\newtheorem{thm}{Theorem}[section]
\newtheorem{cor}[thm]{Corollary}
\newtheorem{lem}[thm]{Lemma}
\newtheorem{prop}[thm]{Proposition}
\theoremstyle{definition}
\newtheorem{defn}[thm]{Definition}
\theoremstyle{remark}
\newtheorem{rem}[thm]{Remark}
\newtheorem{ex}[thm]{Example}
\numberwithin{equation}{section}
\newcommand{\CC}{\mathbb{C}}                
\newcommand{\RR}{\mathbb{R}}                
\newcommand{\ZZ}{\mathbb{Z}}                
\newcommand{\Ela}{\mathbb{E}\mathrm{la}}    
\newcommand{\HH}{\mathbb{H}}                
\newcommand{\TT}{\mathbb{T}}                
\newcommand{\Sym}{\mathbb{S}}               
\newcommand{\VV}{\mathbb{V}}                
\newcommand{\WW}{\mathbb{W}}                
\newcommand{\Sn}[1]{\mathrm{S}_{#1}}        
\newcommand{\RSn}[1]{\Sn{#1}^{\RR}}         
\newcommand{\Hn}[1]{\mathcal{H}_{#1}}       
\newcommand{\Pn}[1]{\mathcal{P}_{#1}}       
\newcommand{\cov}{\mathbf{Cov}}
\newcommand{\inv}{\mathbf{Inv}}
\newcommand{\pol}{\mathrm{Pol}}
\newcommand{\SL}{\mathrm{SL}}               
\newcommand{\SU}{\mathrm{SU}}               
\newcommand{\OO}{\mathrm{O}}                
\newcommand{\SO}{\mathrm{SO}}               
\newcommand{\octa}{\mathbb{O}}              
\newcommand{\ico}{\mathbb{I}}               
\newcommand{\tetra}{\mathbb{T}}             
\newcommand{\DD}{\mathbb{D}}                
\newcommand{\triv}{\mathds{1}}		        
\newcommand{\id}{\mathrm{I}}                
\newcommand{\so}{\mathfrak{so}}             
\newcommand{\slc}{\mathfrak{sl}}            
\newcommand{\bxi}{\pmb{\xi}}                
\newcommand{\ee}{\pmb{e}}                   
\newcommand{\nn}{\pmb{n}}                   
\newcommand{\uu}{\pmb{u}}                   
\newcommand{\vv}{\pmb{v}}                   
\newcommand{\ww}{\pmb{w}}                   
\newcommand{\xx}{\pmb{x}}                   
\newcommand{\ff}{\mathbf{f}}                
\newcommand{\bg}{\mathbf{g}}                
\newcommand{\bw}{\mathbf{w}}                
\newcommand{\hh}{\mathbf{h}}                
\newcommand{\qq}{\mathrm{q}}                
\newcommand{\rp}{\mathrm{p}}                
\newcommand{\ra}{\mathrm{a}}                
\newcommand{\rb}{\mathrm{b}}                
\newcommand{\rh}{\mathrm{h}}                
\newcommand{\rt}{\mathrm{t}}
\newcommand{\ba}{\mathbf{a}}
\newcommand{\bb}{\mathbf{b}}
\newcommand{\bc}{\mathbf{c}}
\newcommand{\bd}{\mathbf{d}}
\newcommand{\be}{\mathbf{e}}
\newcommand{\bq}{\mathbf{q}}                
\newcommand{\bh}{\mathbf{h}}
\newcommand{\bt}{\mathbf{t}}
\newcommand{\bv}{\mathbf{v}}
\newcommand{\bE}{\mathbf{E}}                
\newcommand{\bH}{\mathbf{H}}                
\newcommand{\bS}{\mathbf{S}}                
\newcommand{\bT}{\mathbf{T}}                
\newcommand{\trans}[3]{\lbrace #1,#2\rbrace_{#3}}
\DeclareMathOperator{\Ad}{Ad}
\DeclareMathOperator{\tr}{tr}
\DeclareMathOperator{\2dots}{:}
\DeclareMathOperator{\3dots}{\raisebox{-0.25ex}{\vdots}}
\newcommand{\norm}[1]{\lVert#1\rVert}       
\newcommand{\set}[1]{\left\{#1\right\}}     
\newcommand{\tq}[1]{{{\mathbf{#1}}}}
\newcommand{\td}[1]{{\mathbf{#1}}}
\newcommand{\dg}[2]{{\vphantom{#2}}^{\textrm{#1}}#2}
\newcommand{\cv}[2]{{\dg{#1,#2}{\pmb{C}}}}
\newcommand{\ab}{\left(\td{a}\bb\right)}
\newcommand{\abS}{\left(\td{a}\bb\right)^s}
\begin{document}

\title[Covariants and Symmetry Classes]{Characterization of the symmetry class of an elasticity tensor using polynomial covariants}%

\author{M. Olive}
\address[Marc Olive]{Université Paris-Saclay, ENS Paris-Saclay, CNRS,  LMT - Laboratoire de Mécanique et Technologie, 91190, Gif-sur-Yvette, France}
\email{marc.olive@math.cnrs.fr}

\author{B. Kolev}
\address[Boris Kolev]{Université Paris-Saclay, ENS Paris-Saclay, CNRS,  LMT - Laboratoire de Mécanique et Technologie, 91190, Gif-sur-Yvette, France}
\email{boris.kolev@ens-paris-saclay.fr}

\author{R. Desmorat}
\address[Rodrigue Desmorat]{Université Paris-Saclay, ENS Paris-Saclay, CNRS,  LMT - Laboratoire de Mécanique et Technologie, 91190, Gif-sur-Yvette, France}
\email{rodrigue.desmorat@ens-paris-saclay.fr}

\author{B. Desmorat}
\address[Boris Desmorat]{Sorbonne Université, CNRS, UMR 7190, Institut d'Alembert, F-75252 Paris Cedex 05, France}
\email[Boris Desmorat]{boris.desmorat@sorbonne-universite.fr}

\subjclass[2010]{15A72, 74B05 (20C40, 74E10)}%
\keywords{Covariants of tensors; Characterization of symmetry classes; Fourth order harmonic tensors; Elasticity tensors; Covariant algebras}%

\date{May 7, 2021}%


\begin{abstract}
  We formulate effective necessary and sufficient conditions to identify the symmetry class of an elasticity tensor, a fourth-order tensor which is the cornerstone of the theory of elasticity and a toy model for linear constitutive laws in physics. The novelty is that these conditions are written using \emph{polynomial covariants}. As a corollary, we deduce that the symmetry classes are \emph{affine algebraic sets}, a result which seems to be new. Meanwhile, we have been lead to produce a minimal set of 70 generators for the covariant algebra of a fourth-order harmonic tensor and introduce an original generalized cross product on totally symmetric tensors. Finally, using these tensorial covariants, we produce a new minimal set of 294 generators for the invariant algebra of the elasticity tensor.
\end{abstract}

\maketitle


\begin{scriptsize}
  \setcounter{tocdepth}{2}
  \tableofcontents
\end{scriptsize}

\section{Introduction}
\label{sec:intro}

Having access to an explicit description of the structure of the orbit space of a real representation of a compact Lie group is important in many fields of mathematics, physics, biology and mechanics. It appears, for instance, in bifurcation theory~\cite{IG1984,GSS1988,CLM1991} when symmetry is involved, in physics when one needs to classify allowed patterns of spontaneous symmetry breaking~\cite{Mic1980,AS1981,AS1983}, in neuroimaging when measuring by Diffusion Tensor Imaging, the anisotropic biomarkers relevant to various diseases~\cite{DVW+2009,LMM+2009,PGD2014,CV2015,GHP2018} or in mechanics of materials when one needs to classify constitutive tensors such as the elasticity tensor~\cite{Lov1905,BKO1994,FV1996,FBG1998}.

Given a real representation $(\VV,G)$ of a compact group $G$, let $\Sigma_{[H]}$ be the subset of vectors $\vv$ in $\VV$ whose symmetry group is conjugate to the subgroup $H$. If $\Sigma_{[H]} \ne \emptyset$, then, $\Sigma_{[H]}$ is called an \emph{isotropy stratum} and the conjugacy class $[H]$ of $H$, an \emph{isotropy class}, a \emph{symmetry class} or an \emph{orbit type}. It is known that there are only a finite number of \emph{isotropy classes}~\cite{Mos1957,Man1962} and thus
\begin{equation*}
  V=\Sigma_{[H_{1}]}\sqcup \dotsb \sqcup \Sigma_{[H_{s}]}.
\end{equation*}
The closure $\overline{\Sigma}_{[H]}$ of a stratum (for the usual topology on $\VV$) is itself a union of strata~\cite{DK2000}. In common language it can be described as the set of vectors $\vv \in \VV$ whose symmetry group contains a conjugate of $H$.

An important result of the present work is that, for the representation of $\SO(3)$ on the space of \emph{elasticity tensors},  the \emph{closed strata} $\overline{\Sigma}_{[H]}$ are \emph{real affine algebraic sets}~\cite[Chapter 1]{CLO2007}. We conjecture that this might be true in general for any real representation of a compact algebraic group but we do not have a proof of this fact and we are not aware of such a result in the literature.

It is known, however, that the orbit space $\VV/G$ is a semialgebraic set~\cite{AS1981,AS1983,PS1985,Sch1989}, \textit{i.e}
defined by a finite sequence of polynomial equations and inequalities~\cite{Cos2002}. An explicit construction of a semialgebraic structure requires however the knowledge of an \emph{integrity basis} (a generating set of the polynomial invariant algebra of $\VV$) which involves sophisticated techniques in general~\cite{Wey1997,Stu2008}. For instance, this was achieved only recently for the elasticity tensor~\cite{OKA2017}, a fourth-order tensor on $\RR^{3}$. On the other hand, the description we provide of the algebraic structure on closed strata is based on \emph{polynomial covariants} rather than polynomial invariants. Surprisingly, this was found to be much more effective.

The first appearance of the concept of \emph{polynomial covariants} goes back to the Nineteenth Century and was introduced in the \emph{theory of binary forms}~\cite{GY2010,Olv1999}. It was reformulated more recently by Kraft and Procesi~\cite{KP2000} in the general case (see also~\cite[Chapter 5]{Dol2003}). Given two linear representations $\VV$ and $\WW$ of a group $G$, they define a \emph{polynomial covariant} of $\VV$ of type $\WW$ as a polynomial equivariant mapping between $\VV$ and $\WW$. This definition turns the set of polynomial covariants into a vector space. In this paper, we have extended this definition in such a way that the set of covariants is a \emph{commutative algebra}, namely, we have defined the \emph{covariant algebra} $\cov(\VV,\WW)$ simply as the invariant algebra
\begin{equation*}
  \RR[\VV\oplus \WW^{*}]^{G},
\end{equation*}
where $\WW^{*}$ is the dual space of $\WW$ endowed with the dual linear representation. This algebra contains the set of polynomial covariants as defined by Kraft and Procesi which corresponds to the subspace of polynomial covariants of degree 1 in $\WW^{*}$. The advantage of this extended definition is that, thanks to Hilbert's theorem~\cite{Hil1993,Stu2008}, this algebra is finitely generated. When $\VV$ is a tensorial representation of the orthogonal group $\OO(3)$ and $\WW = \RR^{3}$, this algebra, that we have simply denoted by $\cov(\VV)$, happens to be much more useful to describe the isotropic strata than its subalgebra $\inv(\VV)$, the invariant algebra of $\VV$.

In the specific case of the elasticity tensor, a fourth-order tensor $(E_{ijkl})$ on $\RR^{3}$ with index symmetry
\begin{equation*}
  E_{ijkl} = E_{jikl} = E_{ijlk} = E_{klij},
\end{equation*}
there are exactly \emph{eight symmetry classes}~\cite{FV1996}: $[\SO(3)]$ (isotropic), $[\octa]$ (cubic), $[\OO(2)]$ (transversely isotropic), $[\DD_{3}]$ (trigonal), $[\DD_{4}]$ (tetragonal), $[\DD_{2}]$ (orthotropic), $[\ZZ_{2}]$ (monoclinic) and $[\triv]$ (triclinic). They are described in~\autoref{sec:symmetry-classes}. The problem of their determination has a long history, recalled by Forte and Vianello in~\cite{FV1996}. These authors have definitively clarified the mathematical problem of classifying the symmetry classes of the representation of $\SO(3)$ on $\Ela$, the $21$-dimensional space of elasticity tensors. They removed the link with crystallographic point groups which was extremely confusing and lead to the false assumption that there were ten, rather than eight, symmetry classes~\cite{Fed1968,Cow1987,HP1991}. These eight classes were confirmed in 2001, using an alternative approach~\cite{CVC2001}, where symmetry planes rather than rotations play the central role. Finally, in 2014, a definitive and systematic way to determine the symmetry classes of any finite dimensional representation of the groups $\SO(2)$, $\SO(3)$, $\OO(2)$ or $\OO(3)$ was formulated~\cite{OA2013,Oli2014,OA2014a,Oli2019}, using a strategy which was initiated in the nineteens~\cite{CLM1991,CG1994,CG1996}.

The explicit calculation of the symmetry class \emph{of a given elasticity tensor} has been an active subject of research in mechanics of deformable solids. Besides, the problem becomes even more complicated if one considers that, in \emph{real life}, a measured elasticity tensor (assuming that one can access to all of its components) is subject to experimental errors and has therefore no symmetry but is nevertheless \emph{close} to a given theoretical tensor with a \emph{given symmetry}~\cite{GTT1963,MN2006}. Concerning this, it is worth to cite the excellent work of François and coauthors~\cite{Fra1995,FBG1998} who performed a deep experimental and numerical study of the problem using acoustic measurements on polyhedral testing samples of raw materials.

In the present work, we address the following theoretical question.

\begin{quote}
  \emph{Can one decide by finitely many algebraic calculations what is the symmetry class of a given elasticity tensor?}
\end{quote}

In addition to experimental and numerical approaches, the literature is abundant about formulations of coordinate-free criteria to characterize elasticity tensors which have \emph{exactly} a given symmetry class. Some authors~~\cite{BBS2007} have used the \emph{Kelvin representation}~\cite{TKel1856,TKel1878,Ryc1984} of the elasticity tensor to achieve this goal. They have formulated sufficient conditions involving the multiplicity of the $6$ eigenvalues of the Kelvin representation and of the eigenvalues of its eigenvectors (the \emph{eigenstrains}, which are second-order tensors). Other approaches have used the harmonic decomposition $(\bH, \bd^{\prime}, \bv^{\prime}, \lambda, \mu)$ of the elasticity tensor $\bE$. For instance, following~\cite{Cow1987}, some authors \cite{Cow1989,Jar1994,Bae1998a,CVC2001} have extracted partial information about the symmetry class of $\bE$ from its second-order harmonic components $\bd^{\prime}$ and $\bv^{\prime}$. In the same spirit, but to avoid loosing important information present in the harmonic fourth-order component $\bH$, Baerheim~\cite{Bae1998a} has used the \emph{harmonic factorization} introduced by Sylvester~\cite{Syl1909} (see also~\cite{Bac1970} and \cite{OKDD2018} for a more modern treatment). This factorization allows to decompose an harmonic tensor of order $n$ as an $n$-tuple of vectors, the \emph{Maxwell multipoles}~\cite{Bac1970}. Baerheim has used these multipoles to detect the different symmetry classes of $\bE$. This approach requires, however to solve polynomial equations of degree eight.

More recently, in~\cite{AKP2014}, the authors have suggested to reconsider the question in the general framework of \emph{Real Algebraic Geometry}~\cite{BCR1998}. They have used a generating set of the invariant algebra of \emph{fourth-order harmonic tensors} $\HH^{4}(\RR^{3})$ proposed in~\cite{BKO1994} to characterize the symmetry classes of a tensor $\bH\in\HH^{4}(\RR^{3})$, writing down polynomial equations and inequalities involving the generators of the invariant algebra. These relations become, however, increasingly complicated when the symmetry group becomes smaller and only the cubic, transversely isotropic, tetragonal, trigonal and orthotropic classes have been characterized this way. The same approach was already used by Vianello~\cite{Via1997} in 1997 for the full 2D elasticity tensor, where there are only four symmetry classes: $[\ZZ_{2}]$, $[\DD_{2}]$, $[\DD_{4}]$ and $[\OO(2)]$, and where formulas are considerably much simpler than in 3D.

In this paper, we propose to give a definitive answer to this classification problem for the full elasticity tensor, using \emph{polynomial covariants} rather than invariants (and avoiding, this way, increasing complexity). The answer is furnished by theorem~\ref{thm:main}, which is our main result : the symmetry classes of an elasticity tensor are characterized by polynomial equations involving its covariants. From the mechanical point of view, these polynomial equations provide necessary and sufficient conditions to belong to each of the eight elasticity symmetry classes. These criteria are particularly simple, since one needs only to check that some polynomial functions defined on the components of the elasticity tensor vanish and thus solving any algebraic equation is not necessary. In mathematical terms, the closures of the isotropy strata of the elasticity tensor are real affine algebraic sets and their equations are provided in theorem~\ref{thm:main}.

In order to obtain these results, we have been lead to formulate rigorously what is the \emph{covariant algebra} of a given real representation $\VV$ of the rotation group $\SO(3)$. It was also necessary to introduce a generalization of the cross-product for totally symmetric tensors. Using these tools, we were able to explicit a minimal set of \emph{70 generators for the covariant algebra of $\HH^{4}(\RR^{3})$} in~\autoref{tab:cov-basis-H4}. These fundamental covariants are the main tools which has allowed us to characterize first, the symmetry class of a tensor $\bH \in \HH^{4}(\RR^{3})$ and then, of a full elasticity tensor $\bE$.

A by-product of these achievements is the production of a new set of generators for the invariant algebra of the elasticity tensor, using the covariants in~\autoref{tab:cov-basis-H4} and the covariant tensor operations of section~\ref{sec:covariant-tensor-operations}. These generators are given in \autoref{sec:elasticity-invariants} and shall be more useful for the mechanical community than the original invariants furnished in~\cite{OKA2017}, which were described using \emph{transvectants}~\cite{Olv1999}.

{The explicit characterization of membership to given symmetry class (provided in theorem~\ref{thm:main}) is a valuable tool when considering the synthesis problem of new metamaterials~\cite{DS2020,BSP2019,YTB2019}. New arrangements of given materials at the micro-scale are often studied using an optimization problem which involves the effective elasticity tensor and an homogenization approach~\cite{MHB2017,MBH2017,YATM2020}. In this context, we think that the explicit proposed characterization might be efficiently integrated into the (numerically solved) optimization problem. Moreover, the proposed methodology is not limited to classical elasticity. It could be applied to strain gradient elasticity (for which a justification can be found in~\cite{MAd2020}), or to related continuum models such as micromorphic continuum~\cite{FS2020}. The synthesis problem of such continua, obtained using lattice structure~\cite{EdS2019,SYAC2020,Ere2019}, would also benefit of such explicit symmetry classes characterization. For these, even large deformation~\cite{BEDH2020} or singular constitutive laws~\cite{EAC2019} could be considered.}

\subsection*{Organization of the paper}

The paper is organized as follows. In~\autoref{sec:sym-harm-tensors}, we provide basic definitions and recall the link between totally symmetric tensors and homogeneous polynomials. In~\autoref{sec:covariant-tensor-operations}, we recall the basic covariant operations on tensors and introduce the \emph{generalized cross-product} between totally symmetric tensors. The~\autoref{sec:polynomial-covariants} is devoted to the definition of polynomial covariants of a linear representation and basic facts about the covariant algebra. A minimal generating set of 70 polynomial covariants for $\HH^{4}(\RR^{3})$ is provided in~\autoref{sec:H4-covariant-algebra}. The symmetry classes are introduced in section~\ref{sec:symmetry-classes} and a way to compute them is provided. The~\autoref{sec:covariants-symmetry-dimension} provides several lemmas which connect the dimension of covariant spaces of order one and two to their symmetry class. In~\autoref{sec:covariant-criteria}, several criteria which restrict the symmetry class of one or several totally symmetric tensors, using polynomial covariants are formulated. The characterization of the symmetry class of a fourth-order harmonic tensor $\bH$ using polynomial covariants is given in~\autoref{sec:H4-symmetry-classes} and the result for a full elasticity tensor $\bE$ is given in section~\ref{sec:Ela-symmetry-classes}. In addition, three appendices are provided. In~\autoref{sec:binary-forms-covariants}, we recall the basics about the spaces of binary forms of degree $n$, $\Sn{n}$ (which are models for irreducible algebraic representations of $\SL(2,\CC)$) and we relate the invariant algebra of $\Sn{2n}\oplus\Sn{2}$ to the covariant algebra of $\Sn{2n}$. In~\autoref{sec:harmonic-tensors-covariants}, we explain how we have been able to compute a minimal set of generators for the covariant algebra of $\HH^{4}(\RR^{3})$ using the knowledge of a minimal set of generators for the covariant algebra of $\Sn{8}$.  Finally, in~\autoref{sec:elasticity-invariants}, we provide a new set of 297 generators for the invariant algebra of the elasticity tensor using the tensorial covariants provided in section~\ref{sec:H4-covariant-algebra}.

\section{Symmetric and harmonic tensors}
\label{sec:sym-harm-tensors}

Let $\TT^{n}(\RR^{3})$ be the vector space of $n$-th order tensors on the Euclidean space $\RR^{3}$. Thanks to the Euclidean product, we do not have to distinguish between \emph{upper} and \emph{lower} indices. Therefore, an $n$-th order tensor may always be considered as a $n$-linear mapping
\begin{equation*}
  \bT: \RR^{3} \times \dotsb \times \RR^{3} \to \RR, \qquad (\xx_{1},\dotsc,\xx_{n}) \mapsto \bT(\xx_{1},\dotsc,\xx_{n}).
\end{equation*}
The subspace $\Sym^{n}(\RR^{3})$ of totally symmetric tensors can be identified with the vector space $\Pn{n}(\RR^{3})$ of homogeneous polynomials of degree $n$. This isomorphism generalizes, to higher order tensors, the well-known connection between quadratic forms and symmetric bilinear forms obtained by \emph{polarization}.

\begin{ex}
  For instance, the polynomial representation of a totally symmetric fourth-order tensor $\bS = (S_{ijkl})$ is given by
  \begin{multline*}
    S_{1111}\,x^{4} + S_{2222}\,y^{4} + S_{3333}\,z^{4} + 12S_{1122}\,x^{2}y^{2} + 12S_{1133}\,x^{2}z^{2}
    \\
    + 12S_{2233}\,y^{2}z^{2} + 4S_{1222}\,xy^{3} + 4S_{1112}\,yx^{3} + 4S_{1113}\,zx^{3} + 4S_{1333}\,xz^{3}
    \\
    + 4S_{2223}\,zy^{3} + 4S_{2333}\,yz^{3} + 6S_{1123}\,yzx^{2} + 6S_{1223}\,xzy^{2} + 6S_{1233}\,xyz^{2}.
  \end{multline*}
\end{ex}

Contracting two indices $i,j$ on a totally symmetric tensor $\bS$ does not depend on the particular choice of the pair $i,j$. Thus, we can refer to this contraction without any reference to a particular choice of indices. We will denote this contraction as $\tr \bS$, which is a totally symmetric tensor of order $n-2$ and is called the \emph{trace} of $\bS$.

\begin{defn}
  An $n$-th order totally symmetric and \emph{traceless} tensor will be called an \emph{harmonic tensor} and the subspace of $\Sym^{n}(\RR^{3})$ of harmonic tensors will be denoted by $\HH^{n}(\RR^{3})$ (or simply $\HH^{n}$, if there is no ambiguity).
\end{defn}

\begin{rem}
  In the correspondence between totally symmetric tensors and homogeneous polynomials, a \emph{traceless} totally symmetric tensor $\bH$ corresponds to an \emph{harmonic polynomial} $\rh$ (\textit{i.e.} with vanishing Laplacian: $\triangle \rh = 0$) and this justifies the appellation of \emph{harmonic tensor}. The space of homogeneous harmonic polynomials of degree $n$ will be denoted by $\Hn{n}(\RR^{3})$.
\end{rem}

The natural action of the special orthogonal group $\SO(3)$ (or the full orthogonal group $\OO(3)$) on $\RR^{3}$ induces the tensorial representation $\rho_n$ on $\TT^{n}(\RR^{3})$, defined by
\begin{equation*}
  (\rho_n(g)(\bT))(\xx_{1},\dotsc,\xx_{n})=(g \star \bT)(\xx_{1},\dotsc,\xx_{n}) : =  \bT(g^{-1} \xx_{1},\dotsc,g^{-1} \xx_{n}),
\end{equation*}
where $\bT \in \TT^{n}(\RR^{3})$ and $g \in \SO(3)$. Under this linear representation, the subspaces $\Sym^{n}(\RR^{3})$ and $\HH^{n}(\RR^{3})$ are invariant. Moreover, $\HH^{n}(\RR^{3})$ is \emph{irreducible}~\cite{GSS1988} (its only invariant subspaces are itself and the null space).

\begin{thm}[Harmonic decomposition]
  Every finite dimensional representation $\VV$ of the rotation group $\SO(3)$ can be decomposed into a direct sum of irreducible representations, each of them being isomorphic to an harmonic tensor space $\HH^{n}(\RR^{3})$, by an equivariant isomorphism.
\end{thm}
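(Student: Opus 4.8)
The plan is to combine two classical ingredients: the complete reducibility of finite-dimensional representations of the compact group $\SO(3)$, and the identification of its irreducible representations with the harmonic tensor spaces $\HH^{n}(\RR^{3})$.

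First I would establish complete reducibility by the averaging (Weyl unitary) trick. Starting from any inner product on $\VV$, one averages it against the normalized Haar measure of $\SO(3)$ to produce an $\SO(3)$-invariant inner product $\langle \cdot , \cdot \rangle$; the compactness of $\SO(3)$ guarantees that this integral converges. With respect to this invariant inner product, if $W \subseteq \VV$ is a subrepresentation, then its orthogonal complement $W^{\perp}$ is again $\SO(3)$-invariant and $\VV = W \oplus W^{\perp}$ as representations. A straightforward induction on $\dim \VV$ then yields a decomposition of $\VV$ into a direct sum of irreducible subrepresentations, all of the splittings being equivariant by construction.

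It then remains to show that every irreducible summand is equivariantly isomorphic to some $\HH^{n}(\RR^{3})$. We already know, from the irreducibility of $\HH^{n}(\RR^{3})$ recalled above, that each of these spaces furnishes an irreducible representation, of dimension $2n+1$. To see that they exhaust the irreducibles, I would pass to the Lie algebra: the complexification of $\so(3)$ is isomorphic to $\slc(2,\CC)$, whose finite-dimensional irreducible representations are classified by highest weight, with exactly one of each dimension $d+1$, $d \geq 0$. Among these, the ones that integrate to genuine representations of $\SO(3)$ --- rather than only of its double cover $\SU(2)$ --- are precisely the odd-dimensional ones, i.e. those of dimension $2n+1$. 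Since $\HH^{n}(\RR^{3})$ realizes the irreducible of dimension $2n+1$, and irreducibles are determined up to isomorphism by their dimension, every irreducible representation of $\SO(3)$ is isomorphic to some $\HH^{n}(\RR^{3})$. Composing the decomposition of the first step with these isomorphisms gives the desired equivariant isomorphism.

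The soft part here is complete reducibility; the genuine content is the classification statement, which is where I expect the main obstacle to lie. Concretely, one must prove both that there are no irreducible representations other than the odd-dimensional $\HH^{n}(\RR^{3})$ and that no two distinct $\HH^{n}(\RR^{3})$ are isomorphic. An alternative to the Lie-algebra argument would be character theory: computing the character $\chi_{n}(\theta)$ of $\HH^{n}(\RR^{3})$ on a rotation of angle $\theta$, and invoking the orthogonality relations for characters of the compact group $\SO(3)$ to confirm irreducibility, mutual non-isomorphism, and the fact that these characters span the class functions. Either route requires some care with the integrality condition that distinguishes $\SO(3)$ from $\SU(2)$.
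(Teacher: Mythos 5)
The paper does not actually prove this theorem: it is stated as a classical recollection, with the irreducibility of $\HH^{n}(\RR^{3})$ cited to the literature, so your proposal is supplying an argument the authors omit. Your two-step plan --- complete reducibility by averaging an inner product over the Haar measure, then classification of the irreducibles --- is the standard route, and the first step is sound as written (averaging works verbatim for real representations, and orthogonal complements of subrepresentations are subrepresentations).

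The genuine gap is in the second step, and it is the mismatch between real and complex representations. The highest-weight classification for $\slc(2,\CC)$, the integrality condition singling out odd dimensions, and the character-orthogonality argument all classify \emph{complex} irreducible representations; but the representation $\VV$ in the theorem and the spaces $\HH^{n}(\RR^{3})$ are \emph{real}, and the irreducible summands produced by your first step are real irreducibles. A real irreducible need not remain irreducible after complexification, and ``irreducibles are determined by their dimension'' can fail outright in the real category: for $\SO(2)$ the rotation representation on $\RR^{2}$ is irreducible over $\RR$ yet splits into two one-dimensional representations over $\CC$, so a naive transfer of the complex classification misidentifies the real irreducibles. To close your argument you must show this phenomenon does not occur for $\SO(3)$, i.e.\ that every complex irreducible of $\SO(3)$ is of real type, equivalently that every real irreducible complexifies to a complex irreducible. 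This is true and not hard --- $\HH^{n}(\RR^{3})\otimes_{\RR}\CC$ is the complex irreducible of dimension $2n+1$ and carries the $\SO(3)$-equivariant real structure given by complex conjugation, so every complex irreducible admits an invariant real form (Frobenius--Schur indicator $+1$), and real forms of a fixed complex irreducible are unique up to equivariant isomorphism --- but it is a missing idea, not a formality: without it, the final sentence of your classification paragraph does not follow from what precedes it.
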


\begin{rem}
  An alternative model for the irreducible representations of $\SO(3)$ is furnished by the spaces of harmonic polynomials $\Hn{n}(\RR^{3})$, where the action of $\SO(3)$ on polynomials is given by $(g \star \rp)(\xx) : =  \rp(g^{-1} \xx)$.
\end{rem}

\begin{ex}
  Every homogeneous polynomial of degree $n$ can be decomposed as the following:
  \begin{equation}\label{eq:polynomial-harmonic-decomposition}
    \rp = \rh_{0} + \qq\,\rh_{1} + \dotsb + \qq^{r}\rh_{r},
  \end{equation}
  where $\qq = x^{2} + y^{2} + z^{2}$, $r = [n/2]$ -- with $[\cdot]$ integer part -- and $\rh_{k}$ is a harmonic polynomial of degree $n-2k$.
\end{ex}

\begin{defn}
  Given a homogeneous polynomial $\rp$, the highest order component in~\eqref{eq:polynomial-harmonic-decomposition}, namely $\rh_{0}$, which is uniquely defined, is called the \emph{harmonic projection} of $\rp$ and denoted $(\rp)_{0}$.
\end{defn}

\section{Covariant operations on tensors}
\label{sec:covariant-tensor-operations}

In this section, we will introduce three operations on tensors, which \emph{commute with the action of the rotation group} and are thus called \emph{covariant operations}. The first one is the \emph{symmetric tensor product}.

\begin{defn}[Symmetric tensor product]
  The symmetric tensor product between two tensors $\bT^{1} \in \TT^{p}(\RR^{3})$ and $\bS^{2} \in \TT^{q}(\RR^{3})$ is defined as
  \begin{equation*}
    \bT^{1}\odot\bT^{2} : =  (\bT^{1} \otimes \bT^{2})^{s} \in \Sym^{p + q}(\RR^{3}),
  \end{equation*}
  where the total symmetrisation of a tensor $\bT \in \TT^{n}(\RR^{3})$, noted $\bT^{s} \in \Sym^{n}(\RR^{3})$, is defined as
  \begin{equation*}
    \bT^{s}(\xx_{1},\dotsc,\xx_{n}) : =  \frac{1}{n!}\sum_{\sigma \in \mathfrak{S}_{n}} \bT(\xx_{\sigma(1)},\dotsc,\xx_{\sigma(n)})
  \end{equation*}
  where $\mathfrak{S}_{n}$ is the symmetric group on $n$ letters.
\end{defn}

\begin{rem}
  When \emph{restricted to totally symmetric tensors}, the polynomial counterpart of the symmetric tensor product is just the usual product of polynomials. This product is thus associative and commutative. It is equivariant relative to either the rotation group $\SO(3)$ and the full orthogonal group $\OO(3)$.
\end{rem}

\begin{rem}
  The \emph{harmonic decomposition}~\eqref{eq:polynomial-harmonic-decomposition} of an homogenous polynomial of degree $n$ leads thus to the following harmonic decomposition of a totally symmetric tensor $\bS \in \Sym^{n}(\RR^{3})$:
  \begin{equation}\label{eq:symmetric-harmonic-decomposition}
    \bS = \bH_{0} + \bq \odot \bH_{1} + \dotsb + \bq^{\odot r-1}\odot \bH_{r-1}+ \bq^{\odot r} \odot\bH_{r},
  \end{equation}
  where $\bH_{k}$ is an harmonic tensor of degree $n-2k$. In this formula, $\bq \in \Sym^{2}(\RR^{3})$ is the \emph{Euclidean metric tensor} (which writes as $\bq =(\delta_{ij})$ in any orthonormal basis) and $\bq^{\odot k}$ means the symmetrized tensorial product of $k$ copies of $\bq$.
\end{rem}

The second one is the contraction between two tensors $\bT^1\in\TT^{p}(\RR^{3})$ and $\bT^{2}\in\TT^{q}(\RR^{3})$ over one or several subscripts. This operation uses the Euclidean structure represented by the canonical Euclidean metric tensor $\bq = (q_{ij})$ and its inverse $\bq^{-1} = (q^{ij})$. It is defined as follows:
\begin{equation*}
  (\bT^1 \overset{(r)}{\cdot} \bT^{2})_{i_{1} \dotsb i_{p-r}j_{r+1} \dotsb j_{q}} = q^{i_{p-r+1}j_{1}} \dotsm q^{i_{p}j_{r}} T^1_{i_{1}\dotsb i_{p}} \, T^{2}_{j_{1} \dotsb j_{q}}.
\end{equation*}
The $r$-contraction of two tensors is an $\OO(3)$-equivariant mapping
\begin{equation*}
  \TT^{p}(\RR^{3}) \times \TT^{q}(\RR^{3}) \to \TT^{p+q-2r}(\RR^{3}),
\end{equation*}
and for $n=p=q$, the $n$-contraction corresponds to the canonical scalar product on $\TT^{n}(\RR^{3})$.

\begin{ex}
  In an \emph{orthonormal basis} $(\ee_{i})$, we have
  \begin{align*}
    (\bT^1 \cdot \bT^{2})_{i_{1} \dotsb i_{p-1}j_{2} \dotsb j_{q}}  & = T^1_{i_{1}\dotsb i_{p-1}k} \, T^{2}_{kj_{2} \dotsb j_{q}},     \\
    (\bT^1 \2dots \bT^{2})_{i_{1} \dotsb i_{p-2}j_{3} \dotsb j_{q}} & = T^1_{i_{1}\dotsb i_{p-2}kl} \, T^{2}_{klj_{3} \dotsb j_{q}},   \\
    (\bT^1 \3dots \bT^{2})_{i_{1} \dotsb i_{p-3}j_{4} \dotsb j_{q}} & = T^1_{i_{1}\dotsb i_{p-3}klm} \, T^{2}_{klmj_{4} \dotsb j_{q}}.
  \end{align*}
\end{ex}

\begin{defn}[Symmetric $r$-contraction]
  The \emph{symmetric $r$-contraction} between two totally symmetric tensors $\bS^{1} \in \Sym^{p}(\RR^{3})$ and $\bS^{2} \in \Sym^{q}(\RR^{3})$ is defined as
  \begin{equation*}
    (\bS^1 \overset{(r)}{\cdot} \bS^{2})^{s}.
  \end{equation*}
\end{defn}

\begin{rem}
  The polynomial counterpart of the symmetric $r$-contraction is obtained as follows. If $\bS^1, \bS^{2}$ correspond respectively to the polynomials $\rp_{1}, \rp_{2}$, then, $(\bS^1 \overset{(r)}{\cdot} \bS^{2})^s$ corresponds to the polynomial
  \begin{equation*}
    \rp = \frac{(p-r)!}{p!}\frac{(q-r)!}{q!} \sum_{k_{1}+k_{2}+k_{3}=r} \frac{r!}{k_{1}!k_{2}!k_{3}!}\frac{\partial^r \rp_{1}}{\partial x^{k_{1}}\partial y^{k_{2}}\partial z^{k_{3}}}\frac{\partial^r \rp_{2}}{\partial x^{k_{1}}\partial y^{k_{2}}\partial z^{k_{3}}}.
  \end{equation*}
\end{rem}

The third covariant operation is the \emph{generalized cross product}, which extends the standard cross product between vectors of $\RR^{3}$ to symmetric tensors of arbitrary order.

\begin{defn}[Generalized cross product]\label{def:generalized-cross-product}
  The generalized cross product (or \emph{Lie-Poisson product}) between two totally symmetric tensors $\bS^{1} \in \Sym^{p}(\RR^{3})$ and $\bS^{2} \in \Sym^{q}(\RR^{3})$ is defined as
  \begin{equation*}
    \bS^{1} \times \bS^{2} : = - \left(\bS^{1}\cdot \pmb \varepsilon \cdot \bS^{2}\right)^s
    \in \Sym^{p + q -1}(\RR^{3}).
  \end{equation*}
  where $\pmb \varepsilon$ is the \emph{Levi--Civita} tensor. In any orthonormal basis, we get
  \begin{equation*}
    (\bS^{1}\times\bS^{2})_{i_{1}\dotsb i_{p+q-1}} := (\varepsilon_{i_{1}jk}S^{1}_{ji_{2}\dotsb i_{p}}S^{2}_{ki_{p+1} \dotsb i_{p+q-1}})^{s}
  \end{equation*}
\end{defn}

\begin{rem}
  The generalized cross product is skew-symmetric:
  \begin{equation*}
    \bS^{2} \times \bS^{1} = -\bS^{1} \times \bS^{2}.
  \end{equation*}
  Its polynomial counterpart is (up to a scaling factor) the \emph{Lie--Poisson bracket} on $\so^{*}(3,\RR)$, the dual of the Lie algebra of the rotation group, isomorphic to $\RR^{3}$ (see~\cite[Section 1.6]{AK1998} for the definition and an overview of the use of this bracket in mechanics). More precisely, if $\rp_{1}, \rp_{2}$ are the polynomial representatives of $\bS^{1},\bS^{2}$, then the polynomial representative of $\bS^{1} \times \bS^{2}$ is
  \begin{equation*}
    \frac{1}{pq} \{\rp_{1}, \rp_{2}\}_{LP} = \frac{1}{pq}\det(\xx,\nabla \rp_{1}, \nabla \rp_{2}).
  \end{equation*}
  This product is equivariant relative to the rotation group $\SO(3)$ but not to full orthogonal group $\OO(3)$. In that later case, we get
  \begin{equation*}
    (g \star \bS^{1}) \times (g \star \bS^{2}) = (\det g) \left(g \star(\bS^{1} \times \bS^{2})\right).
  \end{equation*}
\end{rem}

\begin{rem}\label{rem:Sxq=0}
  Note that if $\bq$ is the Euclidean metric tensor, then $\bS \times \bq = 0$ for every totally symmetric tensor $\bS$ (indeed, the radial function $\qq = x^{2} + y^{2} + z^{2}$ is a \emph{Casimir function}~\cite[Section 1.6]{AK1998} for the Lie-Poisson bracket on $\so^{*}(3,\RR)$). In particular,
  $\bS \times \ba = \bS \times \ba'$ for every symmetric \emph{second-order} tensor $\ba$, where
  \begin{equation*}
    \label{eq:dev}
    \ba^\prime=\ba-\frac{1}{3} \tr (\ba)\, \bq
  \end{equation*}
  is the deviatoric (\emph{i.e.} harmonic) part of $\ba$.
\end{rem}

\section{Polynomial covariants}
\label{sec:polynomial-covariants}

Let $\VV$ be a finite dimensional representation of a group $G$. The linear action of $G$ on $\VV$ extends naturally to the algebra $\RR[\VV]$ of real polynomial functions defined on $\VV$ by
\begin{equation*}
  (g\star \rp)(\vv) : =  \rp(g^{-1}\star \vv), \qquad \rp \in \RR[\VV], \, g \in G.
\end{equation*}
A polynomial $\rp \in \RR[\VV]$ is \emph{invariant} if $g\star \rp = \rp$ for all $g\in G$. The set $\RR[\VV]^{G}$, also noted $\inv(\VV)$, of all invariant polynomials is a sub-algebra of $\RR[\VV]$, called the \emph{invariant algebra} of $\VV$. In~\cite{KP2000}, Kraft and Procesi have generalized the concept of invariants in the following way.

\begin{defn}
  Given two representations $\VV$ and $\WW$ of a group $G$, we define $\pol(\VV,\WW)$ to be the space of polynomial mappings $\rp$ from $\VV$ to $\WW$ (\emph{i.e} each component function is a polynomial expression of the components of $\vv \in \VV$, and such in any basis). A \emph{polynomial covariant of $\VV$ of type $\WW$} is a $G$-equivariant polynomial mapping $\rp : \VV \to \WW$, which means that
  \begin{equation*}
    \rp(g\star \vv) = g\star \rp(\vv), \qquad \forall \vv \in \VV, \, \forall g \in G.
  \end{equation*}
\end{defn}

The problem with this definition is that the set $\pol(\VV,\WW)^{G}$, of polynomial covariant of $\VV$ of type $\WW$ is only a vector space and not an algebra. We will therefore extend this definition as follows (see also~\cite[Page 184]{SPV1994} for a more general and abstract definition of this concept).

\begin{defn}\label{def:covariant-algebra}
  Let $\VV, \WW$ be finite dimensional representations of a group $G$. The \emph{covariant algebra of $\VV$ of type $\WW$}, noted $\cov(\VV,\WW)$, is defined as the invariant algebra
  \begin{equation*}
    \RR[\VV \oplus \WW^{*}]^{G},
  \end{equation*}
  where $\WW^{*}$ is the dual vector space of $\WW$.
\end{defn}

\begin{rem}
  We can define similarly, $\mathbf{Con}(\VV,\WW)$, the \emph{contravariant algebra of $\VV$ of type $\WW$} as $\RR[\VV \oplus \WW]^{G}$. However, if $\WW$ and $\WW^{*}$ are equivalent representations (for instance if the representation $\WW$ is unitary), we do not have to distinguish between these two algebras which are canonically isomorphic.
\end{rem}

Note that the covariant algebra $\cov(\VV,\WW)$ has a natural bi-graduation. It is the direct sum of the finite dimensional vector spaces $\cov_{d,k}(\VV,\WW)$ of bi-homogeneous polynomial $p(\vv,\omega)$:
\begin{itemize}
  \item of total degree $d$ in $\vv \in \VV$, called the \textbf{degree} of the covariant,
  \item and, of total degree $k$ in $\omega \in \WW^{*}$, called the \textbf{order} of the covariant.
\end{itemize}
Furthermore, the subspace of covariants of order $0$ is identical to the invariant algebra of $\VV$.

\begin{rem}
  The vector space of polynomial covariants $\pol(\VV,\WW)^{G}$ can thus be identified with
  \begin{equation*}
    \cov_{1}(\VV,\WW) = \bigoplus_{k=0}^{+\infty}\cov_{k,1}(\VV,\WW),
  \end{equation*}
  the vector space of first-order covariants.
\end{rem}

In this paper, we will only be interested when $G = \SO(3)$ and $\WW$ is the Euclidean space $\RR^{3}$ (in which case, we do not have to make any difference between the covariant and the contravariant algebras), and we will set
\begin{equation*}
  \cov(\VV) := \RR[\VV \oplus \RR^{3}]^{\SO(3)}.
\end{equation*}
An element $\rp \in \cov(\VV)$ is thus a polynomial which can be written as
\begin{equation*}
  \rp(\vv,\xx) = \sum_{i,j,k} p_{ijk}(\vv)x^{i}y^{j}z^{k},
\end{equation*}
where each coefficient $p_{ijk}(\vv)$ is a polynomial function of $\vv$ and such that
\begin{equation*}
  \rp(g \star \vv, \xx) = \rp(\vv,g^{-1} \star \xx),
\end{equation*}
for all $\vv \in \VV$, $\xx \in \RR^{3}$ and $g \in \SO(3)$.

\begin{rem}
  Any homogeneous polynomial covariant of $\vv \in \VV$ of degree $d$ and of type $\Sym^{k}(\RR^{3})$ can thus be identified with a polynomial in $\cov_{d,k}(\VV)$.
\end{rem}

One fundamental result, obtained in the nineteenth century, is that the invariant and covariant algebras of a finite dimensional representation of a compact group is finitely generated.

\begin{thm}[Hilbert's Theorem~\cite{Hil1993}]
  The covariant algebra $\cov(\VV)$ is finitely generated, i.e. there exists a finite set $\mathcal{B} : =  \set{\rp_{1},\dotsc,\rp_s}$ in $\cov(\VV)$ such that
  \begin{equation*}
    \cov(\VV) = \RR[\rp_{1},\dotsc,\rp_s].
  \end{equation*}
  Moreover, one can always find such a system where the $\rp_{j}$ are bi-homogeneous, both in $\vv \in \VV$ and $\xx \in \RR^{3}$.
\end{thm}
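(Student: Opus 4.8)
The plan is to recognize that $\cov(\VV) = \RR[\VV \oplus \RR^{3}]^{\SO(3)}$ is nothing but the invariant algebra of the single finite-dimensional representation $U := \VV \oplus \RR^{3}$, so the statement reduces to the classical Hilbert finiteness theorem for invariants of a compact group. First I would set $R := \RR[U]$, the polynomial algebra on $U$, which carries the grading $R = \bigoplus_{d \geq 0} R_{d}$ by total degree; since $\SO(3)$ acts by linear substitutions it preserves each $R_{d}$, and the invariant subalgebra inherits the grading $R^{\SO(3)} = \bigoplus_{d} (R_{d})^{\SO(3)}$ with $(R_{0})^{\SO(3)} = \RR$.

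The essential tool, and the single place where compactness enters, is the \emph{Reynolds operator}. Using the normalized Haar measure $dg$ on the compact group $\SO(3)$, I would define the averaging projection
\[
  \mathcal{R} : R \to R^{\SO(3)}, \qquad \mathcal{R}(\rp) := \int_{\SO(3)} (g \star \rp)\, dg, \qquad \rp \in R.
\]
I would then verify the three properties that drive the argument: $\mathcal{R}$ is a grading-preserving linear projection onto $R^{\SO(3)}$ (it fixes invariants and lands in invariants, by translation invariance of the Haar measure), and, crucially, it is $R^{\SO(3)}$-linear, that is $\mathcal{R}(f\rp) = f\,\mathcal{R}(\rp)$ whenever $f$ is invariant. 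This $R^{\SO(3)}$-linearity is exactly what lets invariant factors be pulled out of the averaging.

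With this in hand I would run the standard Hilbert ideal argument. Let $R^{\SO(3)}_{+}$ denote the invariants of strictly positive degree and let $I := R \cdot R^{\SO(3)}_{+}$ be the ideal they generate in $R$. Since $R$ is a finitely generated polynomial ring, the Hilbert Basis Theorem says $R$ is Noetherian, so $I$ is finitely generated; because $R^{\SO(3)}_{+}$ is spanned by homogeneous invariants, I may choose generators $\rp_{1},\dotsc,\rp_{s}$ of $I$ that are themselves homogeneous invariants of positive degree. I would then claim $R^{\SO(3)} = \RR[\rp_{1},\dotsc,\rp_{s}]$ and prove it by induction on degree: given a homogeneous invariant $f$ of degree $d > 0$, write $f = \sum_{i} h_{i}\rp_{i}$ with $h_{i} \in R$ homogeneous of degree $d - \deg \rp_{i} < d$; applying $\mathcal{R}$ and using its $R^{\SO(3)}$-linearity gives $f = \mathcal{R}(f) = \sum_{i} \mathcal{R}(h_{i})\,\rp_{i}$, where each $\mathcal{R}(h_{i})$ is an invariant of degree $< d$ and hence a polynomial in the $\rp_{j}$ by induction; therefore so is $f$.

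For the bihomogeneity refinement of the ``moreover'' clause, I would observe that $R = \RR[\VV] \otimes \RR[\RR^{3}]$ carries the finer bigrading by degree in $\VV$ and degree in $\RR^{3}$, that $\SO(3)$ acts separately on the two factors and so preserves every bigraded piece, and that $\mathcal{R}$ consequently preserves this bigrading as well; rerunning the argument bigraded then yields generators $\rp_{j}$ that are bi-homogeneous in $\vv \in \VV$ and $\xx \in \RR^{3}$. The hard part is genuinely the construction and the $R^{\SO(3)}$-linearity of $\mathcal{R}$ — i.e. the linear reductivity of $\SO(3)$ furnished by compactness — since once averaging is available the finiteness is a short induction resting on Noetherianity, and the remaining grading bookkeeping and the choice of homogeneous ideal generators are routine.
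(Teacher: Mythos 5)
Your proof is correct. The paper gives no proof of this statement at all --- it is quoted as a classical result with a citation to~\cite{Hil1993} --- and your argument (the Reynolds averaging operator built from the Haar measure of the compact group $\SO(3)$, its $\RR[\VV\oplus\RR^{3}]^{\SO(3)}$-linearity, Hilbert's Noetherian ideal argument with induction on degree, and the bigraded refinement for bi-homogeneity) is precisely the standard proof of that classical theorem, so your write-up supplies, correctly, the proof the paper delegates to the literature.
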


\begin{defn}
  A set of generators $\mathcal{B}$ for $\cov(\VV)$ is called an \emph{integrity basis}. An integrity basis $\mathcal{B}$ is \emph{minimal} if no proper subset of it is an integrity basis.
\end{defn}

\begin{ex}
  A minimal integrity basis for $\cov(\Sym^{2}(\RR^{3}))$ is provided by three invariants $\tr(\ba)$, $\tr(\ba^{2})$ and $\tr(\ba^{3})$, three order $2$ covariants $\bq$, $\ba$ and $\ba^{2}$ and one order $3$ covariant $\ba\times \ba^{2}$.
\end{ex}

\begin{rem}\label{rem:cardinal-minimal-basis}
  Of course, a minimal integrity basis is not unique. However, its cardinality $n(\VV)$ is a constant. To see this, as in~\cite{DL1985/86}, set
  \begin{equation*}
    \cov^{ + }(\VV) : =  \sum_{d + m>0} \cov_{d,m}(\VV),
  \end{equation*}
  which is an ideal of the graded algebra $\cov(\VV)$. Then $(\cov^{ + }(\VV))^{2}$ is the space of covariants which can be written as a sum of reducible covariants. For each $(d,m)$ such that $d + m>0$, let $\delta_{d,m}$ be the codimension of $(\cov^{ + }(\VV))^{2}_{d,m}$ in $\cov_{d,m}(\VV)$. Since $\cov(\VV)$ is finitely generated, there exists an integer $p$ such that $\delta_{d,m} = 0$ for $d + m \ge p$ and we can define
  \begin{equation*}
    n(\VV) : =  \sum_{d,m} \delta_{d,m}.
  \end{equation*}
  Then, any \emph{minimal integrity basis} is of cardinal $n(\VV)$. As far as we know, there is no way to obtain the constant $n(\VV)$ but to compute a minimal basis.
\end{rem}

\section{A minimal integrity basis for the covariant algebra of \texorpdfstring{$\HH^{4}$}{H4}}
\label{sec:H4-covariant-algebra}

In this section, we propose to describe a minimal integrity basis for $\cov(\HH^{4})$. As detailed in \autoref{sec:harmonic-tensors-covariants}, $\cov(\HH^{4})$ is connected with the invariant algebra $\inv(\Sn{8}\oplus\Sn{2})$ (theorem~\ref{thm:decomplexification}), where $\Sn{n}$ is the space of binary forms of degree $n$ (see \autoref{sec:binary-forms-covariants}). This algebra is itself connected to the covariant algebra of the binary form of degree 8, $\cov(\Sn{8})$ (theorem~\ref{thm:basis-for-S2n-oplus-S2}). A minimal covariant basis for $\cov(\Sn{8})$ is known at least partially since 1880 and was first produced by von Gall~\cite{vGal1880} (see also~\cite{Bed2008,Cro2002,Oli2017,OKA2017}). These results have been used to obtain degrees and orders of a minimal basis for $\cov(\HH^{4})$ which are given in~\autoref{tab:deg-ord-cov-H4}.

\begin{table}[ht]
  \begin{center}
    \setlength{\arraycolsep}{1pt}
    \begin{tabular}{ccccccccccccc}
      \toprule
      degree / order & 0 & 1  & 2  & 3  & 4 & 5 & 6 & 7 & 9 & \# & Cum \\
      \midrule
      0              & - & -  & 1  & -  & - & - & - & - & - & 1  & 1   \\
      1              & - & -  & -  & -  & 1 & - & - & - & - & 1  & 2   \\
      2              & 1 & -  & 1  & -  & 1 & - & 1 & - & - & 4  & 6   \\
      3              & 1 & -  & 1  & 1  & 1 & 1 & 1 & 1 & 1 & 8  & 14  \\
      4              & 1 & -  & 2  & 1  & 1 & 2 & 1 & 1 & 1 & 10 & 24  \\
      5              & 1 & 1  & 2  & 2  & 1 & 3 & - & 1 & - & 11 & 35  \\
      6              & 1 & 1  & 2  & 3  & 1 & 1 & - & - & - & 9  & 44  \\
      7              & 1 & 2  & 2  & 3  & - & - & - & - & - & 8  & 52  \\
      8              & 1 & 2  & 2  & 2  & - & - & - & - & - & 7  & 59  \\
      9              & 1 & 3  & 1  & -  & - & - & - & - & - & 5  & 64  \\
      10             & 1 & 2  & -  & -  & - & - & - & - & - & 3  & 67  \\
      11             & - & 2  & -  & -  & - & - & - & - & - & 2  & 69  \\
      12             & - & 1  & -  & -  & - & - & - & - & - & 1  & 70  \\
      \midrule
      Tot            & 9 & 14 & 14 & 12 & 6 & 7 & 3 & 3 & 2 & 70 &     \\
      \bottomrule
    \end{tabular}
    \caption{Degrees and orders of a minimal covariant basis for $\cov(\HH^{4})$}
    \label{tab:deg-ord-cov-H4}
  \end{center}
\end{table}

Once we know the information provided in~\autoref{tab:deg-ord-cov-H4}, we have a lot of freedom in the choice of an explicit minimal basis. Checking that a system of 70 arbitrary covariants satisfying the requirements of~\autoref{tab:deg-ord-cov-H4} is a minimal integrity basis requires moreover the knowledge of the \emph{Hilbert series}~\cite{Spr1980}
\begin{equation*}
  H(z,t): = \sum_{d,k\geq 0} a_{d,k}z^dt^k,
\end{equation*}
which encodes the dimension $a_{d,k}$ of each finite dimensional vector space $\cov_{d,k}(\HH^{4})$. However, the Hilbert series $H(z,t)$ is a rational function which can be computed \textit{a priori}~\cite{Spr1980,Spr1983,LP1990,Stu2008,Bed2009,Bed2011}, using the Molien-Weyl formula~\cite{Stu2008}:
\begin{equation*}
  H(z,t) = \int_{\SO(3)} \frac{1}{\det (I - t\rho_{1}(g))}\frac{1}{\det (I - z\rho_{4}(g))} \, d\mu(g)
\end{equation*}
where $d\mu$ is the Haar measure on $\SO(3)$ (see~\cite[Section 4.1]{Ste1994}), $\rho_{1}$ is the standard representation of $\SO(3)$ on $\RR^{3}$ and $\rho_{4}$ is the representation of $\SO(3)$ on $\HH^{4}$. Thus, for each module $\cov_{d,k}(\HH^{4})$ where $(d,k)$ appears in~\autoref{tab:deg-ord-cov-H4}, we have checked inductively on $n = d+k$ that adding new covariants of immediate superior degree/order to the subspace generated by reducible covariants of lower order/degree, we obtain a vector space of dimension $a_{d,k}$.

\begin{thm}\label{thm:H4-covariants}
  The polynomial covariant algebra of $\HH^{4}$ is generated by a minimal basis of 70 homogeneous covariant polynomials, which degree/order are provided in~\autoref{tab:deg-ord-cov-H4}. An explicit basis has been computed in~\autoref{tab:cov-basis-H4}.
\end{thm}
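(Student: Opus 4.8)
The plan is to split the proof into two largely independent tasks: (i) determining the list of bidegrees $(d,k)$ — degree $d$ in $\bH \in \HH^{4}$ and order $k$ in $\xx \in \RR^{3}$ — that must occur in \emph{any} minimal integrity basis, and (ii) exhibiting 70 explicit covariants of exactly these bidegrees and proving that they both generate $\cov(\HH^{4})$ and are irredundant. The first task furnishes \autoref{tab:deg-ord-cov-H4}; the second produces and validates \autoref{tab:cov-basis-H4}.

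For (i), I would exploit the chain of reductions developed in \autoref{sec:harmonic-tensors-covariants}. By theorem~\ref{thm:decomplexification}, $\cov(\HH^{4})$ is tied to the invariant algebra $\inv(\Sn{8}\oplus\Sn{2})$, and by theorem~\ref{thm:basis-for-S2n-oplus-S2} the latter is in turn controlled by the covariant algebra $\cov(\Sn{8})$ of the binary octic, whose minimal covariant basis is classically known (von Gall~\cite{vGal1880}, and the later treatments~\cite{Bed2008,Cro2002,Oli2017,OKA2017}). Tracking how the degree and order of a binary-form covariant translate into the degree-in-$\bH$ and order-in-$\xx$ of the corresponding tensorial covariant yields precisely the entries of \autoref{tab:deg-ord-cov-H4}; in particular the total is forced to be $70$, and no generator of total degree $d+k>12$ occurs.

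For (ii), the essential bookkeeping device is the bigraded Hilbert series $H(z,t)=\sum_{d,k} a_{d,k}\,z^{d}t^{k}$, which I would compute in closed form from the Molien--Weyl integral recalled above; this rational function supplies the dimension $a_{d,k}$ of every module $\cov_{d,k}(\HH^{4})$. I would then construct $70$ candidate covariants by iterating the three covariant operations of \autoref{sec:covariant-tensor-operations} — the symmetric product $\odot$, the symmetric $r$-contractions, and the generalized cross product $\times$ — starting from $\bH$ and the metric $\bq$, picking representatives of each bidegree occurring in \autoref{tab:deg-ord-cov-H4}. The verification is an induction on $n=d+k$, running up to $n=12$. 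At each bidegree $(d,k)$ of the table I would compute the dimension of the space $(\cov^{+}(\HH^{4}))^{2}_{d,k}$ spanned by products of strictly lower generators, and check that adjoining the candidate generators of that bidegree raises the dimension to exactly $a_{d,k}$, with the number adjoined equal to the table entry. By remark~\ref{rem:cardinal-minimal-basis}, this last equality says precisely that each new covariant contributes to the codimension $\delta_{d,k}$, so the system is simultaneously generating — the whole module is reached at every bidegree, hence the whole algebra is generated once the finiteness bound from (i) guarantees $\delta_{d,k}=0$ beyond degree $12$ — and minimal, the count summing to $n(\HH^{4})=70$.

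The main obstacle is the explicit rank computation inside each module $\cov_{d,k}(\HH^{4})$: one must evaluate the tensor-valued products of lower covariants together with the new candidates and determine the rank of the resulting family modulo the reducible part. In practice I would realize each covariant as an explicit polynomial in $(\bH,\xx)$, evaluate it at enough generic rational points to replace the symbolic rank by an exact numerical rank, and organize the induction so that the span of reducibles at stage $n$ is assembled from the verified data at stages $<n$. The Molien--Weyl integration, although routine in principle, is the other delicate point, since an error in a single coefficient $a_{d,k}$ would invalidate both the count and the minimality conclusion.
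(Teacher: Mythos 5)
Your proposal is correct and follows essentially the same route as the paper: it obtains the bidegree data of \autoref{tab:deg-ord-cov-H4} from the chain $\cov(\Sn{8})\to\inv(\Sn{8}\oplus\Sn{2})\to\cov(\HH^{4})$ of theorems~\ref{thm:basis-for-S2n-oplus-S2} and~\ref{thm:decomplexification} together with von Gall's basis, and then certifies the explicit 70 covariants by comparing, inductively on $d+k$, the span of reducible products plus new candidates against the dimensions $a_{d,k}$ supplied by the Molien--Weyl Hilbert series, invoking remark~\ref{rem:cardinal-minimal-basis} for minimality. The only additions (generic rational evaluation points for exact rank computation) are implementation details consistent with what the paper leaves implicit.
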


In~\autoref{tab:cov-basis-H4}, we have introduced the following symmetric second-order covariants
\begin{equation*}
  \bd_{2} : = \tr_{13} \bH^{2}, \qquad   \bd_{3} : =  \tr_{13} \bH^{3}, \qquad \bc_{k} : =  \bH^{k-2}\2dots\bd_{2}, \quad k \ge 3.
\end{equation*}
where $\bH^{n}:=\bH:\bH^{n-1}$ for $n\geq 2$ and $\tr_{13}\tq{A}$ of a fourth order tensor $\tq{A}$ is defined as $(\tr_{13}\tq{A})_{ij}:=\tq{A}_{kikj}$ (in any orthonormal basis). We have also used the simplified notation $\ba\bb:=\ba\cdot\bb$, when $\ba$ and $\bb$ are second order tensors.

\begin{rem}\label{rem:d3c3}
  Note the following relation
  \begin{equation*}
    \bc_{3} = 2\,\bd_{3}',
  \end{equation*}
  which can be checked by a direct calculation.
\end{rem}

\begin{rem}
  In addition to $\bd_{2} $ and $\bd_{3}$, the following second-order covariants were introduced in~\cite{BKO1994}:
  \begin{equation}\label{eq:Boehler-covariants}
    \begin{aligned}
      \bd_{4}  & : =  {\bd_{2}}^{2},                             & \bd_{5} & : =  \bd_{2} (\bH\2dots \bd_{2}),         & \bd_{6} & : =  {\bd_{2}}^{3},                         \\
      \bd_{7}  & : = {\bd_{2}}^{2} (\bH\2dots \bd_{2}),          & \bd_{8} & : = {\bd_{2}}^{2} (\bH^{2}\2dots\bd_{2}), & \bd_{9} & : = {\bd_{2}}^{2} (\bH\2dots{\bd_{2}}^{2}), \\
      \bd_{10} & : = {\bd_{2}}^{2} (\bH^{2}\2dots{\bd_{2}}^{2}).
    \end{aligned}
  \end{equation}
  For $k = 2,3,4,6$, the $\bd_{k}$ are \emph{symmetric}, while they are not for $k = 5,7,8,9,10$. None of them are harmonic. These covariants were used to define the following invariants:
  \begin{equation}\label{eq:Boehler-invariants}
    J_{k} : =  \tr \bd_{k} , \qquad k = 2, \dotsc ,10,
  \end{equation}
  which constitute a \emph{minimal integrity basis} for $\HH^{4}$ (see~\cite{BKO1994}). In~\autoref{tab:cov-basis-H4}, we did not use the invariants $J_{k}$ but an alternative set of generators $I_k$. The nine invariants $J_{k}$ are not algebraically independent (neither are the nine invariants $I_k$); they are subject to some algebraic relations, which have been calculated first by Shioda~\cite{Shi1967} (with some minor errors).
\end{rem}

\begin{table}[p]
  \centering
  \begin{minipage}{0.49\linewidth}
    \resizebox{\textwidth}{!}{%
      \renewcommand{\arraystretch}{1.2}
      \begin{tabular}{|c|c|c|c|c|}
        \hline
        \# & Cov.          & Deg. & Ord. & Formula                                                              \\
        \hline \rule{0pt}{3ex}
        1  & $I_{2}$       & 2    & 0    & $\tr(\bd_{2})$                                                       \\
        2  & $I_{3}$       & 3    & 0    &
        $\tr(\bd_{3})$                                                                                          \\
        3  & $I_{4}$       & 4    & 0    & $\tr( {\bd_{2}}^{2})$                                                \\
        4  & $I_{5}$       & 5    & 0    & $\tr (\bd_{2} \bd_{3})$                                              \\
        5  & $I_{6}$       & 6    & 0    & $\tr ({\bd_{2}}^{3})$                                                \\
        6  & $I_{7}$       & 7    & 0    & $\tr ({\bd_{2}}^{2} \bd_{3})$                                        \\
        7  & $I_{8}$       & 8    & 0    & $\tr (\bd_{2} {\bd_{3}}^{2})$                                        \\
        8  & $I_{9}$       & 9    & 0    & $\tr ({\bd_{3}}^{3})$                                                \\
        9  & $I_{10}$      & 10   & 0    & $\tr ({\bd_{2}}^{2}{\bd_{3}}^{2})$                                   \\
        \hline \rule{0pt}{3ex}
        10 & $\cv{5}{1}$   & 5    & 1    & $\vv_{5} = \pmb{\varepsilon} \2dots (\bd_{2}\bc_{3})$                \\
        11 & $\cv{6}{1}$   & 6    & 1    & $\vv_{6} = \pmb{\varepsilon} \2dots (\bd_{2}\bc_{4})$                \\
        12 & $\cv{7a}{1}$  & 7    & 1    & $\vv_{7a} = \pmb{\varepsilon} \2dots ({\bd_{2}}^{2}\bc_{3})$         \\
        13 & $\cv{7b}{1}$  & 7    & 1    & $\vv_{7b} = \pmb{\varepsilon} \2dots (\bc_{4}\bc_{3})$               \\
        14 & $\cv{8a}{1}$  & 8    & 1    & $\vv_{8a} = \pmb{\varepsilon} \2dots (\bd_{2}{\bc_{3}}^{2})$         \\
        15 & $\cv{8b}{1}$  & 8    & 1    & $\vv_{8b} = \pmb{\varepsilon} \2dots ({\bd_{2}}^{2}\bc_{4})$         \\
        16 & $\cv{9a}{1}$  & 9    & 1    & $\vv_{9a} = \pmb{\varepsilon} \2dots (\bd_{2}\bc_{4}\bc_{3})$        \\
        17 & $\cv{9b}{1}$  & 9    & 1    & $\vv_{9b} = \pmb{\varepsilon} \2dots (\bc_{3}\bd_{2}\bc_{4})$        \\
        18 & $\cv{9c}{1}$  & 9    & 1    & $\vv_{9c} = \pmb{\varepsilon} \2dots (\bd_{2}\bc_{3}\bc_{4})$        \\
        19 & $\cv{10a}{1}$ & 10   & 1    & $\vv_{10a} = \pmb{\varepsilon} \2dots ({\bd_{2}}^{2}{\bc_{3}}^{2})$  \\
        20 & $\cv{10b}{1}$ & 10   & 1    & $\vv_{10b} = \pmb{\varepsilon} \2dots ({\bc_{3}}^{2}\bc_{4})$        \\
        21 & $\cv{11a}{1}$ & 11   & 1    & $\vv_{11a} = \pmb{\varepsilon} \2dots (\bc_{3}{\bc_{4}}^{2})$        \\
        22 & $\cv{11b}{1}$ & 11   & 1    & $\vv_{11b} = \pmb{\varepsilon} \2dots ({\bd_{2}}^{2}\bc_{3}\bc_{4})$ \\
        23 & $\cv{12}{1}$  & 12   & 1    & $\vv_{12} = \pmb{\varepsilon} \2dots (\bd_{2}{\bc_{3}}^{2}\bc_{4})$  \\ \hline \rule{0pt}{3ex}
        0  & $\cv{0}{2}$   & 0    & 2    & $\tq{q}$                                                             \\
        24 & $\cv{2}{2}$   & 2    & 2    & $\bd_{2}$                                                            \\
        25 & $\cv{3}{2}$   & 3    & 2    & $\bc_{3}$                                                            \\
        26 & $\cv{4a}{2}$  & 4    & 2    & $\bc_{4}$                                                            \\
        27 & $\cv{4b}{2}$  & 4    & 2    & ${\bd_{2}}^{2}$                                                      \\
        28 & $\cv{5a}{2}$  & 5    & 2    & $\bc_{5}$                                                            \\
        29 & $\cv{5b}{2}$  & 5    & 2    & $(\bd_{2}\bc_{3})^{s}$                                               \\
        30 & $\cv{6a}{2}$  & 6    & 2    & $(\bd_{2}\bc_{4})^{s}$                                               \\
        31 & $\cv{6b}{2}$  & 6    & 2    & ${\bc_{3}}^{2}$                                                      \\
        32 & $\cv{7a}{2}$  & 7    & 2    & $({\bd_{2}}^{2}\bc_{3})^{s}$                                         \\
        33 & $\cv{7b}{2}$  & 7    & 2    & $(\bc_{4}\bc_{3})^{s}$                                               \\
        \hline
      \end{tabular}}
  \end{minipage}
  \begin{minipage}{0.5\linewidth}
    \centering
    \resizebox{\textwidth}{!}{%
      \begin{tabular}{|c|c|c|c|c|}
        \hline
        \# & Cov.              & Deg. & Ord. & Formula                                   \\
        \hline \rule{0pt}{3ex}
        34 & $\cv{8a}{2}$      & 8    & 2    & $(\bd_{2}{\bc_{3}}^{2})^{s}$              \\
        35 & $\cv{8b}{2}$      & 8    & 2    & ${\bc_{4}}^{2}$
        \\
        36 & $\cv{9}{2}$       & 9    & 2    & $({\bd_{2}}^{2}\bc_{5})^{s}$
        \\ \hline \rule{0pt}{3ex}
        37 & $\cv{3}{3}$       & 3    & 3    & $\tr(\bH\times \bd_{2})$
        \\
        38 & $\cv{4}{3}$       & 4    & 3    & $\tr(\bH\times \bc_{3})$
        \\
        39 & $\cv{5a}{3}$      & 5    & 3    & $\bd_{2} \times \bc_{3}$
        \\
        40 & $\cv{5b}{3}$      & 5    & 3    & $\tr(\bH\times {\bd_{2}}^{2})$
        \\
        41 & $\cv{6a}{3}$      & 6    & 3    & $\bd_{2} \times {\bd_{2}}^{2}$
        \\
        42 & $\cv{6b}{3}$      & 6    & 3    & $\bd_{2} \times \bc_{4}$
        \\
        43 & $\cv{6c}{3}$      & 6    & 3    & $\tr(\bH\times \bc_{5})$
        \\
        44 & $\cv{7a}{3}$      & 7    & 3    & ${\bd_{2}}^{2} \times \bc_{3}$
        \\
        45 & $\cv{7b}{3}$      & 7    & 3    & $\bc_{3} \times \bc_{4}$
        \\
        46 & $\cv{7c}{3}$      & 7    & 3    & $\bd_{2} \times \bc_{5}$
        \\
        47 & $\cv{8a}{3}$      & 8    & 3    & $\bd_{2} \times {\bc_{3}}^{2}$
        \\
        48 & $\cv{8b}{3}$      & 8    & 3    & $\bc_{3} \times \bc_{5}$
        \\ \hline \rule{0pt}{3ex}
        49 & $\cv{1}{4} = \bH$ & 1    & 4    & $\bH$
        \\
        50 & $\cv{2}{4}$       & 2    & 4    & $(\bH^{2})^{s}$
        \\
        51 & $\cv{3}{4}$       & 3    & 4    & $(\bH^{3})^{s}$
        \\
        52 & $\cv{4}{4}$       & 4    & 4    & $(\bH^{4})^{s}$
        \\
        53 & $\cv{5}{4}$       & 5    & 4    & $(\bH\cdot {\bd_{2}}^{2})^{s}$
        \\
        54 & $\cv{6}{4}$       & 6    & 4    & $(\bH^{2}\cdot {\bd_{2}}^{2})^{s}$
        \\ \hline \rule{0pt}{3ex}
        55 & $\cv{3}{5}$       & 3    & 5    & $\bH\times \bd_{2}$
        \\
        56 & $\cv{4a}{5}$      & 4    & 5    & $\bH\times \bc_{3}$
        \\
        57 & $\cv{4b}{5}$      & 4    & 5    & $(\bH^{2})^{s}\times \bd_{2}$
        \\
        58 & $\cv{5a}{5}$      & 5    & 5    & $\bH\times {\bd_{2}}^{2}$
        \\
        59 & $\cv{5b}{5}$      & 5    & 5    & $\bH\times \bc_{4}$
        \\
        60 & $\cv{5c}{5}$      & 5    & 5    & $(\bH^{2})^{s}\times \bc_{3}$
        \\
        61 & $\cv{6}{5}$       & 6    & 5    & $\bH\times \bc_{5}$
        \\ \hline \rule{0pt}{3ex}
        62 & $\cv{2}{6}$       & 2    & 6    & $(\bH\cdot \bH)^{s}$
        \\
        63 & $\cv{3}{6}$       & 3    & 6    & $(\bH^{2}\cdot \bH)^{s}$
        \\
        64 & $\cv{4}{6}$       & 4    & 6    & $(\bH^{2}\cdot \bH^{2})^{s}$
        \\ \hline \rule{0pt}{3ex}
        65 & $\cv{3}{7}$       & 3    & 7    & $\bH \times (\bH^{2})^{s}$
        \\
        66 & $\cv{4}{7}$       & 4    & 7    & $\bH \times (\bH^{3})^{s}$
        \\
        67 & $\cv{5}{7}$       & 5    & 7    & $(\bH^{2})^{s} \times (\bH^{3})^{s}$
        \\ \hline \rule{0pt}{3ex}
        68 & $\cv{3}{9}$       & 3    & 9    & $(\bH\cdot \bH)^{s} \times \bH $
        \\
        69 & $\cv{4}{9}$       & 4    & 9    & $ (\bH\cdot \bH)^{s}\times (\bH^{2})^{s}$
        \\
        \hline
      \end{tabular}}
  \end{minipage}
  \caption{A minimal covariant basis for $\cov(\HH^{4})$}
  \label{tab:cov-basis-H4}
\end{table}

\section{Symmetry classes}
\label{sec:symmetry-classes}

Symmetry plays a fundamental role in the study of tensor representations. In this section, we recall the definitions of \emph{symmetry groups} and \emph{symmetry classes} of a vector $\vv$ in a finite dimensional representation $\VV$ of a compact group $G$.

\begin{defn}[Symmetry group]
  The \emph{symmetry group} of a vector $\vv \in \VV$ is defined as the subgroup
  \begin{equation*}
    G_{\vv} : =  \set{g\in G,\quad g\star \vv = \vv}.
  \end{equation*}
\end{defn}

\begin{defn}[Symmetry class]
  The \emph{symmetry class} (or isotropy class) of a vector $\vv$ is the conjugacy class of its symmetry group, where the conjugacy class $[H]$ of a subgroup $H$ is defined as
  \begin{equation*}
    [H] : =  \set{gHg^{-1},\quad g\in G}.
  \end{equation*}
\end{defn}

There is, of course, no obstruction to extend the concept of symmetry classes to a finite or infinite family of vectors belonging to different (or same) representations of $G$.

\begin{defn}
  Let $\mathcal{F}$ be a finite or infinite family of vectors belonging to different (or same) representations of $G$. We define the isotropy group of $\mathcal{F}$ as the subgroup
  \begin{equation*}
    G_{\mathcal{F}} : =  \bigcap_{\vv \in \mathcal{F}} G_{\vv}.
  \end{equation*}
  The \emph{symmetry class} of $\mathcal{F}$ is the conjugacy class of $G_{\mathcal{F}}$ in $G$.
\end{defn}

\begin{rem}
  Note that if $\mathcal{F}$ is a vector space and $(\vv_{i})_{i\in I}$ is any generating set of $\mathcal{F}$, then
  \begin{equation*}
    G_{\mathcal{F}} = \bigcap_{i\in I} G_{\vv_{i}}.
  \end{equation*}
  In particular, if $(\vv_{1}, \dotsc ,\vv_{p})$ is a basis of $\mathcal{F}$, then
  \begin{equation*}
    G_{\mathcal{F}} = \bigcap_{j = 1}^{p} G_{\vv_{j}}.
  \end{equation*}
\end{rem}

Since every symmetry group of a vector $\vv$ in $\VV$ is a closed subgroup of $G$, we are mainly interested in the closed subgroups of $G$ up to conjugacy. Now we have the following result which can be deduced from~\cite[Proposition 1.9]{Bre1972}.

\begin{lem}
  The set of conjugacy classes of a compact group $G$ is a \emph{partially ordered set} (poset) induced by inclusion, which is defined as follows:
  \begin{equation*}
    [H_{1}] \preceq [H_{2}] \quad \text{if $H_{1}$ is conjugate to a subgroup of $H_{2}$ in $G$}.
  \end{equation*}
\end{lem}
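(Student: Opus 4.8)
The plan is to verify the three defining properties of a partial order—reflexivity, transitivity and antisymmetry—for the relation $\preceq$, after first checking that it is well defined on conjugacy classes of \emph{closed} subgroups. I expect the first properties to be purely formal, and the genuine content to lie in antisymmetry, which is precisely where both the compactness of $G$ and the closedness of the subgroups must be used.

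First I would check that $\preceq$ does not depend on the chosen representatives. If $H_{1}' = a H_{1} a^{-1}$ and $H_{2}' = b H_{2} b^{-1}$, and some conjugate $g H_{1} g^{-1}$ is contained in $H_{2}$, then $(bga^{-1}) H_{1}' (bga^{-1})^{-1} = b(g H_{1} g^{-1})b^{-1} \subseteq b H_{2} b^{-1} = H_{2}'$, so the relation descends to conjugacy classes. Reflexivity is immediate (take $g = e$), and transitivity follows by composing conjugations: from $g_{1} H_{1} g_{1}^{-1} \subseteq H_{2}$ and $g_{2} H_{2} g_{2}^{-1} \subseteq H_{3}$ one obtains $(g_{2}g_{1}) H_{1} (g_{2}g_{1})^{-1} \subseteq H_{3}$, hence $[H_{1}] \preceq [H_{3}]$.

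The hard part will be antisymmetry. Suppose $[H_{1}]\preceq[H_{2}]$ and $[H_{2}]\preceq[H_{1}]$, so there are $g,h\in G$ with $g H_{1} g^{-1}\subseteq H_{2}$ and $h H_{2} h^{-1}\subseteq H_{1}$. Setting $a=hg$ gives $a H_{1} a^{-1}=h(g H_{1} g^{-1})h^{-1}\subseteq h H_{2} h^{-1}\subseteq H_{1}$. The whole argument then reduces to the following key claim: \emph{if $K$ is a closed subgroup of a compact group $G$ and $aKa^{-1}\subseteq K$, then in fact $aKa^{-1}=K$}. Granting this, $a H_{1} a^{-1}=H_{1}$ forces every inclusion in $H_{1}\subseteq h H_{2} h^{-1}\subseteq H_{1}$ to be an equality, whence $h H_{2} h^{-1}=H_{1}$ and therefore $[H_{1}]=[H_{2}]$.

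It remains to prove the key claim, and this is the only subtle point, since for finite groups it is trivial by cardinality while here it requires a genuinely topological argument. Iterating $aKa^{-1}\subseteq K$ gives $a^{n} K a^{-n}\subseteq K$ for all $n\ge 0$. Since the closure $\overline{\langle a\rangle}$ is a compact abelian group, the sequence $(a^{n})_{n\ge 1}$ has a convergent subsequence, and taking differences of exponents produces a sequence of positive integers $(n_{k})$ with $a^{n_{k}}\to e$. Now fix $x\in K$; for each $k$ the element $a^{n_{k}-1} x a^{-(n_{k}-1)}=a^{-1}\bigl(a^{n_{k}} x a^{-n_{k}}\bigr)a$ lies in $K$ because $n_{k}-1\ge 0$, and as $k\to\infty$ one has $a^{n_{k}} x a^{-n_{k}}\to x$, so this element converges to $a^{-1} x a$. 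Because $K$ is closed, the limit $a^{-1} x a$ belongs to $K$; hence $a^{-1}Ka\subseteq K$, that is $K\subseteq aKa^{-1}$, which combined with the hypothesis yields $aKa^{-1}=K$. I expect this closedness-and-compactness step to be the entire crux; the remaining verifications are formal.
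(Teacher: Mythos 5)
Your proof is correct, and it is more self-contained than the paper's treatment: the paper offers no proof at all, merely asserting that the lemma ``can be deduced from [Bre1972, Proposition 1.9]''. That cited proposition is essentially your key claim --- for a closed subgroup $K$ of a compact group $G$ and $a\in G$, the inclusion $aKa^{-1}\subseteq K$ forces $aKa^{-1}=K$ --- so you have correctly located where the genuine content lies, and your reduction of well-definedness, reflexivity, transitivity and antisymmetry to this claim is exactly the intended deduction. Where you differ is in the proof of the claim itself: Bredon works with compact Lie groups and argues via the dimension and the finite number of connected components of $K$ (conjugation preserves both, so $aKa^{-1}$ is an open and closed subgroup of $K$ of full dimension and full component count), whereas your recurrence argument (positive powers $a^{n_{k}}\to e$, then closedness of $K$) is purely topological and applies to compact groups that are not Lie. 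What the Lie-theoretic route buys is avoiding any limit argument; what your route buys is generality and elementarity.

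One small caveat on that generality: extracting a convergent \emph{subsequence} of $(a^{n})$ and producing $a^{n_{k}}\to e$ by taking differences implicitly uses metrizability of $\overline{\langle a\rangle}$ (sequential compactness plus a bi-invariant metric). A monothetic compact group need not be metrizable --- the Bohr compactification of $\ZZ$ is the standard example --- so in full generality you should either phrase the argument with nets, or invoke the fact that the closed subsemigroup generated by $a$ is a subgroup, which yields a net of nonnegative powers converging to $a^{-1}$; the rest of your argument then goes through verbatim. For the paper's setting, where $G=\SO(3)$ and all subgroups in sight are closed subgroups of a compact Lie group, metrizability is automatic and your proof is complete as written.
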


\begin{defn}\label{def:at-least-at-most}
  Since the symmetry classes of a given representation $\VV$ form a poset, we will say that a vector $\vv \in \VV$  (resp. a family $\mathcal{F}$) is \emph{at least} in a given symmetry class $[H]$, if $[H] \preceq [G_{\vv}]$	(resp. $ [H] \preceq[G_{\mathcal{F}}]$). Similarly, we will say that it is \emph{at most} in the symmetry class $[H]$, if $[G_{\vv}] \preceq [H]$ (resp. $[G_{\mathcal{F}}]\preceq [H]$).
\end{defn}

Since we are interested in representations of the rotation group $\SO(3)$, we will recall the following result~\cite{GSS1988}.

\begin{lem}\label{lem:SO3-closed-subgroups}
  Every closed subgroup of $\SO(3)$ is conjugate to one of the following list:
  \begin{equation*}
    \SO(3),\, \OO(2),\, \SO(2),\, \DD_{n} (n \ge 2),\, \ZZ_{n} (n \ge 2),\, \tetra,\, \octa,\, \ico,\, \text{and}\, \triv
  \end{equation*}
  where:
  \begin{itemize}
    \item $\OO(2)$ is the subgroup generated by all the rotations around the $z$-axis and the order 2 rotation $\sigma : (x,y,z)\mapsto (x,-y,-z)$ around the $x$-axis;
    \item $\SO(2)$ is the subgroup of all the rotations around the $z$-axis;
    \item for $n \ge 2$, $\ZZ_{n}$ is the unique cyclic subgroup of order $n$ of $\SO(2)$, the subgroup of rotations around the $z$-axis;
    \item for $n \ge 2$, $\DD_{n}$ is the \emph{dihedral} group, of order $2n$. It is generated by $\ZZ_{n}$ and $\sigma :(x,y,z)\mapsto (x,-y,-z)$;
    \item $\tetra$ is the \emph{tetrahedral} group, the orientation-preserving symmetry group of a given tetrahedron, which has order 12;
    \item $\octa$ is the \emph{octahedral} group, the orientation-preserving symmetry group of a given cube, which has order 24;
    \item $\ico$ is the \emph{icosahedral} group, the orientation-preserving symmetry group of a given dodecahedron, which has order 60;
    \item $\triv$ is the trivial subgroup, containing only the unit element.
  \end{itemize}
\end{lem}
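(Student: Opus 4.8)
The plan is to stratify a closed (hence compact Lie) subgroup $H \le \SO(3)$ according to its identity component $H_0$, which is a connected closed subgroup and is therefore determined by its Lie algebra $\mathfrak{h} \subseteq \so(3)$. Identifying $\so(3)$ with $(\RR^3, \times)$, a subalgebra is a linear subspace closed under the cross product; since the cross product of two independent vectors is orthogonal to both, the only such subspaces have dimension $0$, $1$ or $3$. Hence $H_0$ is either the trivial group $\triv$, a one-parameter group of rotations about an axis (conjugate to $\SO(2)$), or all of $\SO(3)$.

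If $H_0 = \SO(3)$ then $H = \SO(3)$. If $H_0$ is conjugate to $\SO(2)$ (rotations about the $z$-axis after conjugation), then $H_0$ is normal in $H$, so $H \subseteq N_{\SO(3)}(\SO(2))$. I would compute this normalizer directly and obtain $N_{\SO(3)}(\SO(2)) = \OO(2)$, with $\OO(2)/\SO(2) \cong \ZZ_2$ generated by the half-turn $\sigma$; a closed subgroup lying between $\SO(2)$ and $\OO(2)$ is therefore either $\SO(2)$ or $\OO(2)$.

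The substantial case is $H_0 = \triv$, i.e.\ $H$ finite of order $N > 1$, where I would run the classical pole-counting argument. Each nontrivial $g \in H$ fixes exactly two antipodal points (poles) of $S^2$; letting $P$ be the set of all poles, on which $H$ acts, and counting incidences $(g,p)$ with $g \ne e$ and $gp = p$ in two ways, then grouping $P$ into $H$-orbits $O_1, \dotsc, O_k$ with stabilizer orders $n_1, \dotsc, n_k$ (so $\abs{O_i} = N/n_i$), yields
\begin{equation*}
  \sum_{i=1}^{k}\left(1 - \frac{1}{n_i}\right) = 2\left(1 - \frac{1}{N}\right).
\end{equation*}
Since each summand is at least $1/2$ and the right-hand side is $< 2$, there are at most three orbits; a short case analysis ($k=1$ is impossible, $k=2$ forces two fixed poles on a single axis, and $k=3$ forces $(n_1,n_2,n_3)$ to be one of $(2,2,m)$, $(2,3,3)$, $(2,3,4)$ or $(2,3,5)$) produces respectively the cyclic groups $\ZZ_n$, the dihedral groups $\DD_n$, and the three exceptional orders $N = 12, 24, 60$.

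The main obstacle is the final rigidity step: the Diophantine enumeration only constrains orbit sizes and stabilizer orders, and one must still show that each admissible datum is realized by a \emph{single} conjugacy class. For the cyclic and dihedral cases this is immediate, since the poles determine a single axis (respectively an axis together with an equatorial orbit of $2$-fold axes). For $N = 12, 24, 60$ I would argue that the orbit of shortest poles forms the vertex set of a regular tetrahedron, octahedron, or icosahedron: the stabilizer structure fixes the number of vertices and their valence, and the $H$-equidistribution of the poles on $S^2$ forces the configuration to be the unique regular polytope up to rotation, whence $H$ is conjugate to $\tetra$, $\octa$ or $\ico$. Matching each stratum to the representatives listed then completes the classification.
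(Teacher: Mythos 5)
The paper contains no proof of this lemma to compare yours against: it is merely \emph{recalled} as a classical result, with a citation to Golubitsky--Stewart--Schaeffer~\cite{GSS1988}. Your argument is the standard classical one, and it is sound in structure: the stratification by the identity component (using that a subalgebra of $\so(3)\cong(\RR^{3},\times)$ must have dimension $0$, $1$ or $3$), the reduction of the case $H_{0}$ conjugate to $\SO(2)$ to the normalizer computation $N_{\SO(3)}(\SO(2))=\OO(2)$ together with the index-two observation, and Klein's pole-counting formula with its Diophantine case analysis for finite subgroups. The one step you should tighten is the final rigidity argument for the orders $12$, $24$, $60$: ``$H$-equidistribution of the poles'' is not the operative mechanism. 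The precise argument is that for a pole $p$ in an orbit with stabilizer order $n_{i}\ge 3$, the cyclic stabilizer of $p$ acts on the remaining poles of that orbit by rotation about the axis through $p$, so those poles lie on horizontal circles and are permuted cyclically; a short case-by-case check for $(2,3,3)$, $(2,3,4)$ and $(2,3,5)$ then shows that the relevant orbit is the vertex set of a regular tetrahedron, octahedron, or icosahedron. Consequently $H$ is contained in the rotation group of that solid, and equality of the orders ($12$, $24$, $60$ respectively) forces $H$ to be the full rotation group; conjugacy follows because any two regular solids of the same type inscribed in $S^{2}$ are carried onto one another by a rotation (if an orthogonal map carrying one to the other has determinant $-1$, compose it with a reflection symmetry of the target solid). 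With that step made explicit, your proof is complete, and it supplies exactly the argument the paper delegates to the literature.
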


\begin{rem}[The octahedral group]\label{rem:octahedral-group}
  The octahedral group $\octa$ is defined as the orientation-preserving symmetry group of a cube whose edges are parallel to the axes of a the canonical basis $(\ee_{1}, \ee_{2}, \ee_{3})$ of $\RR^{3}$. It corresponds to the subgroup
  \begin{equation*}
    \set{g \in \SO(3);\; g\star \ee_{i} = \pm \ee_{j}}
  \end{equation*}
  of $\SO(3)$ which contains 24 elements:
  \begin{itemize}
    \item the identity $I$;
    \item 3 order 2 rotations around the axes $\ee_{1}$, $\ee_{2}$, $\ee_{3}$;
    \item 6 order 4 rotations around the axes $\ee_{1}$, $\ee_{2}$, $\ee_{3}$;
    \item 6 order 2 rotations around the axes $\ee_{1} \pm \ee_{2}$, $\ee_{1} \pm \ee_{3}$, $\ee_{2} \pm \ee_{3}$;
    \item 8 order 3 rotations around the axis $\ee_{1} \pm \ee_{2} \pm \ee_{3}$.
  \end{itemize}
\end{rem}

It is a classical fact, that for any representation $\VV$ of a Lie group $G$, there exists only a finite number of symmetry classes~\cite{Mos1957,Man1962}. These classes have been detailed by Ihrig-Golubistky~\cite{IG1984} (see also~\cite{Oli2017}) for irreducible representations of $\SO(3)$. We get, in particular, the following posets:
\begin{enumerate}
  \item For $\HH^{1}$: $[\SO(2)] \preceq [\SO(3)]$.
  \item For $\HH^{2}$: $[\DD_{2}] \preceq [\OO(2)] \preceq [\SO(3)]$.
  \item For $\HH^{3}$: see~\autoref{fig:poset-H3}.
  \item For $\HH^{4}$: see~\autoref{fig:poset-H4} (same as for the elasticity tensor~\cite{FV1996}).
  \item For $\HH^{5}$: see~\autoref{fig:poset-H5}.
\end{enumerate}

The determination of symmetry classes for \emph{reducible representations} of $\SO(3)$ has been achieved by Olive~\cite{Oli2017} (see also~\cite{CLM1991}), who formulated an algorithm to compute theses classes, provided a decomposition into irreducible representations is known. Using these results and the fact that
\begin{equation*}
  \Sym^{n}(\RR^{3})\simeq \HH^{n} \oplus \HH^{n-2} \oplus \dotsb \oplus \HH^{n-2r},\quad r=[n/2],
\end{equation*}
by~\eqref{eq:symmetric-harmonic-decomposition}, we deduce the following proposition.

\begin{prop}\label{prop:symmetry-classes}
  We have the following results.
  \begin{enumerate}
    \item The symmetry classes for $n$ ($n \ge 2$) first-order tensors are
          \begin{equation*}
            \set{[\triv], [\SO(2)], [\SO(3)]}.
          \end{equation*}
    \item The symmetry classes for $n$ ($n \ge 2$) second-order symmetric tensors are
          \begin{equation*}
            \set{[\triv], [\ZZ_{2}], [\DD_{2}], [\OO(2)], [\SO(3)]}.
          \end{equation*}
    \item The symmetry classes for one third-order totally symmetric tensor are (see Erratum~\cite{OKDD2022})
          \begin{equation*}
            \set{[\triv], [\ZZ_{2}], [\ZZ_{3}], [\DD_{3}], [\tetra], [\SO(2)], [\SO(3)]}.
          \end{equation*}
    \item The symmetry classes for one fourth-order totally symmetric tensor are (like for the elasticity tensor)
          \begin{equation*}
            \set{[\triv], [\ZZ_{2}], [\DD_{2}], [\DD_{3}], [\DD_{4}], [\octa], [\OO(2)], [\SO(3)]}.
          \end{equation*}
    \item The symmetry classes for one fifth-order totally symmetric tensor are
          \begin{equation*}
            \set{[\triv], [\ZZ_{2}], [\ZZ_{3}], [\ZZ_{4}], [\ZZ_{5}], [\DD_{2}], [\DD_{3}], [\DD_{4}], [\DD_{5}], [\tetra], [\SO(2)], [\SO(3)]}.
          \end{equation*}
  \end{enumerate}
\end{prop}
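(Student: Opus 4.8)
The plan is to reduce Proposition~\ref{prop:symmetry-classes} to the already-established classification of symmetry classes of \emph{single irreducible} representations of $\SO(3)$, namely the posets for $\HH^1,\HH^2,\HH^3,\HH^4,\HH^5$ recalled just above, together with Olive's algorithm for reducible representations. The key structural input is the harmonic decomposition $\Sym^n(\RR^3)\simeq\HH^n\oplus\HH^{n-2}\oplus\dotsb\oplus\HH^{n-2r}$ from~\eqref{eq:symmetric-harmonic-decomposition}, which converts each statement about totally symmetric tensors of a fixed order into a statement about a direct sum of harmonic tensors, and the remark that the symmetry group of a family is the intersection of the symmetry groups of its members. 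For $n$ first-order or $n$ second-order tensors, the underlying representations are $n$ copies of $\HH^1$ and of $\HH^2\oplus\HH^0$ respectively, so each case is a concrete instance of Olive's reducible-representation computation.

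First I would treat the two multi-tensor cases (items 1 and 2). For item 1, a family of $n\ge 2$ vectors in $\RR^3\simeq\HH^1$ has symmetry group $\bigcap_i \SO(3)_{\vv_i}$; generically two independent vectors have trivial common stabilizer, a single nonzero vector (or all collinear) gives $\SO(2)$, and the zero family gives $\SO(3)$, yielding exactly $\set{[\triv],[\SO(2)],[\SO(3)]}$. For item 2, each symmetric second-order tensor decomposes as a deviator in $\HH^2$ plus a trace in $\HH^0$; the trace part is $\SO(3)$-invariant, so the symmetry class is governed by the $\HH^2$-components, i.e.\ by a family of deviators. Using the $\HH^2$ poset $[\DD_2]\preceq[\OO(2)]\preceq[\SO(3)]$ and intersecting stabilizers across the family produces the additional class $[\ZZ_2]$ (two deviators with distinct but coplanar principal axes) and then $[\triv]$, giving the stated five-element set. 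Here I would lean on the clips-table / intersection computations of~\cite{Oli2017} rather than rederiving them.

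Next I would handle the single-tensor cases (items 3, 4, 5). Here the representation is $\Sym^n(\RR^3)$ for $n=3,4,5$, decomposed via~\eqref{eq:symmetric-harmonic-decomposition}. For $n=4$ this is $\HH^4\oplus\HH^2\oplus\HH^0$; the symmetry class of a generic tensor is dominated by its $\HH^4$-component, and one checks that intersecting with the $\HH^2$- and $\HH^0$-symmetries does not create new classes beyond those already present in the $\HH^4$ poset of~\autoref{fig:poset-H4}. This is precisely why item 4 reproduces the eight Elasticity classes: the list $\set{[\triv],[\ZZ_2],[\DD_2],[\DD_3],[\DD_4],[\octa],[\OO(2)],[\SO(3)]}$ is exactly the $\HH^4$ poset. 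Similarly item 3 reads off the $\HH^3$ poset (\autoref{fig:poset-H3}) with the $\HH^1$-summand adding no new classes, and item 5 reads off the $\HH^5$ poset (\autoref{fig:poset-H5}) with the $\HH^3\oplus\HH^1$ lower summands again contributing nothing new. In each case the argument is: the classes of the direct sum are among the intersections of classes of the summands, and one verifies that the full list is realized and that the lower-order summands do not refine the class of a tensor in the leading harmonic space.

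The main obstacle is the last verification in the single-tensor cases: one must show that for $\Sym^n$ the symmetry class is \emph{exactly} that of the leading harmonic component $\HH^n$, i.e.\ that adjoining the lower-order harmonic summands neither creates genuinely new intersection classes nor fails to realize every class in the $\HH^n$ poset. Concretely, for a given closed subgroup $H$ one needs the fixed-point space of $H$ in the leading $\HH^n$ to be nonzero (so the class is attainable) and one needs to rule out that a nongeneric alignment of the $\HH^n$- and $\HH^{n-2}$-components could enlarge the common stabilizer beyond the $\HH^n$-list. This is a dimension-count / fixed-point-subspace argument carried out with Olive's algorithm and the clips tables; since the posets in~\autoref{fig:poset-H3},~\autoref{fig:poset-H4},~\autoref{fig:poset-H5} are already asserted, the remaining work is the bookkeeping that the reducible sums $\Sym^n$ inherit precisely these posets, which I would state as a direct application of~\cite{Oli2017} rather than reprove.
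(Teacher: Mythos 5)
Your proposal is correct and follows essentially the same route as the paper: the paper likewise offers no computation, deducing the proposition directly from the harmonic decomposition $\Sym^{n}(\RR^{3})\simeq \HH^{n}\oplus\HH^{n-2}\oplus\dotsb$, the known symmetry-class posets of the irreducible spaces $\HH^{k}$ (Ihrig--Golubitsky, Olive), and Olive's clips algorithm for reducible representations~\cite{Oli2017}. The verifications you sketch --- every class of the leading harmonic summand is realized by taking the lower harmonic components to be zero, and the intersections (clips) of classes of the summands produce nothing outside the stated lists --- are precisely the bookkeeping the paper delegates to that citation.
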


\begin{rem}\label{rem:vanishing-isotropic-symmetric-odd-order-tensors}
  The harmonic decomposition of a totally symmetric tensor~\eqref{eq:polynomial-harmonic-decomposition} of odd order contains only factors $\HH^{k}$ with $k$ odd. Moreover, an isotropic tensor in $\HH^{^k}$ vanishes necessarily if $k \ge 1$ odd. Thus any totally symmetric isotropic tensor of \emph{odd order} vanishes. This is however not true for an even order totally symmetric isotropic tensor.
\end{rem}

\begin{figure}[ht]
  \includegraphics[scale = 1]{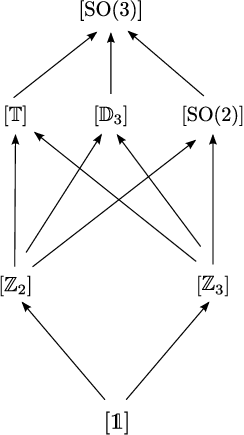}
  \caption{The poset of symmetry classes for $\HH^{3}$~\cite{OKDD2022}.}
  \label{fig:poset-H3}
\end{figure}

\begin{figure}[ht]
  \includegraphics[scale = 1]{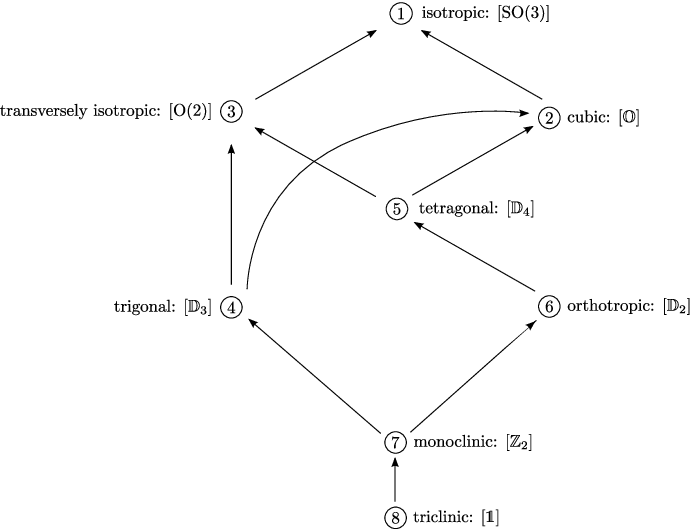}
  \caption{The poset of symmetry classes for $\HH^{4}$ and $\Ela$.}
  \label{fig:poset-H4}
\end{figure}

\begin{figure}[ht]
  \includegraphics[scale = 1]{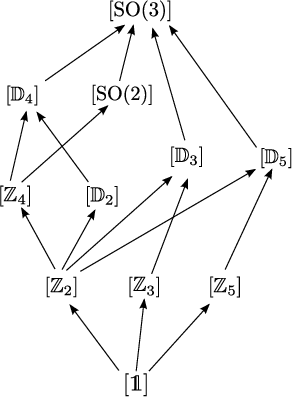}
  \caption{The poset of symmetry classes for $\HH^{5}$.}
  \label{fig:poset-H5}
\end{figure}

\section{Dimension of covariant spaces and symmetry}
\label{sec:covariants-symmetry-dimension}

Given a linear representation $\VV$ of $\SO(3)$ and $\vv \in \VV$, we define $\cov_{k}(\vv)$ as the set of all $k$-order polynomial covariants of $\vv$ (see~\autoref{sec:polynomial-covariants}). Note that whereas $\cov(\VV)$ is a polynomial algebra, and $\cov_{k}(\VV)$ is an infinite dimensional vector space, $\cov_{k}(\vv)$ is the set of all \emph{evaluations} of these covariants on the vector $\vv$. As such, it is a subspace of the finite dimensional real vector space $\Pn{k}(\RR^{3})$ of homogeneous polynomials of degree $k$ on $\RR^{3}$, or equivalently of the space $\Sym^{k}(\RR^{3})$ of totally symmetric tensors of order $k$. In this section, we will focus on polynomial covariants of order one and two of a vector $\vv \in \VV$ and relate the symmetry class of $\cov_{1}(\vv)$ and $\cov_{2}(\vv)$ with their respective dimension.

Recall that, thanks to proposition~\ref{prop:symmetry-classes}, the possible symmetry classes for the space $\cov_{1}(\vv)$ are
\begin{equation*}
  [\triv], \quad [\SO(2)], \quad [\SO(3)],
\end{equation*}
whereas, for $\cov_{2}(\vv)$, they are
\begin{equation*}
  [\triv], \quad [\ZZ_{2}], \quad [\DD_{2}], \quad [\OO(2)], \quad [\SO(3)].
\end{equation*}

\begin{prop}\label{prop:cov1-symmetry-classes}
  Given $\vv \in \VV$, $\dim \cov_{1}(\vv)$ is either $0$, $1$ or $3$. Moreover, the symmetry class of $\cov_{1}(\vv)$ is:
  \begin{enumerate}
    \item $[\SO(3)]$ if and only if $\cov_{1}(\vv) = \set{0}$;
    \item $[\SO(2)]$ if and only if $\dim \cov_{1}(\vv) = 1$;
    \item $[\triv]$ if and only if $\dim \cov_{1}(\vv) = 3$.
  \end{enumerate}
\end{prop}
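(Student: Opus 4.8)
The plan is to analyze the space $\cov_1(\vv) \subseteq \RR^3$ (identifying first-order covariants with vectors in $\RR^3 \cong \HH^1$) directly through its $\SO(3)$-invariance. The crucial observation is that $\cov_1(\vv)$ is not an arbitrary subspace of $\RR^3$: because the collection of \emph{all} order-one covariants is equivariant, the evaluation map commutes with the action of the symmetry group $G_{\vv}$. Concretely, if $\rp : \VV \to \RR^3$ is any first-order covariant and $g \in G_{\vv}$, then $\rp(\vv) = \rp(g \star \vv) = g \star \rp(\vv)$, so every vector in $\cov_1(\vv)$ is fixed by $G_{\vv}$, and conversely $G_{\vv}$ acts trivially on $\cov_1(\vv)$. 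This means $\cov_1(\vv)$ is contained in the fixed-point subspace $(\RR^3)^{G_{\vv}}$. I would also use the reverse containment coming from equivariance in the other direction, so that the symmetry class of the subspace $\cov_1(\vv)$ (as a family of vectors) is controlled by $G_{\vv}$.

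First I would establish the constraint on $\dim \cov_1(\vv)$. Since $\cov_1(\vv)$ is a linear subspace of $\RR^3$, its dimension is a priori $0,1,2$ or $3$; I must rule out dimension $2$. The argument is that $\cov_1(\vv)$ is an $\SO(2)_{\vv}$-type invariant subspace: more precisely, it must be invariant under the symmetry group $G_{\vv}$ and, by the equivariance of covariants, it is itself the image of an equivariant construction, so it carries the structure of a subrepresentation in the relevant sense. A two-dimensional subspace of $\RR^3$ fixed pointwise by a group $H$ forces $H \subseteq \SO(2)$ acting trivially on a plane, which in $\SO(3)$ forces $H = \triv$; but if $H = \triv$ then there is no equivariance obstruction preventing the third basis vector from also arising as a covariant value, pushing the dimension to $3$. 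I would make this precise by invoking proposition~\ref{prop:symmetry-classes}(1): the symmetry classes available to the family $\cov_1(\vv)$ are exactly $[\triv], [\SO(2)], [\SO(3)]$, and each corresponds to a fixed-point space of dimension $3,1,0$ respectively in $\RR^3 \cong \HH^1$ (here I use that $\dim (\RR^3)^{\SO(2)} = 1$ and $\dim (\RR^3)^{\triv} = 3$).

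Next I would match dimensions to classes. For the class $[\SO(3)]$: this occurs iff $G_{\cov_1(\vv)} = \SO(3)$, i.e. every covariant vector is fixed by all rotations, which by remark~\ref{rem:vanishing-isotropic-symmetric-odd-order-tensors} (an isotropic tensor in $\HH^1$ vanishes) forces $\cov_1(\vv) = \set{0}$; conversely $\cov_1(\vv)=\set{0}$ is trivially $\SO(3)$-invariant. For $[\SO(2)]$: here $(\RR^3)^{\SO(2)}$ is the one-dimensional $z$-axis, so $\dim \cov_1(\vv) = 1$; conversely, a one-dimensional $G_{\cov_1(\vv)}$-fixed line in $\RR^3$ has stabilizer exactly $\SO(2)$ (the rotations fixing that axis), giving class $[\SO(2)]$. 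For $[\triv]$: the fixed-point space is all of $\RR^3$, so $\dim \cov_1(\vv) = 3$, and conversely three independent fixed vectors force the stabilizer to be trivial. These three cases are mutually exclusive and exhaust the possible dimensions $0,1,3$.

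The main obstacle I anticipate is rigorously excluding dimension $2$ and pinning down the precise equivalence between the \emph{symmetry class of the family} $\cov_1(\vv)$ and the \emph{dimension} of the subspace. The subtle point is that $\cov_1(\vv)$ is a linear span of covariant evaluations, and I must argue that its own symmetry group $G_{\cov_1(\vv)}$ coincides (in conjugacy class) with the symmetry class dictated by the poset in proposition~\ref{prop:symmetry-classes}(1), rather than merely being contained in it. The cleanest route is to prove the two-sided equivariance statement $G_{\cov_1(\vv)} \supseteq G_{\vv}$ together with the structural fact that the fixed-point subspaces $(\RR^3)^H$ for $H \in \set{\triv, \SO(2), \SO(3)}$ have dimensions $3,1,0$ and \emph{no closed subgroup of $\SO(3)$ has a two-dimensional fixed-point space in $\RR^3$} (which follows from lemma~\ref{lem:SO3-closed-subgroups}, since any nontrivial rotation fixes only a line). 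This last structural remark is what definitively rules out $\dim \cov_1(\vv) = 2$ and closes the proof.
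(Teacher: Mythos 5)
Your proposal has a genuine gap at exactly the step you identify as the main obstacle: excluding $\dim \cov_{1}(\vv) = 2$. The two facts you rely on --- that $\cov_{1}(\vv) \subseteq (\RR^{3})^{G_{\vv}}$ (every covariant evaluation is fixed by $G_{\vv}$), and that no closed subgroup of $\SO(3)$ has a two-dimensional fixed-point space in $\RR^{3}$ --- only give a \emph{containment} of $\cov_{1}(\vv)$ in a fixed-point space, not an equality. They do rule out dimension $2$ when $G_{\vv}$ (or $G_{\cov_{1}(\vv)}$) is nontrivial, since then the fixed-point space has dimension at most $1$. But in the remaining case $G_{\cov_{1}(\vv)} = \triv$, the containment reads $\cov_{1}(\vv) \subseteq \RR^{3}$ and is vacuous: a two-dimensional plane of covariant values would be perfectly consistent with it, since the stabilizer of a plane (pointwise) is trivial and the fixed-point space of $\triv$ is all of $\RR^{3}$. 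Your sentence ``there is no equivariance obstruction preventing the third basis vector from also arising as a covariant value, pushing the dimension to $3$'' asserts what needs to be proved; absence of an obstruction is not a construction. Likewise, the claim that $\cov_{1}(\vv)$ ``carries the structure of a subrepresentation'' is not correct: $\cov_{1}(\vv)$ is attached to the particular vector $\vv$ and is not an $\SO(3)$-invariant subspace of $\RR^{3}$ in general (when it is a line, that line is not $\SO(3)$-stable).

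The missing idea --- and the key step in the paper's proof --- is that $\cov_{1}(\vv)$ is \emph{closed under the cross product}: if $\uu, \ww \in \cov_{1}(\vv)$ are evaluations of first-order covariants, then $\uu \times \ww$ is again the evaluation of a first-order covariant, because the cross product is $\SO(3)$-equivariant. Hence if $\uu, \ww$ are linearly independent, $\uu \times \ww$ is a nonzero vector orthogonal to both, and $\dim \cov_{1}(\vv) = 3$; this is what genuinely forbids dimension $2$ (equivalently, $\cov_{1}(\vv)$ is a Lie subalgebra of $(\RR^{3},\times) \cong \so(3)$, which has no two-dimensional subalgebras). The same fact also powers the converse of your case $[\triv]$. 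The rest of your case analysis (classes $[\SO(3)]$, $[\SO(2)]$, $[\triv]$ matched to dimensions $0$, $1$, $3$ via fixed-point spaces and stabilizers of a basis vector) agrees with the paper's argument and is fine; only the dimension-$2$ exclusion needs the cross-product construction inserted to make the proof complete.
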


\begin{proof}
  (1) If the symmetry class of $\cov_{1}(\vv)$ is $[\SO(3)]$, then, every first-order covariant vanishes and thus $\cov_{1}(\vv) = \set{0}$. Conversely, if $\cov_{1}(\vv) = 0$ then its symmetry class is $[\SO(3)]$.

  (2) Suppose now that the symmetry class of $\cov_{1}(\vv)$ is $[\SO(2)]$. Without loss of generality, we can suppose that the isotropy group of  $\cov_{1}(\vv)$ is exactly $\SO(2)$. Then, $\dim \cov_{1}(\vv) \ge 1$ but all first-order covariant are colinear to $\ee_{3}$ and thus $\dim \cov_{1}(\vv) = 1$. Conversely, suppose that $\dim \cov_{1}(\vv) = 1$ and let $\uu \ne 0$ be a basis of $\cov_{1}(\vv)$. Then the symmetry class of $\cov_{1}(\vv)$ is just $[G_{\uu}] = [\SO(2)]$.

  (3) Finally, suppose that the symmetry class of $\cov_{1}(\vv)$ is $[\triv]$. Then $\dim \cov_{1}(\vv) \ge 2$. But if $\uu, \ww$ are two independent first-order covariants then $\uu \times \ww$ is also a first-order covariant, so that $\dim \cov_{1}(\vv)  = 3$. Conversely, if $\dim \cov_{1}(\vv)  = 3$, we can find two independent covariants $\uu, \ww$ and thus
  \begin{equation*}
    G_{\uu} \cap G_{\ww} = \triv.
  \end{equation*}
\end{proof}

The case of $\cov_{2}(\vv)$ is more involving. Note first that the Euclidean second-order tensor $\bq$ is always in $\cov_{2}(\vv)$, thus $\dim \cov_{2}(\vv) \ge 1$ for every $\vv \in \VV$. Moreover, given two covariants $\ba,\bb$ in $\cov_{2}(\vv)$, then
\begin{equation*}
  (\ba\bb)^{s} : =  \frac{1}{2}(\ba\bb + \bb\ba)
\end{equation*}
belongs to $\cov_{2}(\vv)$, where $\ba\bb$ is the standard matrix product.

\begin{lem}\label{lem:q-a-a2}
  Let $\ba \in \Sym^{2}(\RR^{3})$. Then,
  \begin{enumerate}
    \item $\ba$ is orthotropic if and only if $\dim \langle \bq,\ba,\ba^{2} \rangle = 3$;
    \item $\ba$ is transversely isotropic if and only if $\dim \langle \bq,\ba,\ba^{2} \rangle = 2$.
  \end{enumerate}
\end{lem}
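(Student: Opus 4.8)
The plan is to reduce this statement about the symmetry class of a symmetric second-order tensor $\ba$ to a straightforward linear-algebra computation based on the eigenvalue structure of $\ba$. Since $\ba \in \Sym^{2}(\RR^{3})$ is a real symmetric matrix, it is diagonalizable in an orthonormal basis with real eigenvalues $\lambda_{1}, \lambda_{2}, \lambda_{3}$. By Proposition~\ref{prop:symmetry-classes}, the possible symmetry classes of a single second-order symmetric tensor are $[\triv], [\ZZ_{2}], [\DD_{2}], [\OO(2)], [\SO(3)]$, and it is classical that these correspond exactly to the multiplicity pattern of the eigenvalues: three distinct eigenvalues gives the orthotropic class $[\DD_{2}]$; exactly two distinct eigenvalues (one simple, one double) gives the transversely isotropic class $[\OO(2)]$; and all three equal gives $[\SO(3)]$, i.e.\ $\ba$ proportional to $\bq$.

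First I would observe that $\bq, \ba, \ba^{2}$ all commute and are simultaneously diagonalized in the eigenbasis of $\ba$, with diagonal entries $(1,1,1)$, $(\lambda_{1},\lambda_{2},\lambda_{3})$ and $(\lambda_{1}^{2},\lambda_{2}^{2},\lambda_{3}^{2})$ respectively. Hence $\dim \langle \bq,\ba,\ba^{2}\rangle$ equals the rank of the $3\times 3$ Vandermonde-type matrix whose rows are these three triples. The key computation is that this matrix is a Vandermonde matrix in $\lambda_{1},\lambda_{2},\lambda_{3}$, so its determinant is $\prod_{i<j}(\lambda_{j}-\lambda_{i})$, which is nonzero precisely when the three eigenvalues are pairwise distinct. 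Thus $\dim \langle \bq,\ba,\ba^{2}\rangle = 3$ if and only if $\ba$ has three distinct eigenvalues, which is exactly the orthotropic case $[\DD_{2}]$. This proves part (1).

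For part (2), I would argue that $\dim \langle \bq,\ba,\ba^{2}\rangle = 2$ is equivalent to $\ba$ having exactly two distinct eigenvalues. When two eigenvalues coincide, say $\lambda_{2}=\lambda_{3}\neq\lambda_{1}$, the Vandermonde rank drops to $2$: the three vectors $(1,1,1)$, $(\lambda_{1},\lambda_{2},\lambda_{2})$, $(\lambda_{1}^{2},\lambda_{2}^{2},\lambda_{2}^{2})$ span a $2$-dimensional space since $\ba^{2}$ satisfies a quadratic polynomial relation $\ba^{2} = (\lambda_{1}+\lambda_{2})\ba - \lambda_{1}\lambda_{2}\,\bq$ but $\ba$ is not proportional to $\bq$ (as $\lambda_{1}\neq\lambda_{2}$). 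Conversely $\dim = 2$ forces exactly one eigenvalue coincidence: it rules out the three-distinct case (dimension $3$) by part (1), and it rules out the all-equal case, since if $\lambda_{1}=\lambda_{2}=\lambda_{3}$ then $\ba=\lambda_{1}\bq$ and $\dim \langle\bq,\ba,\ba^{2}\rangle = 1$. The condition ``exactly two distinct eigenvalues'' is precisely transverse isotropy $[\OO(2)]$, with the distinguished axis being the eigenvector of the simple eigenvalue.

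The only genuinely substantive point — and the one I would be careful to state rather than simply assert — is the correspondence between the eigenvalue multiplicity pattern and the named symmetry class. This follows from the fact that the symmetry group of $\ba$ in $\SO(3)$ is the group of rotations preserving each eigenspace; when the eigenspaces are three distinct lines one obtains the rotations by $\pi$ about each axis, namely $\DD_{2}$, when there is a line and an orthogonal plane one obtains all rotations about the axis together with the flip, namely $\OO(2)$, and when the whole space is one eigenspace one obtains $\SO(3)$. I expect no real obstacle here; the main care is bookkeeping to ensure the ``if and only if'' in each direction is genuinely closed, i.e.\ that the dimension count and the symmetry-class label are exhaustively matched across all three multiplicity patterns.
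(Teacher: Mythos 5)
Your proof is correct and follows essentially the same route as the paper's: diagonalize $\ba$ without loss of generality, observe that $\dim\langle\bq,\ba,\ba^{2}\rangle$ is the rank of the Vandermonde matrix in the eigenvalues (the paper simply asserts this independence criterion as ``easy to check''), and then settle the dimension-$2$ case by ruling out the isotropic and orthotropic alternatives. The only difference is that you spell out the Vandermonde determinant and the eigenvalue-multiplicity/symmetry-class dictionary explicitly, details the paper leaves implicit.
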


\begin{proof}
  Without loss of generality, we can suppose that $\ba = \mathrm{diag}(\lambda_{1}, \lambda_{2}, \lambda_{3})$. Then, it is easy to check that $\bq$, $\ba$, $\ba^{2}$ are linearly independent if and only if
  \begin{equation*}
    (\lambda_{2}-\lambda_{1})(\lambda_{3}-\lambda_{1})(\lambda_{3}-\lambda_{2}) \ne 0.
  \end{equation*}
  Thus, we get (1). Moreover, if $\ba$ is transversely isotropic and has thus a double eigenvalue then $\dim \langle \bq,\ba,\ba^{2} \rangle \le 2$ but it cannot be one, otherwise, $\ba$ would be isotropic. Conversely if $\dim \langle \bq,\ba,\ba^{2} \rangle = 2$, then $\ba$ has a double eigenvalue and is hence at least transversely isotropic but it cannot be isotropic (otherwise $\dim \langle \bq,\ba,\ba^{2} \rangle = 1$). This achieves the proof.
\end{proof}

Recall that a pair $(\ba,\bb)$ of symmetric second-order tensors is either isotropic, transversely isotropic, orthotropic, monoclinic or triclinic by proposition~\ref{prop:symmetry-classes}. We have, moreover, the following result.

\begin{lem}\label{lem:orthotropic-linear-combination}
  Let $(\ba,\bb)$ be a pair of symmetric second-order tensors, which is either orthotropic, monoclinic or triclinic. Then, there exists a linear combination of $\ba$ and $\bb$ which is orthotropic.
\end{lem}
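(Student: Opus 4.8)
The plan is to reduce the statement, via Lemma~\ref{lem:q-a-a2}, to producing scalars $\alpha,\beta$ for which $\alpha\ba+\beta\bb$ has three distinct eigenvalues (equivalently, is orthotropic as a single tensor, since a lone symmetric second-order tensor is orthotropic, transversely isotropic or isotropic according to whether it has $3$, $2$ or $1$ distinct eigenvalues). The organizing observation is that a pair $(\ba,\bb)$ commutes if and only if it is simultaneously diagonalizable, if and only if its symmetry class is at least $[\DD_{2}]$ — that is, the pair is orthotropic, transversely isotropic or isotropic (both tensors, fixed by a conjugate of $\DD_{2}$, being diagonal in a common basis). Consequently the monoclinic and triclinic cases coincide exactly with the non-commuting pairs, and I would treat the two regimes separately.

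In the orthotropic (hence commuting) case, I would conjugate by a rotation so that $\ba=\mathrm{diag}(a_{1},a_{2},a_{3})$ and $\bb=\mathrm{diag}(b_{1},b_{2},b_{3})$ simultaneously. For indices $i\neq j$ the combination $\alpha\ba+\beta\bb$ has equal $i$-th and $j$-th diagonal entries exactly on the locus $\alpha(a_{i}-a_{j})+\beta(b_{i}-b_{j})=0$. If this locus were all of $\RR^{2}$ for some pair, i.e.\ $a_{i}=a_{j}$ and $b_{i}=b_{j}$, then both tensors would be invariant under a conjugate of $\OO(2)$ (transverse isotropy about the remaining axis), forcing the symmetry class of the pair to be at least $[\OO(2)]$ and contradicting orthotropy. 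Hence each of the three loci is a proper line through the origin; choosing $(\alpha,\beta)$ outside their union yields three distinct diagonal entries, and $\alpha\ba+\beta\bb$ is orthotropic by Lemma~\ref{lem:q-a-a2}.

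The crux is the non-commuting case, which I would settle through the contrapositive claim: if every $\alpha\ba+\beta\bb$ has a repeated eigenvalue, then $\ba$ and $\bb$ commute. Replacing $\ba,\bb$ by their deviatoric parts $\ba',\bb'$ (this changes neither the commutation relation nor the repeated-eigenvalue condition of any combination), I would first dispose of the case $\ba',\bb'$ linearly dependent, where they are both multiples of one tensor and hence commute. Assuming them independent, I would conjugate and rescale so that $\ba'=\mathrm{diag}(1,1,-2)$, the normal form of a nonzero traceless tensor with a repeated eigenvalue. First-order eigenvalue perturbation along $\ba'+\beta\bb'$ then splits the double eigenvalue $1$ according to the eigenvalues of the $2\times2$ block of $\bb'$ on the degenerate eigenplane; since the third eigenvalue stays isolated near $-2$, persistence of a repeated eigenvalue for all small $\beta$ forces that block to be scalar. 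A further rotation about $\ee_{3}$ (which fixes $\ba'$) then reduces $\bb'$ to a form with a single off-diagonal parameter $r$, and a direct computation of the eigenvalues of $\alpha\ba'+\beta\bb'$ shows it has a repeated eigenvalue for all $\alpha,\beta$ only if $r=0$ — whence $\bb'$ is proportional to $\ba'$, contradicting independence. This contradiction proves the claim, and with it the non-commuting case.

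I expect the main obstacle to be precisely this last claim: one must control the \emph{entire} pencil $\{\alpha\ba+\beta\bb\}$ rather than a single tensor. The device that makes it tractable is to use first-order perturbation theory only to pin down the structure of $\bb'$ relative to the normalized $\ba'$, after which the problem collapses to a completely explicit $3\times3$ eigenvalue computation. Assembling the two cases, in each of the three admissible symmetry classes a suitable linear combination of $\ba$ and $\bb$ is orthotropic, which is the assertion.
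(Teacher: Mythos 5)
Your proof is correct, and it takes a genuinely different route from the paper's. The paper argues as follows: if $\ba$ or $\bb$ is itself orthotropic it is done; otherwise both are transversely isotropic, it passes to the deviators $\ba^{\prime},\bb^{\prime}$ (linearly independent), replaces $\ba^{\prime}$ by $\tilde{\ba} = \ba^{\prime} - \frac{\tr(\ba^{\prime}\bb^{\prime})}{\tr({\bb^{\prime}}^{2})}\bb^{\prime}$ so that $\tr(\tilde{\ba}\tilde{\bb})=0$, and then shows that the discriminant polynomial $p(t)=(\tr(\bd(t)^{2}))^{3}-6(\tr(\bd(t)^{3}))^{2}$ along $\bd(t)=t\tilde{\ba}+(1-t)\tilde{\bb}$ is not identically zero by computing its $t^{2}$-coefficient to be $3\tr(\tilde{\ba}^{2})\tr(\tilde{\bb}^{2})^{2}\ne 0$; any $t$ with $p(t)\ne 0$ gives the orthotropic combination. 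You instead split on commutation: in the commuting (necessarily orthotropic) case you simultaneously diagonalize and choose $(\alpha,\beta)$ off a union of three proper lines, and in the non-commuting case you prove a stronger rigidity statement — a pencil of symmetric tensors all of whose members have a repeated eigenvalue must have proportional deviators — via the normal form $\mathrm{diag}(1,1,-2)$, first-order splitting of the degenerate eigenvalue (forcing the block of $\bb^{\prime}$ on the degenerate eigenplane to be scalar), a rotation about $\ee_{3}$, and an explicit computation showing a repeated eigenvalue occurs only where $\beta r=0$. What each buys: the paper's argument is shorter and stays inside its invariant-theoretic toolkit (traces and discriminants, the same objects as in its covariant criteria), at the cost of one asserted ``direct computation''; yours is more elementary and constructive (explicit loci of good $(\alpha,\beta)$), and your pencil lemma is of independent interest — it in fact subsumes your commuting case, since independent deviators always yield an orthotropic combination, while dependent deviators under the hypothesis force one of the two tensors to be orthotropic on its own, by lemma~\ref{lem:q-a-a2}. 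The one nonelementary ingredient on your side is the standard fact of first-order degenerate eigenvalue perturbation for symmetric matrices (Rellich/Kato), which you should either cite or replace by a short discriminant computation.
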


\begin{proof}
  If either $\ba$ or $\bb$ is orthotropic, we are done. Otherwise, both $\ba$ and $\bb$ are transversely isotropic, neither being isotropic. Let $\ba^{\prime}$ and $\bb^{\prime}$ be the deviatoric parts of $\ba$ and $\bb$ respectively. Note that $\ba^{\prime}$, $\bb^{\prime}$ are linearly independent and both transversely isotropic. If we can show that there exists a linear combination $\alpha \ba^{\prime} + \beta \bb^{\prime}$ which is orthotropic, then, we are done, because
  \begin{equation*}
    \alpha \ba + \beta \bb = \alpha \ba^{\prime} + \beta \bb^{\prime} + \frac{1}{3}\left(\alpha \tr \ba + \beta \tr \bb\right)\bq
  \end{equation*}
  is orthotropic. Let
  \begin{equation*}
    \tilde{\ba} = \ba^{\prime} - \frac{\tr(\ba^{\prime}\bb^{\prime})}{\tr ({\bb^{\prime}}^{2})}\bb^{\prime}, \qquad  \tilde{\bb} = \bb^{\prime}
  \end{equation*}
  If $\tilde{\ba}$ is orthotropic, we are done. Otherwise $\tilde{\ba}$, $\tilde{\bb}$ are two linearly independent, transversely isotropic deviators such that $\tr(\tilde{\ba}\tilde{\bb}) = 0$. Now, the discriminant of the characteristic polynomial of a deviatoric tensor $\bd$ writes as
  \begin{equation*}
    (\tr(\bd^{2}))^{3}/2 - 3(\tr(\bd^{3}))^{2}.
  \end{equation*}
  Hence a deviatoric tensor $\bd$ is orthotropic if and only if
  \begin{equation*}
    (\tr(\bd^{2}))^{3} - 6(\tr(\bd^{3}))^{2} \ne 0.
  \end{equation*}
  Let $\bd(t): = t \tilde{\ba} + (1-t)\tilde{\bb}$. Then $\bd(t)$ is orthotropic if and only if
  \begin{equation*}
    p(t): = (\tr(\bd(t)^{2}))^{3}-6(\tr(\bd(t)^{3}))^{2} \ne 0.
  \end{equation*}
  Moreover, a direct computation shows that the coefficient of $t^{2}$ in the polynomial $p(t)$ is
  \begin{equation*}
    3\tr({\tilde{\ba}}^{2})\tr({\tilde{\bb}}^{2})^{2}\ne 0.
  \end{equation*}
  Hence, there exists $t \in \RR$ such that $p(t)\ne 0$ and for this value, $\bd(t)$ is orthotropic. We have thus found a linear combination of $\tilde{\ba}$, $\tilde{\bb}$, and therefore of $\ba^{\prime}$, $\bb^{\prime}$ which is orthotropic. This achieves the proof.
\end{proof}

\begin{cor}\label{cor:dimension-trans-iso-subspaces}
  Let $F$ be a sub-vector space of $\Sym^{2}(\RR^{3})$ with $\dim F \ge 3$. Then, $F$ contains an orthotropic element.
\end{cor}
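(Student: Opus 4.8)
The plan is to reduce everything to Lemma~\ref{lem:orthotropic-linear-combination} by a dimension count performed on deviatoric parts. Let $\ba \mapsto \ba'$ denote the linear deviatoric projection $\Sym^{2}(\RR^{3}) \to \HH^{2}(\RR^{3})$, whose kernel is the line $\RR\bq$. Since $\dim F \ge 3$, the image of $F$ under this projection has dimension at least $\dim F - 1 \ge 2$, so I can pick $\ba, \bb \in F$ whose deviators $\ba'$ and $\bb'$ are linearly independent.

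First I would argue that the pair $(\ba,\bb)$ falls into one of the three classes handled by Lemma~\ref{lem:orthotropic-linear-combination}, namely orthotropic, monoclinic or triclinic. Because $\bq$ is $\SO(3)$-invariant, one has $\ba = \ba' + \tfrac13 \tr(\ba)\,\bq$ with $g\star\ba = \ba \iff g\star\ba' = \ba'$, so $G_{\ba} = G_{\ba'}$ and likewise $G_{\bb} = G_{\bb'}$; hence $(\ba,\bb)$ has the same symmetry class as $(\ba',\bb')$. By the classification of pairs recalled before the corollary (Proposition~\ref{prop:symmetry-classes}), the only possibilities left to exclude are that this pair is transversely isotropic or isotropic. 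I would rule these out using the explicit fixed-point spaces: the space of $\OO(2)$-invariant symmetric tensors about an axis $\nn$ is $\langle \bq, \nn\otimes\nn\rangle$, so the space of $\OO(2)$-invariant deviators is the single line $\RR(\nn\otimes\nn - \tfrac13\bq)$. A transversely isotropic pair would force both $\ba'$ and $\bb'$ into this line, contradicting their independence, and an isotropic pair would force $\ba' = \bb' = 0$, again impossible.

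With the pair $(\ba,\bb)$ known to be orthotropic, monoclinic or triclinic, Lemma~\ref{lem:orthotropic-linear-combination} immediately produces a linear combination $\bc = \alpha\ba + \beta\bb \in F$ which is orthotropic, and this is the required element. (Alternatively one could apply the lemma to the deviators $\ba',\bb'$ and then invoke Lemma~\ref{lem:q-a-a2}, or observe directly that adding a multiple of $\bq$ shifts all eigenvalues by the same constant and hence preserves the property of having three distinct eigenvalues.)

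The only genuinely delicate point is this reduction step, that is, verifying that two linearly independent deviators cannot generate a transversely isotropic or isotropic pair; everything else is bookkeeping, and the substantive construction of the orthotropic combination is already carried out in Lemma~\ref{lem:orthotropic-linear-combination}. I expect the identification of the $\OO(2)$- and $\SO(3)$-fixed subspaces of $\Sym^{2}(\RR^{3})$ to be the main (if mild) obstacle, since that is precisely where the hypothesis $\dim F \ge 3$ is used.
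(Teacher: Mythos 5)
Your proof is correct and takes essentially the same route as the paper: both arguments reduce the statement to Lemma~\ref{lem:orthotropic-linear-combination}, combined with the fact that the $\OO(2)$-fixed subspace of $\Sym^{2}(\RR^{3})$ is the two-dimensional space $\langle \bq, \nn\otimes\nn \rangle$. The only difference is organizational: you argue directly, using the deviatoric projection to select a pair with linearly independent deviators and then checking it falls in the lemma's scope, whereas the paper argues by contradiction (if $F$ had no orthotropic element, every pair would be at least transversely isotropic and $F$ would lie in $\langle \bq, \bt \rangle$, forcing $\dim F \le 2$); these are contrapositive reformulations of the same argument.
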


\begin{proof}
  Suppose that each element in $F$ is at least transversely isotropic. Then, by lemma~\ref{lem:orthotropic-linear-combination}, each pair $(\ba,\bb)$ of elements in $F$ is at least transversely isotropic. If each element in $F$ is isotropic, then $F$ is of dimension 0 or 1. If $F$ contains a transversely isotropic element $\bt$, then for every $\ba \in F$, the pair $(\bt,\ba)$ is transversely isotropic and thus $\ba = \alpha \bt + \beta \bq$. Thus $\dim F \le 2$. This achieves the proof.
\end{proof}

Given an orthonormal basis $(\ee_{1},\ee_{2},\ee_{3})$ of $\RR^{3}$, we will consider the following natural basis of $\Sym^{2}(\RR^{3})$
\begin{equation}\label{eq:S2-orthogonal-basis}
  \be_{ii}: = \ee_{i}\otimes \ee_{i}, \qquad \be_{ij}: = \ee_{i}\otimes \ee_{j} + \ee_{j}\otimes \ee_{i},\quad (i<j),
\end{equation}
which is orthogonal but not orthonormal.

\begin{prop}\label{prop:cov2-symmetry-classes}
  Given $\vv \in \VV$, $\dim \cov_{2}(\vv)$ is either $1$, $2$, $3$, $4$ or $6$. Moreover, the symmetry class of $\cov_{2}(\vv)$ is:
  \begin{enumerate}
    \item $[\SO(3)]$ if and only if $\dim \cov_{2}(\vv) = 1$;
    \item $[\OO(2)]$ if and only if $\dim \cov_{2}(\vv) = 2$;
    \item $[\DD_{2}]$ if and only if $\dim \cov_{2}(\vv) = 3$;
    \item $[\ZZ_{2}]$ if and only if $\dim \cov_{2}(\vv) = 4$;
    \item $[\triv]$ if and only if $\dim \cov_{2}(\vv) = 6$.
  \end{enumerate}
\end{prop}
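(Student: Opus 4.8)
The plan is to exploit the two structural facts about $F:=\cov_{2}(\vv)\subseteq\Sym^{2}(\RR^{3})$ recorded just before the statement: it always contains the metric $\bq$ (so $\dim F\ge 1$), and it is stable under the symmetrized product $(\ba,\bb)\mapsto(\ba\bb)^{s}$ (in particular $\ba\in F\Rightarrow\ba^{2}\in F$). Writing $H:=G_{F}$ for the isotropy group of $F$, I would determine $\dim F$ and $H$ \emph{simultaneously}, going up in dimension, so that the five stated equivalences fall out of a bijection between admissible dimensions and classes. The small cases are immediate from lemma~\ref{lem:q-a-a2}: if $\dim F=1$ then $F=\RR\bq$ and $H=\SO(3)$; if $\dim F=2$, write $F=\langle\bq,\ba\rangle$ with $\ba\notin\RR\bq$, and stability forces $\ba^{2}\in F$, so $\dim\langle\bq,\ba,\ba^{2}\rangle\le 2$ and lemma~\ref{lem:q-a-a2} makes $\ba$ transversely isotropic, whence $H=G_{\bq}\cap G_{\ba}=G_{\ba}=\OO(2)$.

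For $\dim F\ge 3$, corollary~\ref{cor:dimension-trans-iso-subspaces} provides an orthotropic $\ba\in F$; after choosing the orthonormal eigenframe of $\ba$ (legitimate since the symmetry class is a conjugacy invariant), lemma~\ref{lem:q-a-a2} together with stability gives $D:=\langle\be_{11},\be_{22},\be_{33}\rangle=\langle\bq,\ba,\ba^{2}\rangle\subseteq F$. I then split $\Sym^{2}(\RR^{3})=D\oplus\Omega$ with $\Omega:=\langle\be_{12},\be_{13},\be_{23}\rangle$; since $D\subseteq F$ we get $F=D\oplus W$ with $W:=F\cap\Omega$, and any $g\in H$ fixes the orthotropic $\ba$, hence $g\,\ee_{i}=\pm\ee_{i}$, so $H\subseteq\DD_{2}$. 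The engine of the whole argument is a short computation of two families of products. Writing $\bm=\beta_{12}\be_{12}+\beta_{13}\be_{13}+\beta_{23}\be_{23}$, one finds $\phi_{i}(\bm):=(\be_{ii}\bm)^{s}$ equal to $\tfrac{1}{2}(\beta_{12}\be_{12}+\beta_{13}\be_{13})$, $\tfrac{1}{2}(\beta_{12}\be_{12}+\beta_{23}\be_{23})$, $\tfrac{1}{2}(\beta_{13}\be_{13}+\beta_{23}\be_{23})$ for $i=1,2,3$, and $(\be_{12}\be_{13})^{s}=\tfrac{1}{2}\be_{23}$ (cyclically). All of these lie in $\Omega$, hence in $W$ by stability.

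With this the remaining cases are forced. If $\dim F=3$ then $F=D$ and $H=\DD_{2}$. If $\dim F=4$ then $W=\RR\bm$ must be stable under every $\phi_{i}$, i.e. $\bm$ is a common eigenvector of the three diagonal maps $\phi_{i}$; since the pairwise composites of the $\phi_{i}$ are the coordinate projections, the only such lines are $\RR\be_{12}$, $\RR\be_{13}$, $\RR\be_{23}$, so (up to relabelling) $F=D\oplus\RR\be_{12}$ and $H=\DD_{2}\cap G_{\be_{12}}=\ZZ_{2}$. If $\dim F=5$ then $\dim W=2$ and the same $\phi_{i}$-stability forces $W$ to be a coordinate plane $\langle\be_{ij},\be_{ik}\rangle$; but then $(\be_{ij}\be_{ik})^{s}=\tfrac{1}{2}\be_{jk}\in W$ is impossible, so $\dim F=5$ cannot occur. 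Finally $\dim F=6$ gives $F=\Sym^{2}(\RR^{3})$ and $H=\triv$. The admissible dimensions $1,2,3,4,6$ are thereby matched bijectively to $[\SO(3)],[\OO(2)],[\DD_{2}],[\ZZ_{2}],[\triv]$, which yields all five equivalences.

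I expect the genuine obstacle to be precisely the separation of $\dim F=4$ from $\dim F=5$ and $6$. The naive bound $\dim F\le\dim(\Sym^{2}(\RR^{3}))^{H}$ coming from $F\subseteq(\Sym^{2}(\RR^{3}))^{H}$ is too weak to distinguish them; one genuinely must use the multiplicative closure of $\cov_{2}(\vv)$ \emph{on the off-diagonal block} $\Omega$. Neglecting either the products $(\be_{ii}\bm)^{s}$ with the diagonal generators or the product $(\be_{ij}\be_{ik})^{s}$ of two off-diagonal generators lets through spurious four- and five-dimensional subspaces (for instance $D\oplus\RR(\be_{12}+\be_{13}+\be_{23})$, which is closed only under squaring but not under multiplication by $\be_{11}$), so the care in verifying the full closure is exactly what makes the dimension count rigid.
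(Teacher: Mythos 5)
Your proof is correct; I checked the $\phi_{i}$ formulas, the fact that their pairwise composites are (up to a factor $\tfrac{1}{4}$) the coordinate projections of $\Omega$, and the product $(\be_{12}\be_{13})^{s}=\tfrac{1}{2}\be_{23}$, all of which match the paper's own computations. The computational core is the same as the paper's: both arguments rest on lemma~\ref{lem:q-a-a2}, on corollary~\ref{cor:dimension-trans-iso-subspaces} to produce an orthotropic element once $\dim\cov_{2}(\vv)\ge 3$, and on multiplicative closure applied to the basis $(\be_{ii},\be_{ij})$; your maps $\phi_{i}$ are exactly the combinations the paper forms in its case (4), where $(\be_{11}\ba)^{s}+(\be_{22}\ba)^{s}-(\be_{33}\ba)^{s}=a_{12}\be_{12}$. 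The architecture, however, is genuinely inverted. The paper works class by class: each implication ``class $\Rightarrow$ dimension'' is proved by confining all covariants to the fixed-point space of the given subgroup (every $\OO(2)$-invariant covariant is $\mathrm{diag}(\lambda,\lambda,\mu)$, hence $\dim\le 2$; a block-diagonal form for $\ZZ_{2}$, etc.), and the a priori list of the five admissible classes, recalled from proposition~\ref{prop:symmetry-classes}, is needed to close the triclinic case. You work dimension by dimension: you compute $H=G_{F}$ outright for each admissible dimension, exclude $\dim F=5$ as a standalone step through invariance of $W=F\cap\Omega$ under the commuting projections, and then recover all five equivalences simultaneously by pigeonhole, the five groups you produce being pairwise non-conjugate. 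What your route buys is economy of inputs — no fixed-point/normal-form arguments, no appeal to proposition~\ref{prop:symmetry-classes} — and it makes the true rigidity mechanism (closure on the off-diagonal block, which you rightly flag as the crux) explicit once, rather than rediscovered inside cases (4) and (5). What the paper's route buys is that each equivalence is self-contained, and it produces along the way the invariant normal forms (diagonal, block-diagonal) that are reused later in the treatment of $\HH^{4}$ and of the Elasticity tensor.
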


\begin{proof}
  (1) If the symmetry class of $\cov_{2}(\vv)$ is $[\SO(3)]$, then, every symmetric second-order covariant is proportional to $\bq$ and hence $\dim \cov_{2}(\vv) = 1$. Conversely, if $\dim \cov_{2}(\vv) = 1$ then $\bq$ generates $\cov_{2}(\vv)$, and its symmetry class is $[\SO(3)]$.

  (2) Suppose that the symmetry class of $\cov_{2}(\vv)$ is $[\OO(2)]$. Then, without loss of generality, we can suppose that each symmetric second-order covariant writes as $\mathrm{diag}(\lambda,\lambda,\mu)$ and hence that $\dim \cov_{2}(\vv) \le 2$. Since it cannot be $1$, otherwise $\cov_{2}(\vv)$ would be reduced the one-dimensional space generated by $\bq$, it must be $2$. Conversely, if $\dim \cov_{2}(\vv) = 2$, then, there exists some non--isotropic second-order covariant $\ba$ such that $(\bq,\ba)$ is basis of $\cov_{2}(\vv)$. Since $\ba$ cannot be orthotropic, otherwise $(\bq,\ba,\ba^{2})$ would be linearly independent, by lemma~\ref{lem:q-a-a2}, $\ba$ is necessarily transversely isotropic and so is $\cov_{2}(\vv)$.

  (3) Suppose that the symmetry class of $\cov_{2}(\vv)$ is $[\DD_{2}]$. Then without loss of generality we can assume that each symmetric second-order covariant writes as $\mathrm{diag}(\lambda_{1},\lambda_{2},\lambda_{3})$ and hence that $\dim \cov_{2}(\vv) \le 3$. Since this dimension cannot be $1$, neither $2$ due to points (1) and (2), its must be $3$. Conversely, suppose that $\dim \cov_{2}(\vv) = 3$. Then, by corollary~\ref{cor:dimension-trans-iso-subspaces}, $\cov_{2}(\vv)$ contains an orthotropic tensor $\bc$, and we are done by lemma~\ref{lem:q-a-a2} because the orthotropic triplet $(\bq,\bc,\bc^{2})$ is a basis of $\cov_{2}(\vv)$.

  (4) Suppose that the symmetry class of $\cov_{2}(\vv)$ is $[\ZZ_{2}]$. Then without loss of generality we can suppose that each symmetric second-order covariant writes as
  \begin{equation*}
    \left(
    \begin{array}{ccc}
      a_{11} & a_{12} & 0      \\
      a_{12} & a_{22} & 0      \\
      0      & 0      & a_{33} \\
    \end{array}
    \right)
  \end{equation*}
  and hence that $\dim \cov_{2}(\vv) \le 4$. Since this dimension is necessarily $>3$ by (1), (2) and (3), it is $4$. Conversely, suppose that $\dim \cov_{2}(\vv) = 4$. Then by corollary~\ref{cor:dimension-trans-iso-subspaces}, there exists an orthotropic covariant $\bc$ in $\cov_{2}(\vv)$ and without loss of generality, we can suppose that this covariant is diagonal. Then, by lemma~\ref{lem:q-a-a2}, $\langle \bq, \bc, \bc^{2} \rangle$ is a vector basis of the space of diagonal tensors, so that $\langle \bq, \bc, \bc^{2} \rangle=\langle \be_{11}, \be_{22}, \be_{33} \rangle$, and thus each $\be_{ii}$ belongs to $\cov_{2}(\vv)$. Let $\ba$ be a second-order covariant such that $(\be_{11},\be_{22},\be_{33},\ba)$ is a basis of $\cov_{2}(\vv)$. Without loss of generality, we can assume that
  \begin{equation*}
    \ba =  a_{12}\be_{12} + a_{13}\be_{13} + a_{23}\be_{23},
  \end{equation*}
  where the $a_{ij}$ do not vanish altogether, for instance $a_{12}\neq 0$. Then
  \begin{equation*}
    (\be_{11}\ba)^{s} + (\be_{22}\ba)^{s} - (\be_{33}\ba)^{s} = a_{12}\be_{12}
  \end{equation*}
  belongs to $\cov_{2}(\vv)$ and so does $\be_{12}$. Hence
  \begin{equation*}
    (\be_{11},\be_{22},\be_{33},\be_{12})
  \end{equation*}
  is a basis of $\cov_{2}(\vv)$ which has therefore the symmetry $[\ZZ_{2}]$.

  (5) Suppose that the symmetry class of $\cov_{2}(\vv)$ is $[\triv]$. Then
  \begin{equation*}
    \dim \cov_{2}(\vv) \ge 5,
  \end{equation*}
  by (1), (2), (3) and (4). By corollary~\ref{cor:dimension-trans-iso-subspaces}, there exists an orthotropic covariant $\bc$ in $\cov_{2}(\vv)$, and like in the proof of (4), we can assume that
  \begin{equation*}
    \be_{11},\be_{22},\be_{33} \in\cov_{2}(\vv).
  \end{equation*}
  Since, $\dim \cov_{2}(\vv) \ge 5$, the space $\cov_{2}(\vv)$ contains two linearly independent covariants, which write
  \begin{eqnarray*}
    \ba & = & a_{12}\be_{12} + a_{13}\be_{13} + a_{23}\be_{23}, \\
    \bb & = & b_{12}\be_{12} + b_{13}\be_{13} + b_{23}\be_{23},
  \end{eqnarray*}
  and we can assume (without loss of generality) that the minor
  \begin{equation*}
    a_{12}b_{13} - a_{13}b_{12} \ne 0.
  \end{equation*}
  As in the proof of (4), we conclude then that both $\be_{12}$ and $\be_{13}$ belong to $\cov_{2}(\vv)$. But then
  \begin{equation*}
    (\be_{12}\be_{13})^{s} = \frac{1}{2}\be_{23}
  \end{equation*}
  belongs to $\cov_{2}(\vv)$ and thus $\dim \cov_{2}(\vv)=6$. Conversely, suppose that $\dim \cov_{2}(\vv) = 6$. Then the only possibility is that the symmetry class of $\cov_{2}(\vv)$ is $[\triv]$ by (1), (2), (3) and (4). This achieves the proof.
\end{proof}

\section{Covariant criteria for tensor's symmetry}
\label{sec:covariant-criteria}

In this section, we formulate covariant criteria which restrict the symmetry class of second and fourth order tensors, using the vanishing of some of their covariants.

\subsection{Second order tensors}
\label{subsec:second-order-tensors}

The main results of this section are formulations of coordinate-free polynomial conditions to classify the symmetry class of an $n$-tuple of symmetric second-order tensors.

\begin{lem}\label{lem:axa2=0}
  Let $\ba$ be a symmetric second-order tensor. Then, $\ba$ is at least transversely isotropic if and only if $\ba \times \ba^{2} = 0$.
\end{lem}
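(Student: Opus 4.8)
The plan is to diagonalize $\ba$ and compute the generalized cross product $\ba \times \ba^{2}$ explicitly, exploiting Remark~\ref{rem:Sxq=0} to reduce the problem to the deviatoric part. Since $\ba$ is a symmetric second-order tensor, I may assume without loss of generality (by an $\SO(3)$ change of basis, under which the statement is equivariant up to a $\det g$ factor that does not affect vanishing) that $\ba = \mathrm{diag}(\lambda_{1},\lambda_{2},\lambda_{3})$. First I would recall that $\ba$ is at least transversely isotropic precisely when at least two of the eigenvalues $\lambda_{i}$ coincide, which is the concrete condition I will match against the vanishing of $\ba \times \ba^{2}$.

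\textbf{The computation.}
The key step is to evaluate $\ba \times \ba^{2}$ using the coordinate formula from Definition~\ref{def:generalized-cross-product}, namely
\begin{equation*}
  (\ba \times \ba^{2})_{ijk} = (\varepsilon_{ipq}\, a_{pj}\, (a^{2})_{qk})^{s}.
\end{equation*}
Because $\ba$ is diagonal, $a_{pj} = \lambda_{j}\delta_{pj}$ and $(a^{2})_{qk} = \lambda_{k}^{2}\delta_{qk}$, so the bracketed (pre-symmetrized) expression collapses to $\varepsilon_{ijk}\,\lambda_{j}\lambda_{k}^{2}$. After total symmetrization over $(i,j,k)$, each independent component (say $(i,j,k)$ a permutation of $(1,2,3)$) becomes proportional to a sum of terms $\lambda_{j}\lambda_{k}^{2} - \lambda_{k}\lambda_{j}^{2} = \lambda_{j}\lambda_{k}(\lambda_{k}-\lambda_{j})$, and collecting the three such symmetrized entries I expect each surviving component of $\ba \times \ba^{2}$ to factor as a constant multiple of a product of differences $(\lambda_{j}-\lambda_{k})$ with monomials in the eigenvalues. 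The cleanest route is to observe that $\ba \times \ba^{2} = \ba' \times (\ba')^{2}$ up to metric terms, so I would work directly with the deviator $\ba' = \mathrm{diag}(\mu_{1},\mu_{2},\mu_{3})$ with $\mu_{1}+\mu_{2}+\mu_{3}=0$, which simplifies the bookkeeping.

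\textbf{Concluding the equivalence and the main obstacle.}
Once the components are factored, $\ba \times \ba^{2} = 0$ will hold if and only if each factor $(\lambda_{i}-\lambda_{j})$ arising across all components vanishes sufficiently to force at least two eigenvalues equal; conversely, if two eigenvalues coincide every component manifestly vanishes. Matching this against Lemma~\ref{lem:q-a-a2}(2), having a repeated eigenvalue is exactly the condition for $\ba$ to be at least transversely isotropic (including the isotropic case, where all three agree and the product trivially vanishes). The main obstacle I anticipate is the symmetrization bookkeeping: keeping track of which permutations in $\mathfrak{S}_{3}$ contribute to each component and verifying that the symmetrized entries do not accidentally cancel in a way that allows $\ba \times \ba^{2}=0$ with three \emph{distinct} eigenvalues. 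Passing to the deviator and using $\mu_{1}+\mu_{2}+\mu_{3}=0$ should make this transparent, since the relevant nonzero component will reduce to an expression proportional to $(\mu_{1}-\mu_{2})(\mu_{2}-\mu_{3})(\mu_{3}-\mu_{1})$ — the same discriminant-type quantity appearing in Lemma~\ref{lem:orthotropic-linear-combination} — whose vanishing is equivalent to a repeated eigenvalue.
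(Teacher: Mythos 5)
Your proposal is correct and follows essentially the same route as the paper: diagonalize $\ba$ without loss of generality and compute $\ba\times\ba^{2}$ explicitly, finding that its only surviving component is proportional to $(\lambda_{2}-\lambda_{1})(\lambda_{3}-\lambda_{1})(\lambda_{3}-\lambda_{2})$ (the paper states this as the polynomial form $(\lambda_{2}-\lambda_{1})(\lambda_{3}-\lambda_{1})(\lambda_{3}-\lambda_{2})\,xyz$), which vanishes exactly when two eigenvalues coincide, i.e., when $\ba$ is at least transversely isotropic. The differences in your write-up — working in tensor components with $\pmb{\varepsilon}$ rather than the polynomial representation, and the optional passage to the deviator (which indeed changes nothing, since $\ba\times\ba^{2}=\ba^{\prime}\times(\ba^{\prime})^{2}$) — are notational rather than substantive.
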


\begin{proof}
  Without loss of generality, we can assume that
  \begin{equation*}
    \ba = \mathrm{diag}(\lambda_{1},\lambda_{2},\lambda_{3}).
  \end{equation*}
  Then, the polynomial form of $\ba \times \ba^{2}$, writes
  \begin{equation*}
    \left( \lambda_{2} - \lambda_{1} \right) \left( \lambda_{3} - \lambda_{1}\right) \left( \lambda_{3} - \lambda_{2} \right) xyz.
  \end{equation*}
  Thus, it vanishes if and only if $\ba$ is at least transversely isotropic. This achieves the proof.
\end{proof}

\begin{rem}\label{rem:tetrahedral-symmetry}
  If $\ba$ is not orthotropic, then $\ba \times \ba^{2}$ vanishes and is thus isotropic. Otherwise, the polynomial form of $\ba \times \ba^{2}$ writes $k\, xyz$, with $k= \left( \lambda_{2} - \lambda_{1} \right) \left( \lambda_{3} - \lambda_{1}\right) \left( \lambda_{3} - \lambda_{2} \right) \ne 0$. This form is, of course, invariant by $\DD_{2}$ but it has more symmetries. Indeed, it is invariant by the \emph{fourth-order} rotations around $Ox$, $Oy$ and $Oz$. Considering the symmetry classes of $\Sym^{3}(\RR^{3})$ (proposition~\ref{prop:symmetry-classes}), and \autoref{fig:poset-H3}, we conclude that $\ba \times \ba^{2}$ has \emph{tetrahedral} symmetry $[\tetra]$. An immediate corollary of this result is that $\tr(\ba \times \ba^{2}) = 0$ and thus that $\ba \times \ba^{2}$ is harmonic, independently of the symmetry of $\ba$.
\end{rem}

\begin{lem}\label{lem:axb=0}
  Let $\ba,\bb$ be symmetric second-order tensors and suppose that $\ba$ is transversely isotropic. Then, $(\ba,\bb)$ is transversely isotropic if and only if $\ba \times \bb = 0$.
\end{lem}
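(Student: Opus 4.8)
The plan is to analyze the statement assuming $\ba$ is transversely isotropic. Since $\ba$ is transversely isotropic, without loss of generality I would choose coordinates so that its axis of symmetry is $\ee_{3}$, i.e. $\ba = \mathrm{diag}(\lambda, \lambda, \mu)$ with $\lambda \ne \mu$ (the degenerate isotropic case being trivial). The symmetry group of $\ba$ is then $\OO(2)$, the group of rotations about the $z$-axis together with the order-two rotation $\sigma$. The pair $(\ba, \bb)$ being transversely isotropic means precisely that $G_{\ba} \cap G_{\bb}$ contains a conjugate of $\OO(2)$; since $G_{\ba} = \OO(2)$ already, this forces $G_{\bb} \supseteq \OO(2)$, i.e. $\bb$ must share the same axis of symmetry. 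The task is to translate this group-theoretic condition into the vanishing of the covariant $\ba \times \bb$.

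First I would establish the forward direction. If $(\ba,\bb)$ is transversely isotropic, then $\bb$ is also invariant under $\OO(2)$ about the same axis $\ee_{3}$, so in our coordinates $\bb = \mathrm{diag}(\alpha, \alpha, \beta)$ is simultaneously diagonal with $\ba$. I would then compute $\ba \times \bb$ directly using the generalized cross-product of Definition~\ref{def:generalized-cross-product}, exactly as in the proof of Lemma~\ref{lem:axa2=0}: the polynomial form of a cross-product of two diagonal second-order tensors $\mathrm{diag}(\lambda_1,\lambda_2,\lambda_3)$ and $\mathrm{diag}(\mu_1,\mu_2,\mu_3)$ is proportional to $\bigl[(\lambda_2\mu_3 - \lambda_3\mu_2) + \text{(cyclic permutations)}\bigr]$-type terms times $xyz$, and substituting the two equal pairs of eigenvalues makes every coefficient vanish. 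Hence $\ba \times \bb = 0$.

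For the converse, I would argue contrapositively: suppose $\ba$ is transversely isotropic with axis $\ee_{3}$ but $(\ba,\bb)$ is \emph{not} transversely isotropic, and show $\ba \times \bb \ne 0$. I would decompose $\bb$ into its component commuting with $\ba$ and an off-axis remainder. Since $\bb$ fails to be $\OO(2)$-symmetric about $\ee_{3}$, its deviatoric part is not of the form $\mathrm{diag}(\cdot,\cdot,\cdot)$ in these coordinates; equivalently, $\bb$ has a nonzero entry coupling the $\ee_{3}$-direction to the $(\ee_1,\ee_2)$-plane, or a non-axisymmetric in-plane part. Because Remark~\ref{rem:Sxq=0} lets me replace $\ba$ and $\bb$ by their deviatoric parts without changing the cross-product, I may assume both are deviators, with $\ba = c\,\mathrm{diag}(-1,-1,2)$, $c \ne 0$. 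A direct computation of $\ba \times \bb$ then yields a nonzero polynomial: the cross-product with a purely uniaxial $\ba$ annihilates only the component of $\bb$ that is itself axisymmetric, so $\ba \times \bb = 0$ forces $\bb$ to be axisymmetric about $\ee_{3}$, making $(\ba,\bb)$ transversely isotropic — a contradiction.

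The main obstacle is the converse: I must verify that the kernel of the linear map $\bb \mapsto \ba \times \bb$ (for fixed uniaxial $\ba$) is exactly the space of tensors commuting-and-coaxial with $\ba$, namely $\langle \bq, \ba \rangle$ expanded by the axisymmetric condition, and not larger. This is a concrete but careful computation: I would compute the image of each basis tensor $\be_{ij}$ under $\bb \mapsto \ba \times \bb$ and check that the five off-axis or non-axisymmetric basis elements map to linearly independent nonzero covariants, leaving a two-dimensional kernel spanned by the axisymmetric diagonal tensors. This is the analogue of the eigenvalue factorization appearing in Lemma~\ref{lem:axa2=0}, and the cleanest route is to exploit the Lie--Poisson bracket interpretation, where $\ba \times \bb$ corresponds to $\{\rp_\ba, \rp_\bb\}_{LP} = \det(\xx, \nabla \rp_\ba, \nabla \rp_\bb)$, and the vanishing of this bracket encodes precisely that the quadrics $\rp_\ba$ and $\rp_\bb$ are in involution, i.e. share a common symmetry axis.
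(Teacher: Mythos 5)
Your proposal is correct, and for the substantive direction (the converse) it follows essentially the same route as the paper: reduce to $G_{\ba}=\OO(2)$, pass to polynomial representatives, and solve the linear equation $\ba\times\bb=0$, i.e. $\det(\xx,\nabla \ra,\nabla \rb)=0$. Carrying out the computation you sketch, with $\ra=c(x^{2}+y^{2}-2z^{2})$, $c\neq 0$ (legitimate by remark~\ref{rem:Sxq=0}), one gets $\det(\xx,\nabla\ra,\nabla\rb)=12cz\left[b_{12}(x^{2}-y^{2})+(b_{22}-b_{11})xy+b_{23}xz-b_{13}yz\right]$, whose vanishing is exactly axisymmetry of $\bb$ about $\ee_{3}$; this is precisely the paper's statement that the solution is $\rb=k_{1}(x^{2}+y^{2})+k_{2}z^{2}$. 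Where you genuinely differ is the forward direction: you diagonalize $\ba$ and $\bb$ simultaneously and check directly that the cross product of two coaxial diagonal tensors vanishes (the $3\times 3$ determinant has two equal columns), whereas the paper argues abstractly that $\ba\times\bb$, being a totally symmetric \emph{third-order} covariant of a transversely isotropic pair, would have to be isotropic by proposition~\ref{prop:symmetry-classes} and hence vanish by remark~\ref{rem:vanishing-isotropic-symmetric-odd-order-tensors}. Your version is more elementary; the paper's is the one that scales, since the identical argument is reused for higher-order covariants in lemmas~\ref{lem:Sxd=0} and~\ref{lem:tr(Hxd)=0}. One small slip in your converse sketch: the non-axisymmetric complement in $\Sym^{2}(\RR^{3})$ is four-dimensional, not five. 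Of the six basis tensors, five indeed have nonzero image under $\bb\mapsto\ba\times\bb$, but the images of $\be_{11}$ and $\be_{22}$ are proportional (both collinear with $xyz$), so the map has rank $4$ and kernel $\langle\bq,\ba\rangle$ — consistent with the two-dimensional kernel you need, but inconsistent with ``five linearly independent nonzero images,'' which would force a one-dimensional kernel.
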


\begin{proof}
  Suppose first that $(\ba,\bb)$ is transversely isotropic then $\ba \times \bb$ is at least transversely isotropic and since it is a third-order totally symmetric tensor, it must be isotropic by proposition~\ref{prop:symmetry-classes} and thus vanishes by Remark~\ref{rem:vanishing-isotropic-symmetric-odd-order-tensors}. To prove the converse, we will use the polynomial representative $\ra,\rb$ of $\ba,\bb$ (see Section~\ref{sec:sym-harm-tensors}). The linear equation $\ba \times \bb = 0$ reads then $\det (\xx, \nabla \ra, \nabla \rb) = 0$. Without loss of generality we can assume that $G_{\ba} = \OO(2)$ and thus that
  \begin{equation*}
    \ra =  \lambda (x^{2} + y^{2}) + \mu z^{2}, \qquad \lambda \ne \mu .
  \end{equation*}
  The solution is then
  \begin{equation*}
    \rb = k_{1} (x^{^{2}} + y^{2}) + k_{2} z^{2},
  \end{equation*}
  which is invariant by $\OO(2)$. This achieves the proof.
\end{proof}

Given two symmetric second order tensors $\ba,\bb$ on the euclidean space $\RR^{3}$, their commutator, a second-order skew-symmetric tensor
\begin{equation*}
  [\ba,\bb]:=\ba\bb-\bb\ba
\end{equation*}
can be recast as the first-order covariant
\begin{equation*}
  \tr(\ba \times \bb) = \frac{1}{3} \, \pmb \varepsilon:(\ba \bb).
\end{equation*}

We have thus

\begin{lem}\label{lem:orthotropic-pair}
  The three conditions are equivalent :
  \begin{enumerate}
    \item the pair $(\ba, \bb)$ is \emph{at least orthotropic}.
    \item $\tr(\ba \times \bb) = 0$.
    \item $\ba, \bb$ commute.
  \end{enumerate}
\end{lem}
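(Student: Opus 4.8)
The plan is to prove the three equivalences by first disposing of the elementary equivalence between (2) and (3), and then establishing the substantive equivalence between (1) and (3). The latter rests on the classical fact that two real symmetric tensors commute if and only if they are simultaneously diagonalizable in an \emph{orthonormal} basis, which translates the algebraic condition (3) into a statement about a common eigenbasis that can be analysed group-theoretically.

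For (2) $\Leftrightarrow$ (3): since $\ba$ and $\bb$ are symmetric, the commutator $[\ba,\bb]=\ba\bb-\bb\ba$ equals $\ba\bb-(\ba\bb)^{\top}$, that is, twice the skew-symmetric part of $\ba\bb$ (using $(\ba\bb)^{\top}=\bb\ba$). Under the standard identification of skew-symmetric second-order tensors with vectors via $\pmb\varepsilon$, this skew-symmetric part corresponds, up to a nonzero scalar, to the vector $\pmb\varepsilon:(\ba\bb)=3\,\tr(\ba\times\bb)$ appearing in the statement. As this identification is injective, $\tr(\ba\times\bb)=0$ holds if and only if the skew part of $\ba\bb$ vanishes, i.e. if and only if $\ba\bb=\bb\ba$; so (2) and (3) are equivalent.

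For (3) $\Rightarrow$ (1): if $\ba$ and $\bb$ commute, I would choose a common orthonormal eigenbasis $(\ee_{1},\ee_{2},\ee_{3})$, in which both tensors are diagonal. A diagonal symmetric tensor is fixed by each $\pi$-rotation about a coordinate axis, hence by the Klein four-group $\DD_{2}=\{\id,R_{x}(\pi),R_{y}(\pi),R_{z}(\pi)\}$ (generated, as in the paper, by $R_{z}(\pi)$ and $\sigma$); therefore $\DD_{2}\subseteq G_{(\ba,\bb)}$ and the pair is at least orthotropic in the sense of Definition~\ref{def:at-least-at-most}. For (1) $\Rightarrow$ (3): if $(\ba,\bb)$ is at least orthotropic, then $G_{(\ba,\bb)}$ contains a conjugate of $\DD_{2}$; after applying a suitable rotation, which alters neither the conclusion (1) nor the property (3), I may assume $\DD_{2}\subseteq G_{(\ba,\bb)}$. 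The key step is then to check that $\DD_{2}$-invariance forces both tensors to be diagonal: conjugating a symmetric tensor by $R_{z}(\pi)=\mathrm{diag}(-1,-1,1)$ multiplies its $13$- and $23$-entries by $-1$, so these vanish, while invariance under $R_{x}(\pi)=\mathrm{diag}(1,-1,-1)$ kills the $12$- and $13$-entries; together all off-diagonal entries of $\ba$ and of $\bb$ vanish, so both are diagonal in the same basis and hence commute.

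I expect the only genuine content to lie in the (1) $\Leftrightarrow$ (3) step, and within it in the short computation showing that a symmetric second-order tensor fixed by $\DD_{2}$ is necessarily diagonal; this is the main (though routine) obstacle. By contrast, the equivalence (2) $\Leftrightarrow$ (3) is essentially a reformulation of the identity $\tr(\ba\times\bb)=\tfrac{1}{3}\,\pmb\varepsilon:(\ba\bb)$, and the reduction ``at least orthotropic $\Rightarrow$ (after rotation) $\DD_{2}$-invariant'' uses only the definition of the partial order on conjugacy classes together with Proposition~\ref{prop:symmetry-classes}.
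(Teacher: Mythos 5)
Your proof is correct and takes essentially the same route as the paper: the paper offers no written proof at all, stating the lemma as an immediate consequence (``We have thus'') of the displayed identity $\tr(\ba \times \bb) = \frac{1}{3}\, \pmb\varepsilon:(\ba\bb)$, which identifies this first-order covariant with the commutator $[\ba,\bb]$. Your argument simply supplies the details left implicit there --- injectivity of the skew-tensor/axial-vector identification for (2)$\Leftrightarrow$(3), and simultaneous orthogonal diagonalization together with the check that $\DD_{2}$-invariance forces both tensors to be diagonal for (1)$\Leftrightarrow$(3).
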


\begin{cor}\label{cor:orthotropic-pair}
  Let $\ba,\bb$ be symmetric second-order tensors. Then, $(\ba,\bb)$ is orthotropic if and only if $\tr(\ba \times \bb) = 0$ and
  \begin{equation*}
    \ba \times \ba^{2} \ne 0, \quad \text{or} \quad \bb \times \bb^{2} \ne 0, \quad \text{or} \quad \ba \times \bb \ne 0.
  \end{equation*}
\end{cor}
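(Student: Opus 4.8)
The plan is to reduce everything to the fact, from Proposition~\ref{prop:symmetry-classes}(2), that the symmetry classes of a pair of symmetric second-order tensors form the totally ordered chain $[\triv]\preceq[\ZZ_{2}]\preceq[\DD_{2}]\preceq[\OO(2)]\preceq[\SO(3)]$. Consequently $(\ba,\bb)$ is \emph{exactly} orthotropic precisely when it is at least orthotropic but not at least transversely isotropic. I would therefore translate each of these two conditions into a covariant statement and then simply combine them.

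For the first condition, Lemma~\ref{lem:orthotropic-pair} already gives that $(\ba,\bb)$ is at least orthotropic if and only if $\tr(\ba\times\bb)=0$, so this factor of the criterion is immediate. The heart of the proof is the claim that $(\ba,\bb)$ is \emph{at least transversely isotropic} if and only if the three covariants $\ba\times\ba^{2}$, $\bb\times\bb^{2}$ and $\ba\times\bb$ all vanish; the stated inequation condition is exactly the negation of this. I would prove this claim by the following case analysis.

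For the forward implication, if $(\ba,\bb)$ is at least transversely isotropic then $G_{(\ba,\bb)}=G_{\ba}\cap G_{\bb}$ contains a conjugate of $\OO(2)$, so each of $\ba$ and $\bb$ is individually at least transversely isotropic; Lemma~\ref{lem:axa2=0} then yields $\ba\times\ba^{2}=0$ and $\bb\times\bb^{2}=0$. To get $\ba\times\bb=0$ I would split on $\ba$: if $\ba$ is isotropic, write $\ba=c\,\bq$ and conclude $\ba\times\bb=c\,(\bq\times\bb)=0$ by Remark~\ref{rem:Sxq=0}; if $\ba$ is exactly transversely isotropic, then since $G_{(\ba,\bb)}\subseteq G_{\ba}=\OO(2)$ the pair is in fact exactly transversely isotropic, whence $\ba\times\bb=0$ by Lemma~\ref{lem:axb=0}. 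Conversely, if all three covariants vanish, then $\ba$ and $\bb$ are each at least transversely isotropic by Lemma~\ref{lem:axa2=0}; if $\ba$ is isotropic the pair has the symmetry of $\bb$ and is at least transversely isotropic, while if $\ba$ is exactly transversely isotropic the hypothesis $\ba\times\bb=0$ forces the pair to be transversely isotropic by Lemma~\ref{lem:axb=0}.

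Combining the two translations finishes the proof: $(\ba,\bb)$ is orthotropic iff it is at least orthotropic and not at least transversely isotropic, that is, iff $\tr(\ba\times\bb)=0$ and at least one of $\ba\times\ba^{2}$, $\bb\times\bb^{2}$, $\ba\times\bb$ is nonzero. The only delicate point I expect is the forward implication of the central claim, specifically separating the isotropic case of $\ba$ (handled by the Casimir property $\bq\times\bb=0$ of Remark~\ref{rem:Sxq=0}) from the strictly transversely isotropic case (handled by Lemma~\ref{lem:axb=0}); once this dichotomy is set up, the rest is bookkeeping within the poset.
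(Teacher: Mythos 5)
Your proof is correct and follows essentially the same route as the paper's: both arguments reduce to Lemma~\ref{lem:orthotropic-pair} (giving ``at least orthotropic $\iff \tr(\ba\times\bb)=0$'') combined with Lemmas~\ref{lem:axa2=0} and~\ref{lem:axb=0}, followed by the same poset bookkeeping. The only difference is organizational: you factor the argument through the explicit intermediate claim that $(\ba,\bb)$ is at least transversely isotropic iff $\ba\times\ba^{2}=\bb\times\bb^{2}=\ba\times\bb=0$ (spelling out the isotropic sub-case via Remark~\ref{rem:Sxq=0}), whereas the paper runs the equivalent case analysis inline in each direction of the biconditional, leaving that sub-case implicit.
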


\begin{proof}
  If $(\ba,\bb)$ is orthotropic, then the first-order covariant $\tr(\ba \times \bb)$ is necessarily isotropic by proposition~\ref{prop:symmetry-classes} and thus vanishes by Remark~\ref{rem:vanishing-isotropic-symmetric-odd-order-tensors}. Moreover, either $\ba$ or $\bb$ is orthotropic and thus
  \begin{equation*}
    \ba \times \ba^{2} \ne 0, \qquad \text{or} \qquad \bb \times \bb^{2} \ne 0,
  \end{equation*}
  or both of them are transversely isotropic. In that case we necessarily have $\ba \times \bb \ne 0$ by lemma~\ref{lem:axb=0}. Conversely, if $\tr(\ba \times \bb) = 0$, then the pair $(\ba,\bb)$ is at least orthotropic by lemma~\ref{lem:orthotropic-pair}. If either $\ba$ or $\bb$ is orthotropic, then so is $(\ba,\bb)$. Otherwise, both $\ba$ and $\bb$ are at least transversely isotropic, but then the condition $\ba \times \bb \ne 0$ forbids the pair $(\ba,\bb)$ to be at least transversely isotropic. It is thus orthotropic.
\end{proof}

We will now formulate coordinate-free conditions to classify the symmetry class of an $n$-tuple of symmetric second-order tensors.

\begin{thm}\label{thm:n-quadratic-forms}
  Let $(\ba_{1}, \dotsc ,\ba_{n})$ be an $n$-tuple of second-order symmetric tensors. Then:
  \begin{enumerate}
    \item $(\ba_{1}, \dotsc ,\ba_{n})$ is \emph{isotropic} if and only if
          \begin{equation*}
            \ba_{k}^{\prime} = 0, \quad 1 \le k \le n ,
          \end{equation*}
          where $\ba_{k}^{\prime}$ is the deviatoric part of $\ba_{k}$.
    \item $(\ba_{1}, \dotsc ,\ba_{n})$ is \emph{transversely isotropic} if and only if there exists $\ba_{j}$ such that
          \begin{equation*}
            \ba_{j}^{\prime} \ne 0, \qquad \ba_{j} \times \ba_{j}^{2} = 0,
          \end{equation*}
          and
          \begin{equation*}
            \ba_{j} \times \ba_{k} = 0, \quad 1 \le k \le n .
          \end{equation*}
    \item $(\ba_{1}, \dotsc ,\ba_{n})$ is \emph{orthotropic} if and only if
          \begin{equation*}
            \tr(\ba_{k} \times \ba_{l}) = 0, \quad 1 \le k,l \le n ,
          \end{equation*}
          and there exists $\ba_{j}$ such that $\ba_{j} \times \ba_{j}^{2} \ne 0$ or there exists a pair $(\ba_{i},\ba_{j})$ such that $\ba_{i} \times \ba_{j} \ne 0$.

    \item $(\ba_{1}, \dotsc ,\ba_{n})$ is \emph{monoclinic} if and only if there exists a pair $(\ba_{i},\ba_{j})$ such that $\pmb{\omega} := \tr(\ba_{i} \times \ba_{j}) \ne 0$ and
          \begin{equation*}
            (\ba_{k}\pmb{\omega}) \times \pmb{\omega} = 0, \quad 1 \le k \le n .
          \end{equation*}
  \end{enumerate}
\end{thm}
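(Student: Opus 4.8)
The plan is to translate each of the four criteria into a statement about the common symmetry group $G_{\mathcal{F}} = \bigcap_{k} G_{\ba_{k}}$, exploiting the fact recorded in proposition~\ref{prop:symmetry-classes} that the only possible classes for an $n$-tuple of symmetric second-order tensors are $[\triv]$, $[\ZZ_{2}]$, $[\DD_{2}]$, $[\OO(2)]$ and $[\SO(3)]$. For each item I would prove both implications by establishing a lower bound on $G_{\mathcal{F}}$ from the vanishing conditions (the tuple is \emph{at least} the stated class) and an upper bound from the non-vanishing conditions (it is not strictly more symmetric). The basic dictionary comes from the earlier results: $\ba \times \ba^{2} = 0$ detects ``at least transversely isotropic'' (lemma~\ref{lem:axa2=0}); $\tr(\ba \times \bb) = 0$ detects commuting, hence ``at least orthotropic'', pairs (lemma~\ref{lem:orthotropic-pair}); and lemma~\ref{lem:axb=0} together with corollary~\ref{cor:orthotropic-pair} resolve pairs one symmetry level down.

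Items (1)--(3) follow this template almost mechanically. For (1), $G_{\mathcal{F}} = \SO(3)$ holds if and only if every $\ba_{k}$ is spherical, that is $\ba_{k}' = 0$. For (2), in one direction $G_{\mathcal{F}} = \OO(2)$ forces each $\ba_{k}$ to be isotropic or transversely isotropic about the common axis; choosing a non-spherical $\ba_{j}$ (one exists since the tuple is not isotropic) gives $\ba_{j}' \ne 0$ and $\ba_{j} \times \ba_{j}^{2} = 0$, while each pair $(\ba_{j},\ba_{k})$ is transversely isotropic, hence $\ba_{j} \times \ba_{k} = 0$ by lemma~\ref{lem:axb=0}; conversely such an $\ba_{j}$ is transversely isotropic and lemma~\ref{lem:axb=0} forces every $\ba_{k}$ to share its $\OO(2)$-symmetry, pinning $G_{\mathcal{F}} = \OO(2)$. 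For (3), the condition $\tr(\ba_{k}\times\ba_{l}) = 0$ for all $k,l$ makes the family pairwise commuting, hence simultaneously diagonalizable and at least orthotropic; the supplementary non-vanishing condition exhibits, via lemma~\ref{lem:axa2=0} or corollary~\ref{cor:orthotropic-pair}, a genuinely orthotropic tensor or pair whose isotropy group is exactly the $\DD_{2}$ of the common eigenframe, forcing $G_{\mathcal{F}} = \DD_{2}$; the converse is the contrapositive, since if the supplementary condition failed the tuple would be at least transversely isotropic.

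The crux is item (4), and this is where I expect the real work to lie. The key geometric observation I would isolate first is that the first-order covariant $\pmb{\omega} = \tr(\ba_{i}\times\ba_{j})$ is, up to scale, the axial vector of the commutator $[\ba_{i},\ba_{j}]$, and that for a symmetric tensor $\ba_{k}$ the condition $(\ba_{k}\pmb{\omega})\times\pmb{\omega} = 0$ says exactly that $\pmb{\omega}$ is an eigenvector of $\ba_{k}$, which in turn is equivalent to $\ba_{k}$ being invariant under the half-turn $r_{\pmb{\omega}}$ of order two about the axis $\pmb{\omega}$ (in an eigenbasis, the statement that the entries coupling $\pmb{\omega}$ to its orthogonal plane vanish). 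Granting this, the converse is clean: the hypotheses give $r_{\pmb{\omega}} \in G_{\ba_{k}}$ for every $k$, so $\langle r_{\pmb{\omega}}\rangle \cong \ZZ_{2} \subseteq G_{\mathcal{F}}$, while $\pmb{\omega} \ne 0$ means the pair $(\ba_{i},\ba_{j})$ does not commute, so by proposition~\ref{prop:symmetry-classes} its class is monoclinic or triclinic and $\lvert G_{\ba_{i}}\cap G_{\ba_{j}}\rvert \le 2$; the sandwich $\ZZ_{2} \subseteq G_{\mathcal{F}} \subseteq G_{\ba_{i}}\cap G_{\ba_{j}}$ then forces $G_{\mathcal{F}} \cong \ZZ_{2}$.

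For the forward direction of (4) the main obstacle is to produce the right pair and to show its axial vector is aligned with the monoclinic axis. If the tuple is monoclinic then not all pairs commute, otherwise it would be at least orthotropic by lemma~\ref{lem:orthotropic-pair}, so some pair yields $\pmb{\omega} \ne 0$. The point to nail down is that $\pmb{\omega}$ lies along the monoclinic axis $\nn$: since $\pmb{\omega}$ is a first-order covariant it is fixed by $G_{\mathcal{F}} = \langle r_{\nn}\rangle$ (equivariance holds within $\SO(3)$, so the orientation factor of the cross product is irrelevant here), and $r_{\nn}$ fixes only the line $\RR\nn$, whence $\pmb{\omega} \parallel \nn$; then $r_{\pmb{\omega}} = r_{\nn} \in G_{\mathcal{F}} \subseteq G_{\ba_{k}}$ gives $(\ba_{k}\pmb{\omega})\times\pmb{\omega} = 0$ for every $k$ by the eigenvector characterization. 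Assembling the four cases, together with the exhaustiveness of the class list, completes the proof.
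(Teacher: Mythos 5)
Your proposal is correct and takes essentially the same route as the paper's own proof: items (1)--(3) rest on lemmas~\ref{lem:axa2=0}, \ref{lem:axb=0} and \ref{lem:orthotropic-pair} exactly as in the paper, and item (4) uses the same two ingredients, namely that $(\ba_{k}\pmb{\omega})\times\pmb{\omega}=0$ characterizes $\pmb{\omega}$ as a common eigenvector (equivalently, invariance under the half-turn about $\pmb{\omega}$), and that a nonzero first-order covariant $\tr(\ba_{i}\times\ba_{j})$ of a monoclinic family must lie along the monoclinic axis. Your write-up merely makes explicit two points the paper leaves terse (the $\SO(3)$-equivariance argument pinning $\pmb{\omega}$ to the axis, and the sandwich $\ZZ_{2}\subseteq G_{\mathcal{F}}\subseteq G_{\ba_{i}}\cap G_{\ba_{j}}$ in the converse), which is a matter of detail, not of method.
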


\begin{proof}
  (1) $(\ba_{1}, \dotsc ,\ba_{n})$ is \emph{isotropic} if and only if $\ba_{k} =\lambda_{k}\bq$ for $1 \le k \le n$, which is equivalent to the condition that $\ba_{k}^{\prime} = 0$ for $1 \le k \le n$.

  (2) If $(\ba_{1}, \dotsc ,\ba_{n})$ is \emph{transversely isotropic}, then, each $\ba_{k}$ is at least transversely isotropic and one of them, say $\ba_{j}$, is transversely isotropic. Thus $\ba_{j}^{\prime} \ne 0$ and $\ba_{j} \times \ba_{j}^{2} = 0$ by lemma~\ref{lem:axa2=0}. Moreover, each pair $(\ba_{j}, \ba_{k})$ is at least transversely isotropic and thus $\ba_{j} \times \ba_{k} = 0$ by lemma~\ref{lem:axb=0}. Conversely, if conditions in $(2)$ are satisfied, then $\ba_{j}$ is transversely isotropic and each pair $(\ba_{j}, \ba_{k})$ is transversely isotropic by lemma~\ref{lem:axb=0}. Thus $(\ba_{1}, \dotsc ,\ba_{n})$ is transversely isotropic.

  (3) If $(\ba_{1}, \dotsc ,\ba_{n})$ is \emph{orthotropic}, then, the $\ba_{k}$ commute with each other and thus $\tr(\ba_{k} \times \ba_{l}) = 0$ ($1 \le k,l \le n$) by lemma~\ref{lem:orthotropic-pair}. Moreover, either there exists $j \in \set{1,\dotsc ,n}$ such that $\ba_{j}$ is orthotropic and thus $\ba_{j} \times \ba_{j}^{2} \ne 0$ or all the $\ba_{k}$ are at least transversely isotropic. In that case, a pair of them, say $(\ba_{i},\ba_{j})$ is orthotropic and thus $\ba_{i} \times \ba_{j} \ne 0$. Conversely, if $\tr(\ba_{k} \times \ba_{l}) = 0$ for all $k,l$, then, we can find a basis in which there are all diagonal and the symmetry class of $(\ba_{1}, \dotsc ,\ba_{n})$ is thus at least $[\DD_{2}]$. If there exists $\ba_{j}$ such that $\ba_{j} \times \ba_{j}^{2} \ne 0$, we are done. Otherwise, all the $\ba_{k}$ are at least transversely isotropic, but there exists a pair $(\ba_{i},\ba_{j})$ such that $\ba_{i} \times \ba_{j} \ne 0$. Hence, both $\ba_{i},\ba_{j}$ are transversely isotropic and the pair $(\ba_{i},\ba_{j})$ is orthotropic by lemma~\ref{lem:axb=0}.

  (4) If $(\ba_{1}, \dotsc ,\ba_{n})$ is \emph{monoclinic}, then, its elements have a common eigenvector, $\pmb{\omega}$, so that $(\ba_{k}\pmb{\omega}) \times \pmb{\omega} = 0$ ($1 \le k \le n$). Moreover, there exists a pair $(\ba_{i},\ba_{j})$ such that $\tr(\ba_{i} \times \ba_{j}) \ne 0$ and thus $\tr(\ba_{i} \times \ba_{j}) = \lambda \pmb{\omega}$ with $\lambda \ne 0$. Conversely, if $\pmb{\omega} := \tr(\ba_{i} \times \ba_{j}) \ne 0$, then $(\ba_{1}, \dotsc ,\ba_{n})$ is at most monoclinic. But the condition
  $(\ba_{k}\pmb{\omega}) \times \pmb{\omega} = 0$ for all $k$ means that $\pmb{\omega}$ is a common eigenvector of $\ba_{1}, \dotsc ,\ba_{n}$ and thus the symmetry group of $(\ba_{1}, \dotsc ,\ba_{n})$ contains the second-order rotation around $\pmb{\omega}$.
\end{proof}

\subsection{Fourth order tensors}
\label{subsec:fourth-order-tensors}

The main results of this section are formulations of coordinate-free polynomial conditions to classify the symmetry class of a family of one fourth-order harmonic tensor and one or several second-order harmonic tensors.

\begin{lem}\label{lem:Sxd=0}
  Let $\bt\in \Sym^{2}(\RR^{3})$ be transversely isotropic and $\bS\in \Sym^{4}(\RR^{3})$. Then, $(\bS,\bt)$ is transversely isotropic if and only if $\bS \times \bt = 0$.
\end{lem}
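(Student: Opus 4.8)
The plan is to mirror the structure of the proof of lemma~\ref{lem:axb=0}, treating the two implications separately: the direct implication is a short symmetry-class argument, while the converse requires an explicit computation with polynomial representatives followed by one subtle upgrade.

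For the direct implication, I would suppose $(\bS,\bt)$ is transversely isotropic, so that $G_{(\bS,\bt)} = G_{\bS}\cap G_{\bt}$ is conjugate to $\OO(2)$. Since the generalized cross product is $\SO(3)$-equivariant (definition~\ref{def:generalized-cross-product}), every element of $G_{(\bS,\bt)}$ fixes $\bS\times\bt$, hence $[\OO(2)]\preceq[G_{\bS\times\bt}]$, i.e. $\bS\times\bt$ is at least transversely isotropic in the sense of definition~\ref{def:at-least-at-most}. But $\bS\times\bt\in\Sym^{5}(\RR^{3})$ is a fifth-order totally symmetric tensor, and by proposition~\ref{prop:symmetry-classes} the class $[\OO(2)]$ does not occur among its symmetry classes; the only admissible class $\succeq[\OO(2)]$ is $[\SO(3)]$, since no other closed subgroup in the fifth-order list contains a conjugate of $\OO(2)$. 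Hence $\bS\times\bt$ is isotropic, and being of odd order it vanishes by remark~\ref{rem:vanishing-isotropic-symmetric-odd-order-tensors}.

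For the converse, I would argue with polynomial representatives. Assume $\bt$ transversely isotropic and, after a rotation, take its axis to be $Oz$; its deviatoric part $\bt'$ is then nonzero with polynomial representative proportional to $3z^{2}-\qq$. By remark~\ref{rem:Sxq=0} we have $\bS\times\bt=\bS\times\bt'$, and since $\qq$ is a Casimir for the Lie--Poisson bracket (the row $\nabla\qq=2\xx$ makes the determinant degenerate), the $\qq$ term drops and one may replace $3z^{2}-\qq$ by $3z^{2}$. Writing $\rp$ for the polynomial representative of $\bS$, the vanishing $\bS\times\bt=0$ reduces, up to a nonzero scalar, to $\det(\xx,\nabla\rp,\nabla z^{2})=0$, that is $z\,(x\,\partial_{y}\rp-y\,\partial_{x}\rp)=0$, hence $x\,\partial_{y}\rp-y\,\partial_{x}\rp=0$. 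This says exactly that $\rp$, and so $\bS$, is invariant under all rotations about $Oz$, i.e. $\SO(2)\subseteq G_{\bS}$.

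It remains to upgrade this to transverse isotropy of the pair, which I expect to be the delicate point. Since $\bS$ is of even order, its harmonic decomposition $\Sym^{4}(\RR^{3})\simeq\HH^{4}\oplus\HH^{2}\oplus\HH^{0}$ has each $\SO(2)$-invariant component spanned by the corresponding zonal harmonic, which involves only even powers of $z$ and is therefore also invariant under $\sigma:(x,y,z)\mapsto(x,-y,-z)$; consequently $\SO(2)\subseteq G_{\bS}$ forces $\OO(2)\subseteq G_{\bS}$. As $G_{\bt}=\OO(2)$ exactly (because $\bt$ is transversely isotropic and not isotropic), we obtain $G_{(\bS,\bt)}=G_{\bS}\cap G_{\bt}=\OO(2)$, so $(\bS,\bt)$ is transversely isotropic. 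The main obstacle is precisely this last step: the Poisson computation only yields $\SO(2)$-invariance of $\bS$, and one must invoke the even order of $\bS$ to exclude the a priori possibility $[\SO(2)]$ and conclude genuine $[\OO(2)]$ symmetry.
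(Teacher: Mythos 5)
Your proof is correct and follows essentially the same route as the paper: the direct implication is the identical covariance-plus-classification argument (no class of a fifth-order symmetric tensor lies above $[\OO(2)]$ except $[\SO(3)]$, and odd-order isotropic tensors vanish), and the converse is the same polynomial computation $\det(\xx,\nabla\rp,\nabla\rt)=0$ with the axis of $\bt$ along $Oz$. The paper simply states the solution family $k_{1}z^{4}+k_{2}(x^{2}+y^{2})z^{2}+k_{3}(x^{2}+y^{2})^{2}$ and its $\OO(2)$-invariance outright, whereas you fill in how one finds it (reduction to $z^{2}$ via the Casimir property, the rotation-generator equation, and the parity upgrade from $\SO(2)$ to $\OO(2)$); these details are consistent with, not divergent from, the paper's argument.
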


\begin{proof}
  Suppose first that $(\bS,\bt)$ is transversely isotropic, then $\bS \times \bt$ is at least transversely isotropic and since it is a fifth-order symmetric tensor, it must be isotropic by proposition~\ref{prop:symmetry-classes} and thus vanishes by Remark~\ref{rem:vanishing-isotropic-symmetric-odd-order-tensors}. To prove the converse, let $\rp, \rt$ be the polynomial representatives of $\bS,\bt$. Then, the linear equation $\bS \times \bt = 0$ reads $\det (\xx, \nabla \rp, \nabla \rt) = 0$. Without loss of generality we can assume that $G_{\bt} = \OO(2)$ and thus that
  \begin{equation*}
    \rt = \lambda (x^{2} + y^{2}) + \mu z^{2}, \qquad \lambda \ne \mu
  \end{equation*}
  and the solution is
  \begin{equation*}
    \rp = k_{1}z^{4} + k_{2} (x^{^{2}} + y^{2})z^{2} + k_{3}(x^{2}+y^{2})^{2},
  \end{equation*}
  which is invariant by $\OO(2)$. This achieves the proof.
\end{proof}

\begin{lem}\label{lem:tr(Hxd)=0}
  Let $\bt\in \Sym^{2}(\RR^{3})$ be transversely isotropic and $\bH\in \HH^{4}$. Then, $(\bH,\bt)$ is at least tetragonal if and only if $\tr (\bH \times \bt) = 0$.
\end{lem}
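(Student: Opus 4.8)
The plan is to turn the vanishing of the third-order covariant $\tr(\bH\times\bt)$ into a differential condition on the harmonic polynomial $\rh$ representing $\bH$, and then to read off the symmetry from the Fourier modes of $\rh$ about the axis of $\bt$. First I would normalise the configuration: since $\bt$ is transversely isotropic, I may assume $G_{\bt}=\OO(2)$ with the $z$-axis as principal axis, so that the polynomial representative of $\bt$ is $\rt=\lambda(x^{2}+y^{2})+\mu z^{2}$ with $\lambda\ne\mu$. By Remark~\ref{rem:Sxq=0} one has $\bH\times\bt=\bH\times\bt^{\prime}$, and the polynomial representative of the deviator $\bt^{\prime}$ is proportional to $2z^{2}-x^{2}-y^{2}$.

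Next I would compute the covariant through its polynomial representative. Writing $L_{z}:=x\partial_{y}-y\partial_{x}$ for the infinitesimal rotation about $z$, a direct expansion of the Lie--Poisson (determinant) form of the cross product gives
\[
  \det\bigl(\xx,\nabla\rh,\nabla(2z^{2}-x^{2}-y^{2})\bigr)=6z\,L_{z}\rh,
\]
which is the degree-five polynomial representative of $\bH\times\bt$ up to a nonzero scalar. Since $L_{z}$ commutes with $\triangle$, the polynomial $L_{z}\rh$ is again harmonic of degree four, so $\triangle(z\,L_{z}\rh)=2\partial_{z}L_{z}\rh$. Using that $\partial_{z}$ and $L_{z}$ commute, and that the trace of a symmetric tensor corresponds up to a constant to the Laplacian of its polynomial representative, I obtain that $\tr(\bH\times\bt)$ has polynomial representative proportional to $L_{z}\partial_{z}\rh$. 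Hence $\tr(\bH\times\bt)=0$ if and only if $L_{z}\partial_{z}\rh=0$.

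I would then decompose $\rh=\sum_{m=-4}^{4}\rh_{m}$ into the eigencomponents of $L_{z}$ about the $z$-axis, so that $L_{z}\rh_{m}=im\,\rh_{m}$ and therefore $L_{z}\partial_{z}\rh=\sum_{m}im\,\partial_{z}\rh_{m}$. This vanishes if and only if $\partial_{z}\rh_{m}=0$ for every $m\ne0$. A nonzero degree-four harmonic polynomial that is independent of $z$ is a planar harmonic, hence lives only in the modes $m=\pm4$; consequently the condition forces $\rh_{m}=0$ for $m\in\{\pm1,\pm2,\pm3\}$, that is $\rh=\rh_{0}+\rh_{4}+\rh_{-4}$, keeping only the modes $m\equiv0\pmod 4$.

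Keeping only modes $m\equiv0\pmod4$ means precisely that $\rh$ is invariant under the quarter-turn about $z$, so $G_{\bH}\supseteq\ZZ_{4}$ (with axis $z$). Since $G_{\bt}=\OO(2)$ already contains a $\DD_{4}$ about $z$, it remains to upgrade $\ZZ_{4}$ to $\DD_{4}$ for the pair: either one checks directly that $\rh_{0}+\rh_{4}+\rh_{-4}$ is fixed by a suitable horizontal $\pi$-rotation, which lies in $\OO(2)=G_{\bt}$, or one invokes that $[\ZZ_{4}]$ is not a symmetry class of $\HH^{4}$ (\autoref{fig:poset-H4}), so that $\ZZ_{4}\subseteq G_{\bH}$ already entails $\DD_{4}\subseteq G_{\bH}$. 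Either way $G_{(\bH,\bt)}=G_{\bH}\cap\OO(2)\supseteq\DD_{4}$, i.e. $(\bH,\bt)$ is at least tetragonal. Conversely, if $(\bH,\bt)$ is at least tetragonal then, as $G_{(\bH,\bt)}\subseteq G_{\bt}=\OO(2)$, it contains a $\DD_{4}$ with principal axis $z$, whence $\bH$ is $\ZZ_{4}$-invariant, $\rh=\rh_{0}+\rh_{4}+\rh_{-4}$, and $\tr(\bH\times\bt)=0$ by the computation above. The main obstacle is the second step: extracting the clean identity $\tr(\bH\times\bt)\propto L_{z}\partial_{z}\rh$ from the determinant and Laplacian formulas, since the entire mode analysis rests on it; the $\ZZ_{4}\to\DD_{4}$ upgrade is the secondary subtlety.
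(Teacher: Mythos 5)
Your proof is correct, and for the substantive (converse) direction it takes a genuinely different route from the paper's, so a comparison is worthwhile. Both proofs normalize $G_{\bt}=\OO(2)$ about the $z$-axis, translate the condition into the polynomial equation $\triangle\bigl(\det(\xx,\nabla\rh,\nabla\rt)\bigr)=0$, and both finish identically: the solution space consists exactly of the $\ZZ_{4}$-invariant harmonics, and $\ZZ_{4}$-invariance is upgraded to the class $[\DD_{4}]$ (legitimate, since no symmetry class of $\HH^{4}$ sits strictly between them; the paper makes the very same implicit step when it writes ``invariant by $\ZZ_{4}$ and has thus at least the symmetry $[\DD_{4}]$''). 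The difference is how the solution space is identified. The paper solves the linear equation in the nine components of $\rh$ outright and exhibits the explicit three-parameter family $k_{1}\bigl(6z^{2}(x^{2}+y^{2})-(x^{4}+y^{4})+2z^{4}\bigr)+k_{2}\bigl(6x^{2}y^{2}-(x^{4}+y^{4})\bigr)+k_{3}\,xy(x^{2}-y^{2})$, then inspects it. You instead prove the operator identity $\det\bigl(\xx,\nabla\rh,\nabla(2z^{2}-x^{2}-y^{2})\bigr)=6z\,L_{z}\rh$, with $L_{z}:=x\partial_{y}-y\partial_{x}$ --- which I have checked and which is exact, not merely proportional --- so that $\tr(\bH\times\bt)\propto L_{z}\partial_{z}\rh$, and then argue by $L_{z}$-eigenmodes. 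This buys a structural explanation of the paper's answer (the solutions are precisely the modes $m\in\{0,\pm4\}$) with no brute-force solving; the price is a handful of standard facts used silently or nearly so: that $\triangle$ and $\partial_{z}$ commute with $L_{z}$ (so each mode $\rh_{m}$ is itself harmonic), and that the mode spaces of degree-four harmonics are one-dimensional, with the $m=\pm4$ modes spanned by $(x\pm iy)^{4}$ --- the latter is what gives $\partial_{z}\rh_{\pm4}=0$, which your forward direction quietly needs when it concludes $\tr(\bH\times\bt)=0$ from $\rh=\rh_{0}+\rh_{4}+\rh_{-4}$. Note finally that the paper's forward direction is different and slicker: a covariant of an at-least-tetragonal pair is at least tetragonal, and a third-order totally symmetric tensor whose class dominates $[\DD_{4}]$ must be isotropic by proposition~\ref{prop:symmetry-classes}, hence vanishes by remark~\ref{rem:vanishing-isotropic-symmetric-odd-order-tensors}; that one-line template is reused throughout Sections~\ref{sec:covariant-criteria} and~\ref{sec:H4-symmetry-classes}, whereas your mode computation, while self-contained, is local to this lemma.
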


\begin{proof}
  Suppose first that $(\bH,\bt)$ is at least tetragonal, then $\tr (\bH \times \bt)$ is at least tetragonal and since it is a third-order symmetric tensor, it must be isotropic by proposition~\ref{prop:symmetry-classes} and thus vanishes by Remark~\ref{rem:vanishing-isotropic-symmetric-odd-order-tensors}. To prove the converse, let $\rp, \rt$ be the polynomial representatives of $\bH,\bt$. Then, the linear equation $\tr (\bH \times \bt) = 0$ reads $\triangle (\det (\xx, \nabla \rp, \nabla \rt)) = 0$, where $\triangle$ is the Laplacian. Without loss of generality we can assume that $G_{\bt} = \OO(2)$ and thus that
  \begin{equation*}
    \rt =  \lambda (x^{2} + y^{2}) + \mu z^{2}, \qquad \lambda \ne \mu
  \end{equation*}
  and the solution is
  \begin{equation*}
    \rp = k_{1} \left(6z^{2}(x^{2} + y^{2}) - (x^{4} + y^{4}) + 2z^{4}\right) + k_{2} \left(6x^{2}y^{2} - (x^{4} + y^{4})\right) + k_{3} xy\left(x^{2} - y^{2}\right),
  \end{equation*}
  which is invariant by $\ZZ_{4}$ and has thus at least the symmetry $[\DD_{4}]$. Hence, $(\bH,\bt)$ is at least tetragonal. This achieves the proof.
\end{proof}

The cubic symmetry appears, in practice, as the more subtle to deal with. We will formulate, in the next lemma, more precise statements which allow to detect the symmetry class of a pair $(\bH,\bt)$ when $\bH$ is cubic and $\bt$ is transversely isotropic. In that case, we know from~\cite{Oli2017} that the symmetry class of a pair $(\bH,\bt)$ is one of the following : triclinic, monoclinic, orthotropic, trigonal or tetragonal.

\begin{lem}\label{lem:cube-orientation}
  Let $\bH$ be a cubic fourth-order harmonic tensor and $\bt\in\Sym^{2}(\RR^{2})$ be transversely isotropic. Then
  \begin{enumerate}
    \item $(\bH,\bt)$ is tetragonal if and only if
          \begin{equation*}
            \tr(\bH \times \bt) = 0;
          \end{equation*}
    \item $(\bH,\bt)$ is trigonal if and only if
          \begin{equation*}
            \tr(\bH \times \bt) \ne 0, \quad \text{and} \quad \bt \times (\bH\2dots\bt)=0;
          \end{equation*}
    \item $(\bH,\bt)$ is orthotropic if and only if
          \begin{equation*}
            \bt \times (\bH\2dots\bt) \ne 0, \quad \text{and} \quad \tr \left(\bt \times (\bH\2dots\bt)\right)=0;
          \end{equation*}
    \item $(\bH,\bt)$ is monoclinic if and only if
          \begin{equation*}
            \tr(\bt \times (\bH\2dots\bt)) \ne 0, \quad \text{and} \quad \tr(\bt \times (\bH\2dots\bt)) \times \tr(\bt \times (\bH\2dots\bt)^{2}) =0.
          \end{equation*}
  \end{enumerate}
\end{lem}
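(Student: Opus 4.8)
The plan is to fix a frame adapted to the cube and reduce everything to an explicit computation in the single parameter $\nn$, the axis of $\bt$. Since $\bH$ is cubic, I may assume $G_{\bH}=\octa$ with the cube aligned with the canonical basis, so that $\bH$ is a nonzero multiple of the standard cubic harmonic, whose only nonvanishing components are $H_{iiii}=\tfrac25$ and $H_{iijj}=-\tfrac15$ $(i\ne j)$. Writing $\bt'=\nn\otimes\nn-\tfrac13\bq$ for the harmonic part of $\bt$, every covariant in the statement depends only on $\nn$: the cross products ignore the spherical part of $\bt$ by Remark~\ref{rem:Sxq=0}, and $\bH\2dots\bq=0$ because $\bH$ is traceless. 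The decisive structural fact is that $G_{(\bH,\bt)}=\octa\cap\OO(2)_{\nn}$ depends only on the $\octa$-orbit of $\nn$; computing this intersection shows that $\nn$ on a $4$-fold axis gives $\DD_4$ (tetragonal), on a $3$-fold axis gives $\DD_3$ (trigonal), on a $2$-fold axis gives $\DD_2$ (orthotropic), at a generic point of a mirror plane gives $\ZZ_2$ (monoclinic), and at a generic direction gives $\triv$ (triclinic). These are exactly the five classes allowed by~\cite{Oli2017}, and I would organize the four items as a decision tree, each one presupposing that the earlier vanishing conditions have failed.

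For item (1) I would invoke Lemma~\ref{lem:tr(Hxd)=0}: $(\bH,\bt)$ is at least tetragonal iff $\tr(\bH\times\bt)=0$. Since $\bt$ is transversely isotropic but not isotropic, the pair can be neither isotropic, nor $\OO(2)$, nor octahedral, so its class is $\preceq[\DD_4]$; hence ``at least tetragonal'' coincides with ``tetragonal'', and $\tr(\bH\times\bt)=0$ cuts out precisely the $4$-fold axes. The remaining items hinge on the auxiliary second-order covariant $\bb:=\bH\2dots\bt$, whose components are $H_{ijkl}n_kn_l$ by tracelessness, giving the clean expression $\bb=\mathrm{diag}(n_1^2,n_2^2,n_3^2)-\tfrac25\,\nn\otimes\nn-\tfrac15\,\bq$. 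For (2) I would show $\bb$ is transversely isotropic about $\nn$ exactly when $\mathrm{diag}(n_i^2)$ has the form $\gamma\,\nn\otimes\nn+\delta\,\bq$, which happens only on the $3$- and $4$-fold axes; since $\bt$ is transversely isotropic, Lemma~\ref{lem:axb=0} converts this into the criterion $\bt\times\bb=0$, and combining with $\tr(\bH\times\bt)\ne0$ (which removes the $4$-fold case) yields trigonal. For (3), Lemma~\ref{lem:orthotropic-pair} identifies $\tr(\bt\times\bb)=0$ with $[\bt,\bb]=0$, i.e.\ $n_in_j(n_i^2-n_j^2)=0$ for all $i,j$; together with $\bt\times\bb\ne0$ (excluding the $3$- and $4$-fold axes) this leaves precisely the $2$-fold axes, hence orthotropic. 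The forward directions in (2) and (3) are immediate, as the stabilizer $\DD_3$ (resp.\ $\DD_2$) forces $\bb$ to be transversely isotropic (resp.\ to commute with $\bt$).

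For (4) I would compute the first-order covariants $\pmb{\omega}=\tr(\bt\times\bb)$ and $\pmb{\omega}_2=\tr(\bt\times\bb^2)$ as the axial vectors of $[\bt,\bb]$ and $[\bt,\bb^2]$. Using the expression for $\bb$ one finds $\pmb{\omega}=\big(n_2n_3(n_3^2-n_2^2),\,n_3n_1(n_1^2-n_3^2),\,n_1n_2(n_2^2-n_1^2)\big)$ and, after simplifying with $n_1^2+n_2^2+n_3^2=1$, that $\pmb{\omega}_2=\mathrm{diag}(\tfrac15-n_1^2,\tfrac15-n_2^2,\tfrac15-n_3^2)\,\pmb{\omega}$, whence the factorization
\[
  \pmb{\omega}\times\pmb{\omega}_2=\Big(n_1n_2n_3\,(n_1^2-n_2^2)(n_2^2-n_3^2)(n_3^2-n_1^2)\Big)\,\nn .
\]
Thus $\pmb{\omega}\ne0$ together with $\pmb{\omega}\times\pmb{\omega}_2=0$ holds exactly at the generic points of the mirror planes $n_i=0$ and $n_i^2=n_j^2$, i.e.\ precisely in the monoclinic case; the forward direction follows because $\pmb{\omega}$ and $\pmb{\omega}_2$ are fixed by the order-two stabilizer and hence both parallel to its axis.

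I expect the main obstacle to be the converse directions, which all amount to reading off the $\octa$-orbit of $\nn$ from the vanishing of a covariant. The crux is the two explicit computations—the closed form for $\bb$ and, above all, the factorization $\pmb{\omega}\times\pmb{\omega}_2=\big(n_1n_2n_3\prod_{i<j}(n_i^2-n_j^2)\big)\,\nn$, whose zero locus is exactly the union of the cube's mirror planes. Keeping the decision-tree bookkeeping correct (each item negating the earlier conditions) and matching each orbit type to its intersection class are the remaining structural points; once the two formulas are in hand, each equivalence is a short case check.
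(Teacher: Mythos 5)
Your proposal is correct and follows essentially the same route as the paper's proof: reduce to the octahedral normal form $G_{\bH}=\octa$, parametrize $\bt$ by its unit axis $\nn$, convert each covariant condition into polynomial equations in $\nn$ (using Lemmas~\ref{lem:tr(Hxd)=0}, \ref{lem:axb=0} and \ref{lem:orthotropic-pair}), and identify the zero loci with the $4$-fold, $3$-fold, $2$-fold axes and mirror planes of the cube. Your closed form for $\bH\2dots\bt$ and the factorization $\pmb{\omega}\times\pmb{\omega}_{2}=\bigl(n_{1}n_{2}n_{3}\prod_{i<j}(n_{i}^{2}-n_{j}^{2})\bigr)\nn$ are a compact repackaging of the paper's systems \eqref{eq:sol1}--\eqref{eq:sol4}, and your computation of the stabilizer $\octa\cap G_{\bt}$ for each orbit type plays the role of the paper's explicit exhibition of the invariance rotations in each sufficiency step.
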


\begin{rem}\label{rem:cub-orientation}
  The conditions in $(3)$ are equivalent to the fact that the pair $(\bt,\bH\2dots\bt)$ is orthotropic, whereas in $(4)$ they are equivalent to the fact that the pair $(\bt,\bH\2dots\bt)$ is monoclinic. The cases $(1)$ and $(2)$ cover all the cases where the pair $(\bt,\bH\2dots\bt)$ is transversely isotropic.
\end{rem}

\begin{proof}
  We will first investigate the four equations in lemma~\ref{lem:cube-orientation}. Without loss of generality, we can assume that $G_{\bH}=\octa$ and thus that the polynomial form of $\bH$ is given (up to a scaling factor) by
  \begin{equation*}
    \rp(x,y,z) = x^{4} + y^{4} + z^{4} - 3x^{2}y^{2} - 3x^{2}z^{2} - 3y^{2}z^{2}.
  \end{equation*}
  Now, every transversely isotropic second-order homogeneous polynomial $\rt$ writes as
  \begin{equation*}
    \rt(x,y,z) =(\mu - \lambda)(\nn \cdot \xx)^{2} + \lambda \qq,
  \end{equation*}
  where $\lambda \ne \mu$, $\nn = (n_{1},n_{2},n_{3})$ is a unit vector and $\qq = x^{2}+y^{2}+z^{2}$. We get thus:
  \begin{itemize}
    \item $\tr(\bH \times \bt) =0$ if and only if
          \begin{equation}\label{eq:sol1}
            n_{1}n_{2} = n_{1}n_{3} = n_{2}n_{3} = 0;
          \end{equation}
    \item $\bt \times (\bH\2dots\bt) =0$ if and only if
          \begin{equation}\label{eq:sol2}
            \begin{split}
              n_{1} n_{2} \left(n_{1}^{2}-n_{3}^{2}\right) & = n_{1} n_{3} \left(n_{1}^{2}-n_{2}^{2}\right) = n_{2} n_{3} \left(n_{1}^{2}-n_{2}^{2}\right) =0,
              \\
              n_{1} n_{2} \left(n_{2}^{2}-n_{3}^{2}\right) & = n_{1} n_{3} \left(n_{2}^{2}-n_{3}^{2}\right) = n_{2} n_{3} \left(n_{3}^{2}-n_{1}^{2}\right) =0.
            \end{split}
          \end{equation}
    \item $\tr \left(\bt \times (\bH\2dots\bt)\right) =0$ if and only if
          \begin{equation}\label{eq:sol3}
            n_{1} n_{2} \left(n_{1}^{2}-n_{2}^{2}\right) = n_{1} n_{3} \left(n_{1}^{2}-n_{3}^{2}\right) = n_{2} n_{3} \left(n_{2}^{2}-n_{3}^{2}\right) = 0;
          \end{equation}
    \item $\tr(\bt \times (\bH\2dots\bt)) \times \tr(\bt \times (\bH\2dots\bt)^{2}) =0$ if and only if
          \begin{equation}\label{eq:sol4}
            \begin{split}
              n_{1}^{2}n_{2}n_{3}(n_{2}^{2}-n_{3}^{2})(n_{1}^{2}-n_{3}^{2})(n_{1}^{2}-n_{2}^{2}) = 0, \\
              n_{1}n_{2}^{2}n_{3}(n_{2}^{2}-n_{3}^{2})(n_{1}^{2}-n_{3}^{2})(n_{1}^{2}-n_{2}^{2}) = 0, \\
              n_{1}n_{2}n_{3}^{2}(n_{2}^{2}-n_{3}^{2})(n_{1}^{2}-n_{3}^{2})(n_{1}^{2}-n_{2}^{2}) = 0.
            \end{split}
          \end{equation}
  \end{itemize}
  Note also that $\eqref{eq:sol1} \implies \eqref{eq:sol2} \implies \eqref{eq:sol3} \implies \eqref{eq:sol4}$. We will now prove each statement of lemma~\ref{lem:cube-orientation}.

  (1) Suppose first that the pair $(\bH,\bt)$ is tetragonal, then $\tr(\bH \times \bt) = 0$ by lemma~\ref{lem:tr(Hxd)=0}. Conversely, if $\tr(\bH \times \bt) = 0$ holds, then we get \eqref{eq:sol1} and $\nn$ is collinear to either
  \begin{equation*}
    \ee_{1}, \qquad \ee_{2}, \qquad \ee_{3}.
  \end{equation*}
  Then, both $\bH$ and $\bt$ are invariant by the rotation by $\pi/2$ around $\nn$ and the pair $(\bH,\bt)$ is tetragonal.

  (2) Suppose now that the pair $(\bH,\bt)$ is trigonal, then the pair of second-order covariants $(\bt, \bH\2dots\bt)$ is at least trigonal an thus transversely isotropic by proposition~\ref{prop:symmetry-classes}. Thus $\bt \times (\bH\2dots\bt)=0$ by lemma~\ref{lem:axb=0}. Moreover, $\tr(\bH \times \bt) \ne 0$ by point (1). Conversely, if $\bt \times (\bH\2dots\bt)=0$ and $\tr(\bH \times \bt) \ne 0$, then we get \eqref{eq:sol2} with at least $n_{i}n_{j} \ne 0$ for a pair $(i,j)$ ($i \ne j$). In that case, $\nn$ is collinear to either
  \begin{equation*}
    \ee_{1}+\ee_{2}+\ee_{3}, \quad \ee_{1}-\ee_{2}+\ee_{3}, \quad \ee_{1}+\ee_{2}-\ee_{3}, \quad \ee_{1}-\ee_{2}-\ee_{3}.
  \end{equation*}
  Then, both $\bH$ and $\bt$ are invariant by the rotation by angle $2\pi/3$ around $\nn$ and the pair $(\bH,\bt)$ is trigonal.

  (3) Suppose now that the pair $(\bH,\bt)$ is orthotropic, then the first order covariant $\tr \left(\bt \times (\bH\2dots\bt)\right)$ is at least orthotropic and thus vanishes. Moreover $\bt \times (\bH\2dots\bt) \ne 0$ by points (1) and (2). Conversely, if $\tr \left(\bt \times (\bH\2dots\bt)\right)=0$ and $\bt \times (\bH\2dots\bt) \ne 0$, then we get \eqref{eq:sol3} with at least $n_{i} = 0$ for some $i \in\set{1,2,3}$. In that case $\nn$ is collinear to either
  \begin{equation*}
    \ee_{1}+\ee_{2},\quad \ee_{1}-\ee_{2},\quad \ee_{1}+\ee_{3},\quad \ee_{1}-\ee_{3},\quad \ee_{2}+\ee_{3},\quad \ee_{2}-\ee_{3}.
  \end{equation*}
  Then, both $\bH$ and $\bt$ are invariant by the rotation by angle $\pi$ (a second-order rotation) around a pair of axes $\ee_{i}\pm\ee_{j}$ and the pair $(\bH,\bt)$ is orthotropic.

  (4) Finally, suppose that the pair $(\bH,\bt)$ is monoclinic, then the pair of first-order covariants $(\tr(\bt \times (\bH\2dots\bt)),\tr(\bt \times (\bH\2dots\bt)^{2}))$ is at least monoclinic and thus collinear. Moreover, $\tr(\bt \times \bH\2dots\bt) \ne 0$ by $(1)$, $(2)$ and $(3)$. Conversely, if
  \begin{equation*}
    \tr(\bt \times (\bH\2dots\bt)) \times \tr(\bt \times (\bH\2dots\bt)^{2}) =0,
  \end{equation*}
  then we get \eqref{eq:sol4}. Since $\tr(\bt \times (\bH\2dots\bt)) \ne 0$ cases $(1)$, $(2)$ and $(3)$ are excluded and thus the pair $(\bH,\bt)$ is either monoclinic or triclinic, so we are reduced to show that it is monoclinic. If $n_{i}=0$ for some $i$, both $\bH$ and $\bt$ are invariant by the second-order rotation around $\ee_{i}$. Otherwise, we get $n_{i}=\pm n_{j}$ for a pair $(i,j)$. In that case, both $\bH$ and $\bt$ are invariant by the second-order rotation around $n_{i}\ee_{i}-n_{j}\ee_{j}$. This achieves the proof.
\end{proof}

We will end this section by formulating criteria for detecting orthotropic and monoclinic symmetry for a general harmonic tensor $\bH\in \HH^{4}(\RR^{3})$, using second-order covariants.

\begin{lem}\label{lem:orthotropic-criteria}
  Let $\bc\in \Sym^{2}(\RR^{3})$ be orthotropic and $\bH\in \HH^{4}(\RR^{3})$. Then,
  \begin{equation*}
    G_{(\bc, \bH \2dots \bc, \bH \2dots \bc^{2})} = G_{(\bH,\bc)}.
  \end{equation*}
  In particular, $(\bH,\bc)$ is orthotropic (resp. monoclinic) if and only if
  \begin{equation*}
    (\bc, \bH \2dots \bc, \bH \2dots \bc^{2})
  \end{equation*}
  is orthotropic (resp. monoclinic).
\end{lem}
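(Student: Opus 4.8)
The plan is to establish the group equality $G_{(\bc,\, \bH\2dots\bc,\, \bH\2dots\bc^{2})} = G_{(\bH,\bc)}$ by a double inclusion, after which the ``in particular'' statement is immediate: equal symmetry groups have equal conjugacy classes, and both ``orthotropic'' and ``monoclinic'' are conjugacy-class conditions ($[\DD_{2}]$ and $[\ZZ_{2}]$, respectively, by proposition~\ref{prop:symmetry-classes}). One inclusion is the trivial one: the double contraction $\bH\2dots\bc$, the matrix square $\bc^{2}$ and $\bH\2dots\bc^{2}$ are all $\SO(3)$-equivariant operations, so any $g$ fixing both $\bH$ and $\bc$ automatically fixes $\bc$, $\bH\2dots\bc$ and $\bH\2dots\bc^{2}$; hence $G_{(\bH,\bc)} \subseteq G_{(\bc,\, \bH\2dots\bc,\, \bH\2dots\bc^{2})}$.

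For the reverse inclusion I would take $g$ fixing $\bc$, $\bH\2dots\bc$ and $\bH\2dots\bc^{2}$ and show it fixes $\bH$. Since $\bc$ is orthotropic, I may choose the eigenbasis of $\bc$, so that $\bc = \mathrm{diag}(\lambda_{1},\lambda_{2},\lambda_{3})$ with distinct eigenvalues; then $G_{\bc} = \DD_{2}$, and $g = \mathrm{diag}(\epsilon_{1},\epsilon_{2},\epsilon_{3})$ with $\epsilon_{i}=\pm 1$. The next step exploits lemma~\ref{lem:q-a-a2}: as $\bc$ is orthotropic, $\bq,\bc,\bc^{2}$ are linearly independent and therefore span the three-dimensional space of diagonal symmetric tensors, in particular each $\be_{mm}$. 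Because $\bH$ is harmonic we have $\bH\2dots\bq = 0$, so each $\bH\2dots\be_{mm}$ is a linear combination of $\bH\2dots\bc$ and $\bH\2dots\bc^{2}$; consequently $g$ fixes every $\bH\2dots\be_{mm}$ for $m=1,2,3$.

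The decisive step is then a short index argument. In the chosen basis a diagonal rotation acts on components by $(g\star\bT)_{i_{1}\cdots i_{n}} = \epsilon_{i_{1}}\cdots\epsilon_{i_{n}}\,\bT_{i_{1}\cdots i_{n}}$, so $g$ fixes $\bH$ exactly when $H_{ijkl}=0$ for every quadruple with $\epsilon_{i}\epsilon_{j}\epsilon_{k}\epsilon_{l} = -1$, while $g$ fixing $\bH\2dots\be_{mm}$ means $(\bH\2dots\be_{mm})_{pq} = H_{pqmm} = 0$ (no summation on $m$) whenever $\epsilon_{p}\epsilon_{q} = -1$. Fix any quadruple $(i,j,k,l)$ with $\epsilon_{i}\epsilon_{j}\epsilon_{k}\epsilon_{l}=-1$. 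Since the four indices lie in $\set{1,2,3}$, by the pigeonhole principle two of them coincide, say equal to $m$; writing $p,q$ for the remaining two, total symmetry of $\bH$ gives $H_{ijkl} = H_{pqmm} = (\bH\2dots\be_{mm})_{pq}$, and $\epsilon_{p}\epsilon_{q} = \epsilon_{i}\epsilon_{j}\epsilon_{k}\epsilon_{l} = -1$ because $\epsilon_{m}^{2}=1$. Hence $H_{ijkl}=0$, and therefore $g\star\bH = \bH$.

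I expect the index count of the last paragraph to be the only genuinely substantive point, and the key observation that tames it is that a fourth-order tensor always has a repeated index by pigeonhole: this is precisely what lets a contraction against some $\be_{mm}$ ``see'' every component of $\bH$, so no component can survive. Since the argument never distinguishes the particular $g \in \DD_{2}$, it covers all four elements uniformly and yields $G_{(\bc,\, \bH\2dots\bc,\, \bH\2dots\bc^{2})} \subseteq G_{(\bH,\bc)}$, completing the equality and hence the lemma.
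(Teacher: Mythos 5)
Your proposal is correct and follows essentially the same route as the paper: the trivial inclusion, passing to the eigenbasis of $\bc$ so that $G_{\bc}=\DD_{2}$, using the span of $(\bq,\bc,\bc^{2})$ together with $\bH\2dots\bq=0$ to conclude that $g$ fixes each $\bH\2dots\be_{mm}$, and then a componentwise check that $\bH$ itself is fixed. The only difference is cosmetic: the paper specializes WLOG to the second-order rotation around $\ee_{3}$ and lists the six vanishing components $H_{1113},H_{1123},H_{1223},H_{1333},H_{2223},H_{2333}$ explicitly, whereas you treat all elements of $\DD_{2}$ uniformly via the sign character $\epsilon_{i}\epsilon_{j}\epsilon_{k}\epsilon_{l}$ and a pigeonhole argument on the repeated index — a tidy repackaging of the same final step.
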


\begin{proof}
  Note that the inclusion $G_{(\bH,\bc)} \subset G_{(\bc, \bH \2dots \bc, \bH \2dots \bc^{2})}$ is obvious, since $(\bc, \bH \2dots \bc, \bH \2dots \bc^{2})$ are covariants of the pair $(\bH,\bc)$. To prove the reverse inclusion, we can assume, without loss of generality, that $G_{\bc} = \DD_{2}$ (an thus that $\bc$ is diagonal). Let $g \in G_{(\bc, \bH \2dots \bc, \bH \2dots \bc^{2})} \subset G_{\bc}$. Then, $g$ is either the identity or a second-order rotation $r$ around either $\ee_{1}$, $\ee_{2}$, or $\ee_{3}$. Without loss of generality, we can suppose that $r$ is the rotation around $\ee_{3}$. Then, $\ee_{3}$ is a common eigenvector of $\bc$, $\bH \2dots \bc$ and $\bH \2dots \bc^{2}$. Moreover, since $\bH$ is harmonic, we have $\bH\2dots \bq = 0$ and thus
  \begin{equation*}
    [(\bH \2dots \bd)\ee_{3}] \times \ee_{3} = 0, \quad \text{for} \quad \bd = \bq,\bc,\bc^{2}.
  \end{equation*}
  Since $(\bq,\bc,\bc^{2})$ and $(\be_{11},\be_{22},\be_{33})$ generate the same three-dimensional vector space of diagonal matrices, this last condition can be recast as
  \begin{equation*}
    [(\bH \2dots \be_{ii})\ee_{3}] \times \ee_{3} = 0, \quad \text{for} \quad i=1,2,3.
  \end{equation*}
  and thus
  \begin{equation*}
    H_{1113} = H_{1123} = H_{1223} = H_{1333} = H_{2223} = H_{2333} = 0,
  \end{equation*}
  which means that $\bH$ is invariant under $r$, and so $r \in G_{(\bH, \bc)}$.
\end{proof}

Note that if $\ba$, $\bb$ are transversely isotropic, second-order symmetric tensors, then the pair $(\ba,\bb)$ is either monoclinic, orthotropic or transversely isotropic (see~\cite{Oli2017}), and we get the following corollary.

\begin{cor}\label{cor:transversely-isotropic-pair-criteria}
  Let $\ba$, $\bb$ be transversely isotropic second-order symmetric tensors and $\bH\in \HH^{4}(\RR^{3})$.
  \begin{enumerate}
    \item If $(\ba,\bb)$ is orthotropic, then
          \begin{equation*}
            G_{(\ba, \bb, \bH \2dots \ba, \bH \2dots \bb)} = G_{(\bH,\ba,\bb)}.
          \end{equation*}
          In particular, $(\bH,\ba,\bb)$ is orthotropic (resp. monoclinic) if and only if $(\ba, \bb, \bH \2dots \ba, \bH \2dots \bb)$ is orthotropic (resp. monoclinic).
    \item If $(\ba,\bb)$ is monoclinic, then
          \begin{equation*}
            G_{(\ba, \bb, \bH \2dots \ba, \bH \2dots \bb, \bH \2dots (\ba \bb)^{s})} = G_{(\bH,\ba,\bb)}.
          \end{equation*}
          In particular, $(\bH,\ba,\bb)$ is monoclinic if and only if
          \begin{equation*}
            (\ba, \bb, \bH \2dots \ba, \bH \2dots \bb, \bH \2dots (\ba \bb)^{s})
          \end{equation*}
          is monoclinic.
  \end{enumerate}
\end{cor}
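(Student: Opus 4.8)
The plan is to prove the two \emph{group} equalities stated in the corollary; the ``in particular'' assertions then follow at once, since two families with the same symmetry group have the same conjugacy class, hence the same symmetry class. In both parts the inclusion $G_{(\bH,\ba,\bb)}\subseteq G_{(\ldots)}$ is automatic, because every tensor listed in the family is a covariant of the pair $(\bH,\ba,\bb)$; the whole content lies in the reverse inclusion. The common strategy mirrors the proof of Lemma~\ref{lem:orthotropic-criteria}: take $g$ in the symmetry group of the covariant family, note that $g$ already fixes $\ba,\bb$ and so lies in $G_{(\ba,\bb)}$, and show that the harmonic conditions force $g$ to fix $\bH$ as well. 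Throughout I use $\bH\2dots\bq=0$ (harmonicity) and the fact that, for a second-order rotation $r$ about an axis $\ee_3$, a symmetric tensor $\bt$ is fixed by $r$ exactly when $[\bt\ee_3]\times\ee_3=0$, i.e. when $\ee_3$ is an eigenvector of $\bt$.

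For part (1) I would normalise $G_{(\ba,\bb)}=\DD_2$ with $\ba,\bb$ diagonal. As $\ba,\bb$ are transversely isotropic and the pair is orthotropic, they commute (Lemma~\ref{lem:orthotropic-pair}) and their distinguished axes are orthogonal; moreover $\bb\notin\langle\bq,\ba\rangle$ (else the pair would be transversely isotropic), so $\langle\bq,\ba,\bb\rangle$ is $3$-dimensional and, being contained in the $3$-dimensional space of diagonal tensors $\langle\be_{11},\be_{22},\be_{33}\rangle$, equals it. Now take $g\in G_{(\ba,\bb,\bH\2dots\ba,\bH\2dots\bb)}\subseteq\DD_2$; it suffices to treat $g=r$, a second-order rotation about a coordinate axis, say $\ee_3$. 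Then $\ee_3$ is a common eigenvector of $\bH\2dots\ba$, $\bH\2dots\bb$ and $\bH\2dots\bq=0$, so by linearity it is an eigenvector of $\bH\2dots\be_{ii}$ for $i=1,2,3$. Exactly as in Lemma~\ref{lem:orthotropic-criteria}, this yields $H_{1113}=H_{1123}=H_{1223}=H_{1333}=H_{2223}=H_{2333}=0$, hence $\bH$ is fixed by $r$ and $g\in G_{(\bH,\ba,\bb)}$.

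For part (2) I would normalise $G_{(\ba,\bb)}=\ZZ_2$, say about $\ee_3$, so that each member of the family is $\ZZ_2$-invariant and lies in the $4$-dimensional space $V:=\langle\be_{11},\be_{22},\be_{33},\be_{12}\rangle$. The crucial structural observation is that for a \emph{monoclinic} pair of transversely isotropic tensors, neither symmetry axis can be $\ee_3$: a tensor $\mathrm{diag}(\lambda,\lambda,\mu)$ commutes with every $\ZZ_2$-invariant tensor, which by Lemma~\ref{lem:orthotropic-pair} would make the pair at least orthotropic. Hence both axes lie in the plane $\ee_3^{\perp}$, and they are neither parallel (else the pair is transversely isotropic) nor orthogonal (else orthotropic). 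The key claim is then that $\{\bq,\ba,\bb,(\ba\bb)^{s}\}$ spans all of $V$. Granting this, a $g=r$ fixing $\bH\2dots\ba$, $\bH\2dots\bb$, $\bH\2dots(\ba\bb)^{s}$ (and $\bH\2dots\bq=0$) makes $\ee_3$ an eigenvector of $\bH\2dots\bd$ for every $\bd\in V$, in particular for $\bd=\be_{11},\be_{22},\be_{33}$, which again forces the six components above to vanish and hence $\bH$ to be fixed by $r$; thus $g\in G_{(\bH,\ba,\bb)}$.

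The main obstacle is precisely this spanning claim in part (2): that $(\ba\bb)^{s}$ is independent of $\bq,\ba,\bb$ inside $V$. This is where the extra covariant $\bH\2dots(\ba\bb)^{s}$ is genuinely needed — in the orthotropic case the tensors $\ba,\bb$ commute and $(\ba\bb)^{s}\in\langle\bq,\ba,\bb\rangle$ carries no new information, which is exactly why part (1) succeeds with only four covariants. I would verify the claim by placing the two axes in $\ee_3^{\perp}$ at an angle $\theta$ and computing the $4\times4$ determinant of $\bq,\ba,\bb,(\ba\bb)^{s}$ in the basis $(\be_{11},\be_{22},\be_{33},\be_{12})$; in a suitable parametrisation it factors as a nonzero multiple of $(\mu_a-\lambda_a)^2(\mu_b-\lambda_b)^2\cos\theta\sin^3\theta$, where $\mu-\lambda$ measures the (nonzero) anisotropy of each tensor. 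This vanishes precisely in the excluded degenerate cases $\theta\in\{0,\pi/2\}$ (parallel or orthogonal axes), confirming that in the monoclinic regime the four tensors are linearly independent and span $V$, which closes the argument.
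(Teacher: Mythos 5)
Your proof is correct, and both group equalities are established; the ``in particular'' statements follow as you say. Part (1) is essentially the paper's own argument: the paper likewise reduces to a basis where $\ba,\bb$ are diagonal, observes that $(\bq,\ba,\bb)$ and $(\be_{11},\be_{22},\be_{33})$ span the same three-dimensional space, and repeats the component computation of Lemma~\ref{lem:orthotropic-criteria}. Part (2) is where you genuinely diverge. The paper's proof is a reduction: by Lemma~\ref{lem:orthotropic-linear-combination} some linear combination $\bc$ of $\ba$ and $\bb$ is orthotropic; since $\ba^{2}\in\langle\bq,\ba\rangle$ and $\bb^{2}\in\langle\bq,\bb\rangle$ (Cayley--Hamilton for tensors with a double eigenvalue), $\bc^{2}$ lies in $\langle\bq,\ba,\bb,(\ba\bb)^{s}\rangle$, whence
\begin{equation*}
  G_{(\ba, \bb, \bH \2dots \ba, \bH \2dots \bb, \bH \2dots (\ba \bb)^{s})} \subset G_{(\bc,\, \bH \2dots \bc,\, \bH \2dots \bc^{2})} = G_{(\bH,\bc)}
\end{equation*}
by Lemma~\ref{lem:orthotropic-criteria}, and intersecting with $G_{(\ba,\bb)}$ finishes the proof. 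You instead prove a spanning statement: after normalising $G_{(\ba,\bb)}=\ZZ_{2}$ about $\ee_{3}$, the four tensors $\bq,\ba,\bb,(\ba\bb)^{s}$ span the full four-dimensional space of $\ZZ_{2}$-invariant symmetric tensors, so $\be_{11},\be_{22},\be_{33}$ lie in their span and the eigenvector/component argument of Lemma~\ref{lem:orthotropic-criteria} applies verbatim. Your supporting claims check out: neither axis can be $\ee_{3}$ (via Lemma~\ref{lem:orthotropic-pair}), the axes are neither parallel nor orthogonal, and in your parametrisation the determinant in the basis $(\be_{11},\be_{22},\be_{33},\be_{12})$ is indeed $\tfrac{1}{2}(\mu_{a}-\lambda_{a})^{2}(\mu_{b}-\lambda_{b})^{2}\cos\theta\sin^{3}\theta$, which vanishes exactly in the excluded degenerate configurations. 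Comparing the two: the paper's route is shorter and needs no new computation, recycling Lemmas~\ref{lem:orthotropic-linear-combination} and~\ref{lem:orthotropic-criteria}; yours is self-contained (it never needs an orthotropic auxiliary tensor) and makes quantitatively explicit why the covariant $\bH \2dots (\ba\bb)^{s}$ is indispensable precisely in the monoclinic case. One cosmetic point: your phrase ``each member of the family is $\ZZ_{2}$-invariant'' is only justified once you consider a nontrivial $g=r$ in the family's symmetry group --- which is all your proof actually uses --- not a priori.
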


\begin{proof}
  (1) Suppose that $(\ba,\bb)$ is orthotropic. Then there exists a basis $(\ee_{i})$ in which both $\ba$ and $\bb$ are diagonal. Moreover, $(\bq, \ba, \bb)$ and $(\be_{11},\be_{22},\be_{33})$ generate the same three-dimensional vector space and the proof is similar to that of lemma~\ref{lem:orthotropic-criteria}.

  (2) Suppose that $(\ba,\bb)$ is monoclinic. Then, by lemma~\ref{lem:orthotropic-linear-combination}, there exists a linear combination $\bc$ of $\ba$ and $\bb$ which is orthotropic. But then, $\bc^{2}$ is a linear combination of $\bq$, $\ba$, $\bb$ and $(\ba\bb)^{s}$ and thus
  \begin{equation*}
    G_{(\ba, \bb, \bH \2dots \ba, \bH \2dots \bb, \bH \2dots (\ba \bb)^{s})} \subset G_{(\bc, \bH \2dots \bc, \bH \2dots \bc^{2})} = G_{(\bH,\bc)},
  \end{equation*}
  by lemma~\ref{lem:orthotropic-criteria}. Therefore
  \begin{equation*}
    G_{(\ba, \bb, \bH \2dots \ba, \bH \2dots \bb, \bH \2dots (\ba \bb)^{s})} \subset G_{(\bH,\bc)} \cap G_{(\ba,\bb)} \subset G_{(\bH, \ba,\bb)}.
  \end{equation*}
  The reverse inclusion being obvious, this achieves the proof.
\end{proof}

\section{Characterization of the Symmetry Classes of \texorpdfstring{$\HH^{4}$}{H4}}
\label{sec:H4-symmetry-classes}

In this section, we formulate coordinate-free conditions using covariants up to order 5 that identify the symmetry class of a given tensor $\bH\in \HH^{4}$ and we prove that these conditions are both necessary and sufficient. The partially ordered set of symmetry classes for $\HH^{4}$ is the same as the one for the elasticity tensor, pictured in~\autoref{fig:poset-H4}. The notations used in this section are those introduced in section~\ref{sec:H4-covariant-algebra}. We will start by connecting $\cov_{1}(\bH)$ and $\cov_{2}(\bH)$ by the following lemma.

\begin{lem}\label{lem:cov1-cov2}
  Let $\bH\in \HH^{4}$ be a fourth order harmonic tensor. Then
  \begin{enumerate}
    \item $\cov_{1}(\bH) = \set{0}$ if and only if $\cov_{2}(\bH)$ is at least orthotropic;
    \item $\dim \cov_{1}(\bH) = 1$ if and only if $\cov_{2}(\bH)$ is monoclinic;
    \item $\dim \cov_{1}(\bH) = 3$ if and only if $\cov_{2}(\bH)$ is triclinic.
  \end{enumerate}
\end{lem}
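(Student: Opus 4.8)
The plan is to prove the equivalences in items (1) and (3) directly, and then to obtain item (2) for free by a counting argument. By Proposition~\ref{prop:cov1-symmetry-classes}, $\dim\cov_{1}(\bH)$ takes exactly the three values $0,1,3$, while by Proposition~\ref{prop:cov2-symmetry-classes} the symmetry class of $\cov_{2}(\bH)$ ranges over the totally ordered list $[\SO(3)]\succ[\OO(2)]\succ[\DD_{2}]\succ[\ZZ_{2}]\succ[\triv]$, which splits into three disjoint, exhaustive cases: \emph{at least orthotropic} (the classes $[\SO(3)],[\OO(2)],[\DD_{2}]$), \emph{monoclinic} ($[\ZZ_{2}]$) and \emph{triclinic} ($[\triv]$). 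Since both quantities partition the tensors $\bH$ into three classes, once I have shown $\dim\cov_{1}(\bH)=0 \Leftrightarrow \cov_{2}(\bH)$ at least orthotropic and $\dim\cov_{1}(\bH)=3 \Leftrightarrow \cov_{2}(\bH)$ triclinic, the remaining case $\dim\cov_{1}(\bH)=1 \Leftrightarrow \cov_{2}(\bH)$ monoclinic follows purely by elimination.

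For item (1), the key observation is that for any $\ba,\bb\in\cov_{2}(\bH)$ the vector $\tr(\ba\times\bb)$ is a first-order covariant, hence lies in $\cov_{1}(\bH)$. If $\cov_{1}(\bH)=\set{0}$, then $\tr(\ba\times\bb)=0$ for every such pair, so $\ba$ and $\bb$ commute by lemma~\ref{lem:orthotropic-pair}; a commuting family of symmetric second-order tensors admits a common orthonormal eigenbasis, so $G_{\cov_{2}(\bH)}$ contains a conjugate of $\DD_{2}$ and $\cov_{2}(\bH)$ is at least orthotropic. Conversely, assume $\cov_{2}(\bH)$ is at least orthotropic, so all its elements commute pairwise; in particular the second-order covariants $\bd_{2},\bc_{3},\bc_{4}$ of~\autoref{tab:cov-basis-H4} are simultaneously diagonalizable. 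Inspecting that table, each of the fourteen first-order generators (entries 10--23) has the form $\pmb{\varepsilon}\2dots(M)$, where $M$ is a matrix product of $\bd_{2},\bc_{3},\bc_{4}$; a product of pairwise commuting symmetric tensors is itself symmetric, so $\pmb{\varepsilon}\2dots(M)=0$. As every first-order covariant of $\HH^{4}$ is an invariant combination of these fourteen generators, we conclude $\cov_{1}(\bH)=\set{0}$.

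For item (3), suppose first $\dim\cov_{1}(\bH)=3$, so $\cov_{1}(\bH)=\RR^{3}$. For $\uu,\ww\in\cov_{1}(\bH)$ the symmetric product $\uu\odot\ww$ is a second-order covariant, and letting $\uu,\ww$ run through a basis of $\RR^{3}$ yields all of $\Sym^{2}(\RR^{3})$; hence $\cov_{2}(\bH)=\Sym^{2}(\RR^{3})$ has dimension $6$ and is triclinic. Conversely, if $\cov_{2}(\bH)$ is triclinic then $\dim\cov_{2}(\bH)=6$, so $\cov_{2}(\bH)=\Sym^{2}(\RR^{3})$ contains $\be_{12},\be_{13},\be_{23}$; the three covariants $\tr(\be_{12}\times\be_{13})$, $\tr(\be_{12}\times\be_{23})$, $\tr(\be_{13}\times\be_{23})$ then point respectively along $\ee_{1},\ee_{2},\ee_{3}$ and are nonzero, so $\cov_{1}(\bH)=\RR^{3}$. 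Together with item (1) this establishes the two equivalences required, and item (2) follows by the counting argument of the first paragraph.

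The delicate step is the converse of item (1): that an at least orthotropic $\cov_{2}(\bH)$ forces \emph{every} first-order covariant, not merely the commutator-type ones $\tr(\ba\times\bb)$, to vanish. This is where the explicit minimal basis of~\autoref{tab:cov-basis-H4} is essential, since one reads off that every first-order generator is an $\pmb{\varepsilon}$-contraction of a product of second-order covariants, which becomes symmetric exactly when those covariants commute. A further point to justify carefully is that the order-one component of $\cov(\HH^{4})$ is generated as a module over the invariant subalgebra precisely by the fourteen listed first-order generators; this holds because, in the bigraded algebra, a monomial in the generators has order one only when exactly one factor has order one and all the others are invariants.
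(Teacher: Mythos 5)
Your proof is correct, and it rests on the same two pillars as the paper's: the dimension--symmetry dictionaries of propositions~\ref{prop:cov1-symmetry-classes} and~\ref{prop:cov2-symmetry-classes}, and the special structural fact, read off from \autoref{tab:cov-basis-H4}, that all fourteen first-order generators of $\cov(\HH^{4})$ are $\pmb{\varepsilon}$-contractions of products of the second-order covariants $\bd_{2}$, $\bc_{3}$, $\bc_{4}$; your item (1) is, in substance, identical to the paper's, including the bigraded-module point you flag at the end, which the paper only records in a remark. The difference is organizational: the paper proves (1) and (2) directly and then obtains (3) essentially by elimination, whereas you prove (1) and (3) directly and obtain (2) by elimination. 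Concretely, the paper's proof of (2) excludes the triclinic case by noting that $\cov_{2}(\bH)=\Sym^{2}(\RR^{3})$ would yield two independent commutators (the same construction as your explicit $\tr(\be_{12}\times\be_{13})$, $\tr(\be_{12}\times\be_{23})$, $\tr(\be_{13}\times\be_{23})$), and proves the converse of (2) by arguing that commutators of tensors sharing a $\ZZ_{2}$-symmetry are fixed by that $\ZZ_{2}$, forcing $\dim\cov_{1}(\bH)\le 1$. Your reorganization avoids that symmetry-inheritance argument entirely, at the price of one extra (easy) direct step the paper does not need: if $\cov_{1}(\bH)=\RR^{3}$, the symmetric products $\uu\odot\ww$ of first-order covariants already span $\Sym^{2}(\RR^{3})$, so $\dim\cov_{2}(\bH)=6$ and $\cov_{2}(\bH)$ is triclinic by proposition~\ref{prop:cov2-symmetry-classes}. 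Your elimination step is legitimate because the three alternatives on each side are disjoint and exhaustive --- $\dim\cov_{1}(\bH)\in\set{0,1,3}$ by proposition~\ref{prop:cov1-symmetry-classes}, and the five classes in proposition~\ref{prop:cov2-symmetry-classes} are totally ordered --- so the two direct equivalences force the third.
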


\begin{rem}
  In the proof of lemma~\ref{lem:cov1-cov2}, some arguments are general and relies on Section~\ref{sec:covariants-symmetry-dimension}, others depend on the very special case that, $\cov_{1}(\HH^{4})$ is generated by commutators of elements in $\cov_{2}(\HH^{4})$ (as can be checked in~\autoref{tab:cov-basis-H4}).
\end{rem}

\begin{proof}
  (1) If $\cov_{1}(\bH) = \set{0}$, then each commutator of a pair of elements in $\cov_{2}(\bH)$ vanishes. Thus all the elements of $\cov_{2}(\bH)$ commute together and they can be represented by diagonal matrices in a common basis. All these second-order covariants are thus invariant by $\DD_{2}$ and $\cov_{2}(\bH)$ is thus at least orthotropic. Conversely, if $\cov_{2}(\bH)$ is at least orthotropic, then, since $\cov_{1}(\bH)$ is generated by the commutators of $\cov_{2}(\bH)$, it vanishes.

  (2) If $\dim \cov_{1}(\bH) = 1$, then, by (1) $\cov_{2}(\bH)$ is either monoclinic or triclinic (see proposition~\ref{prop:cov2-symmetry-classes}). However, if $\cov_{2}(\bH)$ was triclinic and thus of dimension 6 by proposition~\ref{prop:cov2-symmetry-classes}, then $\cov_{2}(\bH)=\Sym^{2}(\RR^{3})$ and we could build two linearly independent commutators which belong to $\cov_{1}(\bH)$, which would lead to a contradiction. Therefore, $\cov_{2}(\bH)$ is monoclinic. Conversely, if $\cov_{2}(\bH)$ is monoclinic, then, $\cov_{1}(\bH)$ (which is generated by commutators of $\cov_{2}(\bH)$) is at least monoclinic and thus monoclinic by (1).

  (3) If $\dim \cov_{1}(\bH) = 3$, then $\cov_{2}(\bH)$ is necessarily triclinic by (1) and (2). Conversely, if $\cov_{2}(\bH)$ is triclinic, then $\dim \cov_{1}(\bH) \ge 2$ by (1) and (2) and thus $\dim \cov_{1}(\bH) = 3$ by proposition~\ref{prop:cov1-symmetry-classes}.
\end{proof}

The harmonic tensor $\bH$, being a particular elasticity tensor, can be represented by a symmetric endomorphism of the space $\Sym^{2}(\RR^{3})$, the so-called \emph{Kelvin representation}~\cite{TKel1856}. In the orthonormal basis
\begin{equation*}
  (\be_{11},\be_{22},\be_{33},\be_{23}/\sqrt{2},\be_{13}/\sqrt{2},\be_{12}/\sqrt{2}),
\end{equation*}
where $\be_{ij}$ was defined in~\eqref{eq:S2-orthogonal-basis}, the matrix of its component writes:
\begin{equation}\label{eq:harmonic-generic-matrix-form}
  [\bH] = \begin{pmatrix}
    A               & \sqrt{2}\,B \\
    \sqrt{2}\,B^{t} & 2\,C        \\
  \end{pmatrix}
\end{equation}
where
\begin{equation*}
  A : =   \begin{pmatrix}
    \Lambda_{2}+ \Lambda_{3} & -\Lambda_{3}              & -\Lambda_{2}              \\
    -\Lambda_{3}             & \Lambda_{3} + \Lambda_{1} & -\Lambda_{1}              \\
    -\Lambda_{2}             & -\Lambda_{1}              & \Lambda_{2} + \Lambda_{1} \\
  \end{pmatrix},
\end{equation*}
\begin{equation*}
  B : =  \begin{pmatrix}
    -X_{1}      & Y_{1}+Y_{2} & -Z_{2}      \\
    - X_{2}     & -Y_{1}      & Z_{1}+Z_{2} \\
    X_{1}+X_{2} & -Y_{2}      & -Z_{1}      \\
  \end{pmatrix}, \quad
  C : =  \begin{pmatrix}
    -\,\Lambda_{1} & -Z_{1}         & - Y_{1}        \\
    -Z_{1}         & -\,\Lambda_{2} & -X_{1}         \\
    -Y_{1}         & -X_{1}         & -\,\Lambda_{3}
  \end{pmatrix},
\end{equation*}
and $B^{t}$ is the transpose matrix of $B$.


\subsection{Case I: \texorpdfstring{$\cov_{2}(\bH)$}{Cov2(H)} is isotropic}

\begin{thm}\label{thm:cov2-isotropic}
  Let $\bH\in \HH^{4}$ be a fourth order harmonic tensor. The following propositions are equivalent.
  \begin{enumerate}
    \item $\cov_{2}(\bH)$ is isotropic;
    \item $\bH$ is either \emph{cubic} ($\bd_{2} \ne 0$) or \emph{isotropic} ($\bd_{2} = 0$);
    \item $\bd_{2}$ is isotropic.
  \end{enumerate}
\end{thm}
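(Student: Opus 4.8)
The plan is to establish $(1)\Leftrightarrow(3)$ and then $(1)\Leftrightarrow(2)$; of the four implications involved, three are short and one carries all the difficulty.

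\emph{The equivalence $(1)\Leftrightarrow(3)$.} The implication $(1)\Rightarrow(3)$ is immediate, since $\bd_{2}$ is itself a second-order covariant of $\bH$ (entry $24$ of \autoref{tab:cov-basis-H4}), so $\bd_{2}\in\cov_{2}(\bH)$ and isotropy of the whole space forces isotropy of $\bd_{2}$. For $(3)\Rightarrow(1)$ I would run a collapse argument based on \autoref{tab:cov-basis-H4}. Write $\bd_{2}=\lambda\bq$. Since $\bH$ is harmonic, $\bH^{m}\2dots\bq=0$ for all $m\ge1$ (by induction, since $\bH\2dots\bq=\tr\bH=0$ and $\bH^{m}\2dots\bq=\bH\2dots(\bH^{m-1}\2dots\bq)$). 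Hence $\bc_{k}=\bH^{k-2}\2dots\bd_{2}=\lambda\,\bH^{k-2}\2dots\bq=0$ for every $k\ge3$, and in particular $\bd_{3}'=\tfrac12\bc_{3}=0$ by \autoref{rem:d3c3}. Now every first-order generator of \autoref{tab:cov-basis-H4} contains a factor $\bc_{k}$ ($k\ge3$), so all of them vanish and $\cov_{1}(\bH)=\{0\}$; and every order-two generator either contains such a factor (hence vanishes) or is $\bq$ or a power of $\bd_{2}$ (hence a multiple of $\bq$). Since the order-two covariants are spanned by the order-two generator values together with products of pairs of order-one generator values, and the latter vanish, we get $\cov_{2}(\bH)=\RR\bq$, which is $(1)$.

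\emph{The easy half of $(1)\Leftrightarrow(2)$.} For $(2)\Rightarrow(1)$: if $\bH$ is isotropic then $\bH=0$ (a nonzero element of the nontrivial irreducible $\HH^{4}$ cannot be $\SO(3)$-invariant), and every covariant of positive degree vanishes at $0$, so $\cov_{2}(\bH)=\RR\bq$. If $\bH$ is cubic, fix $G_{\bH}=\octa$; each element of $\cov_{2}(\bH)$ is $\octa$-invariant, and the only $\octa$-invariant symmetric second-order tensors are the multiples of $\bq$ (a single symmetric tensor has symmetry group conjugate to $\SO(3)$, $\OO(2)$ or $\DD_{2}$, and neither $\OO(2)$ nor $\DD_{2}$ contains $\octa$). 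Hence $\cov_{2}(\bH)=\RR\bq$ is isotropic.

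\emph{The hard direction $(1)\Rightarrow(2)$, and the main obstacle.} Assume $\cov_{2}(\bH)$ isotropic, equivalently $\bd_{2}=\lambda\bq$. A direct computation gives $\tr\bd_{2}=\sum_{ijkl}H_{ijkl}^{2}=\norm{\bH}^{2}$ (using total symmetry of $\bH$), so $\lambda=0$ yields $\bH=0$, the isotropic case. The delicate remaining case is $\lambda\neq0$, where I must prove $\bH$ is \emph{cubic}; this is the crux. Equivalently, I must show that the $\SO(3)$-invariant real-algebraic set $\set{\bH\in\HH^{4}:\bd_{2}'=0}$ consists of exactly the cubic tensors together with $0$. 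A dimension count is encouraging: $\bd_{2}'=0$ is five quadratic equations on the nine-dimensional space $\HH^{4}$, of expected codimension five, while the cone of cubic tensors has dimension $4=1+\dim(\SO(3)/\octa)$, so the two match. To convert this into a proof I would exploit that $\bd_{2}'$ is, up to a nonzero scalar, the \emph{unique} $\SO(3)$-equivariant quadratic map $\HH^{4}\to\HH^{2}$ (indeed $\HH^{2}$ occurs with multiplicity one in $\mathrm{Sym}^{2}\HH^{4}=\HH^{0}\oplus\HH^{2}\oplus\HH^{4}\oplus\HH^{6}\oplus\HH^{8}$), and then read $\bd_{2}'=0$ as a spectral condition on the Kelvin operator $[\bH]$ of~\eqref{eq:harmonic-generic-matrix-form} restricted to $\HH^{2}$, aiming to force the two-eigenvalue pattern (multiplicities $2$ and $3$) that characterizes octahedral symmetry.

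The obstacle is precisely the exclusion of a non-cubic $\bH$ with isotropic $\bd_{2}$: by \autoref{prop:symmetry-classes} such an $\bH$ would lie in one of $\OO(2)$, $\DD_{4}$, $\DD_{3}$, $\DD_{2}$, $\ZZ_{2}$, $\triv$. For $\OO(2)$, $\DD_{4}$, $\DD_{3}$ the space of invariant symmetric second-order tensors is two-dimensional, and one checks on a canonical representative that $\bd_{2}=\mathrm{diag}(a,a,b)$ with $a\neq b$. The genuinely subtle classes are $\DD_{2}$, $\ZZ_{2}$ and $\triv$, whose harmonic fixed spaces have dimensions $3$, $5$ and $9$; in particular the triclinic case is tantamount to the full statement and cannot be reduced to a single generic evaluation. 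Showing that the quadrics $\bd_{2}'=0$ carry no spurious component beyond the cubic cone is the heart of the matter, and is exactly where I expect the uniqueness of the covariant, together with the analysis of $[\bH]$ on $\HH^{2}$, to be indispensable.
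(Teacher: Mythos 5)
Your treatment of $(3)\Rightarrow(1)$ (the collapse of \autoref{tab:cov-basis-H4} once $\bd_{2}=\lambda\bq$, via $\bc_{k}=0$ for $k\ge 3$ and $\bd_{3}'=\tfrac{1}{2}\bc_{3}=0$) and of $(2)\Rightarrow(1)$ is correct, and for those implications it is essentially the paper's own argument. The genuine gap is the implication $(1)\Rightarrow(2)$, precisely the one you yourself call the crux: you never prove that $\bd_{2}'=0$ with $\bd_{2}\neq 0$ forces $\bH$ to be cubic. What you offer instead is a dimension count (which by itself cannot exclude spurious components of the real-algebraic set $\set{\bH\in\HH^{4} : \bd_{2}'=0}$), the multiplicity-one occurrence of $\HH^{2}$ in the symmetric square of $\HH^{4}$, and a stated intention to analyze the spectrum of the Kelvin operator~\eqref{eq:harmonic-generic-matrix-form} -- together with the admission that the classes $[\DD_{2}]$, $[\ZZ_{2}]$ and $[\triv]$ remain unhandled. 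As written, the proposal establishes $(1)\Leftrightarrow(3)$ and $(2)\Rightarrow(1)$, but not the theorem.

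The paper closes exactly this gap by invariant-theoretic means rather than spectral ones. From $(1)$, both $\bd_{2}$ and $\bd_{3}$ are isotropic (both lie in $\cov_{2}(\bH)$), so all the Boehler covariants~\eqref{eq:Boehler-covariants} collapse as in~\eqref{eq:isotropic-Boehler-covariant}, giving $3J_{4}=J_{2}^{2}$, $9J_{6}=J_{2}^{3}$ and $J_{5}=J_{7}=J_{8}=J_{9}=J_{10}=0$. It then invokes a degree-six identity valid for \emph{every} $\bH\in\HH^{4}$,
\begin{equation*}
  240\,J_{6}+39\,J_{2}^{3}+190\,J_{3}^{2}-198\,J_{2}J_{4}-540\,\tr({\bd_{3}}^{2})=0,
\end{equation*}
which under the preceding relations yields $30\,J_{3}^{2}=J_{2}^{3}$. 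If $J_{2}=0$, then $\norm{\bH}^{2}=J_{2}=0$ and $\bH$ is isotropic; otherwise the full system~\eqref{eq:cubic-syzigies} holds with $J_{2}\neq 0$, and by Proposition~5.3 of~\cite{AKP2014} these relations are necessary \emph{and sufficient} for $\bH$ to be cubic. That external characterization of the cubic class (or some substitute for it) is the indispensable ingredient your proposal lacks: without it, nothing in your argument rules out, say, a triclinic $\bH$ satisfying $\bd_{2}'=0$.
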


\begin{proof}
  We will show that $(1) \implies (2) \implies (3) \implies (1)$.

  Suppose first that $(1)$ is true, so that we have $\bd_{2} = \frac{1}{3} J_{2} \bq$ and $\bd_{3} = \frac{1}{3} J_{3} \bq$, since $\tr \bd_{2} = J_{2}$ and $\tr \bd_{3} = J_{3}$. Then, the covariants $\bd_{k}$ defined in~\eqref{eq:Boehler-covariants} write as
  \begin{equation}\label{eq:isotropic-Boehler-covariant}
    \begin{array} {lll}
      \bd_{2} = \frac{1}{3} J_{2} \bq, & \bd_{3} = \frac{1}{3} J_{3} \bq,        & \bd_{4} = \frac{1}{9} {J_{2}}^{2} \bq,
      \\
      \bd_{5} = 0,                     & \bd_{6} = \frac{1}{27} {J_{2}}^{3} \bq, & \bd_{7} = 0,
      \\
      \bd_{8} = 0,                     & \bd_{9}  = 0,                           & \bd_{10}  = 0,
    \end{array}
  \end{equation}
  and we get
  \begin{equation*}
    3J_{4} = {J_{2}}^{2}, \quad 9J_{6} = {J_{2}}^{3}, \quad J_{5} = J_{7} = J_{8} = J_{9} = J_{10} = 0.
  \end{equation*}
  Now, $\tr ({\bd_{3}}^{2})$ is an invariant of degree 6 and should be expressible as a linear combination of the invariants
  \begin{equation*}
    {J_{2}}^{3}, \quad {J_{3}}^{2}, \quad J_{2}J_{4}, \quad J_{6}.
  \end{equation*}
  In fact, the following relation, satisfied by any harmonic tensor $\bH \in \HH^{4}$ can be checked directly by computation:
  \begin{equation*}
    240 \, J_{6} + 39 \, {J_{2}}^{3} + 190 \, {J_{3}}^{2} - 198 \, J_{2}J_{4} - 540 \, \tr({\bd_{3}}^{2}) = 0.
  \end{equation*}
  When~\eqref{eq:isotropic-Boehler-covariant} are satisfied, this leads to the relation
  \begin{equation*}
    30\, {J_{3}}^{2} - {J_{2}}^{3} = 0.
  \end{equation*}
  If $J_{2} = 0$, then $\norm{\bH}^{2} = J_{2} = 0$, so that $\bH = 0$ is isotropic. Otherwise, we get
  \begin{equation}\label{eq:cubic-syzigies}
    \begin{aligned}
      3\,J_{4} & = {J_{2}}^{2}, & J_{5} & = 0, & 30\,{J_{3}}^{2} & = {J_{2}}^{3}, & 9\,J_{6} & = {J_{2}}^{3}, \\
      J_{7}    & = 0,           & J_{8} & = 0, & J_{9}           & = 0,           & J_{10}   & = 0,
    \end{aligned}
  \end{equation}
  and $J_{2} \ne 0$, which are, according to~\cite[Proposition 5.3]{AKP2014}, necessary and sufficient conditions for a tensor $\bH \in \HH^{4}$ to be cubic.

  The assertion $(2) \implies (3)$ is trivial because if $\bH$ is either cubic or isotropic, then $\bd_{2}$ as a covariant of $\bH$ inherits its symmetry and is thus necessarily isotropic.

  Suppose that $(3)$ is true, so that $\bd_{2} = \alpha \bq$, for some scalar $\alpha$. Then, using the fact that $\bH \2dots \bq = \tr_{34} \bH = 0$ we deduce first that
  \begin{equation*}
    \bc_{k} = 0, \qquad k \ge 3.
  \end{equation*}
  Now, using remark~\ref{rem:d3c3}, we have $\bc_{3} = 2\bd_{3}'$ and thus $\bd_{3} = \beta \bq$ for some scalar $\beta$. Since all first and second-order covariants in~\autoref{tab:cov-basis-H4} are build from $\bd_{2}$, $\bd_{3}$ and the $\bc_{k}$, we deduce that they are all isotropic. But every symmetric second-order covariant is obtained as a linear combination of either a product of an invariant with a second-order covariant from~\autoref{tab:cov-basis-H4} or the symmetric product of two first-order covariants from~\autoref{tab:cov-basis-H4}. Therefore, $\cov_{2}(\bH)$ is isotropic. This achieves the proof.
\end{proof}

\begin{rem}
  According to~\cite[Proposition 5.3]{AKP2014}, an harmonic tensor $\bH\in \HH^{4}$ is either cubic or isotropic if and only if the relations \eqref{eq:cubic-syzigies} are satisfied.
  Surprisingly, a consequence of theorem~\ref{thm:cov2-isotropic} (which uses this result) is that an harmonic tensor $\bH\in \HH^{4}$ is either cubic or isotropic if and only if $3\,J_{4} = {J_{2}}^{2}$ (one only needs to check the first relation of~\eqref{eq:cubic-syzigies}). Indeed, by theorem~\ref{thm:cov2-isotropic}, $\bH$ is at least cubic if and only if $\bd_{2}$ is isotropic, which is equivalent to $\bd_{2}^{\prime} = 0$. But
  \begin{equation*}
    \norm{\bd_{2}^{\prime}}^{2}  = \tr\left(\bd_{2} - \frac{1}{3}J_{2}\bq\right)^{2}
    = \tr \left(\bd_{2}^{2} - \frac{2}{3}J_{2}\bd_{2} + \frac{1}{9}J_{2}^{2}\bq\right)
    = \frac{1}{3}\left(3\,J_{4} - {J_{2}}^{2}\right).
  \end{equation*}
  The magic here is that in the proof of theorem~\ref{thm:cov2-isotropic}, the knowledge of an integrity basis of $\cov(\HH^{4})$ (given in~\autoref{tab:cov-basis-H4}) has been used (it was not known when~\cite{AKP2014} was written). This remark is important because it has always been pointed out by the elders that the knowledge of covariants, rather than invariants, is the cornerstone to understand the geometry of a representation. This statement is thus illustrated here.
\end{rem}


\subsection{Case II: \texorpdfstring{$\cov_{2}(\bH)$}{Cov2(H)} is transversely isotropic}

\begin{thm}\label{thm:cov2-transversely-isotropic}
  Let $\bH\in \HH^{4}$ be a fourth order harmonic tensor. The following propositions are equivalent.
  \begin{enumerate}
    \item $\cov_{2}(\bH)$ is transversely isotropic;
    \item $\bH$ is \emph{tetragonal}, \emph{trigonal} or \emph{transversely isotropic};
    \item the pair $(\bd_{2},\bc_{3})$ is transversely isotropic.
  \end{enumerate}
\end{thm}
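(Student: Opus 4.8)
The plan is to establish the cycle $(1)\Rightarrow(3)\Rightarrow(2)\Rightarrow(1)$, concentrating essentially all the difficulty in the single implication $(3)\Rightarrow(2)$, just as the isotropic case (Theorem~\ref{thm:cov2-isotropic}) isolates its one hard step. The implication $(1)\Rightarrow(3)$ is immediate: since $\bd_{2}$ and $\bc_{3}$ both belong to $\cov_{2}(\bH)$, which is transversely isotropic, the pair $(\bd_{2},\bc_{3})$ is at least transversely isotropic; moreover $\bd_{2}$ is not isotropic (else $\cov_{2}(\bH)$ would be isotropic by Theorem~\ref{thm:cov2-isotropic}), so $G_{\bd_{2}}$ is conjugate to $\OO(2)$ and $G_{(\bd_{2},\bc_{3})}\subseteq G_{\bd_{2}}$ forces the pair to be exactly transversely isotropic. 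For $(2)\Rightarrow(1)$, if $\bH$ is transversely isotropic, tetragonal or trigonal, then $G_{\bH}$ contains a rotation of order $\ge 3$ about some axis, say $\ee_{3}$; every $\ba\in\cov_{2}(\bH)$ satisfies $G_{\bH}\subseteq G_{\ba}$, hence is invariant under that rotation, and a symmetric second-order tensor invariant under a rotation of order $\ge 3$ has the form $\mathrm{diag}(\lambda,\lambda,\mu)$. Thus $\cov_{2}(\bH)$ is contained in the two-dimensional span of $\bq$ and $\mathrm{diag}(-1,-1,2)$, so $\dim\cov_{2}(\bH)\le 2$, while $\dim\cov_{2}(\bH)\ge 2$ because $\cov_{2}(\bH)$ is not isotropic (Theorem~\ref{thm:cov2-isotropic}, as $\bH$ is neither cubic nor isotropic); by Proposition~\ref{prop:cov2-symmetry-classes} it is transversely isotropic.

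The heart of the argument is $(3)\Rightarrow(2)$. Assuming $(\bd_{2},\bc_{3})$ transversely isotropic, I would adapt the orthonormal frame so that $\bd_{2}=\mathrm{diag}(\lambda,\lambda,\mu)$ with $\lambda\ne\mu$, the common axis being $\ee_{3}$; then $G_{\bH}\subseteq G_{\bd_{2}}=\OO(2)$, so by Proposition~\ref{prop:symmetry-classes} the class of $\bH$ lies in $\{[\OO(2)],[\DD_{4}],[\DD_{3}],[\DD_{2}],[\ZZ_{2}],[\triv]\}$, and it remains only to exclude the last three. Because $\bH$ is harmonic, $\bH\2dots\bq=0$, whence $\bc_{3}=\bH\2dots\bd_{2}=\bH\2dots\bd_{2}'$ is proportional to the second-order tensor $(\bH_{ij33})$. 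Decomposing $\bH$ into its components of weight $0,\pm1,\pm2,\pm3,\pm4$ under rotations about $\ee_{3}$, the weight-$1$ and weight-$2$ parts of $(\bH_{ij33})$ are $(\bH_{1333},\bH_{2333})$ and $(\bH_{1133}-\bH_{2233},\bH_{1233})$; these depend injectively on the weight-$1$ and weight-$2$ parts of $\bH$ (a routine check on the explicit monomial expressions). Hence $\bc_{3}$ transversely isotropic kills the weight-$1$ and weight-$2$ parts of $\bH$, leaving only the weights $0,\pm3,\pm4$.

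For such $\bH$, the only obstruction to transverse isotropy of the quadratic covariant $\bd_{2}=\tr_{13}\bH^{2}$ is its weight-$1$ part, which is a bilinear pairing of the weight-$3$ and weight-$4$ components of $\bH$; since $\bd_{2}$ is transversely isotropic this pairing vanishes, and I expect it to be nondegenerate, so that the weight-$3$ or the weight-$4$ part of $\bH$ must vanish. In either case $\bH$ is invariant under a rotation of order $3$ or $4$ about $\ee_{3}$, and since $G_{\bH}$ is a subgroup of $\OO(2)$ with the cubic and isotropic cases already excluded, its class is $[\DD_{3}]$, $[\DD_{4}]$ or $[\OO(2)]$, i.e. $\bH$ is trigonal, tetragonal or transversely isotropic. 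The main obstacle is precisely this last nondegeneracy: verifying that the weight-$1$ part of $\bd_{2}$ cannot vanish unless the weight-$3$ or weight-$4$ part of $\bH$ is zero. This is the one genuine computation in the proof, and it can be carried out either in the weight basis or, more concretely, by evaluating $\bd_{2}$ and $\bc_{3}$ on the matrix form~\eqref{eq:harmonic-generic-matrix-form} in the adapted frame.
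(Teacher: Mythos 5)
Your proposal is correct, but it takes a genuinely different route from the paper's. The paper proves the cycle $(1)\Rightarrow(2)\Rightarrow(3)\Rightarrow(1)$: its hard steps are $(1)\Rightarrow(2)$, done by writing $\bH$ in the matrix form~\eqref{eq:harmonic-generic-matrix-form} and extracting explicit trigonal and tetragonal normal forms, and $(3)\Rightarrow(1)$, done by showing that $\pmb{\tau}=\mathrm{diag}(1,1,-2)$ is an eigenvector of $\bH$, so that $(\bd_{2},\bc_{3},\bc_{4},\bc_{5})$ is transversely isotropic, and then invoking the full covariant basis of \autoref{tab:cov-basis-H4} (theorem~\ref{thm:H4-covariants}) to conclude that all of $\cov_{2}(\bH)$ is transversely isotropic. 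You run the cycle in the opposite direction and concentrate the work in $(3)\Rightarrow(2)$, argued by the $\SO(2)$-weight decomposition: transverse isotropy of $\bc_{3}$ kills the weight $\pm 1,\pm 2$ parts of $\bH$ (correct; the injectivity you invoke follows from Schur's lemma once the two maps are checked to be nonzero), and then the weight-$1$ part of $\bd_{2}$ is a pairing of the weight-$3$ and weight-$4$ parts of $\bH$ whose vanishing forces one of them to vanish, giving $\ZZ_{3}$- or $\ZZ_{4}$-invariance inside $\OO(2)$ and hence the class $[\DD_{3}]$, $[\DD_{4}]$ or $[\OO(2)]$. The nondegeneracy you flag as an expectation does hold: for $\rh=(x^{3}-3xy^{2})z+(x^{4}-6x^{2}y^{2}+y^{4})$ a direct computation gives $(\bd_{2})_{13}=H_{1111}H_{1113}+3H_{1122}H_{1223}=\tfrac{1}{4}+\tfrac{3}{4}=1\neq 0$, and since the pairing is an equivariant bilinear map between one-dimensional complex weight spaces, nonvanishing at one point implies nondegeneracy. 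What each approach buys: yours never uses the integrity basis of $\cov(\HH^{4})$, which is the heaviest input of the paper's proof, relying only on theorem~\ref{thm:cov2-isotropic}, propositions~\ref{prop:cov1-symmetry-classes} and~\ref{prop:cov2-symmetry-classes}, and the list of symmetry classes of $\HH^{4}$; the paper's argument, in exchange, produces the explicit trigonal and tetragonal normal forms and the eigenvector relation $\bH\2dots\pmb{\tau}\propto\pmb{\tau}$, which are reused in the same style in later computations (e.g.\ corollary~\ref{cor:transiso-trigo-tetra-criteria} and lemma~\ref{lem:v5}). One small point to make explicit when writing this up: in $(3)\Rightarrow(2)$ your normalization $\bd_{2}=\mathrm{diag}(\lambda,\lambda,\mu)$ with $\lambda\neq\mu$ requires ruling out that $\bd_{2}$ is isotropic while $\bc_{3}$ is transversely isotropic; this needs the same appeal to theorem~\ref{thm:cov2-isotropic} that you already make in $(1)\Rightarrow(3)$.
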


\begin{rem}
  By virtue of lemma~\ref{lem:axa2=0}, lemma~\ref{lem:axb=0} and theorem~\ref{thm:cov2-transversely-isotropic}, condition $(3)$ in theorem~\ref{thm:cov2-transversely-isotropic} can be recast as
  \begin{equation*}
    {\bd_{2}}^{\prime} \ne 0, \qquad \bd_{2} \times (\bd_{2})^{2}  = 0, \qquad   \bc_{3} \times \bd_{2}  = 0.
  \end{equation*}
\end{rem}

\begin{proof}[Proof of theorem~\ref{thm:cov2-transversely-isotropic}]
  We will show that $(1) \implies (2) \implies (3) \implies (1)$.

  Suppose first that $(1)$ is true, then, without loss of generality, we can assume that all symmetric second-order covariants are invariant by the subgroup $\OO(2)$ and that at least one of them, say $\ba$ writes as
  \begin{equation*}
    \ba = \lambda\bq + \mu \pmb{\tau}, \qquad \mu \ne 0.
  \end{equation*}
  Thus, $\pmb{\tau} = \mathrm{diag}(1,1,-2)$ is an eigenvector of $\bH$ (because $\bH\2dots\bq=0$ and $\bH\2dots\ba$ is traceless) and we have
  \begin{equation*}
    Y_{2} = 0, \qquad X_{1} + X_{2} = 0, \qquad Z_{1} = 0, \qquad  \Lambda_{1} = \Lambda_{2}.
  \end{equation*}
  Now we compute $\bd_{2}$ from~\eqref{eq:harmonic-generic-matrix-form} and write that it must be invariant by $\OO(2)$ and we get
  \begin{eqnarray*}
    4Z_{2}X_{1} + (\Lambda_{1} + 4\Lambda_{3})Y_{1}& = &0,\\
    (\Lambda_{1} + 4\Lambda_{3})X_{1}-4Z_{2}Y_{1}& = &0.
  \end{eqnarray*}
  The solutions of this system break into two alternatives:
  \begin{enumerate}
    \item either $Z_{2} = 0$ and $\Lambda_{1} = -4\Lambda_{3}$,
    \item or, $X_{1} = Y_{1}  = 0$.
  \end{enumerate}
  In the first case, we get
  \begin{equation*}
    [\bH] =    \begin{pmatrix}
      -3\Lambda_{3}   & -\Lambda_{3}   & 4\Lambda_{3}  & -\sqrt{2}X_{1}  & \sqrt{2}Y_{1}  & 0             \\
      -\Lambda_{3}    & -3\Lambda_{3}  & -4\Lambda_{3} & -\sqrt{2} X_{1} & -\sqrt{2}Y_{1} & 0             \\
      4\Lambda_{3}    & 4\Lambda_{3}   & -8\Lambda_{3} & 0               & 0              & 0             \\
      - \sqrt{2}X_{1} & +\sqrt{2}X_{1} & 0             & 8\Lambda_{3}    & 0              & -2 Y_{1}      \\
      \sqrt{2}Y_{1}   & -\sqrt{2}Y_{1} & 0             & 0               & 8\Lambda_{3}   & -2 X_{1}      \\
      0               & 0              & 0             & -2 Y_{1}        & -2 X_{1}       & -2\Lambda_{3}
    \end{pmatrix}
  \end{equation*}
  which is at least trigonal since $g\star \bH = \bH$ for all $g\in \ZZ_{3}$. In the second case, we get
  \begin{equation*}
    [\bH] =    \begin{pmatrix}
      \Lambda_{1} + \Lambda_{3} & -\Lambda_{3}              & -\Lambda_{1} & 0             & 0             & -\sqrt{2}Z_{2} \\
      -\Lambda_{3}              & \Lambda_{1} + \Lambda_{3} & -\Lambda_{1} & 0             & 0             & \sqrt{2}Z_{2}  \\
      -\Lambda_{1}              & -\Lambda_{1}              & 2\Lambda_{1} & 0             & 0             & 0              \\
      0                         & 0                         & 0            & -2\Lambda_{1} & 0             & 0              \\
      0                         & 0                         & 0            & 0             & -2\Lambda_{1} & 0              \\
      -\sqrt{2}Z_{2}            & \sqrt{2}Z_{2}             & 0            & 0             & 0             & -2\Lambda_{3}
    \end{pmatrix}
  \end{equation*}
  which is at least tetragonal since $g\star \bH = \bH$ for all $g\in \ZZ_{4}$.

  The assertion $(2) \implies (3)$ is trivial because if $(2)$ is true then the pair $(\bd_{2},\bc_{3})$ is either transversely isotropic or isotropic, but it cannot be isotropic by virtue of theorem~\ref{thm:cov2-isotropic}.

  Finally, suppose that $(3)$ is true. Note first that $\bd_{2}$ cannot be isotropic, because of theorem~\ref{thm:cov2-isotropic}. Without loss of generality, we can assume, therefore, that
  \begin{equation*}
    \bd_{2}^{\prime} = \mu_{2} \pmb{\tau}, \qquad \bc_{3} = \mu_{3}\pmb{\tau},
  \end{equation*}
  where $\mu_{2} \ne 0$ and $\pmb{\tau} = \mathrm{diag}(1,1,-2)$. We get thus
  \begin{equation*}
    \bc_{3} = \bH\2dots \bd_{2} = \bH\2dots \bd_{2}^{\prime} = \mu_{2} \bH\2dots \pmb{\tau} = \mu_{3}\pmb{\tau}
  \end{equation*}
  leading to
  \begin{equation*}
    \bH\2dots \pmb{\tau} = \frac{\mu_{3}}{\mu_{2}} \pmb{\tau},
  \end{equation*}
  which means that $\pmb{\tau}$ is an eigenvector of $\bH$. But then
  \begin{equation*}
    \bc_{4} = \bH\2dots \bc_{3} = \frac{\mu_{3}^{2}}{\mu_{2}} \pmb{\tau}, \qquad \bc_{5} = \bH\2dots \bc_{4} = \frac{\mu_{3}^{3}}{\mu_{2}^{2}} \pmb{\tau},
  \end{equation*}
  and thus, the triple $(\bd_{2},\bc_{3},\bc_{4}, \bc_{5})$ is transversely isotropic. We deduce then from~\autoref{tab:cov-basis-H4}, that $\cov_{2}(\bH)$ is transversely isotropic.
\end{proof}

We will now formulate conditions which allow to distinguish between the three remaining cases: transversely isotropic, trigonal and tetragonal.

\begin{cor}\label{cor:transiso-trigo-tetra-criteria}
  Let $\bH\in \HH^{4}$ be a fourth order harmonic tensor. Then
  \begin{enumerate}
    \item $\bH$ is \emph{transversely isotropic} if and only if $\bd_{2}$ is transversely isotropic and
          \begin{equation*}
            \bH \times \bd_{2} = 0;
          \end{equation*}
    \item $\bH$ is \emph{tetragonal} if and only if $\bd_{2}$ is transversely isotropic,
          \begin{equation*}
            \bH\times \bd_{2} \ne 0, \quad \text{and} \quad \tr(\bH\times \bd_{2}) = 0;
          \end{equation*}
    \item $\bH$ is \emph{trigonal} if and only if $\bd_{2}$ is transversely isotropic,
          \begin{equation*}
            \tr(\bH \times \bd_{2}) \ne 0, \quad \text{and} \quad (\bH : \bd_{2}) \times \bd_{2} = 0.
          \end{equation*}
  \end{enumerate}
\end{cor}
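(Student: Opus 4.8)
The plan is to exploit the fact that $\bd_{2}$ and $\bc_{3} = \bH\2dots\bd_{2}$ are \emph{covariants} of $\bH$: for every $g \in G_{\bH}$ one has $g\star\bd_{2} = \bd_{2}$ and $g\star\bc_{3} = \bc_{3}$, so that $G_{\bH} \subseteq G_{\bd_{2}}$ and $G_{\bH}\subseteq G_{\bc_{3}}$. Consequently $G_{(\bH,\bd_{2})} = G_{\bH}\cap G_{\bd_{2}} = G_{\bH}$, i.e. the pair $(\bH,\bd_{2})$ has \emph{exactly} the symmetry class of $\bH$. The three criteria are then obtained by translating membership of $\bH$ in a symmetry class into vanishing conditions on covariants, using the lemmas already proved: lemma~\ref{lem:Sxd=0} (detecting when $(\bH,\bd_{2})$ is transversely isotropic, through $\bH\times\bd_{2}$), lemma~\ref{lem:tr(Hxd)=0} (detecting when $(\bH,\bd_{2})$ is \emph{at least tetragonal}, through $\tr(\bH\times\bd_{2})$), and lemma~\ref{lem:axb=0} (detecting when $(\bd_{2},\bc_{3})$ is transversely isotropic, through $\bc_{3}\times\bd_{2}$). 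I will also use theorem~\ref{thm:cov2-isotropic} contrapositively: none of the classes transversely isotropic, tetragonal, trigonal is cubic or isotropic, so $\bd_{2}$ is never isotropic in these situations.

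For the three \emph{necessity} directions I would start from $G_{\bH}$ conjugate to $\OO(2)$, $\DD_{4}$ or $\DD_{3}$ respectively. In each case $G_{\bd_{2}} \supseteq G_{\bH}$, and since a single symmetric second-order tensor realizes only the classes $[\triv],[\ZZ_{2}],[\DD_{2}],[\OO(2)],[\SO(3)]$ (proposition~\ref{prop:symmetry-classes}), the only one of these containing $\DD_{4}$, $\DD_{3}$ or $\OO(2)$ and distinct from $[\SO(3)]$ is $[\OO(2)]$; hence $\bd_{2}$ is transversely isotropic. Since the pair $(\bH,\bd_{2})$ has symmetry exactly $G_{\bH}$, the lemmas then yield the stated conditions: for $\OO(2)$, lemma~\ref{lem:Sxd=0} gives $\bH\times\bd_{2}=0$; for $\DD_{4}$, the pair is at least tetragonal but not transversely isotropic, so $\tr(\bH\times\bd_{2})=0$ while $\bH\times\bd_{2}\ne 0$ (lemmas~\ref{lem:tr(Hxd)=0} and~\ref{lem:Sxd=0}); for $\DD_{3}$, the pair is not at least tetragonal, so $\tr(\bH\times\bd_{2})\ne 0$ (lemma~\ref{lem:tr(Hxd)=0}), and moreover $(\bd_{2},\bc_{3})$ is a pair of second-order tensors with symmetry $\supseteq\DD_{3}$, which by proposition~\ref{prop:symmetry-classes} forces it to be transversely isotropic, whence $\bc_{3}\times\bd_{2}=0$ by lemma~\ref{lem:axb=0}.

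For the \emph{sufficiency} directions I would run the lemmas backwards, always assuming $\bd_{2}$ transversely isotropic. If $\bH\times\bd_{2}=0$, lemma~\ref{lem:Sxd=0} makes $(\bH,\bd_{2})$ transversely isotropic, and since this pair has symmetry $G_{\bH}$, $\bH$ is transversely isotropic, giving (1). If $\tr(\bH\times\bd_{2})=0$ and $\bH\times\bd_{2}\ne 0$, then $(\bH,\bd_{2})$ is at least tetragonal (lemma~\ref{lem:tr(Hxd)=0}) but not transversely isotropic (lemma~\ref{lem:Sxd=0}); as $G_{\bH}$ is one of the eight classes of $\HH^{4}$ (\autoref{fig:poset-H4}), is contained in $G_{\bd_{2}}=\OO(2)$ and contains $\DD_{4}$, the only options are $[\DD_{4}]$ and $[\OO(2)]$, and the latter is excluded, so $\bH$ is tetragonal, giving (2). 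For (3) the key step is different: I would first use $\bc_{3}\times\bd_{2}=0$ with $\bd_{2}$ transversely isotropic to obtain, via lemma~\ref{lem:axb=0}, that $(\bd_{2},\bc_{3})$ is transversely isotropic; this is precisely condition (3) of theorem~\ref{thm:cov2-transversely-isotropic}, so $\bH$ is tetragonal, trigonal or transversely isotropic. The remaining hypothesis $\tr(\bH\times\bd_{2})\ne 0$ then excludes the transversely isotropic case (where $\bH\times\bd_{2}=0$) and the tetragonal case (where $\tr(\bH\times\bd_{2})=0$ by lemma~\ref{lem:tr(Hxd)=0}), leaving $\bH$ trigonal.

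The main obstacle I anticipate is exactly this last point. The hypothesis ``$\bd_{2}$ transversely isotropic'' alone does \emph{not} place $\bH$ in Case~II: an orthotropic, monoclinic or triclinic $\bH$ may still produce a transversely isotropic $\bd_{2}$. The condition $\bc_{3}\times\bd_{2}=0$ is therefore indispensable, its role being to upgrade the hypothesis to Case~II through theorem~\ref{thm:cov2-transversely-isotropic}; only once inside Case~II does the value of $\tr(\bH\times\bd_{2})$ separate the three classes. The remaining work is poset bookkeeping, the delicate recurring ingredient being the use of proposition~\ref{prop:symmetry-classes} to rule out exact $\DD_{3}$- or $\DD_{4}$-symmetry for one or for a pair of second-order tensors, together with the observation that the pair $(\bH,\bd_{2})$ inherits precisely the symmetry of $\bH$.
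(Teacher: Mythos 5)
Your proposal is correct and follows essentially the same route as the paper's proof: the same three lemmas (lemma~\ref{lem:Sxd=0}, lemma~\ref{lem:tr(Hxd)=0}, lemma~\ref{lem:axb=0}) carry the three criteria, theorem~\ref{thm:cov2-isotropic} rules out the isotropic/cubic cases, and your key step for sufficiency in (3) --- using $\bc_{3}\times\bd_{2}=0$ to invoke theorem~\ref{thm:cov2-transversely-isotropic} and then separating the three classes of Case~II via $\tr(\bH\times\bd_{2})$ --- is exactly the paper's argument. Your explicit use of $G_{(\bH,\bd_{2})}=G_{\bH}$ and the poset sandwich $[\DD_{4}]\preceq[G_{\bH}]\preceq[\OO(2)]$ in (2) merely makes explicit what the paper leaves implicit.
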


\begin{proof}
  Note first that if $\bH$ is either transversely isotropic, tetragonal or trigonal then, $\bd_{2}$ is necessarily transversely isotropic, by theorem~\ref{thm:cov2-isotropic}.

  (1) If $\bH$ is \emph{transversely isotropic}, then, $\bH \times \bd_{2} = 0$ by lemma~\ref{lem:Sxd=0}. Conversely, if $\bd_{2}$ is transversely isotropic and $\bH \times \bd_{2} = 0$, then, $(\bH,\bd_{2})$ is transversely isotropic by lemma~\ref{lem:Sxd=0} and so is $\bH$.

  (2) If $\bH$ is \emph{tetragonal}, then, $\tr(\bH\times \bd_{2}) = 0$ by lemma~\ref{lem:tr(Hxd)=0} and $\bH\times \bd_{2} \ne 0$ by (1). Conversely, if the conditions in (2) are satisfied, then, $(\bH, \bd_{2})$ is at least tetragonal by lemma~\ref{lem:tr(Hxd)=0}, and so is $\bH$. Since $\bH$ cannot be isotropic or cubic by theorem~\ref{thm:cov2-isotropic} (because $\bd_{2}$ is assumed to be transversely isotropic), it is either tetragonal or transversely isotropic, the later case being excluded by the condition $\bH\times \bd_{2} \ne 0$.

  (3) If $\bH$ is \emph{trigonal}, then, the pair $(\bd_{2},\bc_{3})$ is transversely isotropic and thus
  \begin{equation*}
    (\bH : \bd_{2}) \times \bd_{2} = \bc_{3} \times \bd_{2} = 0,
  \end{equation*}
  by lemma~\ref{lem:axb=0}. Moreover, $\tr(\bH \times \bd_{2}) \ne 0$ by lemma~\ref{lem:tr(Hxd)=0}. Conversely, if the conditions in $(3)$ holds, then, the pair $(\bd_{2},\bc_{3})$ is transversely isotropic by lemma~\ref{lem:axb=0} and $\bH$ is either \emph{tetragonal}, \emph{trigonal} or \emph{transversely isotropic} by theorem~\ref{thm:cov2-transversely-isotropic}. Since $\bH$ cannot be transversely isotropic by lemma~\ref{lem:Sxd=0}, nor tetragonal by lemma~\ref{lem:tr(Hxd)=0}, it is necessarily trigonal.
\end{proof}

We will end this subsection with two lemmas which characterise the symmetry class of a pair $(\bH, \bt)$ where $\bH$ is a fourth-order harmonic tensor and $\bt$ is a transversely isotropic second-order symmetric tensor. This completes the results of Section~\ref{sec:covariant-criteria} and will be very useful to prove our main theorem in Section~\ref{sec:Ela-symmetry-classes}.

\begin{lem}\label{lem:trigonal-pair}
  Let $\bt \in \Sym^{2}(\RR^{3})$ be transversely isotropic and $\bH \in \HH^{4}(\RR^{3})$ be an harmonic fourth-order tensor. Then, the pair $(\bH, \bt)$ is trigonal if and only if
  \begin{equation}\label{eq:trigonal-pair}
    (\bH\2dots\bt)\times \bt=0,\quad \bd_{2}\times \bt =0, \quad \text{and} \quad \tr(\bH \times \bt) \ne 0.
  \end{equation}
\end{lem}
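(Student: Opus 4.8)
The plan is to translate each of the three conditions into a statement about symmetry classes using the second-order criteria of Section~\ref{sec:covariant-criteria}, and then to read off the class of the pair $(\bH,\bt)$. Throughout I would fix coordinates so that $G_{\bt}=\OO(2)$ with axis $\ee_{3}$, so that $\bt'$ is proportional to $\pmb{\tau}=\mathrm{diag}(1,1,-2)$. By lemma~\ref{lem:axb=0} (applied with the transversely isotropic tensor $\bt$), the first two equations $(\bH\2dots\bt)\times\bt=0$ and $\bd_{2}\times\bt=0$ are \emph{equivalent} to saying that the pairs $(\bt,\bH\2dots\bt)$ and $(\bt,\bd_{2})$ are transversely isotropic, i.e. that $\bH\2dots\bt$ and $\bd_{2}$ are $\OO(2)$-invariant about $\ee_{3}$; and by lemma~\ref{lem:tr(Hxd)=0} the inequation $\tr(\bH\times\bt)\neq 0$ is equivalent to $(\bH,\bt)$ being \emph{not} at least tetragonal.

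For the direct implication I would assume $(\bH,\bt)$ trigonal, so that $G_{(\bH,\bt)}=G_{\bH}\cap\OO(2)=\DD_{3}$ with axis $\ee_{3}$. Since $\bd_{2}$ is a covariant of $\bH$ and $\bH\2dots\bt$ a covariant of the pair, both are invariant under $\DD_{3}\supset\ZZ_{3}$; but any symmetric second-order tensor invariant under the $\tfrac{2\pi}{3}$-rotation about $\ee_{3}$ is forced into the form $\mathrm{diag}(a,a,b)$, hence is transversely isotropic about $\ee_{3}$ (or isotropic). This yields the first two equations. The inequation then follows for cardinality reasons: $[\DD_{4}]\not\preceq[\DD_{3}]$ (as $|\DD_{4}|=8>6=|\DD_{3}|$), so $(\bH,\bt)$ cannot be at least tetragonal and $\tr(\bH\times\bt)\neq 0$ by lemma~\ref{lem:tr(Hxd)=0}.

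For the converse I would assume the three conditions and split according to the nature of $\bd_{2}$, which by the second equation is transversely isotropic or isotropic. If $\bd_{2}$ is isotropic, theorem~\ref{thm:cov2-isotropic} forces $\bH$ cubic (it cannot be isotropic, as that would give $\bH=0$ and violate $\tr(\bH\times\bt)\neq0$), and then the remaining two conditions are precisely the trigonality criterion of lemma~\ref{lem:cube-orientation}(2), so $(\bH,\bt)$ is trigonal. If instead $\bd_{2}$ is genuinely transversely isotropic about $\ee_{3}$, I would first upgrade the first condition: since $\bH\2dots\bt$ is $\OO(2)$-invariant and traceless (because $\bH$ is harmonic), it must equal $\nu\pmb{\tau}$, whence $\bH\2dots\pmb{\tau}\propto\pmb{\tau}$; combined with $\bd_{2}'\propto\pmb{\tau}$ and $\bH\2dots\bq=0$ this gives $\bc_{3}=\bH\2dots\bd_{2}\propto\pmb{\tau}$, so $(\bd_{2},\bc_{3})$ is transversely isotropic. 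Theorem~\ref{thm:cov2-transversely-isotropic} then forces $\bH$ itself to be tetragonal, trigonal or transversely isotropic, necessarily with axis $\ee_{3}$ (the axis of the non-isotropic covariant $\bd_{2}$); hence $G_{\bH}\subset\OO(2)=G_{\bt}$ and the class of the pair coincides with that of $\bH$. Finally $\tr(\bH\times\bt)\neq0$ rules out the transversely isotropic and tetragonal cases (both at least tetragonal), leaving exactly trigonal.

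The main obstacle, and the only genuinely computational point, is the converse in the case where $\bd_{2}$ is transversely isotropic: one must show that the single scalar condition $(\bH\2dots\bt)\times\bt=0$ propagates to $\bc_{3}\times\bd_{2}=0$, so that theorem~\ref{thm:cov2-transversely-isotropic} becomes available. This hinges on the harmonicity identity $\bH\2dots\bq=0$ together with the fact that $\bd_{2}'$, $\bc_{3}$ and $\bH\2dots\bt$ are all proportional to the single deviator $\pmb{\tau}$ once $\pmb{\tau}$ is recognized as an eigenvector of the Kelvin endomorphism of $\bH$. The cubic sub-case is the other delicate point, but there the hard work has already been done in lemma~\ref{lem:cube-orientation}, so it reduces to matching the two conditions.
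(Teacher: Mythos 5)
Your proof is correct and follows essentially the same route as the paper: the same treatment of the direct implication via the symmetry classes of second-order covariants together with lemmas~\ref{lem:axb=0} and~\ref{lem:tr(Hxd)=0}, and the same case split on $\bd_{2}$ in the converse, with the cubic sub-case handled by theorem~\ref{thm:cov2-isotropic} and lemma~\ref{lem:cube-orientation}. The only cosmetic difference is that, when $\bd_{2}$ is transversely isotropic, the paper transfers the hypotheses to $(\bH,\bd_{2})$ and cites the packaged criterion of corollary~\ref{cor:transiso-trigo-tetra-criteria}, whereas you re-derive that step from theorem~\ref{thm:cov2-transversely-isotropic} and then exclude the transversely isotropic and tetragonal alternatives via lemma~\ref{lem:tr(Hxd)=0}.
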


\begin{proof}
  Suppose first that $(\bH, \bt)$ is trigonal. Then the triplet of second-order covariants $(\bH\2dots\bt,\bd_{2},\bt)$ is at least trigonal and thus transversely isotropic by proposition~\ref{prop:symmetry-classes}. We have thus $(\bH\2dots\bt)\times \bt=0$ and $\bd_{2} \times \bt =0$ by lemma~\ref{lem:axb=0}. Moreover, $\tr(\bH \times \bt) \ne 0$, by lemma~\ref{lem:tr(Hxd)=0}. Conversely, suppose that conditions~\eqref{eq:trigonal-pair} are satisfied. Then, $\bd_{2}$ is at least transversely isotropic by lemma~\ref{lem:axb=0}.
  \begin{enumerate}
    \item If $\bd_{2}$ is isotropic, then $\bH$ is cubic by theorem~\ref{thm:cov2-isotropic} (it cannot vanish because we assume $\tr(\bH \times \bt) \ne 0$). But then, $(\bH, \bt)$ is trigonal by lemma~\ref{lem:cube-orientation}.
    \item If $\bd_{2}$ is transversely isotropic, then $\bd_{2}' = \lambda \bt'$ with $\lambda \ne 0$ and thus
          \begin{equation*}
            (\bH \2dots \bd_{2}) \times \bd_{2} =0, \quad \text{and} \quad \tr(\bH \times \bd_{2}) \ne 0.
          \end{equation*}
          Therefore $\bH$ is trigonal by corollary~\ref{cor:transiso-trigo-tetra-criteria} and so is the pair $(\bH, \bt)$.
  \end{enumerate}
\end{proof}

\begin{cor}\label{cor:transversely-isotropic-triplet}
  Let $\bt\in \Sym^{2}(\RR^{3})$ be a transversely isotropic and $\bH$ be an harmonic fourth-order tensor. Then, the pair $(\bH, \bt)$ is either trigonal, tetragonal or transversely isotropic if and only if the triplet $(\bd_{2}, \bt, \bH \2dots \bt)$ is transversely isotropic.
\end{cor}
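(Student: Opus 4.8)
The plan is to prove the two implications separately, using the classification of the symmetry classes of a triple of second-order symmetric tensors (proposition~\ref{prop:symmetry-classes}) together with the pairwise criteria already established. For the forward implication, suppose $(\bH,\bt)$ is trigonal, tetragonal or transversely isotropic. I would first observe that the symmetry group $G_{(\bd_{2},\bt,\bH\2dots\bt)}$ of the triple is squeezed between two groups: on the one hand it is contained in $G_{\bt}=\OO(2)$, since $\bt$ is one of the three tensors; on the other hand it contains $G_{(\bH,\bt)}$, since $\bd_{2}$, $\bt$ and $\bH\2dots\bt$ are all covariants of the pair $(\bH,\bt)$. In each of the three admissible classes, $G_{(\bH,\bt)}$ contains a rotation of order at least $3$ (the $\ZZ_{3}$ inside $\DD_{3}$, the $\ZZ_{4}$ inside $\DD_{4}$, or all of $\SO(2)$). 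By proposition~\ref{prop:symmetry-classes}(2), the class of the triple is one of $[\triv],[\ZZ_{2}],[\DD_{2}],[\OO(2)],[\SO(3)]$; being contained in $\OO(2)$ excludes $[\SO(3)]$, while containing a rotation of order $\ge 3$ excludes $[\triv]$, $[\ZZ_{2}]$ and $[\DD_{2}]$. Hence the triple is transversely isotropic.

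For the converse, suppose the triple $(\bd_{2},\bt,\bH\2dots\bt)$ is transversely isotropic. Then, since $\bt$ is transversely isotropic, lemma~\ref{lem:axb=0} gives $\bd_{2}\times\bt=0$ and $(\bH\2dots\bt)\times\bt=0$. I would then split on the first-order covariant $\tr(\bH\times\bt)$. If $\tr(\bH\times\bt)\ne0$, these three relations are exactly the hypotheses of lemma~\ref{lem:trigonal-pair}, which gives at once that $(\bH,\bt)$ is trigonal. If $\tr(\bH\times\bt)=0$ and moreover $\bH\times\bt=0$, then $(\bH,\bt)$ is transversely isotropic by lemma~\ref{lem:Sxd=0}. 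The remaining case, $\tr(\bH\times\bt)=0$ with $\bH\times\bt\ne0$, is the one where I expect to show the pair is tetragonal.

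In that last case I would exploit $\bd_{2}$, which is at least transversely isotropic because $\bd_{2}\times\bt=0$. If $\bd_{2}$ is isotropic, theorem~\ref{thm:cov2-isotropic} forces $\bH$ to be cubic (it cannot be isotropic, i.e. zero, since $\bH\times\bt\ne0$), and then lemma~\ref{lem:cube-orientation}(1) together with $\tr(\bH\times\bt)=0$ yields that $(\bH,\bt)$ is tetragonal. If instead $\bd_{2}$ is transversely isotropic, then $\bd_{2}$ and $\bt$ share an axis, so $\bd_{2}'=\lambda\bt'$ with $\lambda\ne0$; using remark~\ref{rem:Sxq=0} (namely $\bS\times\ba=\bS\times\ba'$) one gets $\bH\times\bd_{2}=\lambda\,\bH\times\bt\ne0$ and $\tr(\bH\times\bd_{2})=\lambda\tr(\bH\times\bt)=0$, so corollary~\ref{cor:transiso-trigo-tetra-criteria}(2) shows $\bH$ is tetragonal. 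Since the four-fold axis of $\bH$ then coincides with the common axis of $\bd_{2}$ and $\bt$, we have $G_{(\bH,\bt)}=G_{\bH}\cap G_{\bt}=\DD_{4}$, so the pair $(\bH,\bt)$ is again tetragonal.

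The main obstacle is precisely the branch $\tr(\bH\times\bt)=0$, $\bH\times\bt\ne0$: one must rule out that the pair is strictly larger than tetragonal, which is why I route the argument through the genuine covariant $\bd_{2}$ and the already-proved characterizations (theorem~\ref{thm:cov2-isotropic}, lemma~\ref{lem:cube-orientation}, corollary~\ref{cor:transiso-trigo-tetra-criteria}) rather than arguing abstractly with the symmetry poset of the pair. The only delicate bookkeeping is the use of remark~\ref{rem:Sxq=0} to replace $\bt$ by the deviator $\bd_{2}'=\lambda\bt'$ inside the cross products, which transfers the transverse-isotropy conditions carried by $\bt$ onto $\bd_{2}$ so that corollary~\ref{cor:transiso-trigo-tetra-criteria} applies.
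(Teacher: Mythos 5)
Your proposal is correct and follows essentially the same route as the paper: the forward direction via the inclusions $G_{(\bH,\bt)} \subset G_{(\bd_{2},\,\bt,\,\bH\2dots\bt)} \subset G_{\bt}$, and the converse via lemma~\ref{lem:axb=0} followed by a split on $\tr(\bH\times\bt)$, with lemma~\ref{lem:trigonal-pair} handling the nonzero case. The only divergence is in the branch $\tr(\bH\times\bt)=0$, where the paper simply invokes lemma~\ref{lem:tr(Hxd)=0} to conclude that the pair is tetragonal or transversely isotropic (which suffices, since the statement's conclusion is a disjunction), whereas you do additional (correct but unneeded) work through $\bd_{2}$, theorem~\ref{thm:cov2-isotropic}, lemma~\ref{lem:cube-orientation} and corollary~\ref{cor:transiso-trigo-tetra-criteria} to pin down exactly which of the two classes occurs.
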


\begin{proof}
  Suppose first that $(\bH, \bt)$ is either trigonal, tetragonal or transversely isotropic. Then the triplet of second-order covariants $(\bd_{2}, \bt, \bH \2dots \bt)$ is at least transversely isotropic and thus transversely isotropic. Conversely, suppose that $(\bd_{2}, \bt, \bH \2dots \bt)$ is transversely isotropic. Then we have
  \begin{equation*}
    \bd_{2} \times \bt = 0, \quad \text{and} \quad (\bH \2dots \bt) \times \bt = 0,
  \end{equation*}
  by lemma~\ref{lem:axb=0}. If $\tr(\bH \times \bt) = 0$, then, the pair $(\bH, \bt)$ is either tetragonal or transversely isotropic by lemma~\ref{lem:tr(Hxd)=0}. If $\tr(\bH \times \bt) \ne 0$, then, the pair $(\bH, \bt)$ is trigonal by lemma~\ref{lem:trigonal-pair}. This achieves the proof.
\end{proof}


\subsection{Case III: \texorpdfstring{$\cov_{2}(\bH)$}{Cov2(H)} is orthotropic}

\begin{lem}\label{lem:v5-V6}
  Let $\bH\in \HH^{4}$ be a fourth order harmonic tensor. Then
  \begin{equation*}
    \vv_{5} = \vv_{6} = 0 \quad \implies \quad \cov_{1}(\bH) = \set{0},
  \end{equation*}
  where $\vv_{5}: =  \pmb{\varepsilon} \2dots (\bd_{2}\bc_{3})$ and $\vv_{6}: =  \pmb{\varepsilon} \2dots (\bd_{2}\bc_{4})$.
\end{lem}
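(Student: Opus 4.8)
The plan is to reduce the statement to the single assertion that the three second-order covariants $\bd_{2}$, $\bc_{3}$ and $\bc_{4}$ pairwise commute. Indeed, a glance at covariants $10$--$23$ of \autoref{tab:cov-basis-H4} shows that every order-one generator of $\cov(\HH^{4})$ has the shape $\pmb \varepsilon \2dots (\bw_{1}\bw_{2})$ or $\pmb \varepsilon \2dots (\bw_{1}\bw_{2}\bw_{3})$, where each factor $\bw_{i}$ is a monomial in $\bd_{2},\bc_{3},\bc_{4}$ \emph{only}; such an expression is (up to a scalar) the axial vector of the antisymmetric part of the matrix product $\bw_{1}\bw_{2}$ (resp. $\bw_{1}\bw_{2}\bw_{3}$), and therefore vanishes as soon as $\bd_{2},\bc_{3},\bc_{4}$ generate a commutative matrix algebra. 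Since an order-one element of the algebra is a sum of an invariant times an order-one generator, $\cov_{1}(\bH)$ is spanned by the evaluations at $\bH$ of these generators; hence pairwise commutativity of $\bd_{2},\bc_{3},\bc_{4}$ gives $\cov_{1}(\bH)=\set{0}$.

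First I would translate the hypotheses. By $\tr(\ba\times\bb)=\tfrac{1}{3}\pmb \varepsilon \2dots(\ba\bb)$ and Lemma~\ref{lem:orthotropic-pair}, the equations $\vv_{5}=0$ and $\vv_{6}=0$ say exactly that $\bd_{2}$ commutes with $\bc_{3}$ and with $\bc_{4}$. Thus only $[\bc_{3},\bc_{4}]=0$ remains, and I would run a case analysis on the symmetry type of $\bd_{2}$ (governed, via Lemma~\ref{lem:q-a-a2}, by $\dim\langle\bq,\bd_{2},\bd_{2}^{2}\rangle$). If $\bd_{2}$ is orthotropic, then commuting with it forces $\bc_{3}$ and $\bc_{4}$ to be diagonal in the eigenbasis of $\bd_{2}$, so they commute. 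If $\bd_{2}$ is isotropic, then $\bc_{3}=\bH\2dots\bd_{2}$ is proportional to $\bH\2dots\bq=0$ by harmonicity, whence $\bc_{3}=\bc_{4}=0$ and the claim is trivial.

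The remaining and genuinely delicate case is $\bd_{2}$ transversely isotropic: now $\bc_{3}$ and $\bc_{4}$ only inherit the block structure adapted to the two eigenspaces of $\bd_{2}$, and two such block-diagonal symmetric tensors need not commute. Here I would split once more. If the pair $(\bd_{2},\bc_{3})$ is itself transversely isotropic, i.e. $\bc_{3}\times\bd_{2}=0$, then Theorem~\ref{thm:cov2-transversely-isotropic} yields that $\cov_{2}(\bH)$ is transversely isotropic, hence at least orthotropic, and Lemma~\ref{lem:cov1-cov2} gives $\cov_{1}(\bH)=\set{0}$ at once. Otherwise, by Lemma~\ref{lem:axb=0} the pair $(\bd_{2},\bc_{3})$ is orthotropic; choosing coordinates with $\bd_{2}=\mathrm{diag}(p,p,q)$ and $\bc_{3}=\mathrm{diag}(a,c,d)$, $a\neq c$, I would write $\bc_{4}=\bH\2dots\bc_{3}$ in components and show that its one possibly nonzero off-diagonal entry, $(\bc_{4})_{12}$, vanishes. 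The vanishing of the mixed entries $(\bc_{4})_{13}=(\bc_{4})_{23}=0$ is exactly the hypothesis $\vv_{6}=0$; the additional relation annihilating $(\bc_{4})_{12}$ is extracted by feeding these, together with harmonicity, into the transverse-isotropy relations for $\bd_{2}=\tr_{13}\bH^{2}$ (the vanishing of its off-diagonal entries and $(\bd_{2})_{11}=(\bd_{2})_{22}$), which are quadratic in the components of $\bH$.

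The hard part is precisely this last sub-case. In all the other situations $\vv_{5}=\vv_{6}=0$ already forces $\bc_{3},\bc_{4}$ to be simultaneously diagonal, but when $\bd_{2}$ is transversely isotropic these two conditions alone do \emph{not} make $\bc_{4}$ diagonal; one must use that $\bd_{2}$ is not an arbitrary second-order covariant but the specific quadratic $\tr_{13}\bH^{2}$. The core of the argument is therefore the coordinate computation showing that transverse isotropy of $\tr_{13}\bH^{2}$ supplies exactly the missing relation $(\bc_{4})_{12}=0$, thereby establishing $[\bc_{3},\bc_{4}]=0$ and completing the reduction.
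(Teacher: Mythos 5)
Your proposal is correct, and its skeleton is exactly the paper's: translate $\vv_{5}=\vv_{6}=0$ into $[\bd_{2},\bc_{3}]=[\bd_{2},\bc_{4}]=0$, observe from~\autoref{tab:cov-basis-H4} that every first-order generator is $\pmb\varepsilon\2dots$ applied to a product of powers of $\bd_{2},\bc_{3},\bc_{4}$ (so pairwise commutation kills $\cov_{1}(\bH)$), then run a case analysis on the symmetry class of $\bd_{2}$, with the transversely isotropic case closed by a coordinate computation in the parametrization~\eqref{eq:harmonic-generic-matrix-form}. The one genuine divergence is how that hard case is subdivided. The paper stays inside the computation throughout: with $\bd_{2}=\mathrm{diag}(\lambda,\lambda,\mu)$ and $\bc_{3}$ diagonal it splits on $\Lambda_{1}=\Lambda_{2}$ versus $\Lambda_{1}\neq\Lambda_{2}$ and checks in both branches that $\bc_{4}$ is diagonal. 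Your split on whether the pair $(\bd_{2},\bc_{3})$ is transversely isotropic or orthotropic is the same dichotomy, since $(\bc_{3})_{11}-(\bc_{3})_{22}=(\mu-\lambda)(\Lambda_{1}-\Lambda_{2})$; but you dispatch the first branch by quoting theorem~\ref{thm:cov2-transversely-isotropic} and lemma~\ref{lem:cov1-cov2} (legitimate — neither depends on the present lemma, so there is no circularity), which buys a computation-free branch at the price of invoking structural results where the paper's direct check is the one-line identity $(\bc_{4})_{12}=(\mu-\lambda)(\Lambda_{2}-\Lambda_{1})Z_{2}=0$. The computation you defer in the orthotropic branch is precisely the paper's remaining branch and does close: transverse isotropy of $\bd_{2}$ together with $\Lambda_{1}\neq\Lambda_{2}$ forces $Z_{2}=0$ and $\Lambda_{3}+2\Lambda_{1}+2\Lambda_{2}=0$, while $(\bc_{4})_{13}=(\bc_{4})_{23}=0$ forces $X_{2}=Y_{1}=0$, leaving $\bH$ in orthotropic normal form, so $\bc_{4}$ is diagonal and $[\bc_{3},\bc_{4}]=0$. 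Your isotropic case, argued directly from $\bH\2dots\bq=0$, is also a slight simplification of the paper's appeal to theorem~\ref{thm:cov2-isotropic}.
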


\begin{proof}
  If $\vv_{5} = \vv_{6} = 0$, then the commutators $[\bd_{2},\bc_{3}]$ and $[\bd_{2},\bc_{4}]$ vanish. Without loss of generality, we can assume that $\bd_{2}$ and $\bc_{3}$ are diagonal matrices. We will now show that
  \begin{equation*}
    [\bc_{3},\bc_{4}] = 0.
  \end{equation*}
  \begin{enumerate}
    \item[(a)] If $\bd_{2}$ is orthotropic, then $\bc_{4}$ is also diagonal (since $[\bd_{2},\bc_{4}]=0$) and thus $[\bc_{3},\bc_{4}] = 0$.
    \item[(b)] If $\bd_{2}$ is transversely isotropic, we can assume, without loss of generality, that $\bd_{2} = \mathrm{diag}(\lambda,\lambda,\mu)$ where $\lambda \ne \mu$. Then, since $\bc_{3} = \bH\2dots\bd_{2}$ is also diagonal, we get
      \begin{equation*}
        X_{1} + X_{2} = Y_{2} = Z_{1} = 0.
      \end{equation*}
      Expressing now that $(\td{d}_{2})_{11} = (\td{d}_{2})_{22}$ and $(\td{d}_{2})_{12} = 0$, we have
      \begin{equation*}
        Z_{2}(\Lambda_{1}-\Lambda_{2}) = (\Lambda_{3} + 2\Lambda_{1} + 2\Lambda_{2})(\Lambda_{1}-\Lambda_{2}) = 0.
      \end{equation*}
      But, since $[\bd_{2},\bc_{4}] = 0$ where $\bc_{4} = \bH^{2}\2dots\bd_{2}$, we get
      \begin{equation*}
        (\td{c}_{4})_{13} = (\td{c}_{4})_{23}  = 0,
      \end{equation*}
      and thus
      \begin{equation*}
        (\Lambda_{1}-\Lambda_{2})Y_{1} = (\Lambda_{1}-\Lambda_{2})X_{2}=0.
      \end{equation*}
      \begin{itemize}
        \item If $\Lambda_{1}=\Lambda_{2}$, then, $(\td{c}_{4})_{12} = (\mu-\lambda)(\Lambda_{2}-\Lambda_{1})Z_{2}=0$. Thus $\td{c}_{4}$ is diagonal and $[\bc_{3},\bc_{4}] = 0$.
        \item If $\Lambda_{1}\ne \Lambda_{2}$, then
              \begin{equation*}
                X_{2}=Y_{1}=Z_{2} = 0, \qquad \Lambda_{3} + 2\Lambda_{1} + 2\Lambda_{2} =0
              \end{equation*}
              and, once again, $\td{c}_{4}$ is diagonal and thus $[\bc_{3},\bc_{4}] = 0$.
      \end{itemize}
    \item[(c)] if $\bd_{2}$ is isotropic, then all second order covariants vanish (by theorem~\ref{thm:cov2-isotropic}).
  \end{enumerate}
  In each case, $\bd_{2}$, $\bc_{3}$, $\bc_{4}$ commute with each other and thus all the first-order covariants in~\autoref{tab:cov-basis-H4} vanish, leading to $\cov_{1}(\bH) = \set{0}$.
\end{proof}

\begin{thm}\label{thm:cov2-orthotropic}
  Let $\bH\in \HH^{4}$ be a fourth order harmonic tensor. The following propositions are equivalent.
  \begin{enumerate}
    \item $\cov_{2}(\bH)$ is orthotropic;
    \item $\bH$ is \emph{orthotropic};
    \item $\vv_{5} = \vv_{6} = 0$ and the pair $(\bd_{2},\bc_{3})$ is orthotropic.
  \end{enumerate}
  In that case, $G_{\bH} = G_{(\bd_{2}, \bc_{3}, \bc_{4})}$.
\end{thm}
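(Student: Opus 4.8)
The plan is to follow the cyclic scheme $(1) \implies (2) \implies (3) \implies (1)$ already used for the two previous cases, and then to deduce the identity $G_{\bH} = G_{(\bd_{2}, \bc_{3}, \bc_{4})}$ once the three conditions are known to be equivalent. Throughout I would lean on Proposition~\ref{prop:cov2-symmetry-classes} (which ties the symmetry class of $\cov_{2}(\bH)$ to its dimension), on Lemma~\ref{lem:cov1-cov2} (which relates $\cov_{1}(\bH)$ and $\cov_{2}(\bH)$), and on the already-established Theorems~\ref{thm:cov2-isotropic} and~\ref{thm:cov2-transversely-isotropic}, which let me exclude the isotropic/cubic and the transversely isotropic/tetragonal/trigonal regimes.

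For $(1) \implies (2)$, which I expect to be the heart of the argument, I would reason as follows. If $\cov_{2}(\bH)$ is orthotropic, then $\dim \cov_{2}(\bH) = 3$ by Proposition~\ref{prop:cov2-symmetry-classes}, so Corollary~\ref{cor:dimension-trans-iso-subspaces} produces an orthotropic tensor $\bc \in \cov_{2}(\bH)$. Because $\bc$ is a covariant of $\bH$, the tensors $\bH \2dots \bc$ and $\bH \2dots \bc^{2}$ again lie in $\cov_{2}(\bH)$, and $G_{\bH} \subseteq G_{\bc}$, whence $G_{(\bH, \bc)} = G_{\bH}$. Lemma~\ref{lem:orthotropic-criteria} then gives
\begin{equation*}
  G_{(\bc,\, \bH \2dots \bc,\, \bH \2dots \bc^{2})} = G_{(\bH, \bc)} = G_{\bH}.
\end{equation*}
Since the three tensors on the left all belong to $\cov_{2}(\bH)$, we obtain $G_{\cov_{2}(\bH)} \subseteq G_{\bH}$; the reverse inclusion being automatic, $G_{\bH} = G_{\cov_{2}(\bH)}$ is orthotropic, so $\bH$ is orthotropic.

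For $(2) \implies (3)$: if $\bH$ is orthotropic then $\cov_{2}(\bH)$ is at least orthotropic (its elements inherit $G_{\bH} = \DD_{2}$), so $\cov_{1}(\bH) = \set{0}$ by Lemma~\ref{lem:cov1-cov2}, and in particular $\vv_{5} = \vv_{6} = 0$. The pair $(\bd_{2}, \bc_{3})$ is then at least orthotropic, and it can be neither isotropic nor transversely isotropic, for otherwise Theorem~\ref{thm:cov2-isotropic} or Theorem~\ref{thm:cov2-transversely-isotropic} would force $\bH$ to be cubic/isotropic or tetragonal/trigonal/transversely isotropic; hence $(\bd_{2}, \bc_{3})$ is orthotropic. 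For $(3) \implies (1)$: Lemma~\ref{lem:v5-V6} converts $\vv_{5} = \vv_{6} = 0$ into $\cov_{1}(\bH) = \set{0}$, so $\cov_{2}(\bH)$ is at least orthotropic by Lemma~\ref{lem:cov1-cov2}; moreover $\bd_{2}, \bc_{3} \in \cov_{2}(\bH)$ with $(\bd_{2}, \bc_{3})$ orthotropic gives $G_{\cov_{2}(\bH)} \subseteq G_{(\bd_{2}, \bc_{3})} = \DD_{2}$, i.e. $\cov_{2}(\bH)$ is at most orthotropic. The two bounds together pin $\cov_{2}(\bH)$ to be exactly orthotropic.

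Finally, once the equivalence holds, the group identity is a sandwiching argument: $\bd_{2}, \bc_{3}, \bc_{4}$ are covariants of $\bH$, so $G_{\bH} \subseteq G_{(\bd_{2}, \bc_{3}, \bc_{4})} \subseteq G_{(\bd_{2}, \bc_{3})}$; since $\bH$ is orthotropic and $(\bd_{2}, \bc_{3})$ is orthotropic, both $G_{\bH}$ and $G_{(\bd_{2}, \bc_{3})}$ are conjugates of $\DD_{2}$ and thus of order $4$, and a subgroup of order $4$ inside a group of order $4$ is the whole group, forcing all three groups to coincide. The only genuinely delicate point is $(1) \implies (2)$, namely arranging that the orthotropic element $\bc$ supplied by Corollary~\ref{cor:dimension-trans-iso-subspaces} feeds into Lemma~\ref{lem:orthotropic-criteria} so as to recover \emph{all} of $G_{\cov_{2}(\bH)}$; the remaining implications reduce to bookkeeping against the two earlier cases.
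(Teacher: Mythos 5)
Your proof is correct, and while your overall cyclic scheme and the steps $(2)\implies(3)$, $(3)\implies(1)$ coincide with the paper's argument, your handling of the key implication $(1)\implies(2)$ takes a genuinely different route. The paper uses lemma~\ref{lem:q-a-a2} to identify $\cov_{2}(\bH)$ with $\langle \bq, \bc, \bc^{2}\rangle$, i.e.\ (after conjugation) with the space of diagonal tensors, notes that this space is stable under contraction with $\bH$, writes down the explicit normal form~\eqref{eq:orthotropic-matrix-form}, and then still has to invoke theorems~\ref{thm:cov2-isotropic} and~\ref{thm:cov2-transversely-isotropic} to exclude the classes above $[\DD_{2}]$, since the normal form only certifies that $\bH$ is \emph{at least} orthotropic. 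You instead feed the orthotropic covariant $\bc$ produced by corollary~\ref{cor:dimension-trans-iso-subspaces} into lemma~\ref{lem:orthotropic-criteria}: since $\bc$ is a covariant of $\bH$, one has $G_{\bH}\subseteq G_{\bc}$, hence $G_{(\bH,\bc)}=G_{\bH}$, and the lemma yields
\begin{equation*}
  G_{\cov_{2}(\bH)} \subseteq G_{(\bc,\, \bH\2dots\bc,\, \bH\2dots\bc^{2})} = G_{(\bH,\bc)} = G_{\bH} \subseteq G_{\cov_{2}(\bH)},
\end{equation*}
which pins down $G_{\bH}=G_{\cov_{2}(\bH)}$ exactly, with no normal form and no exclusion step; this is legitimate, as lemma~\ref{lem:orthotropic-criteria} is established in section~\ref{sec:covariant-criteria}, before the present theorem, so there is no circularity. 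What your route buys is modularity and the slightly stronger intermediate conclusion that, in case (1), the symmetry \emph{group} of $\bH$ equals that of $\cov_{2}(\bH)$, not merely that their classes agree; what the paper's route buys is the explicit orthotropic normal form of $\bH$ as a by-product. A further point in your favour: the paper's printed proof never explicitly addresses the final assertion $G_{\bH}=G_{(\bd_{2},\bc_{3},\bc_{4})}$, whereas your sandwich $G_{\bH}\subseteq G_{(\bd_{2},\bc_{3},\bc_{4})}\subseteq G_{(\bd_{2},\bc_{3})}$ between two finite groups, each conjugate to $\DD_{2}$ and hence of order four, settles it cleanly.
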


\begin{rem}\label{rem:orthotropic-triplet}
  Condition $(3)$ implies that the triplet $(\bd_{2},\bc_{3}, \bc_{4})$ is orthotropic by lemma~\ref{lem:v5-V6}. Conversely, if the triplet $(\bd_{2},\bc_{3},\bc_{4})$ is orthotropic, then $(3)$ holds because if $(\bd_{2},\bc_{3})$ was at least transversely isotropic, then so would be $\cov_{2}(\bH)$ by theorem~\ref{thm:cov2-transversely-isotropic} and theorem~\ref{thm:cov2-isotropic}, which would lead to a contradiction. Thus, these two conditions are equivalent. However, checking condition $(3)$ requires less computations than checking that $(\bd_{2},\bc_{3}, \bc_{4})$ is orthotropic using theorem~\ref{thm:n-quadratic-forms}.
\end{rem}

\begin{proof}
  We will show that $(1) \implies (2) \implies (3) \implies (1)$.

  Suppose first that $\cov_{2}(\bH)$ is orthotropic and thus of dimension 3 by proposition~\ref{prop:cov2-symmetry-classes}. Then, by corollary~\ref{cor:dimension-trans-iso-subspaces}, there exists $\bc \in \cov_{2}(\bH)$ which is orthotropic. By lemma~\ref{lem:q-a-a2}, we deduce that $\cov_{2}(\bH) = \langle \bq, \bc, \bc^{2} \rangle$, and thus, without loss of generality, we can assume that $\cov_{2}(\bH)$ is the space of all diagonal tensors. Now, since $\bH \2dots \bq = 0$, $\bH \2dots \bc$ and $\bH \2dots \bc^{2}$ are second-order symmetric covariants, we deduce moreover that the space of diagonal matrices is invariant under $\bH$, which has thus the matrix representation
  \begin{equation}\label{eq:orthotropic-matrix-form}
    [\bH] =    \begin{pmatrix}
      \Lambda_{2} + \Lambda_{3} & -\Lambda_{3}            & -\Lambda_{2}            & 0              & 0             & 0             \\
      -\Lambda_{3}              & \Lambda_{3}+\Lambda_{1} & -\Lambda_{1}            & 0              & 0             & 0             \\
      -\Lambda_{2}              & -\Lambda_{1}            & \Lambda_{1}+\Lambda_{2} & 0              & 0             & 0             \\
      0                         & 0                       & 0                       & -2 \Lambda_{1} & 0             & 0             \\
      0                         & 0                       & 0                       & 0              & -2\Lambda_{2} & 0             \\
      0                         & 0                       & 0                       & 0              & 0             & -2\Lambda_{3}
    \end{pmatrix}
  \end{equation}
  which is the normal form of an harmonic tensor which is \emph{at least} orthotropic. Since it cannot be of lower symmetry by theorem~\ref{thm:cov2-transversely-isotropic} and theorem~\ref{thm:cov2-isotropic}, we conclude that $\bH$ is orthotropic.

  Suppose now that $\bH$ is orthotropic. Then, $\cov_{1}(\bH) = \set{0}$ by proposition~\ref{prop:cov1-symmetry-classes} and thus $\vv_{5} = \vv_{6} = 0$. Moreover, the pair $(\bd_{2},\bc_{3})$ is at least orthotropic and thus orthotropic by theorem~\ref{thm:cov2-transversely-isotropic} and theorem~\ref{thm:cov2-isotropic}. Thus we get $(3)$.

  Finally, suppose that $(3)$ holds. Then by lemma~\ref{lem:v5-V6}, $\cov_{1}(\bH) = \set{0}$ and thus $\cov_{2}(\bH)$ is at least orthotropic by lemma~\ref{lem:cov1-cov2} and thus orthotropic since $(\bd_{2},\bd_{3})$ is orthotropic.
\end{proof}


\subsection{Case IV: \texorpdfstring{$\cov_{2}(\bH)$}{Cov2(H)} is monoclinic}

\begin{lem}\label{lem:v5}
  Let $\bH\in \HH^{4}$ be a fourth order harmonic tensor. Then
  \begin{equation*}
    \vv_{5} = 0 \implies \dim \cov_{1}(\bH) \le 1,
  \end{equation*}
  where $\vv_{5} : = \pmb{\varepsilon} \2dots (\bd_{2}\bc_{3})$.
\end{lem}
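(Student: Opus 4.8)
The hypothesis $\vv_5 = \pmb\varepsilon\2dots(\bd_2\bc_3) = 0$ is exactly the vanishing of the commutator $[\bd_2,\bc_3]$, so $\bd_2$ and $\bc_3$ are simultaneously diagonalisable; I fix an orthonormal basis $(\ee_i)$ in which both are diagonal. By proposition~\ref{prop:cov1-symmetry-classes} we have $\dim\cov_1(\bH)\in\{0,1,3\}$, so it suffices to exclude the value $3$. The starting point is the observation that $\cov_1(\bH)$ is exactly the real linear span of the fourteen first-order generators of~\autoref{tab:cov-basis-H4} evaluated at $\bH$ (multiplication by invariants only rescales them), and that every one of these generators has the form $\pmb\varepsilon\2dots(w)$ with $w$ a word in $\bd_2$, $\bc_3$ and $\bc_4$ \emph{only} --- neither $\bc_5$ nor $\bH$ itself occurs.

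Writing $\bd_2 = \mathrm{diag}(d_i)$, $\bc_3 = \mathrm{diag}(c_i)$ and $\bc_4 = (E_{ij})$, a direct computation of the skew part of such words gives the following. The four words with no factor $\bc_4$ (namely $\vv_5,\vv_{7a},\vv_{8a},\vv_{10a}$) are products of diagonal matrices, hence symmetric, and evaluate to $0$. For each word with a single factor $\bc_4$, the $k$-th component of $\pmb\varepsilon\2dots(w)$ is proportional to the off-diagonal entry of $\bc_4$ transverse to $\ee_k$ (the first component to $E_{23}$, the second to $E_{13}$, the third to $E_{12}$), since for a diagonal matrix $M$ one has $(M\bc_4-\bc_4 M)_{ij}=(M_{ii}-M_{jj})E_{ij}$. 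The single remaining word containing $\bc_4^2$ (namely $\vv_{11a}$) contributes only expressions quadratic in $E_{12},E_{13},E_{23}$. Consequently, if at most one of $E_{12},E_{13},E_{23}$ is nonzero, then every first-order covariant is proportional to a single coordinate axis, so $\dim\cov_1(\bH)\le 1$; and if $\bc_4$ is diagonal then $\cov_1(\bH)=\set{0}$.

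It therefore remains to prove that the simultaneous diagonality of $\bd_2$ and $\bc_3$ forces $\bc_4 = \bH\2dots\bc_3$ to have at most one nonzero off-diagonal entry. I first dispose of the isotropic case: if $\bd_2$ is isotropic then $\bH$ is cubic or isotropic by theorem~\ref{thm:cov2-isotropic} and $\cov_1(\bH)=\set{0}$. Assuming $\bd_2$ non-isotropic, I substitute the Kelvin normal form~\eqref{eq:harmonic-generic-matrix-form} and impose that $\bc_3=\bH\2dots\bd_2$ be diagonal; exactly as in the proof of lemma~\ref{lem:v5-V6} this produces linear relations among the off-axis parameters $X_i,Y_i,Z_i$, which I then feed back into $\bc_4$. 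I organise this as two sub-cases: $\bd_2$ transversely isotropic (say $\bd_2=\mathrm{diag}(\lambda,\lambda,\mu)$, $\lambda\ne\mu$, where the axis $\ee_3$ will supply the common direction) and $\bd_2$ orthotropic (three distinct eigenvalues, so $\bc_3$ is automatically diagonal once $[\bd_2,\bc_3]=0$).

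The main obstacle is this last, computational, step. Unlike in lemma~\ref{lem:v5-V6}, the extra equation $\vv_6=0$ is not available, so $\bc_4$ need not be diagonal and I cannot simply conclude that all commutators vanish; I must instead track which single off-diagonal direction of $\bc_4$ can survive and verify that the relations coming from the diagonality of $\bc_3$ kill the other two. The orthotropic sub-case is the delicate one, since there the hypothesis is strictly weaker than in lemma~\ref{lem:v5-V6}: one must check directly from the normal form that simultaneous diagonality of $\bd_2$ and $\bc_3$ already suppresses two of the three off-diagonal entries of $\bc_4$, leaving $\cov_1(\bH)$ supported on a single axis and thus $\dim\cov_1(\bH)\le 1$.
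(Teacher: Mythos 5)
Your reduction is sound, and it is essentially the same reduction the paper relies on: all fourteen first-order generators in \autoref{tab:cov-basis-H4} are of the form $\pmb\varepsilon\2dots(w)$ with $w$ a word in $\bd_{2},\bc_{3},\bc_{4}$, so once $\bd_{2}$ and $\bc_{3}$ are simultaneously diagonalized everything hinges on the off-diagonal entries of $\bc_{4}$, and your observation that at most one surviving off-diagonal entry forces every first-order covariant onto a single coordinate axis is correct. The gap is that this surviving claim --- that $[\bd_{2},\bc_{3}]=0$ suppresses two of the three off-diagonal entries of $\bc_{4}$ --- is the entire content of the lemma, and you never prove it: you label it ``the main obstacle'', describe what ``must be checked'', and stop. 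Moreover, the plan you sketch for it is doubtful. In the orthotropic sub-case the relations obtained by imposing diagonality of $\bd_{2}$ and $\bc_{3}$ on the normal form \eqref{eq:harmonic-generic-matrix-form} are not linear in $X_{i},Y_{i},Z_{i}$ (the entries of $\bd_{2}$ are already quadratic in the nine parameters, those of $\bc_{3}$ cubic, those of $\bc_{4}$ quartic), and the simplification that makes lemma~\ref{lem:v5-V6} tractable is unavailable: there the \emph{additional} hypothesis $\vv_{6}=0$ is exactly what forces $\bc_{4}$ to be diagonal, whereas with $\vv_{5}=0$ alone you face a genuinely harder elimination problem, which is precisely why the paper abandons brute force at this point.

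For comparison, the paper's proof splits as follows. When $\bd_{2}$ is transversely isotropic it does run the normal-form computation, but the case analysis terminates in one of two ways: either $\bH$ turns out to be invariant under a second-order rotation (at least monoclinic), whence $\dim\cov_{1}(\bH)\le 1$ directly, or one lands in the situation $\vv_{6}=0$ and lemma~\ref{lem:v5-V6} applies. When $\bd_{2}$ is orthotropic the paper replaces the computation by an invariant-theoretic argument: writing $\bc_{3}=\alpha\bq+\beta\bd_{2}+\gamma\bd_{2}^{2}$ (possible since $(\bq,\bd_{2},\bd_{2}^{2})$ spans the diagonal matrices), it invokes explicit syzygies among the covariants --- notably $\bd_{2}\vv_{6}=J_{2}\vv_{6}-\vv_{8b}$ from \eqref{eq:d2v6}, the expansion of $8\,\bH\2dots\bd_{2}^{2}$ in the covariant basis, and $15\,\pmb\varepsilon\2dots(\bc_{5}\bc_{3})=4J_{3}\vv_{5}+15J_{2}\vv_{6}+18\vv_{8a}-24\vv_{8b}$ from \eqref{eq:c5c3} --- to deduce $\vv_{8b}\parallel\vv_{6}$, hence that $\vv_{6}$ is an eigenvector of $\bd_{2}$. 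Since $\vv_{6}$ is (proportional to) the axial vector of $[\bd_{2},\bc_{4}]$ and $\bd_{2}$ has simple eigenvalues, $\vv_{6}$ is then a common eigenvector of $\bd_{2}$, $\bc_{3}$ and $\bc_{4}$, which is exactly your ``single surviving off-diagonal direction''. To close your argument you would need to supply either this syzygy mechanism or a completed case analysis of the polynomial system; as it stands, the proposal establishes only the easy implication from the key claim to the lemma, not the key claim itself.
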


\begin{proof}
  If $\vv_{5} = 0$, then $\bd_{2}$ and $\bc_{3}$ commute. We will distinguish 2 cases whether $\bd_{2}$ is orthotropic or transversely isotropic (if $\bd_{2}$ is isotropic, the result already holds by theorem~\ref{thm:cov2-isotropic}).

  (1) Suppose that $\bd_{2}$ is transversely isotropic. Without loss of generality we can assume that
  $\bd_{2} = \mathrm{diag}(\lambda,\lambda,\mu)$, $\lambda \ne \mu$,
  and that $\bc_{3} = \bH \2dots \bd_{2}$ is diagonal. We get then
  \begin{equation*}
    X_{1}+X_{2} = 0, \qquad Y_{2} = 0, \qquad Z_{1} = 0.
  \end{equation*}
  Using these substitutions, we have
  \begin{align*}
    (\bd_{2})_{11} & = 4\Lambda_{3}^{2} + 2 \Lambda_{2}\Lambda_{3} + 4\Lambda_{2}^{2} + 4Z_{2}^{2} + 6Y_{1}^{2} + 6 X_{1}^{2} \\
    (\bd_{2})_{22} & = 4\Lambda_{3}^{2} + 2\Lambda_{1}\Lambda_{3} + 4\Lambda_{1}^{2} + 4Z_{2}^{2} + 6Y_{1}^{2} + 6X_{1}^{2}   \\
    (\bd_{2})_{12} & = (\Lambda_{1}-\Lambda_{2})Z_{2}                                                                         \\
    (\bd_{2})_{13} & = (4\Lambda_{3} - 2\Lambda_{2} + 3\Lambda_{1})Y_{1} + 4X_{1}Z_{2}                                        \\
    (\bd_{2})_{23} & = (4\Lambda_{3} - 2\Lambda_{1} + 3\Lambda_{2})X_{1} - 4Y_{1}Z_{2}                                        \\
  \end{align*}
  and thus
  \begin{align*}
     & (\Lambda_{1} - \Lambda_{2})Z_{2} = 0,                                       \\
     & (4\Lambda_{3} - 2\Lambda_{2} + 3\Lambda_{1})Y_{1} + 4X_{1}Z_{2} = 0,        \\
     & (4\Lambda_{3} - 2\Lambda_{1} + 3\Lambda_{2})X_{1} - 4Y_{1}Z_{2} = 0,        \\
     & (\Lambda_{1} - \Lambda_{2})(2\Lambda_{1} + 2\Lambda_{2} + \Lambda_{3}) = 0.
  \end{align*}
  (a) If $\Lambda_{1} = \Lambda_{2}$, we get
  \begin{align*}
    (4\Lambda_{3} + \Lambda_{1})Y_{1} + 4X_{1}Z_{2}  & = 0, \\
    -(4\Lambda_{3} + \Lambda_{1})X_{1} + 4Y_{1}Z_{2} & = 0.
  \end{align*}
  Then either the determinant of the system $16Z_{2}^{2}+(4\Lambda_{3} + \Lambda_{1})^{2}$ does not vanish, and thus $ X_{1} = Y_{1} = 0$. In this case we get
  \begin{equation*}
    X_{1} = X_{2} = Y_{1} = Y_{2} = 0
  \end{equation*}
  and $\bH$ is invariant under the rotation by angle $\pi$ around $\ee_{3}$. Otherwise, we have $Z_{2} = 0$ and $4\Lambda_{3} + \Lambda_{1} = 0$. In that case, $\bc_{4}$ is also diagonal and commutes thus with both $\bd_{2}$ and $\bc_{3}$ and we are done by lemma~\ref{lem:v5-V6}.

  (b) If $\Lambda_{1} \ne \Lambda_{2}$ , then $Z_{2} = 0$ and
  \begin{align*}
    Y_{1}(4\Lambda_{3} - 2\Lambda_{2} + 3\Lambda_{1}) & = 0, \\
    X_{1}(4\Lambda_{3} - 2\Lambda_{1} + 3\Lambda_{2}) & = 0, \\
    \Lambda_{3} + 2\Lambda_{2} + 2\Lambda_{1}         & = 0. \\
  \end{align*}
  If $X_{1}= 0$ or $Y_{1} = 0$ then, we are done since $\bH$ is at least monoclinic in either cases. Thus we can assume that
  \begin{align*}
    4\Lambda_{3} - 2\Lambda_{2} +3\Lambda_{1}  & = 0, \\
    4\Lambda_{3} + 3\Lambda_{2} - 2\Lambda_{1} & = 0, \\
    \Lambda_{3} + 2\Lambda_{2} + 2\Lambda_{1}  & = 0,
  \end{align*}
  but the unique solution of this linear system is $\Lambda_{1}  = \Lambda_{2} = \Lambda_{3} = 0$, and then $\bc_{4} = 0$. Again, we are done by lemma~\ref{lem:v5-V6}.

  (2) Suppose that $\bd_{2}$ is orthotropic. Our strategy will be to show that $\vv_{6} = \pmb\varepsilon\2dots(\bd_{2}\bc_{4})$ is a common eigenvector of both $\bd_{2}$, $\bc_{3}$ and $\bc_{4}$, in which case $\dim \cov_{1}(\bH) = 1$ (if $\vv_{6} = 0$, then $\cov_{1}(\bH) = \set{0}$ by lemma~\ref{lem:v5-V6}). Note that, if we can prove that $\vv_{6}$ is an eigenvector of $\bd_{2}$, then, we are done because
  \begin{equation*}
    \bd_{2}(\bc_{3}\vv_{6}) = \bc_{3}(\bd_{2}\vv_{6}), \qquad \bd_{2}(\bc_{4}\vv_{6}) = \bc_{4}(\bd_{2}\vv_{6})
  \end{equation*}
  and $\bd_{2}$ (which is orthotropic) has only simple eigenvalues. Now $\bd_{2} \vv_{6}$ can be recast as a product of covariants in~\autoref{tab:cov-basis-H4}. Indeed, we have
  \begin{equation}\label{eq:d2v6}
    \bd_{2} \vv_{6} = J_{2}\vv_{6}-\vv_{8b},
  \end{equation}
  where
  \begin{equation*}
    \vv_{6} = \pmb\varepsilon\2dots(\bd_{2}\bc_{4}), \quad \text{and} \quad \vv_{8b} = \pmb\varepsilon\2dots(\bd_{2}^{2}\bc_{4}).
  \end{equation*}

  Without loss of generality, we can assume that $\bd_{2}$ is diagonal, and hence that $(\tq{q},\bd_{2},\bd_{2}^{2})$ is a basis of the space of diagonal matrices. Therefore
  \begin{equation}\label{eq:c3diag}
    \bc_{3} = \alpha\td{q} + \beta\bd_{2} + \gamma\bd_{2}^{2}.
  \end{equation}
  If $\gamma = 0$, then
  \begin{equation*}
    \bc_{4} = \bH\2dots \bc_{3} = \beta (\bH\2dots \bd_{2}) = \beta\bc_{3}
  \end{equation*}
  and we are done by lemma~\ref{lem:v5-V6}. Therefore, we can suppose that $\gamma \ne 0$. Contracting with $\bc_{4}$ both sides of \eqref{eq:c3diag}, we get
  \begin{equation*}
    \bc_{4}\bc_{3} = \alpha\bc_{4} + \beta\bc_{4}\bd_{2} + \gamma\bc_{4}\bd_{2}^{2},
  \end{equation*}
  and contracting with $\pmb\varepsilon$ leads to
  \begin{equation}\label{eq:v7beq1}
    \vv_{7b} = \pmb\varepsilon\2dots(\bc_{4}\bc_{3}) = -\beta\vv_{6}-\gamma\vv_{8b}.
  \end{equation}
  Now, contracting $\bH$ with both sides of \eqref{eq:c3diag}, we get
  \begin{equation*}
    \bc_{4} = \bH\2dots\bc_{3} = \beta\bH\2dots\bd_{2} + \gamma\bH\2dots\bd_{2}^{2} = \beta\bc_{3} + \gamma\bH\2dots\bd_{2}^{2}.
  \end{equation*}
  But $\bH\2dots\bd_{2}^{2}$ can be recast as a product of covariants in~\autoref{tab:cov-basis-H4}. Indeed
  \begin{equation*}
    8\bH\2dots\bd_{2}^{2}  = (-2J_{2}J_{3} + 8J_{5})\td{q}-2J_{3}\bd_{2} + 7J_{2}\bc_{3} + 10\bc_{5}-12(\bd_{2}\bc_{3})^{s}.
  \end{equation*}
  Therefore (remember that $\bd_{2}$ and $\bc_{3}$ commute), we have
  \begin{equation*}
    \bc_{4} = \left(\beta + \frac{7\gamma}{8}J_{2}\right)\bc_{3} + \gamma\left(J_{5}-\frac{J_{2}J_{3}}{4}\right)\bq - \frac{\gamma}{4}J_{3}\bd_{2} + \frac{5\gamma}{4}\bc_{5} - \frac{3\gamma}{2}\bd_{2}\bc_{3}
  \end{equation*}
  and thus
  \begin{equation*}
    \bc_{4}\bc_{3} = \left(\beta + \frac{7\gamma}{8}J_{2}\right) \bc_{3}^{2} + \gamma \left(J_{5} - \frac{J_{2}J_{3}}{4}\right) \bc_{3} - \frac{\gamma}{4}J_{3}\bd_{2}\bc_{3} + \frac{5\gamma}{4}\bc_{5}\bc_{3} - \frac{3\gamma}{2}\bd_{2}\bc_{3}^{2}.
  \end{equation*}
  Hence
  \begin{equation*}
    \vv_{7b} = \pmb\varepsilon\2dots(\bc_{4}\bc_{3}) = \frac{5\gamma}{3}\pmb\varepsilon\2dots(\bc_{5}\bc_{3}).
  \end{equation*}
  But $\pmb\varepsilon\2dots(\bc_{5}\bc_{3})$ can be recast as a product of covariants in~\autoref{tab:cov-basis-H4}. Indeed
  \begin{equation}\label{eq:c5c3}
    15\, \pmb\varepsilon \2dots (\bc_{5}\td{c_{3}}) = 4J_{3}\vv_{5} + 15J_{2}\vv_{6} + 18\vv_{8a} - 24\vv_{8b},
  \end{equation}
  where
  \begin{equation*}
    \vv_{5} = \pmb\varepsilon \2dots (\bd_{2}\td{c_{3}}) = 0, \quad \text{and} \quad  \vv_{8a} = \pmb\varepsilon \2dots (\bd_{2}\td{{c_{3}}^{2}}) = 0.
  \end{equation*}
  We have thus
  \begin{equation*}
    \vv_{7b} = \frac{5\gamma}{3}J_{2}\vv_{6}-\frac{8\gamma}{3}\vv_{8b}.
  \end{equation*}
  Using~\eqref{eq:v7beq1}, we deduce that
  \begin{equation*}
    \frac{5\gamma}{3}J_{2}\vv_{6} - \frac{8\gamma}{3}\vv_{8b} = \vv_{7b} = -\beta\vv_{6}-\gamma\vv_{8b}
  \end{equation*}
  and hence that
  \begin{equation*}
    \vv_{8b} = \left(J_{2} + \frac{3\beta}{5\gamma}\right)\vv_{6}.
  \end{equation*}
  Therefore
  \begin{equation*}
    \bd_{2}\vv_{6} = J_{2}\vv_{6}-\vv_{8b} = \frac{3\beta}{5\gamma}\vv_{6}
  \end{equation*}
  and $\vv_{6}$ is an eigenvector of $\bd_{2}$, which achieves the proof.
\end{proof}

\begin{cor}\label{cor:cov1-v5}
  Let $\bH\in \HH^{4}$ be a fourth order harmonic tensor. Then,
  \begin{equation}\label{eq:v5-v5}
    \vv_{5} \times \left[(\vv_{5}\cdot \bH \cdot \vv_{5}) \vv_{5}\right]  = 0 \quad  \text{and} \quad \vv_{5} \times \left[(\vv_{5} \cdot \bH^{2} \cdot \vv_{5}) \vv_{5}\right] = 0,
  \end{equation}
  if and only if $\bH$ is at least monoclinic.
\end{cor}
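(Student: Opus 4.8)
The plan is to reduce the statement to a dimension count on the space of first-order covariants, namely to the equivalence
\[
  \bH \text{ is at least monoclinic} \iff \dim\cov_{1}(\bH)\le 1,
\]
and then to show that \eqref{eq:v5-v5} is itself equivalent to $\dim\cov_1(\bH)\le 1$. For the first equivalence the forward direction is soft: every symmetry class of $\HH^4$ that is at least monoclinic (that is, $\ZZ_2,\DD_2,\DD_3,\DD_4,\octa,\OO(2),\SO(3)$) contains a binary rotation $r$ about some axis $\nn$; since every first-order covariant of $\bH$ is $r$-invariant and the only vectors fixed by a $\pi$-rotation about $\nn$ are the multiples of $\nn$, we get $\cov_1(\bH)\subseteq\RR\nn$, hence $\dim\cov_1(\bH)\le1$. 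For the converse I would use lemma~\ref{lem:cov1-cov2}: if $\dim\cov_1(\bH)=0$ then $\cov_2(\bH)$ is at least orthotropic and theorems~\ref{thm:cov2-isotropic}, \ref{thm:cov2-transversely-isotropic} and~\ref{thm:cov2-orthotropic} force $\bH$ to be at least orthotropic; if $\dim\cov_1(\bH)=1$ then $\cov_2(\bH)$ is monoclinic, and here I would argue directly that $\bH$ is monoclinic.

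This last direct argument (really the hard half of the monoclinic case, and it does not use the theorem it will feed) runs as follows. If $\cov_2(\bH)$ is monoclinic then $\dim\cov_2(\bH)=4$ by proposition~\ref{prop:cov2-symmetry-classes}, so by corollary~\ref{cor:dimension-trans-iso-subspaces} it contains an orthotropic element $\bc$; since all elements of $\cov_2(\bH)$ share the binary axis $\nn$, I may take coordinates with $\nn=\ee_3$ and, after a rotation about $\ee_3$, assume $\bc$ diagonal with distinct eigenvalues. Then $\bq,\bc,\bc^2$ span the diagonal tensors (lemma~\ref{lem:q-a-a2}) and all lie in $\cov_2(\bH)$, while $\bH\2dots\bc$ and $\bH\2dots\bc^2$ are again covariants of the monoclinic form, so $\ee_3$ is an eigenvector of $\bH\2dots\bq=0$, of $\bH\2dots\bc$ and of $\bH\2dots\bc^2$, hence of $\bH\2dots\be_{ii}$ for $i=1,2,3$. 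The component computation of lemma~\ref{lem:orthotropic-criteria} then yields $H_{1113}=H_{1123}=H_{1223}=H_{2223}=H_{1333}=H_{2333}=0$, i.e.\ $\bH$ is invariant under $r$; combined with $G_{\bH}\subseteq G_{\cov_2(\bH)}$ this gives $G_{\bH}=\ZZ_2$.

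It remains to match \eqref{eq:v5-v5} with $\dim\cov_1(\bH)\le1$. The easy direction is immediate: $M_1:=\vv_5\cdot\bH\cdot\vv_5$ and $M_2:=\vv_5\cdot\bH^2\cdot\vv_5$ are second-order covariants, so $M_1\vv_5$ and $M_2\vv_5$ are again first-order covariants lying in $\cov_1(\bH)$; if $\dim\cov_1(\bH)\le1$ then $\cov_1(\bH)$ is spanned by $\vv_5$ whenever $\vv_5\ne0$, so both vectors are collinear with $\vv_5$, which is exactly \eqref{eq:v5-v5} (and the conditions are trivial when $\vv_5=0$). For the reverse implication I would split on $\vv_5$: if $\vv_5=0$, lemma~\ref{lem:v5} gives $\dim\cov_1(\bH)\le1$ at once.

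The main obstacle is the remaining case $\vv_5\ne0$ together with \eqref{eq:v5-v5}, where I must exclude the triclinic class (note $\vv_5\ne0$ forces $\cov_1(\bH)\ne\set{0}$, so by the equivalence above $\bH$ is already known to be either monoclinic or triclinic). Here I would pass to the normal form $\vv_5\parallel\ee_3$; writing $M_1=\bH\2dots(\vv_5\otimes\vv_5)$ one sees that the first condition of \eqref{eq:v5-v5} is $(M_1)_{13}=(M_1)_{23}=0$, i.e.\ $H_{1333}=H_{2333}=0$, while the second gives the two analogous relations for $M_2=\bH^2\2dots(\vv_5\otimes\vv_5)$. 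The delicate point is that these four scalar equations do not visibly suffice to kill the remaining off-axis components $H_{1113},H_{1123},H_{1223},H_{2223}$; the extra leverage must come from the fact that $\vv_5$ is the axial vector of $[\bd_2,\bc_3]$, whose vanishing transverse components feed further polynomial identities into the system. Concretely I would either show that \eqref{eq:v5-v5} forces \emph{every} second-order covariant, not merely $M_1,M_2$, to admit $\vv_5$ as an eigenvector --- whence $\cov_1(\bH)\subseteq\RR\vv_5$, because $\cov_1(\HH^4)$ is generated by commutators of $\cov_2(\HH^4)$ and the commutator of two symmetric tensors sharing the eigenvector $\vv_5$ has axial vector along $\vv_5$ --- or carry out the explicit elimination in the normal form, mirroring the orthotropic/transversely isotropic case split used in the proof of lemma~\ref{lem:v5}. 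I expect this normal-form elimination to be the only genuinely computational step of the proof.
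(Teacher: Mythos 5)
Your supporting reductions are sound: the equivalence ``$\bH$ is at least monoclinic $\iff \dim\cov_{1}(\bH)\le 1$'' is true, your soft argument for its forward direction is fine, and your direct proof that $\cov_{2}(\bH)$ monoclinic forces $\bH$ monoclinic (an orthotropic $\bc\in\cov_{2}(\bH)$, the span $\langle\bq,\bc,\bc^{2}\rangle$, the common eigenvector $\ee_{3}$ of the tensors $\bH\2dots\be_{ii}$, then the component computation of lemma~\ref{lem:orthotropic-criteria}) is correct. That last step is in fact a genuinely different route from the paper, which obtains the implication ``$\cov_{2}(\bH)$ monoclinic $\Rightarrow\bH$ monoclinic'' only in theorem~\ref{thm:cov2-monoclinic}, \emph{using} the present corollary; your detour is what keeps your scheme non-circular. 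The easy direction of the corollary and the case $\vv_{5}=0$ (lemma~\ref{lem:v5}) likewise agree with the paper.

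The genuine gap is the case $\vv_{5}\ne 0$, which you correctly flag as ``the main obstacle'' but never prove; yet this is the entire content of the corollary, being the only place where the particular covariants $(\vv_{5}\cdot\bH\cdot\vv_{5})\vv_{5}$ and $(\vv_{5}\cdot\bH^{2}\cdot\vv_{5})\vv_{5}$ do any work. Of your two proposed strategies, the first (show every second-order covariant admits $\vv_{5}$ as an eigenvector) is essentially a restatement of the conclusion, and the second (normal-form elimination) is only announced, not executed. The paper carries out exactly that elimination: taking $\vv_{5}=k\ee_{1}$ with $k\ne 0$, conditions \eqref{eq:v5-v5} read $H_{1112}=H_{1113}=0$ and $(\bH^{2})_{1112}=(\bH^{2})_{1113}=0$, which in the parametrization \eqref{eq:harmonic-generic-matrix-form} become $Y_{1}+Y_{2}=0$, $Z_{2}=0$, together with a $2\times 2$ linear system in $(Y_{2},Z_{1})$ whose determinant is $4X_{1}^{2}+(\Lambda_{2}-\Lambda_{3})^{2}$. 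If this determinant is nonzero, then $Y_{1}=Y_{2}=Z_{1}=Z_{2}=0$, so all six independent components of $\bH$ carrying an odd number of indices equal to $1$ vanish and $\bH$ is invariant under the binary rotation about $\ee_{1}$; if it is zero, then $X_{1}=0$ and $\Lambda_{2}=\Lambda_{3}$, which makes $\bd_{2}$ and $\bc_{3}$ commute, i.e.\ $\vv_{5}=0$, a contradiction. This dichotomy is precisely the ``extra leverage'' you were looking for: your four scalar equations do suffice in the non-degenerate branch, and the degenerate branch is incompatible with $\vv_{5}\ne 0$. Without carrying out this (or an equivalent) computation, your proposal does not establish the statement.
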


\begin{proof}
  Suppose first that~\eqref{eq:v5-v5} is satisfied. If $\vv_{5} = 0$, we are done by lemma~\ref{lem:v5}. Otherwise, we can suppose, without loss of generality, that $\vv_{5} = k \ee_{1}$ with $k \ne 0$. But then we get
  \begin{equation*}
    H_{1112} = H_{1113} = 0, \qquad (\bH^{2})_{1112} = (\bH^{2})_{1113} = 0,
  \end{equation*}
  and thus
  \begin{align*}
    Y_{1} + Y_{2}                                 & = 0, \\
    Z_{2}                                         & = 0, \\
    2X_{1}Y_{2} -(\Lambda_{2}-\Lambda_{3})Z_{1}   & = 0, \\
    (\Lambda_{2}-\Lambda_{3})Y_{2} + 2X_{1} Z_{1} & = 0.
  \end{align*}
  If $4X_{1}^{2} +(\Lambda_{2}-\Lambda_{3})^{2} \ne 0$, then $Y_{2} = Z_{1} = 0$ and we are done (since then, $\bH$ is a normal form of a  monoclinic tensor). Otherwise, we get $\Lambda_{3} = \Lambda_{2}$ and $X_{1} = 0$. Then $\bd_{2}$ and $\bc_{3}$ commute so that $\vv_{5} = 0$, which leads to a contradiction. Conversely, if $\bH$ is at least monoclinic, then $\dim \cov_{1}(\bH) \le 1$ by proposition~\ref{prop:cov1-symmetry-classes}, and thus we get \eqref{eq:v5-v5}. This achieves the proof.
\end{proof}

\begin{thm}\label{thm:cov2-monoclinic}
  Let $\bH\in \HH^{4}$ be a fourth order harmonic tensor. The following propositions are equivalent.
  \begin{enumerate}
    \item $\bH$ is monoclinic;
    \item $\cov_{2}(\bH)$ is monoclinic;
    \item the triplet $(\bd_{2}, \bc_{3}, \bc_{4})$ is monoclinic.
  \end{enumerate}
  In that case, $G_{\bH} = G_{(\bd_{2}, \bc_{3}, \bc_{4})}$.
\end{thm}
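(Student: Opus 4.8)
The plan is to follow the same $(1)\implies(2)\implies(3)\implies(1)$ cycle used throughout Section~\ref{sec:H4-symmetry-classes}, leaning on the structural facts already established for the previous cases. First I would prove $(1)\implies(2)$: if $\bH$ is monoclinic, then by proposition~\ref{prop:cov1-symmetry-classes} we have $\dim\cov_{1}(\bH)=1$, and lemma~\ref{lem:cov1-cov2}(2) immediately gives that $\cov_{2}(\bH)$ is monoclinic. The implication $(2)\implies(3)$ should be almost formal: since $\bd_{2}$, $\bc_{3}$, $\bc_{4}$ all lie in $\cov_{2}(\bH)$, the triplet is \emph{at least} monoclinic; it cannot be orthotropic, transversely isotropic or isotropic, for otherwise theorem~\ref{thm:cov2-orthotropic}, theorem~\ref{thm:cov2-transversely-isotropic} and theorem~\ref{thm:cov2-isotropic} would force $\cov_{2}(\bH)$ into a higher symmetry class, contradicting $(2)$. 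The only point to rule out is that the triplet be triclinic: but if it were, one could build two independent commutators among $\bd_{2},\bc_{3},\bc_{4}$, giving $\dim\cov_{1}(\bH)\ge 2$ and hence triclinic $\cov_{2}(\bH)$ by lemma~\ref{lem:cov1-cov2}, again a contradiction. So the triplet is exactly monoclinic.

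The substantive implication is $(3)\implies(1)$, together with the final identity $G_{\bH}=G_{(\bd_{2},\bc_{3},\bc_{4})}$. Assume $(\bd_{2},\bc_{3},\bc_{4})$ is monoclinic. The inclusion $G_{\bH}\subset G_{(\bd_{2},\bc_{3},\bc_{4})}$ is automatic since these are covariants of $\bH$, so the whole content is the reverse inclusion $G_{(\bd_{2},\bc_{3},\bc_{4})}\subset G_{\bH}$. The natural route is to fix the common symmetry axis: a monoclinic triplet has symmetry group conjugate to $\ZZ_{2}$, so without loss of generality its common eigenvector is $\ee_{3}$ and the second-order rotation $r$ about $\ee_{3}$ fixes $\bd_{2},\bc_{3},\bc_{4}$. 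I would then argue that $r\in G_{\bH}$, i.e.\ that $\ee_{3}$ being a common eigenvector of $\bd_{2},\bc_{3},\bc_{4}$ forces the relevant components of $\bH$ (namely $H_{1113},H_{1123},H_{1223},H_{1333},H_{2223},H_{2333}$, using the notation of lemma~\ref{lem:orthotropic-criteria}) to vanish. The cleanest way is to mimic lemma~\ref{lem:orthotropic-criteria}: since $\bH\2dots\bq=0$, the span $\langle\bq,\bd_{2},\bd_{2}^{2}\rangle$ or, more to the point here, a suitable three-dimensional span containing the diagonal tensors $\be_{11},\be_{22},\be_{33}$ must be controlled, and the condition $[(\bH\2dots\be_{ii})\ee_{3}]\times\ee_{3}=0$ yields precisely the vanishing of those six components.

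The main obstacle I anticipate is that, unlike the orthotropic case of lemma~\ref{lem:orthotropic-criteria}, here I cannot assume $\bd_{2}$ itself is orthotropic: the triplet could be monoclinic while $\bd_{2}$ is merely transversely isotropic (or even the degenerate configurations that surfaced in the proof of lemma~\ref{lem:v5}). So the argument that $\langle\bq,\bd_{2},\bc_{3},\bc_{4}\rangle$ contains the full space of diagonal matrices $\langle\be_{11},\be_{22},\be_{33}\rangle$ may fail. To handle this I would first invoke lemma~\ref{lem:v5}: since $\bd_{2},\bc_{3}$ commute (as the triplet is at least monoclinic, hence $\vv_{5}=\pmb{\varepsilon}\2dots(\bd_{2}\bc_{3})=0$ by lemma~\ref{lem:orthotropic-pair}), we get $\dim\cov_{1}(\bH)\le 1$. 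Combined with lemma~\ref{lem:cov1-cov2}, the only possibilities for $\cov_{2}(\bH)$ are then at least orthotropic (if $\dim\cov_{1}(\bH)=0$) or monoclinic (if $\dim\cov_{1}(\bH)=1$). The former is excluded because it would make the triplet at least orthotropic by theorem~\ref{thm:cov2-orthotropic}, contradicting $(3)$. Hence $\cov_{2}(\bH)$ is monoclinic, so $\bH$ is monoclinic by lemma~\ref{lem:cov1-cov2}(2) and proposition~\ref{prop:cov1-symmetry-classes}, and moreover $G_{\bH}$ and $G_{(\bd_{2},\bc_{3},\bc_{4})}$ are both conjugate to $\ZZ_{2}$ with the same axis, giving the asserted equality. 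This reduction, routing everything through $\cov_{1}$ and the already-proved cases, is what lets me avoid the delicate component-by-component casework that the transversely-isotropic-$\bd_{2}$ configuration would otherwise demand.
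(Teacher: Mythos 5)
Your implications $(1)\implies(2)$ and $(2)\implies(3)$ follow the paper's route and are essentially sound, modulo two small repairs: in $(1)\implies(2)$, proposition~\ref{prop:cov1-symmetry-classes} alone only yields $\dim\cov_{1}(\bH)\le 1$ (excluding $\dim\cov_{1}(\bH)=0$ requires lemma~\ref{lem:cov1-cov2} together with theorems~\ref{thm:cov2-isotropic}, \ref{thm:cov2-transversely-isotropic} and \ref{thm:cov2-orthotropic}); and in $(2)\implies(3)$ your exclusion of a triclinic triplet is redundant (``at least monoclinic'' already excludes $[\triv]$) and, taken as a general claim, false -- a triclinic triple of symmetric tensors need not admit two independent commutators (take $\ba=\mathrm{diag}(1,2,3)$ and a suitable $\bb$ with $\bc=0$); the paper's lemma~\ref{lem:cov1-cov2} only uses this for the \emph{full} six-dimensional space $\Sym^{2}(\RR^{3})$.

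The genuine gap is in $(3)\implies(1)$, and it is twofold. First, your claim that $\vv_{5}=\pmb{\varepsilon}\2dots(\bd_{2}\bc_{3})=0$ ``since the triplet is at least monoclinic'' misreads lemma~\ref{lem:orthotropic-pair}: commutation of a pair is equivalent to that pair being at least \emph{orthotropic}, not at least monoclinic. Indeed, by your own step $(2)\implies(3)$ the monoclinic triplet is \emph{not} at least orthotropic, so at least one pairwise commutator is nonzero, and for a generic monoclinic $\bH$ one has $\vv_{5}\ne 0$ (it is then a nonzero vector along the $\ZZ_{2}$ axis). Hence lemma~\ref{lem:v5} cannot be invoked, and your route to $\dim\cov_{1}(\bH)\le 1$ collapses. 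The paper obtains this bound quite differently: in a basis adapted to the axis, each of $\bd_{2},\bc_{3},\bc_{4}$ is block-diagonal (a $2\times 2$ block plus the $33$ entry), every first-order generator in \autoref{tab:cov-basis-H4} is $\pmb{\varepsilon}\2dots$ of a product of these, and such products keep the block form, so every first-order covariant is parallel to $\ee_{3}$. Second, even granting $\dim\cov_{1}(\bH)\le 1$ and $\cov_{2}(\bH)$ monoclinic, your conclusion ``so $\bH$ is monoclinic by lemma~\ref{lem:cov1-cov2}(2) and proposition~\ref{prop:cov1-symmetry-classes}'' is circular: those results relate $\cov_{1}(\bH)$ and $\cov_{2}(\bH)$ to each other and say nothing about $G_{\bH}$; passing from ``$\cov_{2}(\bH)$ monoclinic'' to ``$\bH$ monoclinic'' is exactly the implication $(2)\implies(1)$ that the cyclic proof is supposed to establish. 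The missing bridge is corollary~\ref{cor:cov1-v5}, proved by explicit computation on the components of $\bH$ in the Kelvin form~\eqref{eq:harmonic-generic-matrix-form}, which converts $\dim\cov_{1}(\bH)\le 1$ into ``$\bH$ is at least monoclinic''; combined with $G_{\bH}\subset G_{(\bd_{2},\bc_{3},\bc_{4})}\simeq\ZZ_{2}$, this yields both $(1)$ and the equality $G_{\bH}=G_{(\bd_{2},\bc_{3},\bc_{4})}$.
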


\begin{proof}
  We will prove that $(1) \implies (2) \implies (3) \implies (1)$.

  Suppose first that $(1)$ holds. Then, $\cov_{2}(\bH)$ is at least monoclinic. But $\cov_{2}(\bH)$ cannot be orthotropic, transversely isotropic, nor isotropic by Theorems~\ref{thm:cov2-orthotropic}, \ref{thm:cov2-transversely-isotropic}, and  \ref{thm:cov2-isotropic}. Thus $\cov_{2}(\bH)$ is monoclinic.

  Suppose now that $(2)$ holds. Then, the triplet $(\bd_{2}, \bc_{3}, \bc_{4})$ is at least monoclinic. Since it cannot be at least orthotropic by lemma~\ref{lem:v5-V6} and lemma~\ref{lem:cov1-cov2}, it is thus monoclinic.

  Finally, suppose that $(3)$ holds. Then, $\bH$ is either monoclinic or triclinic. Moreover, there exists a basis where each element of the triplet $(\bd_{2}, \bc_{3}, \bc_{4})$ can be written as
  \begin{equation*}
    \left(
    \begin{array}{ccc}
      * & * & 0 \\
      * & * & 0 \\
      0 & 0 & * \\
    \end{array}
    \right)
  \end{equation*}
  and using the results of~\autoref{tab:cov-basis-H4}, we can conclude that $\dim \cov_{1}(\bH) = 1$. But then, $\bH$ is at least monoclinic by corollary~\ref{cor:cov1-v5} and thus monoclinic.
\end{proof}


\subsection{Case V: \texorpdfstring{$\cov_{2}(\bH)$}{Cov2(H)} is triclinic}

\begin{thm}\label{thm:cov1-triclinic}
  Let $\bH\in \HH^{4}$ be a fourth order harmonic tensor. The following propositions are equivalent.
  \begin{enumerate}
    \item $\tq H$ is triclinic;
    \item $\cov_{2}(\bH)$ is triclinic;
    \item the triplet $(\bd_{2}, \bc_{3}, \bc_{4})$ is triclinic.
  \end{enumerate}
\end{thm}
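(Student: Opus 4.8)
The plan is to run the same cycle $(1) \implies (2) \implies (3) \implies (1)$ used in the four preceding cases, exploiting that the triclinic class is the \emph{lowest} element of each of the three relevant posets, so that every higher class has already been disposed of by Theorems~\ref{thm:cov2-isotropic}--\ref{thm:cov2-monoclinic}.

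For $(1) \implies (2)$ I would argue purely by elimination. If $\bH$ is triclinic it is not isotropic or cubic, not transversely isotropic, tetragonal or trigonal, not orthotropic and not monoclinic; Theorems~\ref{thm:cov2-isotropic}, \ref{thm:cov2-transversely-isotropic}, \ref{thm:cov2-orthotropic} and \ref{thm:cov2-monoclinic} then prevent $\cov_2(\bH)$ from being isotropic, transversely isotropic, orthotropic or monoclinic. Since by Proposition~\ref{prop:cov2-symmetry-classes} these four classes together with the triclinic one exhaust the possibilities for $\cov_2(\bH)$, it must be triclinic.

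For $(3) \implies (1)$ the same bookkeeping works in reverse. Because $\bd_2, \bc_3, \bc_4 \in \cov_2(\bH)$, one has $G_{\cov_2(\bH)} \subseteq G_{(\bd_2,\bc_3,\bc_4)}$, so any symmetry forced on $\cov_2(\bH)$ is also a symmetry of the triplet. I would then check that each of the seven non-triclinic classes of $\bH$ makes the triplet visibly symmetric: cubic or isotropic forces $\bd_2$ isotropic and $\bc_3 = \bc_4 = 0$ (Theorem~\ref{thm:cov2-isotropic}); transversely isotropic, tetragonal or trigonal forces $\cov_2(\bH)$, hence the triplet, to be transversely isotropic (Theorem~\ref{thm:cov2-transversely-isotropic}); orthotropic forces the triplet orthotropic (Theorem~\ref{thm:cov2-orthotropic} with Remark~\ref{rem:orthotropic-triplet}); and monoclinic forces the triplet monoclinic (Theorem~\ref{thm:cov2-monoclinic}). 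Each of these contradicts the triplet being triclinic, so $\bH$ is triclinic.

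The delicate step is $(2) \implies (3)$, since the triplet is only a \emph{subset} of $\cov_2(\bH)$ and the inclusion above runs the wrong way to transport triviality of the symmetry group from $\cov_2(\bH)$ to the triplet. I would therefore prove the contrapositive: assume $(\bd_2,\bc_3,\bc_4)$ is not triclinic, hence at least monoclinic, and split into two cases. If the triplet is at least orthotropic, the three tensors are simultaneously diagonalizable and thus pairwise commute, so that $\vv_5 = \pmb\varepsilon \2dots (\bd_2\bc_3)$ and $\vv_6 = \pmb\varepsilon \2dots (\bd_2\bc_4)$ both vanish; lemma~\ref{lem:v5-V6} then gives $\cov_1(\bH) = \set{0}$ and lemma~\ref{lem:cov1-cov2} forces $\cov_2(\bH)$ to be at least orthotropic. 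If instead the triplet is exactly monoclinic, Theorem~\ref{thm:cov2-monoclinic} directly yields that $\cov_2(\bH)$ is monoclinic. In either case $\cov_2(\bH)$ is not triclinic, which is exactly the contrapositive. The only care needed throughout is to keep the poset of Proposition~\ref{prop:symmetry-classes} in view, so that at each step the leftover class is unambiguously triclinic.
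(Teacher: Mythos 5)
Your proposal is correct and takes essentially the same route as the paper: the cycle $(1) \implies (2) \implies (3) \implies (1)$, with $(1)\implies(2)$ and $(3)\implies(1)$ handled by elimination through Theorems~\ref{thm:cov2-isotropic}, \ref{thm:cov2-transversely-isotropic}, \ref{thm:cov2-orthotropic} and \ref{thm:cov2-monoclinic}, and $(2)\implies(3)$ by contraposition through lemma~\ref{lem:cov1-cov2}. The only cosmetic difference is that the paper disposes of the contrapositive of $(2)\implies(3)$ in one stroke (a triplet that is at least monoclinic forces $\dim \cov_{1}(\bH) \le 1$), whereas you split it into the at-least-orthotropic subcase (via lemma~\ref{lem:v5-V6}) and the exactly monoclinic subcase (via theorem~\ref{thm:cov2-monoclinic}); both versions rest on the same underlying lemmas.
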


\begin{proof}
  We will prove that $(1) \implies (2) \implies (3) \implies (1)$. If $(1)$ holds, then $(2)$ holds by Theorems~\ref{thm:cov2-isotropic}, \ref{thm:cov2-transversely-isotropic}, \ref{thm:cov2-orthotropic} and \ref{thm:cov2-monoclinic}. If $(2)$ holds, then $(3)$ holds because if $(\bd_{2}, \bc_{3}, \bc_{4})$ is at least monoclinic, then $\dim \cov_{1}(\bH) \le 1$ and thus $\cov_{2}(\bH)$ is at least monoclinic by lemma~\ref{lem:cov1-cov2}. Finally, if $(3)$ holds, then $\bH$ is necessarily triclinic.
\end{proof}

\section{Characterization of the Symmetry Class of an elasticity tensor}
\label{sec:Ela-symmetry-classes}

The harmonic decomposition of the elasticity tensor was first obtained by Backus~\cite{Bac1970} (see also~\cite{Cow1989,Bae1993}) and is given by
\begin{equation*}
  \Ela \simeq \HH^{0} \oplus \HH^{0} \oplus \HH^{2} \oplus \HH^{2} \oplus \HH^{4}.
\end{equation*}
More precisely (see~\cite{BKO1994} for instance), given an orthonormal frame $(\ee_{1}, \ee_{2}, \ee_{3})$, each elasticity tensor $\bE$ can be written as
\begin{equation}\label{eq:boehler-decomposition}
  \begin{split}
    E_{ijkl} & = \lambda \delta_{ij} \delta_{kl} + \mu(\delta_{ik} \delta_{jl} + \delta_{il} \delta_{jk}) \\
    & \quad + \delta_{ij} a_{kl} + \delta_{kl} a_{ij} \\
    & \quad + \delta_{ik} b_{jl} + \delta_{jl} b_{ik} + \delta_{il} b_{jk} + \delta_{jk} b_{il} \\
    & \quad + H_{ijkl}.
  \end{split}
\end{equation}
In this decomposition, $\lambda,\mu$ (the generalized Lamé coefficients) and the deviators $\ba,\bb$ are related to the \emph{dilatation tensor} $\td d:=\tr_{12} \tq E$ and the \emph{Voigt tensor} $\td v:=\tr_{13}\tq E$ by the following process~\cite{Cow1989}. Starting with \eqref{eq:boehler-decomposition}, we get
\begin{equation*}
  \bd  = (3\lambda + 2 \mu)\bq + 3\ba + 4\bb, \quad
  \bv  = (\lambda + 4 \mu)\bq + 2\ba + 5\bb.
\end{equation*}
Taking the traces of each equation, one obtains
\begin{equation*}
  \tr(\bd)  =  9\lambda + 6 \mu , \quad
  \tr(\bv)  =  3\lambda + 12 \mu,
\end{equation*}
and, finally:
\begin{equation*}
  \begin{aligned}
    \lambda & = \frac{1}{15}( 2  \tr(\bd) - \tr(\bv)),          & \mu & =  \frac{1}{30}(-   \tr(\bd)   + 3    \tr(\bv)),      \\
    \ba     & =  \frac{1}{7}( 5\bd^{\prime} - 4 \bv^{\prime}) , & \bb & = \frac{1}{7} ( -2\bd^{\prime}   + 3   \bv^{\prime}),
  \end{aligned}
\end{equation*}
where $\bd^{\prime}:=\td d-\frac{1}{3}\tr( \td d)\,\bq$ and $\bv^{\prime}:=\td v-\frac{1}{3}\tr(\td v)\,\bq$ are the deviatoric parts of $\bd$ and $\bv$ respectively.

The fourth-order harmonic component $\bH$ is obtained using $\bS := (\bE)^{s}$, the total symmetrization of $\bE$, given by
\begin{equation*}
  S_{ijkl} = \frac{1}{3} (E_{ijkl}+E_{ikjl}+E_{iljk}).
\end{equation*}
The traceless part of $\bS$, $\bH$, is then given by
\begin{equation*}
  \bH = \bS - \frac{2}{7} \bq \odot \left(\bd' + 2 \bv'\right) - \frac{1}{15}\left(\tr \bd + 2 \tr \bv\right) \bq \odot \bq
\end{equation*}
where
\begin{equation*}
  (\ba \odot \bb)_{ijkl} := \frac{1}{6} \big( a_{ij} b_{kl} + b_{ij} a_{kl}+a_{ik} b_{jl} +b_{ik} a_{jl} +a_{il} b_{jk} + b_{il} a_{jk})
\end{equation*}
if $\ba$ and $\bb$ are two symmetric second order tensors.

\begin{rem}
  An elasticity tensor $\bE$ can thus be written as
  \begin{equation*}
    \bE = (\bH, \ba, \bb, \lambda, \mu),
  \end{equation*}
  where $\lambda, \mu$ are scalars, $\ba, \bb \in \HH^{2}$ and $\bH \in \HH^{4}$. This decomposition is however \emph{not unique}. Indeed, substituting for $(\ba, \bb)$ any \emph{invertible linear combination} of them would lead to a similar decomposition and the same is true for the pair of scalars $(\lambda,\mu)$. In particular, in the following theorem, one can use $(\bd^{\prime}, \bv^{\prime})$ instead of $(\ba, \bb)$, for instance.
\end{rem}

We will now state our main theorem, which characterizes, using polynomial covariants, the symmetry class of an elasticity tensor.

\begin{thm}\label{thm:main}
  Let $\bE = (\bH, \ba, \bb, \lambda, \mu) \in \Ela$ be an harmonic decomposition of an elasticity tensor $\bE$, where $\bH \in \HH^{4}$, $\ba, \bb \in \HH^{2}$ and $\lambda, \mu$ are scalars. Then
  \begin{enumerate}
    \item $\bE$ is isotropic if and only if $\ba = \bb = \bd_{2} = 0$.

    \item $\bE$ is cubic if and only if $\ba = \bb = \bd_{2}^{\prime} = 0$ and $\bd_{2} \ne 0$.

    \item $\bE$ is transversely isotropic if and only if $(\bd_{2}, \ba, \bb)$ is transversely isotropic and
          \begin{equation*}
            \bH \times \bd_{2} = \bH \times \ba = \bH \times \bb = 0.
          \end{equation*}

    \item $\bE$ is tetragonal if and only if $(\bd_{2}, \ba, \bb)$ is transversely isotropic,
          \begin{equation*}
            \tr(\bH \times \bd_{2}) = \tr(\bH \times \ba) = \tr(\bH \times \bb) = 0,
          \end{equation*}
          and
          \begin{equation*}
            \bH \times \bd_{2} \ne 0, \quad \text{or} \quad \bH \times \ba \ne 0, \quad \text{or} \quad \bH \times \bb \ne 0.
          \end{equation*}

    \item $\bE$ is trigonal if and only if $(\bd_{2}, \ba, \bb)$ is transversely isotropic,
          \begin{equation*}
            \bd_{2} \times (\bH\2dots\bd_{2}) = \ba \times (\bH\2dots\ba) = \bb \times (\bH\2dots\bb) = 0,
          \end{equation*}
          and
          \begin{equation*}
            \tr( \bH \times \bd_{2}) \ne 0, \quad \text{or} \quad \tr(\bH \times \ba) \ne 0, \quad \text{or} \quad \tr(\bH \times \bb) \ne 0.
          \end{equation*}

    \item $\bE$ is orthotropic if and only if the family of second-order tensors
          \begin{equation*}
            \mathcal{F}_{o} := \set{\bd_{2}, \ba, \bb, \bc_{3}, \bc_{4}, \bH \2dots \ba, \bH \2dots \bb, \bH \2dots \ba^{2}, \bH \2dots \bb^{2}}
          \end{equation*}
          is orthotropic.

    \item $\bE$ is monoclinic if and only if the family of second-order tensors
          \begin{equation*}
            \mathcal{F}_{m} := \left\{\bd_{2}, \ba, \bb, \bc_{3}, \bc_{4}, \bH \2dots \ba, \bH \2dots \bb, \bH \2dots \ba^{2}, \bH \2dots \bb^{2}, \bH \2dots (\ba\bb)^{s}, \bH \2dots (\ba\bd_{2})^{s}, \bH \2dots (\bb\bd_{2})^{s}\right\}
          \end{equation*}
          is monoclinic.

    \item $\bE$ is triclinic if and only if none of the preceding conditions holds.
  \end{enumerate}
\end{thm}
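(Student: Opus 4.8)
The plan is to reduce everything to the identity $G_{\bE} = G_{(\bH, \ba, \bb)}$, which holds because $\lambda$ and $\mu$ are $\SO(3)$-invariant scalars and therefore place no constraint on the symmetry group; consequently the symmetry class of $\bE$ is exactly that of the triple $(\bH, \ba, \bb)$. I would then descend through the poset of~\autoref{fig:poset-H4}, establishing necessity and sufficiency for each class. Necessity is uniform and easy: every tensor occurring in the stated conditions is a covariant of $\bE$ and is hence at least as symmetric as $\bE$, so any odd-order symmetric covariant that is forced to be isotropic must vanish by Remark~\ref{rem:vanishing-isotropic-symmetric-odd-order-tensors}; this produces all the stated equalities at once, while the accompanying inequalities hold because any strictly higher symmetry would force the relevant covariant to vanish. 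The real content is sufficiency.

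For the two top classes I would first observe that a nonzero deviator can be invariant under neither $\octa$ nor $\SO(3)$, so that isotropy or cubic symmetry of $\bE$ forces $\ba = \bb = 0$; the question then collapses to the symmetry of $\bH$ alone, which is settled by Theorem~\ref{thm:cov2-isotropic}, with $\bd_{2} = 0$ versus $\bd_{2}' = 0$, $\bd_{2} \ne 0$ separating the isotropic from the cubic case. For the three uniaxial classes the common hypothesis that $(\bd_{2}, \ba, \bb)$ is transversely isotropic forces, via Theorem~\ref{thm:n-quadratic-forms}, a single shared $\OO(2)$-axis for all the second-order data; it then remains only to locate $\bH$ relative to that axis, and this is precisely what Lemmas~\ref{lem:Sxd=0}, \ref{lem:tr(Hxd)=0} and~\ref{lem:trigonal-pair} achieve, the conditions $\bH \times \bt = 0$, $\tr(\bH \times \bt) = 0$ (with $\bH \times \bt \ne 0$), and $\bt \times (\bH \2dots \bt) = 0$ (with $\tr(\bH \times \bt) \ne 0$) detecting transverse isotropy, tetragonal and trigonal symmetry respectively. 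I would let $\bt$ range over $\bd_{2}, \ba, \bb$ so that whichever of these genuinely realizes the uniaxial symmetry is the one tested; this covers in particular the degenerate configuration where $\bH$ is cubic, $\bd_{2}$ is isotropic (so $\bH \times \bd_{2} = 0$ trivially by Remark~\ref{rem:Sxq=0}), and the uniaxial information is carried entirely by $\ba$ or $\bb$, the relevant criterion then being Lemma~\ref{lem:cube-orientation}.

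The heart of the argument, and the main obstacle, is the orthotropic and monoclinic cases, where no continuous symmetry survives and one must prove that the explicit finite families $\mathcal{F}_{o}$ and $\mathcal{F}_{m}$ have symmetry group exactly $G_{(\bH, \ba, \bb)}$. The inclusion $G_{(\bH,\ba,\bb)} \subseteq G_{\mathcal{F}}$ is automatic since every member is a covariant, so everything rests on the reverse inclusion; concretely, passing to a frame in which the candidate half-turn is the rotation $r$ about the $z$-axis, each second-order member of the family has vanishing $(1,3)$- and $(2,3)$-entries, and since $\ba,\bb$ themselves lie in the family it suffices to force the six components $H_{1113}, H_{1123}, H_{1223}, H_{1333}, H_{2223}, H_{2333}$ to vanish, which is exactly the statement that $r \star \bH = \bH$. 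Reading off the $(1,3)$-entry of $\bH \2dots \bd$ for a member $\bd$ yields the linear relation $H_{1311}d_{11} + 2H_{1312}d_{12} + H_{1322}d_{22} + H_{1333}d_{33} = 0$, so the offending components are annihilated as soon as the vectors $(d_{11}, d_{22}, d_{33}, d_{12})$ span $\RR^{4}$ as $\bd$ runs through $\bd_{2}, \ba, \bb, \ba^{2}, \bb^{2}$ and the mixed products; this spanning is exactly what the structural Lemma~\ref{lem:orthotropic-criteria} and Corollary~\ref{cor:transversely-isotropic-pair-criteria} encode in the cases where one deviator is already orthotropic or where $\ba, \bb$ are transversely isotropic with distinct axes. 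The delicate point, and the reason $\mathcal{F}_{m}$ must additionally carry the generators $\bH \2dots (\ba\bb)^{s}$, $\bH \2dots (\ba\bd_{2})^{s}$ and $\bH \2dots (\bb\bd_{2})^{s}$, is completeness: when none of $\ba$, $\bb$, $\bd_{2}$ is individually orthotropic one invokes Lemma~\ref{lem:orthotropic-linear-combination} to produce an orthotropic linear combination $\bc$, whose square $\bc^{2}$ then involves precisely these mixed products, so their $\bH$-contractions are needed to realize the spanning and to apply Theorem~\ref{thm:cov2-orthotropic} or Theorem~\ref{thm:cov2-monoclinic} to $\bc$.

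Finally, the triclinic case follows by pure elimination: if none of the conditions (1)--(7) holds then $G_{(\bH,\ba,\bb)}$ lies strictly below every higher class in the poset and is therefore trivial. Throughout, I expect the principal source of difficulty to be the bookkeeping of which second-order tensor feeds each criterion, together with the honest treatment of the degenerate sub-cases --- $\bd_{2}$ becoming isotropic, a deviator vanishing, or two deviators sharing an axis --- and I would therefore isolate these degeneracies explicitly rather than assume that $\bd_{2}$ always realizes the symmetry of $\bH$.
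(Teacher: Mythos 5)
Your architecture for cases (1)--(5) is essentially the paper's own: reduce to the triple $(\bH,\ba,\bb)$, get necessity uniformly from covariance, and settle the isotropic/cubic and uniaxial classes with theorem~\ref{thm:cov2-isotropic} and lemmas~\ref{lem:Sxd=0}, \ref{lem:tr(Hxd)=0}, \ref{lem:trigonal-pair}; your component-spanning idea for the lower classes is also exactly the computational core of lemma~\ref{lem:orthotropic-criteria} and corollary~\ref{cor:transversely-isotropic-pair-criteria}. The genuine gap is in your sufficiency argument for (6) and (7), precisely in the degenerate configuration you promised to isolate. When none of $\ba$, $\bb$, $\bd_{2}$ is orthotropic, your fallback is lemma~\ref{lem:orthotropic-linear-combination}; but that lemma needs a pair which is orthotropic, monoclinic or triclinic, and it yields nothing when \emph{every} pair among $\ba$, $\bb$, $\bd_{2}$ is transversely isotropic or isotropic. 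This situation really occurs with $\mathcal{F}_{o}$ orthotropic (resp. $\mathcal{F}_{m}$ monoclinic): take $\bH$ cubic, so that $\bd_{2}$ is isotropic and $\bc_{3}=\bc_{4}=0$, and take $\ba$, $\bb$ collinear transversely isotropic deviators whose axis lies along a face diagonal of the cube (resp. in a suitably generic symmetry plane). Then no linear combination of $\ba$, $\bb$, $\bd_{2}$ is orthotropic, and your spanning argument cannot be closed either: up to span the family reduces to $\set{\bq,\ \ba,\ \bH\2dots\ba}$, and running your $(1,3)$/$(2,3)$-entry computation on the member $\bH\2dots\ba$ would require controlling $\bH\2dots(\bH\2dots\ba)$, which is \emph{not} an element of $\mathcal{F}_{o}$ nor of $\mathcal{F}_{m}$.

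This is exactly where the paper uses a tool you never bring to bear on these two classes: it shows that in this configuration $G_{\mathcal{F}_{o}}=G_{(\ba,\,\bH\2dots\ba)}$ (since $\ba^{2}$, $(\ba\bb)^{s}$, $(\ba\bd_{2})^{s}$, \dots all lie in $\langle\bq,\ba\rangle$ and $\bH\2dots\bq=0$), so orthotropy (resp. monoclinicity) of the family transfers to the pair $(\ba,\bH\2dots\ba)$, and then parts (3) and (4) of lemma~\ref{lem:cube-orientation}, together with remark~\ref{rem:cub-orientation} --- the classification of admissible orientations of a transversely isotropic axis relative to a \emph{cubic} $\bH$ --- produce the required half-turn(s) of $\bH$ directly. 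You invoke lemma~\ref{lem:cube-orientation} only for the uniaxial classes, where its parts (1)--(2) suffice; without its parts (3)--(4) the cubic degenerate case of (6) and (7) is simply not covered, and no spanning or linear-combination argument can substitute for it. A secondary slip: you propose to apply theorem~\ref{thm:cov2-orthotropic} or theorem~\ref{thm:cov2-monoclinic} to an orthotropic combination $\bc$, but those theorems concern only the intrinsic covariants $(\bd_{2},\bc_{3},\bc_{4})$ of $\bH$; the statement valid for an arbitrary orthotropic $\bc$ is lemma~\ref{lem:orthotropic-criteria}, namely $G_{(\bc,\,\bH\2dots\bc,\,\bH\2dots\bc^{2})}=G_{(\bH,\bc)}$, which is what the mixed products in $\mathcal{F}_{m}$ are designed to feed via corollary~\ref{cor:transversely-isotropic-pair-criteria}.
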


\begin{rem}
  Explicit covariant relations on a finite family $\mathcal{F}$ of second-order tensors which characterize its symmetry class are provided by theorem~\ref{thm:n-quadratic-forms}.
\end{rem}

\begin{rem}
  Note that if the family $\mathcal{F}_{o}$ is transversely isotropic, then, the triplet $(\bd_{2},\ba,\bb)$ is transversely isotropic. Otherwise, it would be isotropic but then, $\bE$ would be either isotropic or cubic by points (1) and (2), and $\mathcal{F}_{o}$ would be isotropic itself, because covariants of $\bE$ cannot have less symmetry than $\bE$ itself. This would lead to a contradiction. Hence, if the family $\mathcal{F}_{o}$ is transversely isotropic, then, $(\bd_{2},\ba,\bb)$ is transversely isotropic and thus $\bE$ is either transversely isotropic (3), tetragonal (4), or trigonal (5).
\end{rem}

\begin{proof}[Proof of theorem~\ref{thm:main}]
  Note first that the symmetry class of $\bE$ is the same as the symmetry class of the triplet $(\bH, \ba, \bb)$ (see Section~\ref{sec:symmetry-classes}).

  (1) If $\bE$ is isotropic, then, $\bH, \ba, \bb$ are all isotropic and thus vanish, since they all belong to irreducible representations. Conversely, if $\ba = \bb = \bd_{2} = 0$, then, $\bH$ vanishes because $\norm{\bH}^{2} = \tr\bd_{2}$. Thus, $(\bH, \ba, \bb)$ is isotropic.

  (2) If $\bE$ is cubic, then, all second-order symmetric covariant are isotropic. Thus, $\ba = \bb = \bd_{2}^{\prime} = 0$ but $\bd_{2} \ne 0$ (otherwise, $(\bH, \ba, \bb)$ would be isotropic, by point (1)). Conversely, if $\ba  = \bb = \bd_{2}^{\prime} = 0$ and $\bd_{2} \ne 0$, then, $\bH$ is cubic according to theorem~\ref{thm:cov2-isotropic} and so is $(\bH, \ba, \bb)$.

  For the sequel of the proof, note that, so far, that we have proved that $\bE$ is either isotropic, or cubic if and only if the family of second-order covariants
  \begin{equation*}
    \mathcal{F}_{i} := \set{\bd_{2}, \ba, \bb}
  \end{equation*}
  is isotropic.

  (3) If $\bE$ is transversely isotropic, then, the triplet $(\ba, \bb, \bd_{2})$ is thus transversely isotropic. Moreover, each pair $(\bH,\bd_{2})$, $(\bH,\ba)$, $(\bH,\bb)$ is at least transversely isotropic and thus
  \begin{equation*}
    \bH \times \bd_{2} = \bH \times \ba = \bH \times \bb = 0,
  \end{equation*}
  by lemma~\ref{lem:Sxd=0} and Remark~\ref{rem:Sxq=0}. Conversely, if the conditions in (3) are satisfied, then, at least one of the covariants $\ba$, $\bb$, $\bd_{2}$ (call it $\bt$) is transversely isotropic and
  \begin{equation*}
    \bH \times \bt  = 0.
  \end{equation*}
  Therefore, the pair $(\bH,\bt)$ is transversely isotropic according to lemma~\ref{lem:Sxd=0} and so is the triplet $(\bH,\ba,\bb)$.

  (4) If $\bE$ is tetragonal, then, each pair of covariants $(\bH,\bd_{2})$, $(\bH,\ba)$, $(\bH,\bb)$ is at least tetragonal and thus
  \begin{equation*}
    \tr(\bH \times \bd_{2}) = \tr(\bH \times \ba) = \tr(\bH \times \bb) = 0,
  \end{equation*}
  by lemma~\ref{lem:tr(Hxd)=0} and Remark~\ref{rem:Sxq=0}. Now, since $(\ba, \bb, \bd_{2})$ is transversely isotropic, at least one of the covariants $\bd_{2}$, $\ba$, $\bb$ is transversely isotropic and thus
  \begin{equation*}
    \bH \times \bd_{2} \ne 0, \quad \text{or} \quad \bH \times \ba \ne 0, \quad \text{or} \quad \bH \times \bb \ne 0,
  \end{equation*}
  by lemma~\ref{lem:Sxd=0} (otherwise, one of the pairs $(\bH,\bd_{2})$, $(\bH,\ba)$, $(\bH,\bb)$ would be at least transversely isotropic and so would be $(\bH, \ba, \bb)$). Conversely, if conditions in (4) are satisfied, we can find a covariant $\bt$ among $\ba$, $\bb$, $\bd_{2}$ such that
  \begin{equation*}
    \tr(\bH \times \bt) = 0, \quad \text{and} \quad \bH \times \bt \ne 0.
  \end{equation*}
  Then, $\bt$ is necessarily transversely isotropic by Remark~\ref{rem:Sxq=0} and thus $(\bH,\bt)$ is at least tetragonal by lemma~\ref{lem:tr(Hxd)=0}. Moreover, $(\bH,\bt)$ cannot be transversely isotropic, nor isotropic by lemma~\ref{lem:Sxd=0}. Since it cannot be either cubic (since $\bt$ is transversely isotropic), it is in fact tetragonal, and so is the triplet $(\bH,\ba,\bb)$.

  (5) If $\bE$ is trigonal, then, $\cov_{2}(\bE)$ is at least transversely isotropic and we get, in particular,
  \begin{equation*}
    \bd_{2} \times (\bH\2dots\bd_{2}) = \ba \times (\bH\2dots\ba) = \bb \times (\bH\2dots\bb) = 0,
  \end{equation*}
  by lemma~\ref{lem:axb=0} and Remark~\ref{rem:Sxq=0}. Moreover, $(\ba, \bb, \bd_{2})$ is transversely isotropic and thus at least one of the covariants $\ba$, $\bb$, $\bd_{2}$ (call it $\bt$) is transversely isotropic. But then,
  \begin{equation*}
    G_{\bE} = G_{(\bH,\ba,\bb)} = G_{\bH} \cap G_{(\bd_{2},\ba,\bb)} = G_{\bH} \cap G_{\bt}.
  \end{equation*}
  Thus, the pair $(\bH, \bt)$ is trigonal and $\tr( \bH \times \bt) \ne 0$ by lemma~\ref{lem:trigonal-pair}. Conversely, if conditions in (5) are satisfied, we can find a covariant $\bt$ among $\ba$, $\bb$, $\bd_{2}$ (and thus at least transversely isotropic) such that
  \begin{equation*}
    \bt \times (\bH\2dots\bt) = 0, \quad \bd_{2} \times \bt = 0, \quad \text{and} \quad \tr( \bH \times \bt) \ne 0.
  \end{equation*}
  Then, $\bt$ is necessarily transversely isotropic by Remark~\ref{rem:Sxq=0} and thus $(\bH,\bt)$ is trigonal by lemma~\ref{lem:trigonal-pair}, and so is the triplet $(\bH,\ba,\bb)$.

  For the sequel of the proof, note that, so far, that we have proved that $\bE$ is either transversely isotropic, tetragonal or trigonal if and only if the family of second-order covariants
  \begin{equation*}
    \mathcal{F}_{ti} := \set{\bd_{2}, \ba, \bb, \bc_{3}, \bH\2dots\ba, \bH\2dots\bb}
  \end{equation*}
  is transversely isotropic.

  (6) If $\bE$ is orthotropic, then, the family of second-order covariants $\mathcal{F}_{o}$ is at least orthotropic. Since, moreover,
  \begin{equation*}
    \mathcal{F}_{i} \subset \mathcal{F}_{ti} \subset \mathcal{F}_{o},
  \end{equation*}
  $\mathcal{F}_{o}$ cannot be isotropic by points (1) and (2), neither transversely isotropic by points (3), (4) and (5). It is thus orthotropic. Conversely, if $\mathcal{F}_{o}$ is orthotropic, then $(\bH, \ba, \bb)$ is either orthotropic, monoclinic or triclinic because $(\bH, \ba, \bb)$ cannot have higher symmetry than its covariants. If either $\ba$ or $\bb$ is orthotropic, then, $(\bH, \ba, \bb)$ is orthotropic by lemma~\ref{lem:orthotropic-criteria}. The same conclusion holds if either $\bd_{2}$ or $\bc_{3}$ is orthotropic by theorem~\ref{thm:cov2-orthotropic}. Otherwise, $\ba$, $\bb$, $\bd_{2}$ and $\bc_{3}$ are each at least transversely isotropic. In that case, if either $(\ba,\bb)$, $(\ba,\bd_{2})$, $(\ba,\bc_{3})$, $(\bb,\bd_{2})$, $(\bb,\bc_{3})$ or $(\bd_{2},\bc_{3})$ is orthotropic, then, $(\bH, \ba, \bb)$ is orthotropic by corollary~\ref{cor:transversely-isotropic-pair-criteria} and the fact that $\bc_{3} = \bH \2dots \bd_{2}$ and $\bc_{4} = \bH \2dots \bc_{3}$
  . Thus, we can assume that the quadruplet $(\ba,\bb,\bd_{2},\bc_{3})$ is transversely isotropic (it cannot be isotropic, otherwise, so would be $\mathcal{F}_{o}$, because $\bH \2dots \bq = 0$). Note that, in this case, the alternative $\bd_{2}$ transversely isotropic is excluded, otherwise, $(\bd_{2},\bc_{3})$ would be transversely isotropic and so would be $\mathcal{F}_{o}$ by theorem~\ref{thm:cov2-transversely-isotropic}. Therefore, $\bd_{2}^{\prime} =0$ and $\bH$ is either isotropic or cubic by theorem~\ref{thm:cov2-isotropic}. The case where $\bH$ is isotropic (and thus vanishes) is excluded because then, $\mathcal{F}_{o}$ would be at least transversely isotropic. We can thus finally assume that $\bH$ is cubic. Then, either $\ba$ or $\bb$ is transversely isotropic. Let suppose it is $\ba$. Then $\bb$ is collinear to $\ba$ (since $\ba$ and $\bb$ are deviators) and the pair $(\ba, \bH \2dots \ba)$ has the same symmetry group as $\mathcal{F}_{o}$ and is thus orthotropic. Th
  erefore,
  \begin{equation*}
    \tr(\ba \times (\bH \2dots \ba)) = 0, \qquad \ba \times (\bH \2dots \ba) \ne 0,
  \end{equation*}
  and $(\bH, \ba)$ is orthotropic by lemma~\ref{lem:cube-orientation}, and so is $(\bH,\ba,\bb)$.

  (7) If $\bE$ is monoclinic then the family of covariants $\mathcal{F}_{m}$ is at least monoclinic and thus monoclinic, by points (1)--(6) and because
  \begin{equation*}
    \mathcal{F}_{i} \subset \mathcal{F}_{ti} \subset \mathcal{F}_{o} \subset \mathcal{F}_{m}.
  \end{equation*}
  Conversely, suppose that
  \begin{equation*}
    G_{\mathcal{F}_{m}} = \set{id,r},
  \end{equation*}
  where $r$ is a second-order rotation. Then, $(\bH,\ba,\bb)$ is at most monoclinic, because it cannot have higher symmetry than its covariants. Besides,
  \begin{equation*}
    r \in G_{\mathcal{F}_{m}} \subset G_{(\ba,\bb)},
  \end{equation*}
  so we have only to check that $r\in G_{\bH}$, to prove that
  \begin{equation*}
    r \in  G_{\bE} = G_{\bH} \cap G_{(\ba,\bb)}.
  \end{equation*}
  Now, since
  \begin{equation*}
    r \in G_{\mathcal{F}_{m}} \subset G_{(\bd_{2}, \bc_{3}, \bc_{4})},
  \end{equation*}
  $\bH$ is at least monoclinic by theorem~\ref{thm:cov1-triclinic}. If $\bH$ is either monoclinic or orthotropic, then, we are done by theorem~\ref{thm:cov2-monoclinic} and theorem~\ref{thm:cov2-orthotropic}, because, in these cases we have
  \begin{equation*}
    G_{\bH} = G_{(\bd_{2}, \bc_{3}, \bc_{4})}.
  \end{equation*}
  If $\bH$ is either transversely isotropic, tetragonal or trigonal, then $(\bd_{2}, \bc_{3})$ is transversely isotropic by theorem~\ref{thm:cov2-transversely-isotropic}. Thus $\bd_{2}$ is transversely isotropic and $\bd_{2} \times \bc_{3} = 0$ with $\bc_{3} = \bH \2dots \bd_{2}$. But then, the triplet $(\bd_{2}, \ba, \bb)$ is at most orthotropic, otherwise the family $\mathcal{F}_{m}$ would have the same symmetry group as $(\bd_{2}, \bH \2dots \bd_{2})$ and would be transversely isotropic. Therefore, either $\ba$ or $\bb$ (let call it $\bc$) is orthotropic, and we are done by lemma~\ref{lem:orthotropic-criteria}, because
  \begin{equation*}
    r \in G_{\mathcal{F}_{m}} \subset G_{(\bc, \bH \2dots \bc, \bH \2dots \bc^{2})} =  G_{(\bH,\bc)} \subset G_{\bH},
  \end{equation*}
  or $\ba$ and $\bb$ are both transversely isotropic but one of the three pair $(\ba,\bb)$, $(\ba, \bd_{2})$ or $(\bb, \bd_{2})$ (let call it $(\bt_{1},\bt_{2})$) is either orthotropic or monoclinic, and we are done by corollary~\ref{cor:transversely-isotropic-pair-criteria} (since $\bc_{3} = \bH \2dots \bd_{2}$), because then
  \begin{equation*}
    r \in G_{\mathcal{F}_{m}} \subset G_{(\bH, \bt_{1},\bt_{2})} \subset G_{\bH}.
  \end{equation*}
  Suppose now that $\bH$ is cubic. If either $\ba$ or $\bb$ is orthotropic, then, we are done by lemma~\ref{lem:orthotropic-criteria} and the same conclusion holds, by corollary~\ref{cor:transversely-isotropic-pair-criteria}, if $\ba$ and $\bb$ are transversely isotropic but the pair $(\ba,\bb)$ is orthotropic or monoclinic. We can thus assume that the pair of deviators $(\ba,\bb)$ is transversely isotropic (it cannot be isotropic otherwise, so would be $\mathcal{F}_{m}$). In that case, either $\ba$ or $\bb$ does not vanish and is thus transversely isotropic. Suppose, for instance, that $\ba \ne 0$. Then, $\bb$ is collinear to $\ba$ and
  \begin{equation*}
    G_{\mathcal{F}_{m}} = G_{(\ba, \bH \2dots \ba)}.
  \end{equation*}
  Thus, $(\ba, \bH \2dots \ba)$ is monoclinic and
  \begin{equation*}
    r \in G_{(\ba, \bH \2dots \ba)} = G_{(\ba, \bH)},
  \end{equation*}
  by lemma~\ref{lem:cube-orientation} and Remark~\ref{rem:cub-orientation}. Finally, if $\bH$ is isotropic, $\bH = 0$ and we are done. This achieves the proof.
\end{proof}

\appendix

\section{Covariants of binary forms}
\label{sec:binary-forms-covariants}

A binary form $\ff$ of degree $n$ is a homogeneous complex polynomial in two variables $u,v$ of degree $n$:
\begin{equation*}
  \ff(\bxi) = a_{0}u^{n} + a_{1}u^{n-1}v + \dotsb + a_{n-1}uv^{n-1} + a_{n}v^{n},
\end{equation*}
where $\bxi = (u,v)\in \CC^{2}$ and $a_{k}\in \CC$. The set of all binary forms of degree $n$ is a complex vector space of dimension $n + 1$ which will be denoted by $\Sn{n}$. The special linear group
\begin{equation*}
  \SL(2,\CC) : =  \set{\gamma: =
    \begin{pmatrix}
      a & b \\
      c & d
    \end{pmatrix}
    ,\quad ad-bc = 1}
\end{equation*}
acts naturally on $\CC^{2}$ and induces a left action on $\Sn{n}$, given by
\begin{equation*}
  (\gamma \star \ff)(\bxi): = \ff(\gamma^{-1} \bxi),
\end{equation*}
where $\gamma\in \SL(2,\CC)$. The spaces $\Sn{n}$ are irreducible representations of $\SL(2,\CC)$ (see~\cite{Ste1994} for instance) and every complex algebraic linear representation $V$ of $\SL(2,\CC)$ can be decomposed into a direct sum
\begin{equation*}
  V \simeq  \Sn{n_{1}}\oplus \dotsc \oplus \Sn{n_{p}}.
\end{equation*}

\begin{defn}
  The \emph{transvectant} of index $r$ of two binary forms $\ff\in \Sn{n}$ and $\bg\in \Sn{p}$ is defined as
  \begin{equation}\label{eq:transvectant}
    \trans{\ff}{\bg}{r} =  \frac{(n-r)!}{n!}\frac{(p-r)!}{p!}  \sum_{i = 0}^{r}(-1)^{i} \binom{r}{i} \frac{\partial^{r} \ff}{\partial^{r-i}u \partial^{i} v}
    \frac{\partial^{r} \bg}{\partial^{i}u \partial^{r-i}v},
  \end{equation}
  which is a binary form of degree $n + p-2r$ (which vanishes if $r > \min(n,p)$).
\end{defn}

\begin{ex}
  For two $n$-th powers binary forms
  \begin{equation}\label{eq:n-powers-transvectant}
    (\ba\bxi)^{n} : =  (a_{1}u + a_{2}v)^{n},\quad (\bb\bxi)^{p} : =  (b_{1}u + b_{2}v)^{p},
  \end{equation}
  we get the particularly simple form
  \begin{equation*}
    \trans{(\ba\bxi)^{n}}{(\bb\bxi)^{p}}{r} = (\ba\bb)^{r}(\ba\bxi)^{n-r}(\bb\bxi)^{p-r},
  \end{equation*}
  where by definition $(\ba\bb): = a_{1}b_{2}-a_{2}b_{1}$.
\end{ex}

\begin{defn}
  The covariant algebra of $V$ is defined as
  \begin{equation*}
    \cov(V) : =  \CC[V\oplus \CC^{2}]^{\SL(2,\CC)}.
  \end{equation*}
  The \textbf{degree} of a covariant $\hh \in \cov(V)$ is the total degree $d$ of $\hh$ in $\ff \in V$, whereas the total degree $k$ of $\hh$ in $\xi \in \CC^{2}$ is called the \textbf{order} of $\hh$.
\end{defn}

The key point is that the transvectant of two binary forms is $\SL(2,\CC)$-equivariant and that $\cov(V)$ is generated by the infinite set of \emph{iterated transvectants}~\cite{GY2010,Olv1999,Oli2017}:
\begin{equation*}
  \ff_{1}, \dotsc ,\ff_{p} \quad \trans{\ff_{i}}{\ff_{j}}{r}, \quad \trans{\ff_{i}}{\trans{\ff_{j}}{\ff_{k}}{r}}{s}, \quad \dotsc
\end{equation*}

\begin{rem}\label{rem:even-order-covariants}
  A consequence of this observation is that for every integer $n\geq 1$, the covariant algebra $\cov(\Sn{2n})$ is generated by \emph{even order} covariants.
\end{rem}

The remarkable achievement of Gordan is that he was able to provide a constructive (and extremely efficient) way to obtain a \emph{finite} generating set of transvectants for the covariant algebra of finite dimensional representation of $\SL(2,\CC)$. This algorithm is now known as \emph{Gordan's algorithm} (see~\cite{Oli2017}). There are in fact two versions of this algorithm; one of them produces a basis for $\cov(\Sn{n})$, provided we know bases for $\cov(\Sn{k})$, for each $k < n$. The other one produces a basis for $\cov(V_{1}\oplus V_{2})$, if we know bases for $\cov(V_{1})$ and $\cov(V_{2})$. More precisely, if $\set{\ff_{1}, \dotsc , \ff_{p}}$ and $\set{\bg_{1}, \dotsc , \bg_{q}}$ generate respectively $\cov(V_{1})$ and $\cov(V_{2})$, then the covariant algebra $\cov(V_{1} \oplus V_{2})$ is generated by the finite family of transvectants
\begin{equation*}
  \trans{\ff_{1}^{\alpha_{1}}\dotsb \ff_{p}^{\alpha_{p}}}{\bg_{1}^{\beta_{1}}\dotsb \bg_{q}^{\beta_{q}}}{r},
\end{equation*}
where the integers $(\alpha_{i},\beta_{i},u,v,r)$ are the \emph{irreducible solutions} of the \emph{Diophantine equation}
\begin{equation*}
  \sum_{i = 1}^{p} a_{i} \alpha_{i} = u + r, \qquad \sum_{j = 1}^{p} b_{j} \beta_{j} = v + r
\end{equation*}
and $a_{i},b_{j}$ are the orders of $\ff_{i},\bg_{j}$.

Using this algorithm, we will formulate a theorem which connects generating sets for $\cov(\Sn{2n})$ and $\inv(\Sn{2n}\oplus \Sn{2})$. First, observe that there is a natural covariant mapping
\begin{equation*}
  \psi : \CC^{2} \to \Sn{2}, \qquad \eta \mapsto \bw_{\eta},
\end{equation*}
where
\begin{equation*}
  \bw_{\eta}(\pmb{\bxi}) : = (\eta_{1}v - \eta_{2}u)^{2}, \qquad \bxi = (u,v).
\end{equation*}
By pullback, this mapping induces an algebra homomorphism
\begin{equation*}
  \psi^{*} : \CC[\Sn{2n}\oplus \Sn{2}]^{\SL(2,\CC)} \to \CC[\Sn{2n}\oplus \CC^{2}]^{\SL(2,\CC)} = \cov(\Sn{2n})
\end{equation*}
given by
\begin{equation*}
  \psi^{*}(\rp)(\ff, \bxi) = \rp(\ff, \bw_{\eta}),\qquad \rp \in \CC[\Sn{2n}\oplus \Sn{2}]^{\SL(2,\CC)}.
\end{equation*}
Consider now the covariant linear mapping
\begin{equation}\label{eq:psi-section-definition}
  \varsigma: \cov(\Sn{2n}) \to \CC[\Sn{2n}\oplus \Sn{2}]^{\SL(2,\CC)}, \qquad \bh \mapsto \rp(\ff,\bw) := \sum_{k=0}^{r} \trans{\bh_{2k}}{\bw^{k}}{2k}
\end{equation}
where $\bh(\ff,\bxi)=\sum_{k=0}^{r}\bh_{2k}(\ff,\bxi)$ is the decomposition of $\bh$ into homogeneous covariants of order $2k$ (see remark~\ref{rem:even-order-covariants}). We have the following result.

\begin{lem}
  The algebra homomorphism $\psi^{*}$ is surjective and $\varsigma$ is a linear equivariant section of $\psi^{*}$. In other words
  \begin{equation*}
    \psi^{*} \circ \varsigma = \mathrm{Id}.
  \end{equation*}
\end{lem}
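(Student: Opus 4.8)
The plan is to prove the single identity $\psi^{*}\circ\varsigma=\mathrm{Id}$; surjectivity of $\psi^{*}$ is then automatic, since any map admitting a right inverse is onto. Before that I would record the two formal properties of $\varsigma$. Linearity is immediate, because the projection $\bh\mapsto\bh_{2k}$ onto the order-$2k$ homogeneous component is linear and the transvectant $\trans{\cdot}{\bw^{k}}{2k}$ is linear in its first slot. Well-definedness (and the equivariance claim) hold because each summand $\trans{\bh_{2k}}{\bw^{k}}{2k}$ is a transvectant of two concomitants of the pair $(\ff,\bw)$ — the order-$2k$ covariant $\bh_{2k}$ of $\ff$ and the order-$2k$ covariant $\bw^{k}$ — and is therefore itself a concomitant; its order is $2k+2k-2\cdot 2k=0$, so it is an \emph{invariant} and lies in $\CC[\Sn{2n}\oplus\Sn{2}]^{\SL(2,\CC)}$. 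Here I use remark~\ref{rem:even-order-covariants} to guarantee that every $\bh\in\cov(\Sn{2n})$ splits as $\sum_{k=0}^{r}\bh_{2k}$ with only even-order parts.

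The heart of the argument is a reproducing identity for transvectants against a power of a fixed linear form. Writing $\ell_{\eta}(\bxi):=\eta_{1}v-\eta_{2}u$, so that $\bw_{\eta}=\ell_{\eta}^{2}$ and $\bw_{\eta}^{k}=\ell_{\eta}^{2k}$, I claim that for every binary form $\phi$ of degree $m$,
\begin{equation*}
  \trans{\phi}{\ell_{\eta}^{m}}{m}=\phi(\eta_{1},\eta_{2}).
\end{equation*}
To prove it, observe that $\ell_{\eta}=(\ba\bxi)$ with $\ba=(-\eta_{2},\eta_{1})$. For a pure power $\phi=(\bc\bxi)^{m}$ the transvectant formula for $m$-th powers of linear forms, namely $\trans{(\bc\bxi)^{m}}{(\ba\bxi)^{m}}{m}=(\bc\ba)^{m}$, together with $(\bc\ba)=c_{1}\eta_{1}+c_{2}\eta_{2}$, gives $(\bc\ba)^{m}=(c_{1}\eta_{1}+c_{2}\eta_{2})^{m}=\phi(\eta)$. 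Since the $m$-th powers of linear forms span $\Sn{m}$ over $\CC$, and both sides are linear in $\phi$, the identity follows for all $\phi$. (As a check on the normalization, expanding the defining transvectant~\eqref{eq:transvectant} directly, using that the $m$-th derivatives of $\ell_{\eta}^{m}$ are $m!\,(-\eta_{2})^{i}\eta_{1}^{m-i}$, makes all binomial and factorial factors cancel so that the constant is exactly $1$.)

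Finally I would assemble the pieces. Because transvectants depend polynomially on the coefficients of their arguments and $\psi^{*}$ is the substitution homomorphism $\bw\mapsto\bw_{\eta}$, the map $\psi^{*}$ commutes with forming $\bw^{k}$ and with transvecting, so that
\begin{equation*}
  \psi^{*}\big(\varsigma(\bh)\big)(\ff,\eta)=\sum_{k=0}^{r}\trans{\bh_{2k}}{\bw_{\eta}^{k}}{2k}=\sum_{k=0}^{r}\trans{\bh_{2k}}{\ell_{\eta}^{2k}}{2k}.
\end{equation*}
Applying the reproducing identity with $m=2k$ to each summand yields $\trans{\bh_{2k}}{\ell_{\eta}^{2k}}{2k}=\bh_{2k}(\ff,\eta)$, whence
\begin{equation*}
  \psi^{*}\big(\varsigma(\bh)\big)(\ff,\eta)=\sum_{k=0}^{r}\bh_{2k}(\ff,\eta)=\bh(\ff,\eta).
\end{equation*}
This is exactly $\bh$, so $\psi^{*}\circ\varsigma=\mathrm{Id}$, and surjectivity of $\psi^{*}$ follows at once.

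The step I expect to be the main obstacle is the reproducing identity itself: everything else is bookkeeping, but this identity is the only genuine content and one must either justify the spanning of $\Sn{m}$ by $m$-th powers of linear forms or carry out the direct expansion, in both cases confirming that the normalizing constant is precisely $1$ (a hidden $k$-dependent factor would force a rescaling in the definition of $\varsigma$). The one further point deserving a sentence of care is the commutation of $\psi^{*}$ with transvection, which is safe because $\psi^{*}$ merely substitutes the coefficients of $\bw$ by those of $\bw_{\eta}$ and the transvectant is a polynomial expression in exactly those coefficients.
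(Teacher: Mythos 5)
Your proof is correct and follows essentially the same route as the paper: both reduce to homogeneous components, evaluate $\trans{\bh_{2k}}{\bw_{\eta}^{k}}{2k}$ on powers of linear forms via the formula $\trans{(\ba\bxi)^{n}}{(\bb\bxi)^{p}}{r}=(\ba\bb)^{r}(\ba\bxi)^{n-r}(\bb\bxi)^{p-r}$, and conclude by linearity and the fact that such powers span $\Sn{2r}$. Your added remarks (order count showing each summand is an invariant, and the commutation of the substitution homomorphism $\psi^{*}$ with transvection) only make explicit what the paper leaves implicit.
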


\begin{proof}
  Note first that \emph{$\varsigma$ is linear but is not an algebra homomorphism}. We will show that $\varsigma$ is a section of $\psi^{*}$ (as a linear mapping) and the surjectivity will follow. If $\bh$ is homogeneous of order $2r$, we have
  \begin{equation*}
    \varsigma(\bh) = \trans{\bh}{\bw^{r}}{2r},
  \end{equation*}
  and hence
  \begin{equation*}
    [(\psi^{*} \circ \varsigma)(\bh)](\ff,\eta) = \trans{\bh}{\bw_{\eta}^{r}}{2r}.
  \end{equation*}
  Thus, if $\bh(\ff,\bxi) = (\ba\pmb{\bxi})^{2r} = (a_{1}u + a_{2}v)^{2r}$ is a $2r$-th power binary form, we get
  \begin{equation*}
    (\psi^{*} \circ \varsigma)(\bh)(\ff,\eta) = \trans{(a_{1}u + a_{2}v)^{2r}}{(\eta_{1}v - \eta_{2}u)^{2r}}{2r} = (a_{1}\eta_{1} + a_{2}\eta_{2})^{2r} = \bh(\ff,\eta),
  \end{equation*}
  by virtue of~\eqref{eq:n-powers-transvectant}. Since every binary form of degree $2r$ is a linear combination of $2r$-th power binary forms, this achieves the proof.
\end{proof}

\begin{thm}\label{thm:basis-for-S2n-oplus-S2}
  Let $\set{\hh_{1},\dotsc ,\hh_{N}}$ be a minimal basis for $\cov(\Sn{2n})$. Then a minimal basis for the joint invariant algebra
  \begin{equation*}
    \CC[\Sn{2n}\oplus \Sn{2}]^{\SL(2,\CC)}
  \end{equation*}
  is given by
  \begin{equation*}
    \set{\varsigma(\hh_{1}),\dotsc ,\varsigma(\hh_{N}), \Delta}
  \end{equation*}
  where $\Delta(\bw) : =  b_{1}^{2} - b_{0}b_{2}$, if $\bw(\bxi) := b_{0} u^{2} + b_{1}uv + b_{2}v^{2} \in \Sn{2}$.
\end{thm}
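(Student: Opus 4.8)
The plan is to prove generation and minimality simultaneously by exploiting the algebra homomorphism $\psi^{*}$ and its linear section $\varsigma$ from the preceding lemma, together with the bigrading of $R:=\CC[\Sn{2n}\oplus\Sn{2}]^{\SL(2,\CC)}$ by the degree $d$ in $\Sn{2n}$ and the degree $m$ in $\Sn{2}$. The first thing I would record is how the two maps interact with the grading: since $\bw_{\eta}$ is quadratic in $\eta$, the map $\psi^{*}$ carries the bidegree $(d,m)$ piece of $R$ into the degree $d$, order $2m$ piece of $\cov(\Sn{2n})$, while $\varsigma$ reverses this. Consequently, once the $\hh_{i}$ are chosen bi-homogeneous, each $\varsigma(\hh_{i})$ is $\Sn{2}$-homogeneous of degree equal to half the order of $\hh_{i}$, the discriminant $\Delta$ has bidegree $(0,2)$, and both $\psi^{*}$ and $\varsigma$ respect the grading.

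The crux is the identification
\[
  \ker\bigl(\psi^{*}\colon R\to\cov(\Sn{2n})\bigr) = \Delta\,R .
\]
Here I would first observe that the image $\psi(\CC^{2})$ is exactly the locus $V(\Delta)\subset\Sn{2}$ of degenerate quadratics: a binary quadratic has vanishing discriminant if and only if it is the square of a linear form, and over $\CC$ every such square has the shape $\bw_{\eta}$. Hence $\psi^{*}\rp=0$ means that $\rp$ vanishes on the irreducible hypersurface $\Sn{2n}\times V(\Delta)$. Because $\Delta$ is an irreducible polynomial, the ideal $(\Delta)$ is prime, hence radical, so the Nullstellensatz gives $\rp=\Delta\,\qq$ in the full polynomial ring; as $\rp$ and $\Delta$ are both $\SL(2,\CC)$-invariant and the ring is a domain, the quotient $\qq=\rp/\Delta$ is again invariant, so $\rp\in\Delta R$. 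The reverse inclusion is immediate from $\psi^{*}\Delta=\Delta(\bw_{\eta})=0$. This is the step I expect to be the main obstacle, since it is where the geometry of the squaring map $\psi$ and the irreducibility of the discriminant genuinely enter.

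Generation then follows by induction on the $\Sn{2}$-degree $m$ of a bi-homogeneous $\rp\in R$. Writing $\psi^{*}\rp=P(\hh_{1},\dots,\hh_{N})$ with $P$ taken of the matching order $2m$ (possible since the $\hh_{i}$ generate $\cov(\Sn{2n})$), and using that $\psi^{*}$ is an algebra homomorphism with $\psi^{*}\circ\varsigma=\mathrm{Id}$, the element $\rp-P(\varsigma(\hh_{1}),\dots,\varsigma(\hh_{N}))$ is $\Sn{2}$-homogeneous of degree $m$ and lies in $\ker\psi^{*}=\Delta R$, hence equals $\Delta\,\qq$ with $\qq\in R$ of degree $m-2$. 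Since the remainder vanishes when $m<2$, downward application of the induction hypothesis to $\qq$ exhibits $\rp$ as a polynomial in $\{\varsigma(\hh_{i}),\Delta\}$.

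For minimality I would invoke the criterion behind Remark~\ref{rem:cardinal-minimal-basis}: a bi-homogeneous generating set is minimal if and only if its elements are linearly independent modulo the square of the augmentation ideal $R^{+}$. Suppose $\sum_{i}\lambda_{i}\varsigma(\hh_{i})+\nu\Delta\in(R^{+})^{2}$, bi-homogeneous. Applying $\psi^{*}$, which maps $R^{+}$ into $\cov^{+}(\Sn{2n})$ and therefore $(R^{+})^{2}$ into $(\cov^{+}(\Sn{2n}))^{2}$, and using $\psi^{*}\varsigma(\hh_{i})=\hh_{i}$ together with $\psi^{*}\Delta=0$, yields $\sum_{i}\lambda_{i}\hh_{i}\in(\cov^{+}(\Sn{2n}))^{2}$; the minimality of $\{\hh_{i}\}$ forces every $\lambda_{i}=0$. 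It then remains to see that $\Delta\notin(R^{+})^{2}$: a decomposition of the bidegree-$(0,2)$ element $\Delta$ would require factors of bidegree $(0,1)$, that is, $\SL(2,\CC)$-invariant linear forms on $\Sn{2}$, of which there are none because $\inv(\Sn{2})=\CC[\Delta]$ carries no element in degree $1$. Hence $\nu=0$, the proposed generators are linearly independent modulo decomposables, and the basis is minimal.
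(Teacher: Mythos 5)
Your proof is correct, but it takes a genuinely different route from the paper's, at least for the generation part. The paper deduces generation from Gordan's algorithm applied to the direct sum $\Sn{2n}\oplus\Sn{2}$: since $\cov(\Sn{2})$ is generated by $\bw$ and $\Delta$, the joint invariants are generated by $\Delta$ together with the transvectants $\trans{\hh_{1}^{\alpha_{1}}\cdots\hh_{N}^{\alpha_{N}}}{\bw^{r}}{2r}$ indexed by irreducible solutions of a Diophantine equation, and a short combinatorial observation (any monomial with two or more factors $\hh_{i}$ corresponds to a reducible solution) cuts this list down to $\set{\varsigma(\hh_{1}),\dotsc,\varsigma(\hh_{N}),\Delta}$. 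You instead prove the structural statement $\ker\psi^{*}=\Delta\,R$ (writing $R$ for the joint invariant algebra), using that $\mathrm{Im}\,\psi$ is exactly the discriminant locus over $\CC$, that $\Delta$ is irreducible so that the Nullstellensatz applies, and that invariance of the quotient $\rp/\Delta$ follows since the polynomial ring is a domain; generation then follows by induction on the $\Sn{2}$-degree. This costs you some commutative algebra but avoids Gordan's machinery entirely, and it yields the cleaner by-product $R/(\Delta)\cong\cov(\Sn{2n})$. For minimality, both arguments push a purported relation through $\psi^{*}$, using $\psi^{*}\circ\varsigma=\mathrm{Id}$ and $\psi^{*}\Delta=0$; yours is phrased via the graded Nakayama criterion behind Remark~\ref{rem:cardinal-minimal-basis} (linear independence modulo $(R^{+})^{2}$), whereas the paper argues directly that $\varsigma(\hh_{i})=P(\Delta,\varsigma(\hh_{j}))$ would give $\hh_{i}=P(0,\hh_{j})$. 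Your treatment is in fact slightly more complete here: the paper only rules out redundancy of the $\varsigma(\hh_{i})$, whereas your bidegree argument ($\Delta$ has bidegree $(0,2)$ and there is no $\SL(2,\CC)$-invariant linear form on $\Sn{2}$) also rules out redundancy of $\Delta$, a point the paper's proof leaves implicit.
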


\begin{rem}
  The result is still true if we replace, in the theorem, $\Sn{2n}$ by a direct sum of binary forms of even degree $\Sn{2n_{1}} \oplus \dotsb \oplus \Sn{2n_{k}}$.
\end{rem}

\begin{proof}
  Applying Gordan's algorithm to obtain a basis for $\inv(\Sn{2n}\oplus \Sn{2})$, and since $\cov(\Sn{2})$ is generated by the binary form $\bw$ itself and the invariant $\Delta$, we deduce that a generating set for $\inv(\Sn{2n}\oplus \Sn{2})$ is given by $\Delta$ and transvectants
  \begin{equation*}
    \trans{\bh_{1}^{\alpha_{1}}\cdots \bh_{N}^{\alpha_{N}}}{\bw^{r}}{2r},
  \end{equation*}
  where $(\alpha_{i},2r)$ is an irreducible solution of
  \begin{equation}\label{eq:Diophantine-equation}
    \sum_{i = 1}^{N} \alpha_{i} a_{i} = 2r
  \end{equation}
  and $\bh_{1}, \dotsc ,\bh_{N}$ are generators for $\cov(\Sn{2n})$, all of them being of even order.

  Now observe that, if a product $\bh_{1}^{\alpha_{1}}\cdots \bh_{N}^{\alpha_{N}}$ contains more than two factors, then $(\alpha_{i},2r)$ is reducible. Indeed it can be written as a sum of two non-trivial solutions $(\alpha^{1}_{i},2r_{1})$ and $(\alpha^{2}_{i},2r_{2})$ of~\eqref{eq:Diophantine-equation}, where $2r_{1} + 2r_{2} = 2r$. Thus, a finite set of generators for $\inv(\Sn{2n}\oplus \Sn{2})$ is given by $\Delta$ and
  \begin{equation*}
    \trans{\bh_{i}}{\bw^{r_{i}}}{2r_{i}}, \qquad i = 1, \dotsc , N
  \end{equation*}
  where $2r_{i}$ (see remark~\ref{rem:even-order-covariants}) is the order of $\bh_{i}$. To achieve the proof, it remains to show that if $\set{\hh_{1},\dotsc ,\hh_{N}}$ is minimal the same is true for $\set{\varsigma(\hh_{1}),\dotsc ,\varsigma(\hh_{N}), \Delta}$. To do so, observe that if for some $i \in \set{1, \dotsc , N}$, there exists a polynomial $P$ such that
  \begin{equation*}
    \varsigma(\hh_{i}) = P(\Delta,\varsigma(\hh_{j})), \qquad j \ne i,
  \end{equation*}
  then using the fact that $\psi^{*}$ is an algebra homomorphism, we get
  \begin{equation*}
    \hh_{i} = \psi^{*}(\varsigma(\hh_{i})) = \psi^{*}(P(\Delta,\varsigma(\hh_{j}))) = P(\psi^{*}(\Delta), \psi^{*}(\varsigma(\hh_{j}))) = P(0,\hh_{j}),
  \end{equation*}
  because $\psi^{*}(\Delta) = \Delta (\bw_{\eta}) = \Delta ((\eta_{1}v - \eta_{2}u)^{2}) = 0$, which leads to a contradiction.
\end{proof}

\section{Covariants of harmonic tensors}
\label{sec:harmonic-tensors-covariants}

There is a closed relation between covariant/invariant algebras of harmonic polynomials of three variables and those of binary forms which is recalled in this section (see also~\cite[Appendix B]{OKA2017}).

The complexification of the $\SO(3)$-representation on the real space of harmonic polynomials $\Hn{n}(\RR^{3})$ extends to a representation of the complex algebraic group
\begin{equation*}
  \SO(3,\CC) : =  \set{P \in \mathrm{M}_{3}(\CC); \; P^{t}P = \id ,\, \det P = 1}
\end{equation*}
on the space of complex harmonic polynomials $\Hn{n}(\CC^{3})$, which remains irreducible. There is, moreover, a group homomorphism~\cite[Appendix B]{OKA2017}
\begin{equation*}
  \pi : \SL(2,\CC) \to \SO(3,\CC), \qquad \gamma \mapsto \Ad_{\gamma},
\end{equation*}
where
\begin{equation*}
  \Ad_{\gamma} : M \mapsto \gamma M \gamma^{-1}, \qquad \gamma \in \SL(2,\CC), \quad M \in \slc(2,\CC)
\end{equation*}
is the adjoint action of $\SL(2,\CC)$ on its Lie algebra $\slc(2,\CC)$. When restricted to the real Lie group
\begin{equation*}
  \SU(2,\CC) : =  \set{\gamma \in \SL(2,\CC);\; \bar{\gamma}^{t} \gamma = \id},
\end{equation*}
it induces the well-known two-fold covering
\begin{equation*}
  \pi : \SU(2,\CC) \to \SO(3,\RR), \qquad \gamma \mapsto \Ad_{\gamma}.
\end{equation*}
Using these constructions, $\Hn{n}(\CC^{3})$ becomes an $\SL(2,\CC)$-representation if we set
\begin{equation}\label{eq:actiongamma}
  \gamma \star \rh : =  \pi(\gamma) \star \rh, \qquad \rh \in\Hn{n}(\CC^{3}),\quad \gamma \in \SL(2,\CC),
\end{equation}
and $\Hn{n}(\RR^{3})$ becomes an $\SU(2,\CC)$-representation if we set
\begin{equation*}
  \gamma \star \rh : =  \pi(\gamma) \star \rh, \qquad \rh \in\Hn{n}(\RR^{3}),\quad \gamma \in \SU(2,\CC),
\end{equation*}
both of them remaining irreducible.

Now, there is an equivariant isomorphism between the space $\Hn{n}(\CC^{3})$ of complex harmonic polynomials of degree $n$ and binary forms of degree $2n$. This isomorphism derives from an equivariant mapping introduced first in Cartan's theory of spinors going back to 1913 (see~\cite[Chapter 3]{Car1981}) and rediscovered later by Backus~\cite{Bac1970}. More precisely, let us introduce the \emph{Cartan map}
\begin{equation}\label{eq:Cartan-map}
  \phi : \CC^{2} \to \slc(2,\CC), \qquad \bxi \mapsto \bxi \, \bxi^{\omega}=
  \begin{pmatrix}
    -uv    & u^{2} \\
    -v^{2} & uv
  \end{pmatrix},
\end{equation}
where
\begin{equation*}
  \bxi =
  \begin{pmatrix}
    u \\
    v
  \end{pmatrix}
  , \qquad \bxi^{\omega} =
  \begin{pmatrix}
    -v & u
  \end{pmatrix},
\end{equation*}
and $\bxi^{\omega}$ means the covariant version of the vector $\bxi$, defined using the determinant $\omega$ on $\CC^{2}$ (a nondegenerate bilinear form). The main property of this mapping is that it is $\SL(2,\CC)$-equivariant, meaning that
\begin{equation*}
  \phi(\gamma \bxi) = \Ad_{\gamma} \phi(\bxi), \qquad \forall \gamma \in \slc(2,\CC).
\end{equation*}
Choosing the following basis
\begin{equation*}
  \begin{pmatrix}
    0  & 1 \\
    -1 & 0
  \end{pmatrix},
  \qquad
  \begin{pmatrix}
    0 & i \\
    i & 0
  \end{pmatrix},
  \qquad
  \begin{pmatrix}
    i & 0  \\
    0 & -i
  \end{pmatrix},
\end{equation*}
of the Lie algebra $\slc(2,\CC)$ (corresponding to multiplication by $i$ of Pauli matrices), allows us to identify $\slc(2,\CC)$ with $\CC^{3}$, using the parametrization
\begin{equation*}
  \begin{pmatrix}
    iz    & x+iy \\
    -x+iy & -iz
  \end{pmatrix}.
\end{equation*}
In this basis, the Cartan map~\eqref{eq:Cartan-map} writes
\begin{equation}\label{eq:explicit-Cartan-map}
  \phi : \CC^{2} \to \CC^{3}, \qquad (u,v) \mapsto \left( x= \frac{u^{2} + v^{2}}{2}, y= \frac{u^{2} - v^{2}}{2i}, z= iuv \right).
\end{equation}
By pullback, the Cartan map $\phi$ induces an equivariant isomorphism
\begin{equation*}
  \phi^{*} : \Hn{n}(\CC^{3}) \to \Sn{2n}, \qquad \rh \mapsto \rh \circ \phi ,
\end{equation*}
which is equivariant in the following sense (using~\eqref{eq:actiongamma})
\begin{equation*}
  \phi^{*}( \Ad_{\gamma} \star \rh) = \gamma \star \phi^{*}(\rh), \qquad \rh \in \Hn{n}(\CC^{3}), \, \gamma \in \SL(2, \CC).
\end{equation*}

\begin{thm}\label{thm:Hn-S2n-isomorphism}
  The linear mapping $\phi^{*}: \Hn{n}(\CC^{3}) \to \Sn{2n}$ defined by
  \begin{equation*}
    (\phi^{*}(\rh))(u,v) : =  \rh \left( \frac{u^{2} + v^{2}}{2}, \frac{u^{2} - v^{2}}{2i}, iuv \right).
  \end{equation*}
  is an $\SL(2,\CC)$-equivariant \emph{isomorphism}. The invariant algebras $\CC[\Hn{n}(\CC^{3})]^{\SO(3,\CC)}$ and $\CC[\Sn{2n}]^{\SL(2, \CC)}$ are thus isomorphic.
\end{thm}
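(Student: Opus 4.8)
The plan is to prove that $\phi^{*}$ is a linear bijection; its $\SL(2,\CC)$-equivariance has already been recorded just before the statement, being a direct consequence of the equivariance $\phi(\gamma\bxi)=\Ad_{\gamma}\phi(\bxi)$ of the Cartan map. First I would verify that $\phi^{*}$ is well defined, i.e.\ that it indeed lands in $\Sn{2n}$: each of the three coordinate functions $x,y,z$ in~\eqref{eq:explicit-Cartan-map} is homogeneous of degree $2$ in $(u,v)$, so for $\rh\in\Hn{n}(\CC^{3})$ the composite $\rh\circ\phi$ is homogeneous of degree $2n$ in $(u,v)$, hence a binary form in $\Sn{2n}$. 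Since $\dim_{\CC}\Hn{n}(\CC^{3})=2n+1=\dim_{\CC}\Sn{2n}$, it then suffices to establish that $\phi^{*}$ is injective, after which bijectivity, and thus the isomorphism claim, follows automatically.

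For the injectivity, the key observation is that the image of $\phi$ lies in the \emph{isotropic cone}. A direct computation using~\eqref{eq:explicit-Cartan-map} gives $\qq\circ\phi=0$ identically, so $\operatorname{Im}\phi\subseteq\set{\qq=0}$; conversely, solving $u^{2}=x+iy$, $v^{2}=x-iy$ and $uv=-iz$ shows that $\phi$ in fact surjects onto this cone. Hence, if $\phi^{*}(\rh)=\rh\circ\phi=0$, the harmonic polynomial $\rh$ vanishes on the whole quadric $\set{\qq=0}$. As $\qq=x^{2}+y^{2}+z^{2}$ is irreducible over $\CC$, this forces $\qq\mid\rh$, say $\rh=\qq\,g$ with $g\in\Pn{n-2}(\CC^{3})$. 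But the harmonic decomposition~\eqref{eq:polynomial-harmonic-decomposition} realises $\Pn{n}(\CC^{3})=\Hn{n}(\CC^{3})\oplus\qq\,\Pn{n-2}(\CC^{3})$ as a \emph{direct} sum, so $\Hn{n}(\CC^{3})\cap\qq\,\Pn{n-2}(\CC^{3})=\set{0}$ and therefore $\rh=0$. This is the only substantive step; the subtle point I would be careful about is ensuring that $\rh$ vanishes not merely on $\operatorname{Im}\phi$ but on the entire irreducible quadric $\set{\qq=0}$ (surjectivity onto the cone, or equivalently Zariski density of the image, takes care of this).

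It remains to pass from the representation isomorphism to the invariant-algebra statement. Here I would use that $\pi\colon\SL(2,\CC)\to\SO(3,\CC)$ is surjective (it is the standard double cover of algebraic groups), so a polynomial function on $\Hn{n}(\CC^{3})$ is $\SO(3,\CC)$-invariant if and only if it is $\SL(2,\CC)$-invariant for the action~\eqref{eq:actiongamma}; in other words $\CC[\Hn{n}(\CC^{3})]^{\SO(3,\CC)}=\CC[\Hn{n}(\CC^{3})]^{\SL(2,\CC)}$. The equivariant linear isomorphism $\phi^{*}$ then dualises to an $\SL(2,\CC)$-equivariant isomorphism of coordinate algebras $\CC[\Sn{2n}]\xrightarrow{\ \sim\ }\CC[\Hn{n}(\CC^{3})]$, which restricts to an isomorphism of the invariant subalgebras. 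Composing, we obtain $\CC[\Sn{2n}]^{\SL(2,\CC)}\cong\CC[\Hn{n}(\CC^{3})]^{\SO(3,\CC)}$, as claimed.
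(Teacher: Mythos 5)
Your proposal is correct, and it does not merely parallel the paper: the paper offers no proof of theorem~\ref{thm:Hn-S2n-isomorphism} at all, presenting it as a classical fact (Cartan, Backus) after recording the equivariance $\phi(\gamma\bxi)=\Ad_{\gamma}\phi(\bxi)$, and the remark that follows the theorem (uniqueness of $\phi^{*}$ up to scale by Schur's lemma) presupposes the isomorphism rather than establishing it. Your argument is the natural self-contained one: well-definedness because the components of $\phi$ in \eqref{eq:explicit-Cartan-map} are quadratic, hence $\rh\circ\phi\in\Sn{2n}$; the dimension count $\dim\Hn{n}(\CC^{3})=2n+1=\dim\Sn{2n}$ reducing bijectivity to injectivity; and injectivity via the geometry of the Cartan map, whose image is exactly the isotropic cone $\set{\qq=0}$ (your inversion $u^{2}=x+iy$, $v^{2}=x-iy$, $uv=-iz$ is right, the sign of $v$ being adjustable thanks to $(uv)^{2}=x^{2}+y^{2}=-z^{2}$ on the cone), so that $\rh\circ\phi=0$ forces $\qq\mid\rh$ by irreducibility of the nondegenerate quadric together with the Nullstellensatz, whence $\rh=0$ by the direct-sum decomposition. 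Two small points should be made explicit to be airtight: first, \eqref{eq:polynomial-harmonic-decomposition} is stated in the paper over $\RR^{3}$, so the splitting $\Pn{n}(\CC^{3})=\Hn{n}(\CC^{3})\oplus\qq\,\Pn{n-2}(\CC^{3})$ you invoke should be obtained by complexifying the real one (the Laplacian being defined over $\RR$, this is immediate); second, the surjectivity of $\pi:\SL(2,\CC)\to\SO(3,\CC)$, on which your identification of $\SO(3,\CC)$-invariants with $\SL(2,\CC)$-invariants rests, deserves its one-line justification ($\SO(3,\CC)$ is connected and the image of $\pi$ is an open subgroup, since $d\pi$ is a Lie-algebra isomorphism). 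Neither is a gap in substance; both are routine, and your proof supplies exactly what the paper delegates to the references.
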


\begin{rem}
  The equivariant isomorphism $\phi^{*}: \Hn{n}(\CC^{3}) \to \Sn{2n}$ is unique, up to a scaling factor, thanks to Schur's lemma. A different basis and thus a different representation has been considered by Backus in its study of the elasticity tensor, as \cite[Eq. 50]{Bac1970}
  \begin{equation*}
    (u,v) \mapsto \left( u^{2} - v^{2}, -i(u^{2} + v^{2}), 2uv \right).
  \end{equation*}
  Note also that a different representation was given in~\cite[Theorem 5.1]{OKA2017}, as
  \begin{equation*}
    (u,v) \mapsto \left( \frac{u^{2} - v^{2}}{2}, \frac{u^{2} + v^{2}}{2i}, uv \right).
  \end{equation*}
  However, expression \eqref{eq:explicit-Cartan-map} seems to be finally more convenient, especially when one works with transvectants.
\end{rem}

Let $\RSn{2n} : =  \phi^{*}(\Hn{n}(\RR^{3}))$ be the space of binary forms which correspond to real harmonic polynomials. This space is characterized as follows
\begin{equation*}
  \RSn{2n} = \set{\ff \in \Sn{2n}; \; S\ff = \ff},
\end{equation*}
where $S$ is the linear involution of $\Sn{2n}$ defined by
\begin{equation*}
  (S\ff)(u,v) = \bar{\ff}(-v,u),
\end{equation*}
and where $\bar{\ff}(u,v) := \overline{\ff(\bar{u},\bar{v})}$. This means that if
\begin{equation*}
  \ff = \sum_{k=0}^{2n} a_{k}u^{k}v^{2n-k},
\end{equation*}
then,
\begin{equation*}
  \ff \in \RSn{2n} \iff a_{2n-k} = (-1)^{k}\overline{a_{k}}, \qquad k = 0, \dotsc, 2n.
\end{equation*}

Note that $\RSn{2n}$ is invariant under the action of $\SU(2,\CC)$ and that the decomposition of the space $\Sn{2n}$ into irreducible components of $\SU(2,\CC)$ writes
\begin{equation*}
  \Sn{2n} = \RSn{2n} \oplus i\RSn{2n},
\end{equation*}
where $i\RSn{2n}$ is characterized by the functional equation $S\ff = -\ff$. Moreover, since we have the following commuting relations
\begin{equation*}
  \partial_{u} \circ S = S \circ \partial_{v}, \qquad \partial_{v} \circ S = -S \circ \partial_{u},
\end{equation*}
we deduce that
\begin{equation*}
  \trans{S\ff}{S\bg}{r} = S\trans{\ff}{\bg}{r},
\end{equation*}
by~\eqref{eq:transvectant}. Therefore, we have the following result.

\begin{lem}
  Let $\ff \in \RSn{2n}$ and $\bg \in \RSn{2p}$. Then $\trans{\ff}{\bg}{2r} \in \RSn{2n+2p-2r}$.
\end{lem}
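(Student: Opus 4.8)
The plan is to reduce the statement to a single fact: that the transvectant $\trans{\ff}{\bg}{2r}$ is fixed by the involution $S$. Indeed, $\RSn{m}$ is by definition the $(+1)$-eigenspace of $S$ acting on $\Sn{m}$, and the degree of $\trans{\ff}{\bg}{2r}$ is already pinned down by the transvectant degree formula recalled just after~\eqref{eq:transvectant}, so that membership in the claimed space $\RSn{2n+2p-2r}$ amounts precisely to checking the eigenvalue condition $S\trans{\ff}{\bg}{2r} = \trans{\ff}{\bg}{2r}$. Thus the reality of the two inputs is all that needs to be propagated through the transvectant.

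For this I would invoke the intertwining identity $\trans{S\ff}{S\bg}{2r} = S\trans{\ff}{\bg}{2r}$ established just above from the commutation relations $\partial_u\circ S = S\circ\partial_v$ and $\partial_v\circ S = -S\circ\partial_u$. Since $\ff\in\RSn{2n}$ and $\bg\in\RSn{2p}$ means exactly $S\ff=\ff$ and $S\bg=\bg$, substituting into this identity gives
\[
S\trans{\ff}{\bg}{2r} = \trans{S\ff}{S\bg}{2r} = \trans{\ff}{\bg}{2r},
\]
so $\trans{\ff}{\bg}{2r}$ lies in the fixed space of $S$, namely $\RSn{2n+2p-2r}$. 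That is the entire argument: once the intertwining property of $S$ with transvectants is in hand, the lemma is an immediate eigenspace computation.

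The only genuinely delicate point — and it resides in the identity I am quoting rather than in the lemma itself — is the verification that $S$ intertwines transvectants. Were I to prove that identity from scratch, I would first record that $S = R\circ C$ is a conjugate-linear \emph{algebra homomorphism}, where $C$ conjugates coefficients and $R$ is the substitution $(u,v)\mapsto(-v,u)$ induced by $\begin{pmatrix}0&1\\-1&0\end{pmatrix}\in\SL(2,\CC)$; this lets $S$ pass through the products in~\eqref{eq:transvectant} while leaving the real scalars $(-1)^i$, $\binom{r}{i}$, and the normalizing factorials untouched. Then I would push $S$ through each mixed derivative using $S\partial_u=-\partial_v S$ and $S\partial_v=\partial_u S$, turning $\partial_u^{r-i}\partial_v^i$ acting on $\ff$ into $(-1)^{r-i}\partial_u^i\partial_v^{r-i}$ on $S\ff$ and $\partial_u^i\partial_v^{r-i}$ on $\bg$ into $(-1)^i\partial_u^{r-i}\partial_v^i$ on $S\bg$. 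The main obstacle is then purely the sign bookkeeping: after re-indexing $i\mapsto r-i$ in the sum, the accumulated signs combine to $(-1)^{r-i}$, which matches the sign appearing in $\trans{S\ff}{S\bg}{2r}$ because $(-1)^{r+i}=(-1)^{r-i}$. I expect this computation to show that the intertwining holds for \emph{every} index, so the even index $2r$ is not essential to the reality statement but is the case actually used in constructing the section $\varsigma$ in~\eqref{eq:psi-section-definition}.
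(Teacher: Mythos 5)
Your proof is correct and takes essentially the same route as the paper: the paper likewise obtains the lemma immediately by applying the intertwining identity $\trans{S\ff}{S\bg}{r} = S\trans{\ff}{\bg}{r}$ (deduced from the commutation relations $\partial_{u}\circ S = S\circ\partial_{v}$ and $\partial_{v}\circ S = -S\circ\partial_{u}$) to forms fixed by $S$. Your supplementary sign-bookkeeping verification of that identity is also sound, including the observation that it holds for every index $r$, not only the even ones actually used.
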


In particular, an iterated transvectant of order 0 (\textit{i.e.} an invariant) is necessary real when evaluated on binary forms in $\RSn{2n}$, because $\RSn{0} = \RR$. Therefore, if $I_{1}, \dotsc, I_{N}$ are invariants of binary forms in $\Sn{2n_{1}}\oplus \dotsb \oplus \Sn{2n_{p}}$ obtained by such a transvectant process, they become real polynomials when evaluated on $\RSn{2n_{1}}\oplus \dotsb \oplus \RSn{2n_{p}}$.

Consider now the covariant algebra
\begin{equation*}
  \cov(\VV) = \RR[\VV \oplus \RR^{3}]^{\SO(3)}
\end{equation*}
where
\begin{equation*}
  \VV : =  \Hn{n_{1}}(\RR^{3})\oplus \dotsb \oplus \Hn{n_{p}}(\RR^{3}).
\end{equation*}
Then, using the group morphism $\pi : \SU(2,\CC) \to \SO(3,\RR)$ and the isomorphism introduced in theorem~\ref{thm:Hn-S2n-isomorphism}, we deduce an explicit real algebra isomorphism
\begin{equation*}
  \cov(\VV) \simeq \RR[\RSn{2n_{1}}\oplus \dotsb \oplus \RSn{2n_{p}} \oplus \RSn{2}]^{\SU(2,\CC)},
\end{equation*}
where we have made the trivial identification $\RR^{3} = \Hn{1}(\RR^{3})$ and we have the following result.

\begin{thm}\label{thm:decomplexification}
  Let $\set{\bg_{1},\dotsc,\bg_{N}}$ be a \emph{minimal} generating set of the complex covariant algebra
  \begin{equation*}
    \CC[\Sn{2n_{1}}\oplus \dotsb \oplus \Sn{2n_{p}} \oplus \Sn{2}]^{\SL(2,\CC)}
  \end{equation*}
  obtained by iterated transvectants. Then, by restriction, the set $\set{\bg_{1},\dotsc,\bg_{N}}$ defines a \emph{minimal} generating set of the real invariant algebra
  \begin{equation*}
    \RR[\RSn{2n_{1}}\oplus \dotsb \oplus \RSn{2n_{p}} \oplus \RSn{2}]^{\SU(2,\CC)}.
  \end{equation*}
\end{thm}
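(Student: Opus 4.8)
The plan is to avoid re-running Gordan's algorithm over the compact group and instead reduce the statement to two density facts and one reality fact. Throughout I abbreviate $V := \Sn{2n_{1}} \oplus \dotsb \oplus \Sn{2n_{p}} \oplus \Sn{2}$ and write $V_{\RR} := \RSn{2n_{1}} \oplus \dotsb \oplus \RSn{2n_{p}} \oplus \RSn{2}$ for its real form; recall from the discussion above that $V = V_{\RR} \oplus i V_{\RR}$ as real $\SU(2,\CC)$-modules, so that $V$ is the complexification of the $\SU(2,\CC)$-stable real space $V_{\RR}$. The first thing I would record is the \emph{reality fact} already established just before the theorem: an invariant obtained by the iterated-transvectant process takes real values on $V_{\RR}$. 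Indeed, the generators of $\cov(\Sn{2n})$ have even order (remark~\ref{rem:even-order-covariants}), so the transvectants occurring in the construction of theorem~\ref{thm:basis-for-S2n-oplus-S2} all carry even index, and the lemma immediately preceding this theorem gives $\trans{\ff}{\bg}{2r} \in \RSn{\,\cdot\,}$ at every step, the final order-$0$ output landing in $\RSn{0} = \RR$. Consequently each restriction $\bg_{i}|_{V_{\RR}}$ is a genuine element of $\RR[V_{\RR}]^{\SU(2,\CC)}$.

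Next I would set up the restriction homomorphism. Fixing a basis of $V_{\RR}$ over $\RR$ (which is simultaneously a basis of $V$ over $\CC$), the real points $V_{\RR}$ are Zariski dense in $V$, so restriction $\mathrm{res}\colon \CC[V] \to \CC[V_{\RR}]$ onto complex-valued polynomial functions on $V_{\RR}$ is an isomorphism of $\CC$-algebras, and it is $\SU(2,\CC)$-equivariant since $V_{\RR}$ is $\SU(2,\CC)$-stable. Two density observations then finish the structural part. First, because $\SU(2,\CC)$ is Zariski dense in $\SL(2,\CC)$ (the unitarian trick) and the stabilizer of a polynomial is Zariski closed, one has $\CC[V]^{\SU(2,\CC)} = \CC[V]^{\SL(2,\CC)}$. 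Second, because $\SU(2,\CC)$ acts $\RR$-linearly on $V_{\RR}$, a complex polynomial $p + iq$ (with $p,q$ real) is invariant iff $p$ and $q$ are, so $\CC[V_{\RR}]^{\SU(2,\CC)} = \RR[V_{\RR}]^{\SU(2,\CC)} \otimes_{\RR} \CC$. Combining these, restriction induces a $\CC$-algebra isomorphism
\[
  \Phi \colon \CC[V]^{\SL(2,\CC)} \xrightarrow{\ \sim\ } \RR[V_{\RR}]^{\SU(2,\CC)} \otimes_{\RR} \CC, \qquad \Phi(\bg_{i}) = \bg_{i}|_{V_{\RR}}.
\]

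Generation and minimality then follow formally. For generation, take $f \in \RR[V_{\RR}]^{\SU(2,\CC)}$; since $\Phi$ is onto and the $\bg_{i}$ generate its domain, $f = P(\bg_{1}|_{V_{\RR}}, \dotsc, \bg_{N}|_{V_{\RR}})$ for some $P$ with complex coefficients. Splitting $P = P_{1} + i P_{2}$ into real and imaginary parts and using that $f$ and every $\bg_{i}|_{V_{\RR}}$ are real (the reality fact), the imaginary contribution $P_{2}(\bg_{\bullet}|_{V_{\RR}})$ must vanish, so $f = P_{1}(\bg_{\bullet}|_{V_{\RR}})$ is a \emph{real} polynomial in the restrictions. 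For minimality, suppose a proper subset, say $\{\bg_{i}|_{V_{\RR}}\}_{i \ne j}$, already generated $\RR[V_{\RR}]^{\SU(2,\CC)}$; then $\bg_{j}|_{V_{\RR}} = Q(\{\bg_{i}|_{V_{\RR}}\}_{i \ne j})$ for some real $Q$, and since $\mathrm{res}$ is injective on $\CC[V]^{\SL(2,\CC)}$, this lifts to $\bg_{j} = Q(\{\bg_{i}\}_{i \ne j})$ in $\CC[V]^{\SL(2,\CC)}$, whence $\{\bg_{i}\}_{i \ne j}$ would generate the whole complex algebra, contradicting the minimality of $\{\bg_{1}, \dotsc, \bg_{N}\}$.

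I expect the genuine obstacle to be conceptual rather than computational, and to lie entirely in the reality step. The abstract isomorphism $\Phi$ exists for \emph{any} generating family and is pure bookkeeping once the two density statements are in place; what makes the theorem true is precisely that the chosen $\bg_{i}$ come from even-index iterated transvectants, so that the lemma on $\trans{\ff}{\bg}{2r}$ applies at every stage and forces $\bg_{i}|_{V_{\RR}}$ to be real. Without that hypothesis the restrictions need not be real-valued, and the conclusion fails; hence the careful placement of ``obtained by iterated transvectants'' in the statement is essential, and verifying it is the one place where the harmonic-tensor/binary-form dictionary of this appendix is actually used.
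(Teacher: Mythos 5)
Your proof is correct and follows essentially the route the paper intends: the paper states this theorem without a formal proof, treating it as a direct consequence of the reality lemma and the restriction/complexification discussion immediately preceding it, and your write-up supplies exactly the bookkeeping left implicit there — Zariski density of $V_{\RR}$ in $V$ and of $\SU(2,\CC)$ in $\SL(2,\CC)$, the splitting $\CC[V_{\RR}]^{\SU(2,\CC)} = \RR[V_{\RR}]^{\SU(2,\CC)}\otimes_{\RR}\CC$, generation by separating real and imaginary parts, and minimality by lifting a relation through the injective restriction map.

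One step of your justification needs repair, although it does not invalidate the proof. You argue the reality fact by asserting that, since the generators of $\cov(\Sn{2n})$ have even \emph{order} (remark~\ref{rem:even-order-covariants}), ``the transvectants occurring in the construction all carry even index'', so that the even-index lemma applies at every step. Even order of a covariant does not imply it is produced by even-\emph{index} transvectants: the iterated transvectants generating $\cov(\Sn{2n})$ contain odd-index steps, e.g.\ Jacobian-type covariants such as $\trans{\ff}{\trans{\ff}{\ff}{2}}{1}$; in the dictionary of lemma~\ref{lem:trad-transvectants} these odd-index transvectants are precisely the cross-product covariants, which abound in \autoref{tab:cov-basis-H4}. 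Only the outer transvectants $\trans{\bh_i}{\bw^{r_i}}{2r_i}$ of theorem~\ref{thm:basis-for-S2n-oplus-S2} are guaranteed to have even index. The reality fact is nonetheless true at every step: the identity $\trans{S\ff}{S\bg}{r}=S\trans{\ff}{\bg}{r}$, which the paper derives from $\partial_u\circ S = S\circ \partial_v$ and $\partial_v \circ S = -S\circ\partial_u$, holds for \emph{every} index $r$, not only even ones, so transvectants of any index preserve $S$-fixed forms (all degrees in play being even). Replacing your appeal to the even-index lemma by this identity closes the gap, and the rest of your argument stands as written.
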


In practice, to obtain an explicit basis of the covariant algebra
\begin{equation*}
  \cov(\Hn{n_{1}}(\RR^{3})\oplus \dotsb \oplus \Hn{n_{p}}(\RR^{3}))
\end{equation*}
starting from the knowledge of a basis
\begin{equation*}
  \set{\bh_{1},\dotsc,\bh_{N}}
\end{equation*}
of the covariant algebra
\begin{equation*}
  \cov(\Sn{2n_{1}}\oplus \dotsb \oplus \Sn{2n_{p}}),
\end{equation*}
obtained by iterated transvectants, one can use lemma~\ref{lem:trad-transvectants} to translate these iterated transvectants and obtained tensorial expressions (or their polynomial counterparts, using the results of Section~\ref{sec:covariant-tensor-operations}) for the generators of
\begin{equation*}
  \cov(\Hn{n_{1}}(\RR^{3})\oplus \dotsb \oplus \Hn{n_{p}}(\RR^{3})),
\end{equation*}
don't omitting to add $\qq : =  x^{2} + y^{2} + z^{2}$ to this list.

\begin{lem}\label{lem:trad-transvectants}
  Let $\bH_{1} \in \HH^{n}(\RR^{3})$ and $\bH_{2} \in \HH^{p}(\RR^{3})$ be two harmonic tensors. Then we have
  \begin{equation}\label{eq:even-order-transvectant}
    \trans{\phi^{*}\bH_{1}}{\phi^{*}\bH_{2}}{2r} = 2^{-r}\phi^\ast((\bH_{1} \overset{(r)}{\cdot}\bH_{2})^{s}_0)
  \end{equation}
  and
  \begin{equation}\label{eq:odd-order-transvectant}
    \trans{\phi^{*}\bH_{1}}{\phi^{*}\bH_{2}}{2r+1} = \kappa(n,p,r) \phi^\ast((\tr^r(\bH_{1} \times \bH_{2}))_0)
  \end{equation}
  where
  \begin{equation*}
    \kappa(n,p,r) = \frac{1}{2^{2r+1}} \frac{(n+p-1)! (n-r-1)!(p-r-1)!}{(n+p-1-2r)! (n-1)! (p-1)! }.
  \end{equation*}
\end{lem}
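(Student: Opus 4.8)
The plan is to prove both identities by reducing to the case of powers of null (isotropic) linear forms, where the Cartan map linearises everything and the transvectant collapses to the elementary formula \eqref{eq:n-powers-transvectant}. Both sides of \eqref{eq:even-order-transvectant} and \eqref{eq:odd-order-transvectant} are bilinear in the pair $(\bH_1,\bH_2)$: the transvectant is bilinear, $\phi^{*}$ is linear, and the symmetric $r$-contraction and the generalised cross product are bilinear tensor operations. Since the powers $(\vec{a}\cdot\xx)^{n}$ with $\vec{a}\in\CC^{3}$ isotropic ($\vec{a}\cdot\vec{a}=0$) span $\HH^{n}(\CC^{3})$, it suffices, by bilinearity, to verify each identity for $\bH_1=(\vec{a}\cdot\xx)^{n}$ and $\bH_2=(\vec{b}\cdot\xx)^{p}$ with $\vec{a},\vec{b}$ isotropic; the statement for real harmonic tensors then follows by restriction, all expressions being polynomial in the components.

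Next I would record three elementary consequences of the explicit Cartan map \eqref{eq:explicit-Cartan-map}, valid after extending $\phi^{*}$ to a ring homomorphism on all polynomials. First, $\qq\circ\phi=0$, so $\phi^{*}$ annihilates every multiple of $\qq$ and hence $\phi^{*}(\rp)=\phi^{*}(\rp_0)$ for any $\rp$; this lets me discard all harmonic projections $(\cdot)_0$ throughout. Second, for an isotropic $\vec{a}$ the quadratic form $\phi^{*}(\vec{a}\cdot\xx)$ has vanishing discriminant (its discriminant equals $-\vec{a}\cdot\vec{a}$), so it is a perfect square $(\sigma\bxi)^{2}$ of a linear form, and a direct component computation gives the spinor pairing relation $(\sigma\tau)^{2}=\tfrac12\,\vec{a}\cdot\vec{b}$, where $(\sigma\tau):=\sigma_1\tau_2-\sigma_2\tau_1$. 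Third, writing $\vec{c}=\vec{a}\times\vec{b}$ for the ordinary cross product (which satisfies $\vec{a}\cdot\vec{c}=\vec{b}\cdot\vec{c}=0$), the same parametrisation yields $\phi^{*}(\vec{c}\cdot\xx)=2(\sigma\tau)(\sigma\bxi)(\tau\bxi)$.

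For the even case, the polynomial form of the symmetric $r$-contraction shows that $\bH_1\overset{(r)}{\cdot}\bH_2$ corresponds to $(\vec{a}\cdot\vec{b})^{r}(\vec{a}\cdot\xx)^{n-r}(\vec{b}\cdot\xx)^{p-r}$, the multinomial sum collapsing to $(\vec{a}\cdot\vec{b})^{r}$; applying $\phi^{*}$ and the spinor relation turns the right-hand side of \eqref{eq:even-order-transvectant} into $(\sigma\tau)^{2r}(\sigma\bxi)^{2n-2r}(\tau\bxi)^{2p-2r}$, which is exactly $\trans{(\sigma\bxi)^{2n}}{(\tau\bxi)^{2p}}{2r}$ by \eqref{eq:n-powers-transvectant}. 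For the odd case, the determinant formula for the cross product gives $\bH_1\times\bH_2\leftrightarrow(\vec{a}\cdot\xx)^{n-1}(\vec{b}\cdot\xx)^{p-1}(\vec{c}\cdot\xx)$. The crux is to iterate the trace, for which I use $\tr=\frac{1}{m(m-1)}\triangle$ on a degree-$m$ polynomial. Because $\vec{a},\vec{b}$ are isotropic and $\vec{c}\perp\vec{a},\vec{b}$, the Laplacian closes on this family: a short computation gives $\triangle[(\vec{a}\cdot\xx)^{i}(\vec{b}\cdot\xx)^{j}(\vec{c}\cdot\xx)]=2ij\,(\vec{a}\cdot\vec{b})\,(\vec{a}\cdot\xx)^{i-1}(\vec{b}\cdot\xx)^{j-1}(\vec{c}\cdot\xx)$, so only the $\vec{a}\cdot\vec{b}$ cross-term survives. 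Iterating $r$ times produces an explicit ratio of factorials times $(2\,\vec{a}\cdot\vec{b})^{r}$, after which $\phi^{*}$ together with the spinor relations reduces both sides to a multiple of $(\sigma\tau)^{2r+1}(\sigma\bxi)^{2n-2r-1}(\tau\bxi)^{2p-2r-1}$; matching the constants, via $(\sigma\tau)^{2}=\tfrac12\,\vec{a}\cdot\vec{b}$, pins down precisely $\kappa(n,p,r)$.

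I expect the bookkeeping of the iterated trace in the odd case — tracking the product of the factors $2ij(\vec{a}\cdot\vec{b})$ and simplifying the resulting factorials into the stated $\kappa(n,p,r)$ — to be the main obstacle, the rest being linear-algebra reductions and the three Cartan-map identities, each of which is a one-line discriminant or cross-product computation.
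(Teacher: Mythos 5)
The paper states this lemma \emph{without proof} (it is given at the end of Appendix~B as a computational tool), so there is no argument of record to compare yours against; your proposal in fact supplies the missing justification, and it is correct. I checked the key ingredients. The reduction is sound: both sides of \eqref{eq:even-order-transvectant} and \eqref{eq:odd-order-transvectant} are bilinear, the powers $(\vec{a}\cdot\xx)^{n}$ with $\vec{a}\cdot\vec{a}=0$ do span $\Hn{n}(\CC^{3})$ (their images under $\phi^{*}$ are exactly the $2n$-th powers $(\sigma\bxi)^{2n}$, which span $\Sn{2n}$, and $\phi^{*}$ is an isomorphism by theorem~\ref{thm:Hn-S2n-isomorphism}), and a bilinear identity verified over $\CC$ restricts to $\HH^{n}(\RR^{3})$. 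Your three Cartan-map identities are correct: $\qq\circ\phi=0$ (so harmonic projections may be dropped after applying $\phi^{*}$); with the parametrization $\vec{a}=\bigl(\sigma_{1}^{2}+\sigma_{2}^{2},\,i(\sigma_{1}^{2}-\sigma_{2}^{2}),\,-2i\sigma_{1}\sigma_{2}\bigr)$ one gets $(\sigma\tau)^{2}=\tfrac{1}{2}\,\vec{a}\cdot\vec{b}$ and $\phi^{*}\bigl((\vec{a}\times\vec{b})\cdot\xx\bigr)=2(\sigma\tau)(\sigma\bxi)(\tau\bxi)$. The contraction bookkeeping also works out: $(\bH_{1}\overset{(r)}{\cdot}\bH_{2})^{s}$ corresponds to $(\vec{a}\cdot\vec{b})^{r}(\vec{a}\cdot\xx)^{n-r}(\vec{b}\cdot\xx)^{p-r}$, which combined with $(\sigma\tau)^{2r}=2^{-r}(\vec{a}\cdot\vec{b})^{r}$ and \eqref{eq:n-powers-transvectant} gives the factor $2^{-r}$ in the even case; in the odd case, since $\vec{a},\vec{b}$ are null and $\vec{a}\times\vec{b}$ is orthogonal to both, each application of $\tr=\frac{1}{m(m-1)}\triangle$ multiplies $(\vec{a}\cdot\xx)^{i}(\vec{b}\cdot\xx)^{j}\bigl((\vec{a}\times\vec{b})\cdot\xx\bigr)$ by $\frac{2ij\,\vec{a}\cdot\vec{b}}{(i+j+1)(i+j)}$, and iterating $r$ times from $(i,j)=(n-1,p-1)$ yields $2^{r}(\vec{a}\cdot\vec{b})^{r}\,\frac{(n-1)!\,(p-1)!\,(n+p-1-2r)!}{(n-1-r)!\,(p-1-r)!\,(n+p-1)!}$; together with the extra factor $2(\sigma\tau)$ coming from $\phi^{*}\bigl((\vec{a}\times\vec{b})\cdot\xx\bigr)$ this is precisely the reciprocal of $\kappa(n,p,r)$, so the constants match exactly. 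The only things to make fully explicit in a final write-up are the spanning fact for null-form powers (one line from the isomorphism theorem, as above) and the verification of the two spinor relations, which you correctly identify as short direct computations.
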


\section{Invariants of the elasticity tensor}
\label{sec:elasticity-invariants}

An integrity basis of $297$ invariants for the elasticity tensor $\bE = (\bH, \ba, \bb, \lambda,\mu)$ was produced for the first time in~\cite{OKA2017}, using Gordan's algorithm~\cite{Oli2017}. In this appendix, we provide an alternative integrity basis, using the covariants of $\bH$ given in~\autoref{tab:cov-basis-H4}. All present computations have been done first using Macaulay2~\cite{GS}, and then, independently with Mathematica software. A \emph{minimal} integrity basis of $294$ invariants is finally obtained here. It turns out that the integrity basis produced in~\cite{OKA2017} is not minimal: indeed, the computations achieved here show that the three joint invariants of degree 11 in~\cite[Table 5]{OKA2017} are superfluous.

This basis has been checked to be correct using the Hilbert series of $\inv(\Ela)$ and using the method which was outlined in section~\ref{sec:H4-covariant-algebra}. This basis consists in:

\begin{enumerate}
  \item 15 simple invariants:
        \begin{itemize}
          \item $\lambda$, $\mu$;
          \item the simple invariants of $\ba$ and $\bb$: $\tr\ba^{2}$, $\tr\ba^{3}$, $\tr\bb^{2}$, $\tr\bb^{3}$:
          \item and the nine simple invariants of $\bH$, computed first in~\cite{BKO1994}:
                \begin{gather*}
                  \tr\td{d}_{2},\quad \tr\td{d}_{3},\quad \tr\td{d}_{2}^{2},\quad \tr\left(\td{d}_{2}\td{d}_{3}\right),\quad \tr\td{d}_{2}^{3}, \\
                  \quad \tr\left(\td{d}_{2}^{2}\td{d}_{3}\right), \quad \tr\left(\td{d}_{2}\td{d}_{3}^{2}\right), \quad \tr\td{d}_{3}^{3}, \quad \tr\left(\td{d}_{2}^{2}\td{d}_{3}^{2}\right);
                \end{gather*}
        \end{itemize}
  \item 4 joint invariants of $(\ba,\bb)$:
        \begin{equation*}
          \tr\left(\ba\bb\right),\quad \tr\left(\ba^{2}\bb\right),\quad \tr\left(\ba\bb^{2}\right),\quad \tr\left(\ba^{2}\bb^{2}\right)
        \end{equation*}
  \item 52 joint invariants of $(\bH,\ba)$, and similarly 52 joint invariants of $(\bH,\bb)$ given in~\autoref{tab:inv-H4-H2}, where
        \begin{equation*}
          \cv{3}{3} = \tr\big(\bH\times \td{d}_{2}\big) ,\quad \cv{4b}{5}=\big(\bH^{2}\big)^s\times\td{d}_{2},\quad \vv_{5} = \pmb\varepsilon\2dots\big(\td{d}_{2}\td{c}_{3}\big).
        \end{equation*}
  \item 171 joint invariant of $(\bH,\ba,\bb)$ given in~\autoref{tab:inv-H4-H2-H2-a}, ~\autoref{tab:inv-H4-H2-H2-b} and~\autoref{tab:inv-H4-H2-H2-c}, where
        \begin{gather*}
          \cv{3}{7} = \bH\times\big(\bH^{2}\big)^s, \quad \cv{3}{9} = \big( \big(\bH\cdot\bH\big)^s\times \bH \big), \\
          \cv{3}{5} = \bH\times \td{d}_{2}, \quad \cv{4}{7}=\big( \bH \times \big(\bH^{3}\big)^s\big), \quad
          \vv_{8a} = \pmb{\varepsilon} \2dots (\bd_{2}{\bc_{3}}^{2}).
        \end{gather*}
\end{enumerate}

\begin{table}[p]
  \centering
  \begin{minipage}{0.49\linewidth}
    \centering
    \resizebox{\textwidth}{!}{%
      \renewcommand{\arraystretch}{1.2}
      \begin{tabular}{|l|c|c|c|}
        \hline\renewcommand{\arraystretch}{1.2}
        \#        & Deg. & M. deg. & Inv.                                                                                                     \\
        \hline
        $j_{a1}$  & 3    & (2,1,0) & $\tr\big(\ba\td{d_{2}}\big)$                                                                             \\
        $j_{a2}$  & 3    & (1,2,0) & $\ba\2dots\bH\2dots\ba$                                                                                  \\
        $j_{a3}$  & 4    & (2,2,0) & $\tr\big(\ba^{2}\td{d_{2}}\big)$                                                                         \\
        $j_{a4}$  & 4    & (3,1,0) & $\tr\big(\ba\td{d_{3}}\big)$                                                                             \\
        $j_{a5}$  & 4    & (1,3,0) & $\ba\2dots\bH\2dots\ba^{2}$                                                                              \\
        $j_{a6}$  & 4    & (2,2,0) & $\ba\2dots\big(\bH^{2}\big)^s\2dots\ba$                                                                  \\
        $j_{a7}$  & 5    & (1,4,0) & $\ba^{2}\2dots\bH\2dots\ba^{2}$                                                                          \\
        $j_{a8}$  & 5    & (2,3,0) & $\ba\2dots\big(\bH^{2}\big)^s\2dots\ba^{2}$                                                              \\
        $j_{a9}$  & 5    & (2,3,0) & $\ba\2dots\big( \ba\2dots\big(\bH\cdot\bH\big)^s\2dots\ba\big)$                                          \\
        $j_{a10}$ & 5    & (3,2,0) & $\ba\2dots(\bH^{3})^s\2dots\ba$                                                                          \\
        $j_{a11}$ & 5    & (3,2,0) & $\tr\big(\ba^{2}\td{d}_{3}\big)$                                                                         \\
        $j_{a12}$ & 5    & (4,1,0) & $\tr\big(\ba\td{d}_{2}^2\big)$                                                                           \\
        $j_{a13}$ & 5    & (4,1,0) & $\ba\2dots\big(\bH^{2}\big)^s\2dots\td{d}_{2}$                                                           \\
        $j_{a14}$ & 6    & (2,4,0) & $\ba^{2}\2dots\big(\bH^{2}\big)^s\2dots\ba^{2}$                                                          \\
        $j_{a15}$ & 6    & (2,4,0) & $\ba^{2}\2dots\big( \ba\2dots\big(\bH\cdot\bH\big)^s\2dots\ba\big)$                                      \\
        $j_{a16}$ & 6    & (3,3,0) & $\ba\2dots\big( \ba\2dots\big(\bH^{2}\cdot\bH\big)^s\2dots\ba\big)$                                      \\
        $j_{a17}$ & 6    & (3,3,0) & $\ba\2dots\big(\bH^{3}\big)^s\2dots\ba^{2}$                                                              \\
        $j_{a18}$ & 6    & (3,3,0) & $\tr\big(\bH\times\td{d}_{2}\big)\3dots \big( \ba^{2}\times \ba\big)$                                    \\
        $j_{a19}$ & 6    & (4,2,0) & $\td{d}_{2}^{2}\2dots\ba^{2}$                                                                            \\
        $j_{a20}$ & 6    & (4,2,0) & $\ba\2dots\big(\bH^{4}\big)^s\2dots\ba$                                                                  \\
        $j_{a21}$ & 6    & (4,2,0) & $\ba^{2}\2dots\td{c}_{4}$                                                                                \\
        $j_{a22}$ & 6    & (5,1,0) & $\ba\2dots(\td{d}_{2}\td{d}_{3})$                                                                        \\
        $j_{a23}$ & 6    & (5,1,0) & $\ba\2dots\big(\bH^{3}\big)^s\2dots\td{d}_{2}$                                                           \\
        $j_{a24}$ & 7    & (2,5,0) & $\ba\2dots\big( \ba^{2}\2dots\big(\bH\cdot\bH\big)^s\2dots\ba^{2}\big)$                                  \\
        $j_{a25}$ & 7    & (3,4,0) & $\ba\2dots\big(\big(\bH\times\td{d}_{2}\big)\3dots \big( \ba^{2}\times \ba\big)\big)$                    \\
        $j_{a26}$ & 7    & (3,4,0) & $\ba\2dots\big( \ba\2dots\big(\bH^{2}\cdot\bH\renewcommand{\arraystretch}{1.2}\big)^s\2dots\ba^{2}\big)$ \\
        $j_{a27}$ & 7    & (4,3,0) & $\ba\2dots\big(\bH^{4}\big)^s\2dots\ba^{2}$                                                              \\
        \hline
      \end{tabular}}
  \end{minipage}
  \begin{minipage}{0.5\linewidth}
    \centering
    \resizebox{\textwidth}{!}{%
      \renewcommand{\arraystretch}{1.2}
      \begin{tabular}{|l|c|c|c|}
        \hline
        \#        & Deg. & M. deg.                                    & Inv.                                                                       \\
        \hline
        $j_{a28}$ & 7    & (4,3,0)                                    & $\tr\big(\bH\times\td{c}_{3}\big)\3dots \big( \ba^{2}\times \ba\big)$      \\
        $j_{a29}$ & 7    & (4,3,0)                                    & $\ba\2dots\big( \ba\2dots\big(\bH^{2}\cdot\bH^{2}\big)^s\2dots\ba\big)$    \\
        $j_{a30}$ & 7    & (5,2,0)                                    & $\ba\2dots\big(\bH\cdot\td{d}_{2}^{2}\big)^s\2dots\ba$                     \\
        $j_{a31}$ & 7    & (5,2,0)  \renewcommand{\arraystretch}{1.2} & $\td{c}_{5}\2dots\ba^{2}$                                                  \\
        $j_{a32}$ & 7    & (5,2,0)                                    & $(\td{d}_{2}\td{c}_{3})\2dots\ba^{2}$                                      \\
        $j_{a33}$ & 7    & (6,1,0)                                    & $(\td{d}_{2}\td{c}_{4})\2dots\ba$                                          \\
        $j_{a34}$ & 7    & (6,1,0)                                    & $\td{c}_{3}^{2}\2dots\ba$                                                  \\
        $j_{a35}$ & 8    & (7,1,0)                                    & $\big(\td{d}_{2}^{2}\td{c}_{3}\big)\2dots\ba$                              \\
        $j_{a36}$ & 8    & (7,1,0)                                    & $\big(\td{c}_{4}\td{c}_{3}\big)\2dots\ba$                                  \\
        $j_{a37}$ & 8    & (6,2,0)                                    & $\big(\td{d}_{2}\td{c}_{4}\big)\2dots\ba^{2}$                              \\
        $j_{a38}$ & 8    & (6,2,0)                                    & $\td{c}_{3}^{2}\2dots\ba^{2}$                                              \\
        $j_{a39}$ & 8    & (6,2,0)                                    & $\ba\2dots\big(\bH^{2}\cdot\td{d}_{2}^{2}\big)^s\2dots\ba$                 \\
        $j_{a40}$ & 8    & (5,3,0)                                    & $\tr\big(\bH\times\td{d}_{2}^{2}\big)\3dots \big( \ba^{2}\times \ba\big)$  \\
        $j_{a41}$ & 8    & (5,3,0)                                    & $\ba\2dots\big(\bH\cdot\td{d}_{2}^{2}\big)^s\2dots\ba^{2}$                 \\
        $j_{a42}$ & 8    & (4,4,0)                                    & $\ba\2dots\big(\cv{4b}{5}\3dots \big( \ba^{2}\times \ba\big)\big)$         \\
        $j_{a43}$ & 8    & (4,4,0)                                    & $\ba\2dots\big(\ba\2dots\big(\bH^{2}\cdot\bH^{2}\big)^s\2dots\ba^{2}\big)$ \\
        $j_{a44}$ & 8    & (3,5,0)                                    & $\ba^{2}\2dots\big(\ba^{2}\2dots\big(\bH^{2}\cdot\bH\big)^s\2dots\ba\big)$ \\
        $j_{a45}$ & 9    & (6,3,0)                                    & $\big(\cv{3}{3}\2dots\ba\big)\cdot\ba\cdot\big(\cv{3}{3}\2dots\ba\big)$    \\
        $j_{a46}$ & 9    & (7,2,0)                                    & $\big(\td{c}_{4}\td{c}_{3}\big)\2dots\ba^{2}$                              \\
        $j_{a47}$ & 9    & (7,2,0)                                    & $\big(\td{d}_{2}^{2}\td{c}_{3}\big)\2dots\ba^{2}$                          \\
        $j_{a48}$ & 9    & (8,1,0)                                    & $\big(\td{d}_{2}\td{c}_{3}^{2}\big)\2dots\ba$                              \\
        $j_{a49}$ & 9    & (8,1,0)                                    & $\td{c}_{4}^{2}\2dots\ba$                                                  \\
        $j_{a50}$ & 10   & (8,2,0)                                    & $\td{c}_{4}^{2}\2dots\ba^{2}$                                              \\
        $j_{a51}$ & 10   & (9,1,0)                                    & $\big(\td{d}_{2}^{2}\td{c}_{5}\big)\2dots\ba$                              \\
        $j_{a52}$ & 11   & (10,1,0)                                   & $\vv_{5}\cdot\ba\cdot\vv_{5}$                                              \\ \hline
      \end{tabular}}
  \end{minipage}
  \caption{Joint Invariants of $(\bH,\ba)$}
  \label{tab:inv-H4-H2}
\end{table}

\begin{table}[p]
  \begin{minipage}[B]{0.49\linewidth}
    \centering
    \resizebox{\textwidth}{!}{%
      \renewcommand{\arraystretch}{1.2}
      \begin{tabular}{|l|c|c|c|}
        \hline
        \#        & Deg. & M. deg. & Inv.                                                              \\ \hline \rule{0pt}{3ex}
        $J_{i1}$  & 3    & (1,1,1) & $\ba\2dots\bH\2dots\bb$                                           \\
        $J_{i2}$  & 4    & (1,1,2) & $\ba\2dots\bH\2dots\bb^{2}$                                       \\
        $J_{i3}$  & 4    & (1,1,2) & $\bb\2dots\bH\2dots(\ba\bb)$                                      \\
        $J_{i4}$  & 4    & (1,2,1) & $\bb\2dots\bH\2dots\ba^{2}$                                       \\
        $J_{i5}$  & 4    & (1,2,1) & $\ba\2dots\bH\2dots(\ba\bb)$                                      \\
        $J_{i6}$  & 4    & (2,1,1) & $\ba\2dots(\td{d}_{2}\bb)$                                        \\
        $J_{i7}$  & 4    & (2,1,1) & $\ba\2dots\big(\bH^{2}\big)^s\2dots\bb$                           \\
        $J_{i8}$  & 5    & (1,1,3) & $\bb^{2}\2dots\bH\2dots\ab$                                       \\
        $J_{i9}$  & 5    & (1,1,3) & $\bb\2dots\bH\2dots\big(\ba\bb^{2}\big)$                          \\
        $J_{i10}$ & 5    & (1,2,2) & $\ba^{2}\2dots\bH\2dots\bb^{2}$                                   \\
        $J_{i11}$ & 5    & (1,2,2) & $\bb\2dots\bH\2dots\big(\ba^{2}\bb\big)$                          \\
        $J_{i12}$ & 5    & (1,2,2) & $\ab\2dots\bH\2dots\ab$                                           \\
        $J_{i13}$ & 5    & (1,3,1) & $\ba^{2}\2dots\bH\2dots\ab$                                       \\
        $J_{i14}$ & 5    & (1,3,1) & $\ba\2dots\bH\2dots\big(\ba^{2}\bb\big)$                          \\
        $J_{i15}$ & 5    & (2,1,2) & $\ba\2dots\big(\bH^{2}\big)^s\2dots\bb^{2}$                       \\
        $J_{i16}$ & 5    & (2,1,2) & $\ba\2dots\big( \bb\2dots\big(\bH\cdot\bH\big)^s\2dots\bb\big)$   \\
        $J_{i17}$ & 5    & (2,2,1) & $\bb\2dots\big( \ba\2dots\big(\bH\cdot\bH\big)^s\2dots\ba\big)$   \\
        $J_{i18}$ & 5    & (2,1,2) & $\ba\2dots\big(\bb^{2}\td{d}_{2}\big)$                            \\
        $J_{i19}$ & 5    & (2,2,1) & $\ba^{2}\2dots\big(\bb\td{d}_{2}\big)$                            \\
        $J_{i20}$ & 5    & (2,2,1) & $\bb\2dots\big(\bH^{2}\big)^s\2dots\ba^{2}$                       \\
        $J_{i21}$ & 5    & (2,2,1) & $\ba\2dots\big(\bH^{2}\big)^s\2dots\ab$                           \\
        $J_{i22}$ & 5    & (3,1,1) & $\ba\2dots\big(\bH^{3}\big)^s\2dots\bb$                           \\
        $J_{i23}$ & 5    & (3,1,1) & $\ab\2dots\td{d}_{3}$                                             \\
        $J_{i24}$ & 5    & (2,1,2) & $\bb\2dots\big(\bH^{2}\big)^s\2dots\ab$                           \\
        $J_{i25}$ & 5    & (3,1,1) & $\tr\big(\bH\times\td{d}_{2}\big)\3dots \big( \ba\times \bb\big)$ \\
        $J_{i26}$ & 6    & (1,1,4) & $\bb^{2}\2dots\bH\2dots(\ba\bb^{2})$                              \\
        $J_{i27}$ & 6    & (1,2,3) & $\bb\2dots\bH\2dots(\ba^{2}\bb^{2})$                              \\
        $J_{i28}$ & 6    & (1,2,3) & $(\ba\bb)\2dots\bH\2dots(\ba\bb^{2})$                             \\
        $J_{i29}$ & 6    & (1,3,2) & $(\ba\bb)\2dots\bH\2dots(\ba^{2}\bb)$                             \\
        $J_{i30}$ & 6    & (1,3,2) & $\ba\2dots\bH\2dots(\ba^{2}\bb^{2})$                              \\
        $J_{i31}$ & 6    & (1,4,1) & $\ba^{2}\2dots\bH\2dots(\ba^{2}\bb$                               \\
        \hline
      \end{tabular}
    }
  \end{minipage}
  \centering
  \begin{minipage}[t]{0.50\linewidth}
    \resizebox{\textwidth}{!}{%
      \renewcommand{\arraystretch}{1.2}
      \begin{tabular}{|l|c|c|c|}
        \hline
        \#        & Deg. & M. deg. & Inv.                                                                              \\ \hline \rule{0pt}{3ex}
        $J_{i32}$ & 6    & (2,1,3) & $\bb\2dots\big(\bH^{2}\big)^s\2dots(\ba\bb^{2})$                                  \\
        $J_{i33}$ & 6    & (2,1,3) & $\bb^{2}\2dots\big(\bH^{2}\big)^s\2dots(\ba\bb)$                                  \\
        $J_{i34}$ & 6    & (2,2,2) & $\bb\2dots\big(\bH^{2}\big)^s\2dots(\ba^{2}\bb)$                                  \\
        $J_{i35}$ & 6    & (2,2,2) & $\ab\2dots\big(\bH^{2}\big)^{s}\2dots(\ba\bb)$                                    \\
        $J_{i36}$ & 6    & (2,2,2) & $\ba^{2}\2dots\big(\bH^{2}\big)^s\2dots\bb^{2}$                                   \\
        $J_{i37}$ & 6    & (2,3,1) & $\ba^{2}\2dots\big(\bH^{2}\big)^s\2dots(\ba\bb)$                                  \\
        $J_{i38}$ & 6    & (2,3,1) & $\ba\2dots\big(\bH^{2}\big)^s\2dots(\ba^{2}\bb)$                                  \\
        $J_{i39}$ & 6    & (2,1,3) & $\bb^{2}\2dots\big( \ba\2dots\big(\bH\cdot\bH\big)^s\2dots\bb\big)$               \\
        $J_{i40}$ & 6    & (2,1,3) & $\bb\2dots\big( \bb\2dots\big(\bH\cdot\bH\big)^s\2dots(\ba\bb)\big)$              \\
        $J_{i41}$ & 6    & (2,2,2) & $\bb\2dots\big( \bb\2dots\big(\bH\cdot\bH\big)^s\2dots\ba^{2}\big)$               \\
        $J_{i42}$ & 6    & (2,3,1) & $\ba\2dots\big( \ba\2dots\big(\bH\cdot\bH\big)^s\2dots(\ba\bb)\big)$              \\
        $J_{i43}$ & 6    & (2,3,1) & $\ba\2dots\big( \bb\2dots\big(\bH\cdot\bH\big)^s\2dots\ba^{2}\big)$               \\
        $J_{i44}$ & 6    & (2,2,2) & $\ba\2dots\big( \bb\2dots\big(\bH\cdot\bH\big)^s\2dots(\ba\bb)\big)$              \\
        $J_{i45}$ & 6    & (2,2,2) & $\ba\2dots\big( \ba\2dots\big(\bH\cdot\bH\big)^s\2dots\bb^{2}\big)$               \\
        $J_{i46}$ & 6    & (3,1,2) & $\tr\big(\bH\times\td{d}_{2}\big)\3dots \big( \ba\times \bb^{2}\big)$             \\
        $J_{i47}$ & 6    & (3,2,1) & $\tr\big(\bH\times\td{d}_{2}\big)\3dots \big( \ba\times \abS\big)$                \\
        $J_{i48}$ & 6    & (3,1,2) & $\tr\big(\bH\times\td{d}_{2}\big)\3dots \big( \bb\times \abS\big)$                \\
        $J_{i49}$ & 6    & (3,2,1) & $\tr\big(\bH\times\td{d}_{2}\big)\3dots \big( \ba^{2}\times \bb\big)$             \\
        $J_{i50}$ & 6    & (3,1,2) & $\ba\2dots\big(\bH^{3}\big)^s\2dots\bb^{2}$                                       \\
        $J_{i51}$ & 6    & (3,1,2) & $\bb\2dots\big(\bH^{3}\big)^s\2dots\ab$                                           \\
        $J_{i52}$ & 6    & (3,2,1) & $\bb\2dots\big(\bH^{3}\big)^s\2dots\ba^{2}$                                       \\
        $J_{i53}$ & 6    & (3,2,1) & $\ba\2dots\big(\bH^{3}\big)^s\2dots\ab$                                           \\
        $J_{i54}$ & 6    & (3,1,2) & $\bb\2dots\big(\big(\bH\times\td{d}_{2}\big)\3dots \big( \ba\times \bb\big)\big)$ \\
        $J_{i55}$ & 6    & (3,2,1) & $\ba\2dots\big(\big(\bH\times\td{d}_{2}\big)\3dots \big( \ba\times \bb\big)\big)$ \\
        $J_{i56}$ & 6    & (3,1,2) & $\ba\2dots\big( \bb\2dots\big(\bH^{2}\cdot\bH\big)^s\2dots\bb\big)$               \\
        $J_{i57}$ & 6    & (3,2,1) & $\bb\2dots\big( \ba\2dots\big(\bH^{2}\cdot\bH\big)^s\2dots\ba\big)$               \\
        $J_{i58}$ & 6    & (4,1,1) & $\td{d}_{2}^{2}\2dots\ab$                                                         \\
        $J_{i59}$ & 6    & (4,1,1) & $\td{c}_{4}\2dots\ab$                                                             \\
        $J_{i60}$ & 6    & (4,1,1) & $\tr\big(\bH\times\td{c}_{3}\big)\3dots \big( \ba\times \bb\big)$                 \\
        $J_{i61}$ & 6    & (4,1,1) & $\ba\2dots\big(\bH^{4}\big)^s\2dots\bb$                                           \\
        \hline
      \end{tabular}
    }
  \end{minipage}
  \caption{Joint Invariants of $(\bH,\ba,\bb)$ (degree $\leq 6$)}
  \label{tab:inv-H4-H2-H2-a}
\end{table}

\begin{table}[p]
  \begin{minipage}[B]{0.49\linewidth}
    \centering
    \resizebox{\textwidth}{!}{%
      \renewcommand{\arraystretch}{1.2}
      \begin{tabular}{|l|c|c|c|}
        \hline
        \#        & Deg. & M. deg. & Inv.                                                                             \\ \hline
        $J_{i62}$ & 7    & (2,1,4) & $\bb\2dots\big( \bb\2dots\big(\bH\cdot\bH\big)^s\2dots\big(\ba\bb^{2}\big)\big)$ \\
        $J_{i63}$ & 7    & (2,1,4) & $\ba\2dots\big( \bb^{2}\2dots\big(\bH\cdot\bH\big)^s\2dots\bb^{2}\big)$          \\
        $J_{i64}$ & 7    & (2,1,4) & $\bb\2dots\big( \bb^{2}\2dots\big(\bH\cdot\bH\big)^s\2dots\ab\big)$              \\
        $J_{i65}$ & 7    & (2,2,3) & $\bb\2dots\big( \bb\2dots\big(\bH\cdot\bH\big)^s\2dots\big(\ba^{2}\bb\big)\big)$ \\
        $J_{i66}$ & 7    & (2,2,3) & $\ba\2dots\big( \bb^{2}\2dots\big(\bH\cdot\bH\big)^s\2dots\ab\big)$              \\
        $J_{i67}$ & 7    & (2,3,2) & $\bb\2dots\big( \ba^{2}\2dots\big(\bH\cdot\bH\big)^s\2dots\ab\big)$              \\
        $J_{i68}$ & 7    & (2,3,2) & $\ba\2dots\big( \bb\2dots\big(\bH\cdot\bH\big)^s\2dots\big(\ba^{2}\bb\big)\big)$ \\
        $J_{i69}$ & 7    & (2,3,2) & $\ba\2dots\big( \ba^{2}\2dots\big(\bH\cdot\bH\big)^s\2dots\bb^{2}\big)$          \\
        $J_{i70}$ & 7    & (2,4,1) & $\ba\2dots\big( \ba^{2}\2dots\big(\bH\cdot\bH\big)^s\2dots\ab\big)$              \\
        $J_{i71}$ & 7    & (2,4,1) & $\ba\2dots\big( \ba\2dots\big(\bH\cdot\bH\big)^s\2dots\big(\ba^{2}\bb\big)\big)$ \\
        $J_{i72}$ & 7    & (2,4,1) & $\bb\2dots\big( \ba^{2}\2dots\big(\bH\cdot\bH\big)^s\2dots\ba^{2}\big)$          \\
        $J_{i73}$ & 7    & (2,3,2) & $\ba\2dots\big( \ab\2dots\big(\bH\cdot\bH\big)^s\2dots\ab\big)$                  \\
        $J_{i74}$ & 7    & (2,2,3) & $\bb\2dots\big( \ab\2dots\big(\bH\cdot\bH\big)^s\2dots\ab\big)$                  \\
        $J_{i75}$ & 7    & (2,2,3) & $\bb\2dots\big( \ba^{2}\2dots\big(\bH\cdot\bH\big)^s\2dots\bb^{2}\big)$          \\
        $J_{i76}$ & 7    & (3,1,3) & $\bb\2dots\big(\cv{3}{5}\3dots \big( \bb\times \abS\big)\big)$                   \\
        $J_{i77}$ & 7    & (3,2,2) & $\ba\2dots\big(\cv{3}{5}\3dots \big( \bb\times \abS\big)\big)$                   \\
        $J_{i78}$ & 7    & (3,2,2) & $\ba\2dots\big(\cv{3}{5}\3dots \big( \ba\times \bb^{2}\big)\big)$                \\
        $J_{i79}$ & 7    & (3,2,2) & $\bb\2dots\big(\cv{3}{5}\3dots \big( \ba^{2}\times \bb\big)\big)$                \\
        $J_{i80}$ & 7    & (3,3,1) & $\ba\2dots\big(\cv{3}{5}\3dots \big( \ba^{2}\times \bb\big)\big)$                \\
        $J_{i81}$ & 7    & (3,3,1) & $\ba\2dots\big(\cv{3}{5}\3dots \big( \ba\times \abS\big)\big)$                   \\
        $J_{i82}$ & 7    & (3,3,1) & $\bb\2dots\big(\cv{3}{5}\3dots \big( \ba^{2}\times \ba\big)\big)$                \\
        $J_{i83}$ & 7    & (3,1,3) & $\bb\2dots\big(\cv{3}{5}\3dots \big( \ba\times \bb^{2}\big)\big)$                \\
        $J_{i84}$ & 7    & (3,1,3) & $\ba\2dots\big(\cv{3}{5}\3dots \big( \bb^{2}\times \bb\big)\big)$                \\
        $J_{i85}$ & 7    & (3,2,2) & $\ab\2dots\big(\bH^{3}\big)^s\2dots\ab$                                          \\
        $J_{i86}$ & 7    & (3,1,3) & $\bb\2dots\big( \bb\2dots\big(\bH^{2}\cdot\bH\big)^s\2dots\ab\big)$              \\
        $J_{i87}$ & 7    & (3,1,3) & $\ba\2dots\big( \bb\2dots\big(\bH^{2}\cdot\bH\big)^s\2dots\bb^{2}\big)$          \\
        $J_{i88}$ & 7    & (3,2,2) & $\ba\2dots\big( \ba\2dots\big(\bH^{2}\cdot\bH\big)^s\2dots\bb^{2}\big)$          \\
        $J_{i89}$ & 7    & (3,2,2) & $\ba\2dots\big( \bb\2dots\big(\bH^{2}\cdot\bH\big)^s\2dots\ab\big)$              \\
        \hline
      \end{tabular}
    }
  \end{minipage}
  \centering
  \begin{minipage}[t]{0.50\linewidth}
    \resizebox{\textwidth}{!}{%
      \renewcommand{\arraystretch}{1.2}
      \begin{tabular}{|l|c|c|c|}
        \hline
        \#         & Deg. & M. deg. & Inv.                                                                              \\ \hline
        $J_{i90}$  & 7    & (3,2,2) & $\bb\2dots\big( \bb\2dots\big(\bH^{2}\cdot\bH\big)^s\2dots\ba^{2}\big)$           \\
        $J_{i91}$  & 7    & (3,3,1) & $\ba\2dots\big( \bb\2dots\big(\bH^{2}\cdot\bH\big)^s\2dots\ba^{2}\big)$           \\
        $J_{i92}$  & 7    & (3,3,1) & $\ba\2dots\big( \ba\2dots\big(\bH^{2}\cdot\bH\big)^s\2dots\ab\big)$               \\
        $J_{i93}$  & 7    & (3,1,3) & $\bb\2dots\big( \cv{3}{7}\3dots\big(\ba\times \bb\big)\big)\2dots\bb$             \\
        $J_{i94}$  & 7    & (3,2,2) & $\ba\2dots\big( \cv{3}{7}\3dots\big(\ba\times \bb\big)\big)\2dots\bb$             \\
        $J_{i95}$  & 7    & (3,3,1) & $\ba\2dots\big( \cv{3}{7}\3dots\big(\ba\times \bb\big)\big)\2dots\ba$             \\
        $J_{i96}$  & 7    & (4,1,2) & $\tr\big(\bH\times\td{c}_{3}\big)\3dots \big( \ba\times \bb^{2}\big)$             \\
        $J_{i97}$  & 7    & (4,1,2) & $\tr\big(\bH\times\td{c}_{3}\big)\3dots \big( \bb\times \abS\big)$                \\
        $J_{i98}$  & 7    & (4,2,1) & $\tr\big(\bH\times\td{c}_{3}\big)\3dots \big( \ba\times \abS\big)$                \\
        $J_{i99}$  & 7    & (4,2,1) & $\tr\big(\bH\times\td{c}_{3}\big)\3dots \big( \ba^{2}\times \bb\big)$             \\
        $J_{i100}$ & 7    & (4,1,2) & $\bb\2dots\big(\bH^{4}\big)^s\2dots\ab$                                           \\
        $J_{i101}$ & 7    & (4,2,1) & $\ba\2dots\big(\bH^{4}\big)^s\2dots\ab$                                           \\
        $J_{i102}$ & 7    & (4,2,1) & $\bb\2dots\big(\bH^{4}\big)^s\2dots\ba^{2}$                                       \\
        $J_{i103}$ & 7    & (4,1,2) & $\ba\2dots\big(\bH^{4}\big)^s\2dots\bb^{2}$                                       \\
        $J_{i104}$ & 7    & (4,1,2) & $\bb\2dots\big(\big(\bH\times \td{c}_{3}\big)\3dots \big(\ba\times \bb\big)\big)$ \\
        $J_{i105}$ & 7    & (4,2,1) & $\ba\2dots\big(\big(\bH\times \td{c}_{3}\big)\3dots \big(\ba\times \bb\big)\big)$ \\
        $J_{i106}$ & 7    & (4,1,2) & $\bb\2dots\big(\cv{4b}{5}\3dots \big(\ba\times \bb\big)\big)$                     \\
        $J_{i107}$ & 7    & (4,2,1) & $\ba\2dots\big(\cv{4b}{5}\3dots \big(\ba\times \bb\big)\big)$                     \\
        $J_{i108}$ & 7    & (4,1,2) & $\ba\2dots\big( \bb\2dots\big(\bH^{2}\cdot\bH^{2}\big)^s\2dots\bb\big)$           \\
        $J_{i109}$ & 7    & (4,2,1) & $\ba\2dots\big( \ba\2dots\big(\bH^{2}\cdot\bH^{2}\big)^s\2dots\bb\big)$           \\
        $J_{i110}$ & 7    & (5,1,1) & $\td{c}_{5}\2dots\ab$                                                             \\
        $J_{i111}$ & 7    & (5,1,1) & $\big(\td{d}_{2}\td{c}_{3}\big)^s\2dots\ab$                                       \\
        $J_{i112}$ & 7    & (5,1,1) & $\big(\td{d}_{2}\times \td{c}_{3}\big)\3dots \big( \ba\times \bb\big)$            \\
        $J_{i113}$ & 7    & (5,1,1) & $\tr\big(\bH\times\td{d}_{2}^{2}\big)\3dots \big( \ba\times \bb\big)$             \\
        $J_{i114}$ & 7    & (5,1,1) & $\ba\2dots\big(\bH\cdot\td{d}_{2}^{2}\big)^s\2dots\bb$                            \\  \hline
      \end{tabular}
    }
  \end{minipage}
  \caption{Joint Invariants of $(\bH,\ba,\bb)$ (degree $7$)}
  \label{tab:inv-H4-H2-H2-b}
\end{table}

\begin{table}[p]
  \begin{minipage}[b]{0.49\linewidth}
    \centering
    \resizebox{\textwidth}{!}{%
      \renewcommand{\arraystretch}{1.2}
      \begin{tabular}{|l|c|c|c|}
        \hline
        \#         & Deg. & M. deg. & Inv.                                                                                     \\ \hline
        $J_{i115}$ & 8    & (3,1,4) & $\bb\2dots\big(\bb^{2}\2dots\big(\bH^{2}\cdot\bH\big)^s\2dots\ab\big)$                   \\
        $J_{i116}$ & 8    & (3,2,3) & $\bb\2dots\big(\ab\2dots\big(\bH^{2}\cdot\bH\big)^s\2dots\ab\big)$                       \\
        $J_{i117}$ & 8    & (3,4,1) & $\bb\2dots\big(\ba^{2}\2dots\big(\bH^{2}\cdot\bH\big)^s\2dots\ba^{2}\big)$               \\
        $J_{i118}$ & 8    & (3,3,2) & $\bb\2dots\big(\ba^{2}\2dots\big(\bH^{2}\cdot\bH\big)^s\2dots\ab\big)$                   \\
        $J_{i119}$ & 8    & (3,1,4) & $\bb\2dots\big( \cv{3}{7}\3dots \big( \bb\times \abS\big)\big)\2dots\bb$                 \\
        $J_{i120}$ & 8    & (3,2,3) & $\ba\2dots\big( \cv{3}{7}\3dots \big( \bb\times \abS\big)\big)\2dots\bb$                 \\
        $J_{i121}$ & 8    & (3,3,2) & $\ba\2dots\big( \cv{3}{7}\3dots \big( \ba\times \abS\big)\big)\2dots\bb$                 \\
        $J_{i122}$ & 8    & (3,1,4) & $\bb\2dots\big(\big(\cv{3}{9}\3dots \big( \ba\times\bb\big)\big)\2dots\bb\big)\2dots\bb$ \\
        $J_{i123}$ & 8    & (3,4,1) & $\ba\2dots\big(\big(\cv{3}{9}\3dots \big( \ba\times\bb\big)\big)\2dots\ba\big)\2dots\ba$ \\
        $J_{i124}$ & 8    & (3,3,2) & $\ba\2dots\big(\big(\cv{3}{9}\3dots \big( \ba\times\bb\big)\big)\2dots\ba\big)\2dots\bb$ \\
        $J_{i125}$ & 8    & (3,2,3) & $\ba\2dots\big(\big(\cv{3}{9}\3dots \big( \ba\times\bb\big)\big)\2dots\bb\big)\2dots\bb$ \\
        $J_{i126}$ & 8    & (3,4,1) & $\ba\2dots\big( \cv{3}{7}\3dots \big( \ba\times \abS\big)\big)\2dots\ba$                 \\
        $J_{i127}$ & 8    & (4,3,1) & $\ba\2dots\big(\cv{4b}{5}\3dots \big( \ba\times \abS\big)\big)$                          \\
        $J_{i128}$ & 8    & (4,3,1) & $\ba\2dots\big(\cv{4b}{5}\3dots \big( \ba^{2}\times \bb\big)\big)$                       \\
        $J_{i129}$ & 8    & (4,2,2) & $\bb\2dots\big(\cv{4b}{5}\3dots \big( \ba^{2}\times \bb\big)\big)$                       \\
        $J_{i130}$ & 8    & (4,1,3) & $\ba\2dots\big(\bb\2dots\big(\bH^{2}\cdot\bH^{2}\big)^s\2dots\bb^{2}\big)$               \\
        $J_{i131}$ & 8    & (4,3,1) & $\ba\2dots\big(\bb\2dots\big(\bH^{2}\cdot\bH^{2}\big)^s\2dots\ba^{2}\big)$               \\
        $J_{i132}$ & 8    & (4,3,1) & $\ba\2dots\big(\ba\2dots\big(\bH^{2}\cdot\bH^{2}\big)^s\2dots\ab\big)$                   \\
        $J_{i133}$ & 8    & (4,1,3) & $\bb\2dots\big(\bb\2dots\big(\bH^{2}\cdot\bH^{2}\big)^s\2dots\ab\big)$                   \\
        $J_{i134}$ & 8    & (4,2,2) & $\ba\2dots\big(\bb\2dots\big(\bH^{2}\cdot\bH^{2}\big)^s\2dots\ab\big)$                   \\
        $J_{i135}$ & 8    & (4,2,2) & $\bb\2dots\big(\bb\2dots\big(\bH^{2}\cdot\bH^{2}\big)^s\2dots\ba^{2}\big)$               \\
        $J_{i136}$ & 8    & (4,2,2) & $\ba\2dots\big(\ba\2dots\big(\bH^{2}\cdot\bH^{2}\big)^s\2dots\bb^{2}\big)$               \\
        $J_{i137}$ & 8    & (4,1,3) & $\bb\2dots\big(\cv{4}{7}\3dots \big( \ba\times \bb\big)\big)\2dots\bb$                   \\
        $J_{i138}$ & 8    & (4,3,1) & $\ba\2dots\big(\cv{4}{7}\3dots \big( \ba\times \bb\big)\big)\2dots\ba$                   \\
        $J_{i139}$ & 8    & (4,2,2) & $\bb\2dots\big(\cv{4}{7}\3dots \big( \ba\times \bb\big)\big)\2dots\ba$                   \\
        $J_{i140}$ & 8    & (4,3,1) & $\bb\2dots\big(\cv{4b}{5}\3dots \big( \bb\times \abS\big)\big)$                          \\
        $J_{i141}$ & 8    & (4,2,2) & $\ba\2dots\big(\cv{4b}{5}\3dots \big( \bb\times \abS\big)\big)$                          \\
        $J_{i142}$ & 8    & (4,1,3) & $\bb\2dots\big(\cv{4b}{5}\3dots \big( \ba\times \bb^{2}\big)\big)$                       \\
        $J_{i143}$ & 8    & (5,2,1) & $\ba\2dots\big(\bH\cdot\td{d}_{2}^{2}\big)^s\2dots\ab$                                   \\
        $J_{i144}$ & 8    & (5,2,1) & $\bb\2dots\big(\bH\cdot\td{d}_{2}^{2}\big)^s\2dots\ba^{2}$                               \\
        $J_{i145}$ & 8    & (5,1,2) & $\bb\2dots\big(\bH\cdot\td{d}_{2}^{2}\big)^s\2dots\ab$                                   \\
        $J_{i146}$ & 8    & (5,1,2) & $\ba\2dots\big(\bH\cdot\td{d}_{2}^{2}\big)^s\2dots\bb^{2}$                               \\
        \hline
      \end{tabular}
    }
  \end{minipage}
  \centering
  \begin{minipage}{0.50\linewidth}
    \resizebox{\textwidth}{!}{%
      \renewcommand{\arraystretch}{1.2}
      \begin{tabular}{|l|c|c|c|}
        \hline
        \#         & Deg. & M. deg. & Inv.                                                                                                  \\ \hline
        $J_{i147}$ & 8    & (5,1,2) & $\bb\2dots \big( \big( \bH\times \td{d}_{2}^{2}\big)\3dots \big( \ba\times \bb\big)\big)$             \\
        $J_{i148}$ & 8    & (5,2,1) & $\ba\2dots \big( \big( \bH\times \td{c}_{4}\big)\3dots \big( \ba\times \bb\big)\big)$                 \\
        $J_{i149}$ & 8    & (5,1,2) & $\bb\2dots \big( \big( \bH\times \td{c}_{4}\big)\3dots \big( \ba\times \bb\big)\big)$                 \\
        $J_{i150}$ & 8    & (5,2,1) & $\ba\2dots \big( \big( \big(\bH^{2}\big)^s\times \td{c}_{3}\big)\3dots \big( \ba\times \bb\big)\big)$ \\
        $J_{i151}$ & 8    & (5,1,2) & $\bb\2dots \big( \big( \big(\bH^{2}\big)^s\times \td{c}_{3}\big)\3dots \big( \ba\times \bb\big)\big)$ \\
        $J_{i152}$ & 8    & (5,2,1) & $\ba\2dots \big( \big( \bH\times \td{d}_{2}^{2}\big)\3dots \big( \ba\times \bb\big)\big)$             \\
        $J_{i153}$ & 8    & (5,2,1) & $\tr\big(\bH\times \td{d}_{2}^{2}\big)\3dots \big( \ba^{2}\times \bb\big)$                            \\
        $J_{i154}$ & 8    & (5,1,2) & $\tr\big(\bH\times \td{d}_{2}^{2}\big)\3dots \big( \ba\times \bb^{2}\big)$                            \\
        $J_{i155}$ & 8    & (6,1,1) & $\big(\td{d}_{2}\td{c}_{4}\big)^s\2dots\ab$                                                           \\
        $J_{i156}$ & 8    & (6,1,1) & $\td{c}_{3}^{2}\2dots\ab$                                                                             \\
        $J_{i157}$ & 8    & (6,1,1) & $\tr\big(\bH\times \td{c}_{5}\big)\3dots \big( \ba\times \bb\big)$                                    \\
        $J_{i156}$ & 8    & (6,1,1) & $\td{c}_{3}^{2}\2dots\ab$                                                                             \\
        $J_{i157}$ & 8    & (6,1,1) & $\tr\big(\bH\times \td{c}_{5}\big)\3dots \big( \ba\times \bb\big)$                                    \\
        $J_{i158}$ & 8    & (6,1,1) & $\big(\td{d}_{2}\times \td{c}_{4}\big)\3dots \big( \ba\times \bb\big)$                                \\
        $J_{i159}$ & 8    & (6,1,1) & $\ba\2dots\big(\bH^{2}\cdot\td{d}_{2}^{2}\big)^s\2dots\bb$                                            \\ \hline
        $J_{i160}$ & 9    & (6,2,1) & $\vv_{5}\cdot\ba\cdot\big(\bH\3dots \big(\ba\times \bb\big)\big)$                                     \\
        $J_{i161}$ & 9    & (6,1,2) & $\vv_{5}\cdot\bb\cdot\big(\bH\3dots \big(\ba\times \bb\big)\big)$                                     \\
        $J_{i162}$ & 9    & (6,1,2) & $\big(\cv{3}{3}\2dots\bb\big)\cdot\ba\cdot\big(\big(\cv{3}{3}\big)\2dots\bb\big)$                     \\
        $J_{i163}$ & 9    & (6,2,1) & $\big(\cv{3}{3}\2dots\ba\big)\cdot\bb\cdot\big(\big(\cv{3}{3}\big)\2dots\ba\big)$                     \\
        $J_{i164}$ & 9    & (6,1,2) & $\bb\2dots\big( \big(\bH\times \td{c}_{5}\big)\3dots\big(\ba\times \bb\big)\big)$                     \\
        $J_{i165}$ & 9    & (6,2,1) & $\ba\2dots\big( \big(\bH\times \td{c}_{5}\big)\3dots\big(\ba\times \bb\big)\big)$                     \\
        $J_{i166}$ & 9    & (7,1,1) & $\td{d}_{2}\2dots\big(\big(\ba\times \bb\big)\cdot\vv_{5}\big)$                                       \\
        $J_{i167}$ & 9    & (7,1,1) & $\big(\td{c}_{3}\times \td{c}_{4}\big)\3dots \big( \ba\times \bb\big)$                                \\
        $J_{i168}$ & 9    & (7,1,1) & $\big(\td{c}_{3}\td{c}_{4}\big)^s\2dots\ab$                                                           \\
        $J_{i169}$ & 9    & (7,1,1) & $\big(\td{d}_{2}^{2}\td{c}_{3}\big)^s\2dots\ab$                                                       \\ \hline
        $J_{i170}$ & 10   & (8,1,1) & $\vv_{8a}\cdot\big( \pmb\varepsilon\2dots\big(\ba\bb\big)\big)$                                       \\
        $J_{i171}$ & 10   & (8,1,1) & $\td{c}_{4}^{2}\2dots\ab$                                                                             \\ \hline
      \end{tabular}
    }
  \end{minipage}
  \caption{Joint Invariants of $(\bH,\ba,\bb)$ (degree $8$ to $10$)}
  \label{tab:inv-H4-H2-H2-c}
\end{table}



\begin{thebibliography}{10}

  \bibitem{DS2020}
  In F.~Dell'Isola and D.~Steigmann, editors, {\em Discrete and Continuum Models
      for Complex Metamaterials}. Cambridge University Press, 2020.

  \bibitem{AS1981}
  M.~Abud and G.~Sartori.
  \newblock The geometry of orbit-space and natural minima of {H}iggs potentials.
  \newblock {\em Phys. Lett. B}, 104(2):147--152, 1981.

  \bibitem{AS1983}
  M.~Abud and G.~Sartori.
  \newblock {T}he geometry of spontaneous symmetry breaking.
  \newblock {\em Ann. Physics}, 150(2):307--372, 1983.

  \bibitem{AK1998}
  V.~I. Arnold and B.~A. Khesin.
  \newblock {\em {T}opological methods in hydrodynamics}, volume 125 of {\em
      Applied Mathematical Sciences}.
  \newblock Springer-Verlag, New York, 1998.

  \bibitem{AKP2014}
  N.~Auffray, B.~Kolev, and M.~Petitot.
  \newblock {O}n {A}nisotropic {P}olynomial {R}elations for the {E}lasticity
  {T}ensor.
  \newblock {\em J. Elasticity}, 115(1):77--103, June 2014.

  \bibitem{Bac1970}
  G.~Backus.
  \newblock {A} geometrical picture of anisotropic elastic tensors.
  \newblock {\em Rev. Geophys.}, 8(3):633--671, 1970.

  \bibitem{Bae1993}
  R.~Baerheim.
  \newblock {H}armonic decomposition of the anisotropic elasticity tensor.
  \newblock {\em The Quarterly Journal of Mechanics and Applied Mathematics},
  46(3):391--418, 1993.

  \bibitem{Bae1998a}
  R.~Baerheim.
  \newblock {C}lassification of symmetry by means of maxwell multipoles.
  \newblock {\em Q. J. Mech. Appl. Math.}, 51:73--103, 1998.

  \bibitem{BEDH2020}
  E.~Barchiesi, S.~R. Eugster, F.~Dell{\textquoteright}isola, and F.~Hild.
  \newblock Large in-plane elastic deformations of bi-pantographic fabrics:
  asymptotic homogenization and experimental validation.
  \newblock {\em Mathematics and Mechanics of Solids}, 25(3):739--767, 2020.

  \bibitem{BSP2019}
  E.~Barchiesi, M.~Spagnuolo, and L.~Placidi.
  \newblock Mechanical metamaterials: a state of the art.
  \newblock {\em Mathematics and Mechanics of Solids}, 24(1):212--234, 2019.

  \bibitem{Bed2008}
  L.~Bedratyuk.
  \newblock {O}n complete system of covariants for the binary form of degree 8.
  \newblock {\em Mat. Visn. Nauk. Tov. Im. Shevchenka}, 5:11--22, 2008.

  \bibitem{Bed2009}
  L.~Bedratyuk.
  \newblock {A} complete minimal system of covariants for the binary form of
  degree 7.
  \newblock {\em J. Symbolic Comput.}, 44(2):211--220, 2009.

  \bibitem{Bed2011}
  L.~Bedratyuk.
  \newblock {T}he {MAP}{LE} package for calculating {P}oincaré series.
  \newblock (2), 2011.

  \bibitem{BCR1998}
  J.~Bochnak, M.~Coste, and M.-F. Roy.
  \newblock {\em Real algebraic geometry}, volume~36 of {\em Ergebnisse der
      Mathematik und ihrer Grenzgebiete (3) [Results in Mathematics and Related
          Areas (3)]}.
  \newblock Springer-Verlag, Berlin, 1998.
  \newblock Translated from the 1987 French original, Revised by the authors.

  \bibitem{BKO1994}
  J.-P. Boehler, A.~A. Kirillov, Jr., and E.~T. Onat.
  \newblock On the polynomial invariants of the elasticity tensor.
  \newblock {\em J. Elasticity}, 34(2):97--110, 1994.

  \bibitem{BBS2007}
  A.~Bóna, I.~Bucataru, and M.~A. Slawinski.
  \newblock {C}oordinate-free characterization of the symmetry classes of
  elasticity tensors.
  \newblock {\em J. Elasticity}, 87(2 - 3):109--132, feb 2007.

  \bibitem{Bre1972}
  G.~E. Bredon.
  \newblock {\em {I}ntroduction to compact transformation groups}.
  \newblock Academic Press, New York, 1972.
  \newblock Pure and Applied Mathematics, Vol. 46.

  \bibitem{Car1981}
  E.~Cartan.
  \newblock {\em The theory of spinors}.
  \newblock Dover Publications, Inc., New York, 1981.
  \newblock With a foreword by Raymond Streater, A reprint of the 1966 English
  translation, Dover Books on Advanced Mathematics.

  \bibitem{CV2015}
  E.~Caruyer and R.~Verma.
  \newblock On facilitating the use of {HARDI} in population studies by creating
  rotation-invariant markers.
  \newblock {\em Medical Image Analysis}, 20(1):87--96, Feb. 2015.

  \bibitem{CVC2001}
  P.~Chadwick, M.~Vianello, and S.~Cowin.
  \newblock A new proof that the number of linear elastic symmetries is eight.
  \newblock {\em J. Mech. Phys. Solids}, 49:2471--2492, 2001.

  \bibitem{CG1994}
  P.~Chossat and F.~Guyard.
  \newblock A classification of {$2$}-modes interactions with {${\rm SO}(3)$}
  symmetry and applications.
  \newblock In {\em Dynamics, bifurcation and symmetry ({C}arg\`ese, 1993)},
  volume 437 of {\em NATO Adv. Sci. Inst. Ser. C Math. Phys. Sci.}, pages
  79--95. Kluwer Acad. Publ., Dordrecht, 1994.

  \bibitem{CG1996}
  P.~Chossat and F.~Guyard.
  \newblock Heteroclinic cycles in bifurcation problems with {${\rm O}(3)$}
  symmetry and the spherical {B}\'enard problem.
  \newblock {\em J. Nonlinear Sci.}, 6(3):201--238, 1996.

  \bibitem{CLM1991}
  P.~Chossat, R.~Lauterbach, and I.~Melbourne.
  \newblock Steady-state bifurcation with {${\rm O}(3)$}-symmetry.
  \newblock {\em Archive for Rational Mechanics and Analysis}, 113(4):313--376,
  1991.

  \bibitem{Cos2002}
  M.~Coste.
  \newblock {A}n {I}ntroduction to {S}emialgebraic {G}eometry.
  \newblock Université de Rennes, 2002.

  \bibitem{Cow1987}
  M.~M. Cowin, S.C.
  \newblock On the identification of material symmetry for anisotropic elastic
  materials.
  \newblock {\em Q. J. Mech. Appl. Math.}, 40:451--476, 1987.

  \bibitem{Cow1989}
  S.~Cowin.
  \newblock Properties of the anisotropic elasticity tensor.
  \newblock {\em Q. J. Mech. Appl. Math.}, 42:249--266, 1989.

  \bibitem{CLO2007}
  D.~Cox, J.~Little, and D.~O'Shea.
  \newblock {\em {I}deals, varieties, and algorithms}.
  \newblock Undergraduate Texts in Mathematics. Springer, New York, third
  edition, 2007.
  \newblock An introduction to computational algebraic geometry and commutative
  algebra.

  \bibitem{Cro2002}
  H.~L. Cröni.
  \newblock {\em {Z}ur {B}erechnung von {K}ovarianten von {Q}uantiken}.
  \newblock PhD thesis, 2002.

  \bibitem{DVW+2009}
  C.~Delmaire, M.~Vidailhet, D.~Wassermann, M.~Descoteaux, R.~Valabregue,
  F.~Bourdain, C.~Lenglet, S.~Sangla, A.~Terrier, R.~Deriche, and
  S.~Leh{é}ricy.
  \newblock Diffusion abnormalities in the primary sensorimotor pathways in
  writer's cramp.
  \newblock {\em Archives of Neurology}, 66(4), Apr. 2009.

  \bibitem{DL1985/86}
  J.~Dixmier and D.~Lazard.
  \newblock {L}e nombre minimum d'invariants fondamentaux pour les formes
  binaires de degr\'e {$7$}.
  \newblock {\em Portugal. Math.}, 43(3):377--392, 1985/86.

  \bibitem{Dol2003}
  I.~Dolgachev.
  \newblock {\em {L}ectures on invariant theory}, volume 296 of {\em London
      Mathematical Society Lecture Note Series}.
  \newblock Cambridge University Press, Cambridge, 2003.

  \bibitem{DK2000}
  J.~J. Duistermaat and J.~A.~C. Kolk.
  \newblock {\em Lie groups}.
  \newblock Universitext. Springer-Verlag, Berlin, 2000.

  \bibitem{Ere2019}
  V.~A. Eremeyev.
  \newblock On the characterization of the nonlinear reduced micromorphic
  continuum with the local material symmetry group.
  \newblock In A.~B. Altenbach~H., M{ü}ller~W., editor, {\em Higher Gradient
      Materials and Related Generalized Continua}, Advanced Structured Materials,
  vol 120, pages 43--54. Springer, Cham, 2019.

  \bibitem{EAC2019}
  V.~A. Eremeyev, F.~S. Alzahrani, A.~Cazzani, F.~dell{\textquoteright}Isola,
  T.~Hayat, E.~Turco, and V.~Konopinska-Zmyslowska.
  \newblock On existence and uniqueness of weak solutions for linear pantographic
  beam lattices models.
  \newblock {\em Continuum Mechanics and Thermodynamics}, 31(6):1843--1861, 2019.

  \bibitem{EdS2019}
  S.~Eugster, F.~dell'Isola, and D.~Steigmann.
  \newblock Continuum theory for mechanical metamaterials with a cubic lattice
  substructure.
  \newblock {\em Mathematics and Mechanics of Complex Systems}, 7(1):75--98,
  2019.

  \bibitem{Fed1968}
  F.~I. Fedorov.
  \newblock {\em Theory of Elastic Waves in Crystals}.
  \newblock Springer {US}, 1968.

  \bibitem{FS2020}
  S.~Forest and K.~Sab.
  \newblock Finite-deformation second-order micromorphic theory and its relations
  to strain and stress gradient models.
  \newblock {\em Mathematics and Mechanics of Solids}, 25(7):1429--1449, 2020.

  \bibitem{FV1996}
  S.~Forte and M.~Vianello.
  \newblock {S}ymmetry classes for elasticity tensors.
  \newblock {\em J. Elasticity}, 43(2):81--108, 1996.

  \bibitem{Fra1995}
  M.~Francois.
  \newblock {\em D\'etermination des sym\'etries mat\'erielles de mat\'eriaux
    anisotropes}.
  \newblock PhD thesis, Universit\'e Paris 6, 1995.

  \bibitem{FBG1998}
  M.~Francois, Y.~Berthaud, and G.~Geymonat.
  \newblock Determination of the symmetries of an experimentally determined
  stiffness tensor: application to acoustic measurements.
  \newblock {\em Int. J. Solids Structures}, 35:4091--4106, 1998.

  \bibitem{GTT1963}
  D.~Gazis, I.~Tadjbakhsh, and R.~Toupin.
  \newblock The elastic tensor of given symmetry nearest to an anisotropic
  elastic tensor.
  \newblock {\em Acta Crystallographica}, 16:917--922, 1963.

  \bibitem{GSS1988}
  M.~Golubitsky, I.~Stewart, and D.~G. Schaeffer.
  \newblock {\em {S}ingularities and groups in bifurcation theory. {V}ol. {II}},
  volume~69 of {\em Applied Mathematical Sciences}.
  \newblock Springer-Verlag, New York, 1988.

  \bibitem{GHP2018}
  P.~G{ö}rlach, E.~Hubert, and T.~Papadopoulo.
  \newblock Rational invariants of even ternary forms under the orthogonal group.
  \newblock {\em Foundations of Computational Mathematics}, 19(6):1315--1361,
  Nov. 2018.

  \bibitem{GY2010}
  J.~H. Grace and A.~Young.
  \newblock {\em {T}he algebra of invariants}.
  \newblock Cambridge Library Collection. Cambridge University Press, Cambridge,
  2010.
  \newblock Reprint of the 1903 original.

  \bibitem{GS}
  D.~R. Grayson and M.~E. Stillman.
  \newblock {M}acaulay2, a software system for research in algebraic geometry.
  \newblock Available at \href{http://www.math.uiuc.edu/Macaulay2/}%
  {http://www.math.uiuc.edu/Macaulay2/}.

  \bibitem{Hil1993}
  D.~Hilbert.
  \newblock {\em {T}heory of algebraic invariants}.
  \newblock Cambridge University Press, Cambridge, 1993.

  \bibitem{HP1991}
  Y.-Z. Huo and G.~D. Piero.
  \newblock On the completeness of the crystallographic symmetries in the
  description of the symmetries of the elastic tensor.
  \newblock {\em J. Elasticity}, 25:203--246, 1991.

  \bibitem{IG1984}
  E.~Ihrig and M.~Golubitsky.
  \newblock {P}attern selection with {${\rm O}(3)$} symmetry.
  \newblock {\em Phys. D}, 13(1-2):1--33, 1984.

  \bibitem{Jar1994}
  J.~P. Jaric.
  \newblock {O}n the conditions for the existence of a plane of symmetry for
  anisotropic elastic material.
  \newblock {\em Mechanics Research Communications}, 21(2):153--174, 1994.

  \bibitem{KP2000}
  H.~Kraft and C.~Procesi.
  \newblock {C}lassical {I}nvariant {T}heory, a {P}rimer.
  \newblock Lectures notes avaiable at
  \url{http://www.math.unibas.ch/~kraft/Papers/KP-Primer.pdf}, 2000.

  \bibitem{LMM+2009}
  X.~Li, A.~Mess{é}, G.~Marrelec, M.~P{é}l{é}grini-Issac, and
  H.~Benali.
  \newblock An enhanced voxel-based morphometry method to investigate structural
  changes: application to alzheimer's disease.
  \newblock {\em Neuroradiology}, 52(3):203--213, Oct. 2009.

  \bibitem{LP1990}
  P.~Littelmann and C.~Procesi.
  \newblock On the {P}oincar\'e series of the invariants of binary forms.
  \newblock {\em J. Algebra}, 133(2):490--499, 1990.

  \bibitem{Lov1905}
  A.~Love.
  \newblock {\em A Treatise on the Mathematical Theory of Elasticity}.
  \newblock 2\textsuperscript{nd} edition, Cambridge University Press, 1905.

  \bibitem{Man1962}
  L.~Mann.
  \newblock Finite orbit structure on locally compact manifolds.
  \newblock {\em The Michigan Mathematical Journal}, 9(1):87--92, 1962.

  \bibitem{Mic1980}
  L.~Michel.
  \newblock Symmetry defects and broken symmetry. configurations hidden symmetry.
  \newblock {\em Reviews of Modern Physics}, 52(3):617, 1980.

  \bibitem{MBH2017}
  G.~Milton, M.~Briane, and D.~Harutyunyan.
  \newblock On the possible effective elasticity tensors of 2-dimensional and
  3-dimensional printed materials.
  \newblock {\em Mathematics and Mechanics of Complex Systems}, 5(1):41--94,
  2017.

  \bibitem{MHB2017}
  G.~Milton, D.~Harutyunyan, and M.~Briane.
  \newblock Towards a complete characterization of the effective elasticity
  tensors of mixtures of an elastic phase and an almost rigid phase.
  \newblock {\em Mathematics and Mechanics of Complex Systems}, 5(1):95--113,
  2017.

  \bibitem{MN2006}
  M.~Moakher and A.~N. Norris.
  \newblock {T}he closest elastic tensor of arbitrary symmetry to an elasticity
  tensor of lower symmetry.
  \newblock {\em J. Elasticity}, 85(3):215--263, 2006.

  \bibitem{Mos1957}
  G.~D. Mostow.
  \newblock On a conjecture of {M}ontgomery.
  \newblock {\em Annals of Mathematics}, pages 513--516, 1957.

  \bibitem{MAd2020}
  U.~M{ü}hlich, B.~E. Abali, and F.~dell{\textquoteright}Isola.
  \newblock Commented translation of erwin schr{ö}dinger{\textquoteright}s
  paper {\textquoteleft}on the dynamics of elastically coupled point
  systems{\textquoteright}(zur dynamik elastisch gekoppelter punktsysteme).
  \newblock {\em Mathematics and Mechanics of Solids}, 2020.

  \bibitem{Oli2014}
  M.~Olive.
  \newblock {\em G\'eom\'etrie des espaces de tenseurs, une approche effective
    appliqu\'ee \`a la m\'ecanique des milieux continus}.
  \newblock PhD thesis, Aix-Marseille Universit{é}, 2014.

  \bibitem{Oli2017}
  M.~Olive.
  \newblock About {G}ordan's algorithm for binary forms.
  \newblock {\em Found. Comput. Math.}, 17(6):1407--1466, jun 2017.

  \bibitem{Oli2019}
  M.~Olive.
  \newblock Effective computation of {SO}(3) and {O}(3) linear representation
  symmetry classes.
  \newblock {\em Mathematics and Mechanics of Complex Systems}, 7(3):203--237,
  sep 2019.

  \bibitem{OA2013}
  M.~Olive and N.~Auffray.
  \newblock Symmetry classes for even-order tensors.
  \newblock {\em Mathematics and Mechanics of Complex Systems}, 1(2):177--210,
  apr 2013.

  \bibitem{OA2014a}
  M.~Olive and N.~Auffray.
  \newblock Symmetry classes for odd-order tensors.
  \newblock {\em ZAMM Z. Angew. Math. Mech.}, 94(5):421--447, aug 2014.

  \bibitem{OKA2017}
  M.~Olive, B.~Kolev, and N.~Auffray.
  \newblock A minimal integrity basis for the elasticity tensor.
  \newblock {\em Archive for Rational Mechanics and Analysis}, 226(1):1--31, Oct.
  2017.

  \bibitem{OKDD2018}
  M.~Olive, B.~Kolev, B.~Desmorat, and R.~Desmorat.
  \newblock Harmonic {F}actorization and {R}econstruction of the {E}lasticity
  {T}ensor.
  \newblock {\em J. Elasticity}, 132(1):67--101, oct 2018.

  \bibitem{OKDD2022}
  M.~Olive, B.~Kolev, R.~Desmorat, and B.~Desmorat.
  \newblock Erratum, characterization of the symmetry class of an elasticity
  tensor using polynomial covariants.
  \newblock {\em Mathematics and Mechanics of Solids}, feb 2022.
  \newblock DOI: 10.1177/10812865221074926.

  \bibitem{Olv1999}
  P.~J. Olver.
  \newblock {\em {C}lassical invariant theory}, volume~44 of {\em London
      Mathematical Society Student Texts}.
  \newblock Cambridge University Press, Cambridge, 1999.

  \bibitem{PGD2014}
  T.~Papadopoulo, A.~Ghosh, and R.~Deriche.
  \newblock Complete set of invariants of a 4 th order tensor: The 12 tasks of
    {HARDI} from ternary quartics.
  \newblock pages 233--240, 2014.

  \bibitem{SPV1994}
  V.~L. Popov and E.~B. Vinberg.
  \newblock Invariant theory.
  \newblock In A.~N. Parshin and I.~R. Shafarevich, editors, {\em Algebraic
      Geometry IV: Linear Algebraic Groups, Invariant Theory}, volume~55 of {\em
      Encyclopaedia of Mathematical Sciences}, pages 123--278. Springer-Verlag
  Berlin Heidelberg, 1994.

  \bibitem{PS1985}
  C.~Procesi and G.~Schwarz.
  \newblock Inequalities defining orbit spaces.
  \newblock {\em Inventiones mathematicae}, 81(3):539--554, 1985.

  \bibitem{Ryc1984}
  J.~Rychlewski.
  \newblock On hooke's law.
  \newblock {\em Prikl. Matem. Mekhan.}, 48:303--314, 1984.

  \bibitem{Sch1989}
  G.~W. Schwarz.
  \newblock The topology of algebraic quotients.
  \newblock In {\em Topological methods in algebraic transformation groups},
  pages 135--151. Springer, 1989.

  \bibitem{Shi1967}
  T.~Shioda.
  \newblock {O}n the graded ring of invariants of binary octavics.
  \newblock {\em Amer. J. Math.}, 89:1022--1046, 1967.

  \bibitem{SYAC2020}
  M.~Spagnuolo, M.~E. Yildizdag, U.~Andreaus, and A.~M. Cazzani.
  \newblock Are higher-gradient models also capable of predicting mechanical
  behavior in the case of wide-knit pantographic structures ?
  \newblock {\em Mathematics and Mechanics of Solids}, 2020.

  \bibitem{Spr1980}
  T.~A. Springer.
  \newblock {O}n the invariant theory of {${\rm SU}\sb{2}$}.
  \newblock {\em Nederl. Akad. Wetensch. Indag. Math.}, 42(3):339--345, 1980.

  \bibitem{Spr1983}
  T.~A. Springer.
  \newblock {S\'e}ries de {P}oincaré dans la théorie des invariants.
  \newblock In {\em Paul {D}ubreil and {M}arie-{P}aule {M}alliavin algebra
  seminar, 35\textsuperscript{th} year ({P}aris, 1982)}, volume 1029 of {\em
      Lecture Notes in Math.}, pages 37--54. Springer, Berlin, 1983.

  \bibitem{Ste1994}
  S.~Sternberg.
  \newblock {\em {G}roup theory and physics}.
  \newblock Cambridge University Press, Cambridge, 1994.

  \bibitem{Stu2008}
  B.~Sturmfels.
  \newblock {\em Algorithms in Invariant Theory}.
  \newblock Texts \& Monographs in Symbolic Computation. 2\textsuperscript{nd}
  edition, Springer Wien New-York, 2008.

  \bibitem{Syl1909}
  J.~J. Sylvester.
  \newblock {N}ote on spherical harmonics.
  \newblock In {\em Collected Mathematical Papers}, volume~3, pages 37--51.
  Cambridge University Press, 1909.

  \bibitem{TKel1856}
  W.~K. Thomson (Lord~Kelvin).
  \newblock Elements of a mathematical theory of elasticity.
  \newblock {\em Phil. Trans. R. Soc.}, 166, 481, 1856.

  \bibitem{TKel1878}
  W.~K. Thomson (Lord~Kelvin).
  \newblock {\em Elasticity, Encyclopaedia Britannica}.
  \newblock Adam and Charles Black, Edinburgh, 1878.

  \bibitem{Via1997}
  M.~Vianello.
  \newblock {A}n integrity basis for plane elasticity tensors.
  \newblock {\em Arch. Mech.}, 49:197--208, 1997.

  \bibitem{vGal1880}
  F.~von Gall.
  \newblock Ueber das vollst\"andige {S}ystem einer bin\"aren {F}orm achter
    {O}rdnung.
  \newblock {\em Math. Ann.}, 17(1):139--152, 1880.

  \bibitem{Wey1997}
  H.~Weyl.
  \newblock {\em {T}he classical groups}.
  \newblock Princeton Landmarks in Mathematics. Princeton University Press,
  Princeton, NJ, 1997.
  \newblock Their invariants and representations, Fifteenth printing, Princeton
  Paperbacks.

  \bibitem{YATM2020}
  H.~Yang, B.~E. Abali, D.~Timofeev, and W.~H. M{ü}ller.
  \newblock Determination of metamaterial parameters by means of a homogenization
  approach based on asymptotic analysis.
  \newblock {\em Continuum Mechanics and Thermodynamics}, 32:1251--1270, 2020.

  \bibitem{YTB2019}
  M.~E. Yildizdag, C.~A. Tran, E.~Barchiesi, M.~Spagnuolo,
  F.~dell{\textquoteright}Isola, and F.~Hild.
  \newblock A multi-disciplinary approach for mechanical metamaterial synthesis:
  A hierarchical modular multiscale cellular structure paradigm.
  \newblock In A.~Holm and A.~{{Ö}}chsner, editors, {\em State of the Art and
      Future Trends in Material Modeling}, Advanced Structured Materials, vol 100,
  pages 485--505. Springer Cham, 2019.

\end{thebibliography}
\end{document}